\newcommand{\V}{\mathcal{V}}
\newcommand{\ep}{\varepsilon}
\newcommand{\imply}{\mspace{10mu} \Longrightarrow \mspace{10mu}}
\DeclareMathOperator{\cls}{cl}
\DeclareMathOperator{\dom}{dom}
\DeclareMathOperator{\ev}{ev}
\DeclareMathOperator{\lip}{lip}
\DeclareMathOperator{\Map}{Map}
\DeclareMathOperator{\sgn}{sgn}
\DeclareMathOperator{\supp}{supp}
\theoremstyle{plain}
\newtheorem{theorem}{Theorem}[section]
\newtheorem{Theorem}{Theorem}
\newtheorem{lemma}[theorem]{Lemma}
\newtheorem{proposition}[theorem]{Proposition}
\newtheorem{corollary}[theorem]{Corollary}
\newtheorem{fact}{Fact}
\theoremstyle{definition}
\newtheorem{definition}[theorem]{Definition}
\newtheorem{notation}{Notation}
\newtheorem{example}{Example}[section]
\theoremstyle{remark}
\newtheorem{remark}[theorem]{Remark}
\begin{document}

\title{Theory of well-posedness for delay differential equations via prolongations and $C^1$-prolongations:
its application to state-dependent delay}
\author{Junya Nishiguchi
	\thanks{Research Alliance Center for Mathematical Sciences, Tohoku University,
	2-1-1 Katahira, Aoba-ku, Sendai, 980-8577, Japan}
	\footnote{E-mail: \url{junya.nishiguchi.b1@tohoku.ac.jp}}
	}

\maketitle

\begin{abstract}
In this paper, we establish a theory of well-posedness for delay differential equations (DDEs)
via notions of \textit{prolongations} and \textit{$C^1$-prolongations},
which are continuous and continuously differentiable extensions of histories to the right, respectively.
In this sense, this paper serves as a continuation and an extension of the previous paper
by this author (\cite{Nishiguchi 2017}).
The results in \cite{Nishiguchi 2017} are applicable to various DDEs,
however, the results in \cite{Nishiguchi 2017} cannot be applied to general class of state-dependent DDEs,
and its extendability is missing.
We find this missing link by introducing notions of 
($C^1$-) prolongabilities, regulation of topology by ($C^1$-) prolongations,
and Lipschitz conditions about ($C^1$-) prolongations, etc.
One of the main result claims that
the continuity of the semiflow with a parameter generated by the trivial DDEs $\dot{x} = v$
plays an important role for the well-posedness.
The results are applied to general class of state-dependent DDEs.

\begin{flushleft}
\textbf{2010 Mathematics Subject Classification}.
Primary: 34K05, 34K30, Secondary: 37B55, 37L05
\end{flushleft}

\begin{flushleft}
\textbf{Keywords}.
delay differential equations;
retarded functional differential equations;
maximal well-posedness and continuous maximal processes;
state-dependent delay;
bounded or unbounded delay;
infinite delay
\end{flushleft}
\end{abstract}

\tableofcontents

\section{Introduction}

A delay differential equation (DDE) is a differential equation
for which the derivative of the unknown function $x = x(\cdot)$ at $t \in \mathbb{R}$ also depends on the past information
	\begin{equation*}
		\{\mspace{2mu} x(s) : s < t \mspace{2mu}\}
			\mspace{10mu} \text{or, more precisely} \mspace{10mu}
		x(t + \cdot)|_{(-\infty, 0)}
	\end{equation*}
before $t$.
DDEs appear as various mathematical models for \textit{time-delay systems},
which are \textit{dynamic systems} having delayed time-lag in the causality
(refs.\ Erneux~\cite{Erneux 2009}, Smith~\cite{Smith 2011},
Lakshmanan \& Senthilkumar~\cite{Lakshmanan--Senthilkumar 2010},
and Walther~\cite{Walther 2014a}).

The purpose of this paper is to present a unified theory of well-posedness
which is applicable for various DDEs
including (i) general class of state-dependent DDEs and (ii) DDEs with infinite lag.
This was studied by this author (see \cite{Nishiguchi 2017}),
where a unified theory of well-posedness was established
based on the notion of \textit{prolongations} of histories.
Here a prolongation of some history is simply a continuous extension of that history to the right.
In this sense, this paper serves as a continuation and an extension of the previous one.
We note that the terminology of \textit{continuations} was used in \cite{Nishiguchi 2017} as that of prolongations.
In this paper it is decided that the terminology of continuations should be replaced with that of prolongations.
The theory in \cite{Nishiguchi 2017} covers some class of state-dependent DDEs and DDEs with infinite lag,
however, it is unclear whether this theory can be extended to cover general class of state-dependent DDEs.

\vspace{0.5\baselineskip}

We briefly review the dynamics viewpoint for DDEs to clarify the position of this paper.
A mathematical formulation of DDEs is the notion of retarded functional differential equations (RFDEs) of the form
	\begin{equation*}
		\dot{x}(t) = F(t, x_t) \mspace{20mu} (\text{$t \in \mathbb{R}$, $x(t) \in E$}),
	\end{equation*}
where $E = (E, \|\cdot\|_E)$ is some Banach space
(ref.\ Hale~\cite{Hale--Lunel 1993}).
In the right-hand side, the map $F$, called the \textit{history functional} in this paper,
is an $E$-valued functional of the \textit{history} $x_t$ of $x$ at $t$.
Therefore, the above equation stands for the past dependence of the derivative $\dot{x}(t)$.
For the formulation of a DDE as an RFDE,
it is necessary to choose a space of histories, called \textit{history spaces},
which constitutes the ambient space of the domain of definition $\dom(F)$ of $F$.

The dynamical systems point of view for DDEs was introduced
through this formulation (see Hale~\cite{Hale 1963b, Hale 2006b}).
For example, a given autonomous DDE with finite lag can be formulated as an autonomous RFDE
with the Banach space of continuous histories,
namely, the Banach space $C([-r, 0], E)_\mathrm{u}$ when the maximal time-lag is less than or equal to $r > 0$.
Here the symbol u represents the topology of uniform convergence.
In this case, the history functional $F$ becomes a map
	\begin{equation*}
		F \colon C([-r, 0], E)_\mathrm{u} \supset \dom(F) \to E,
	\end{equation*}
and the history $x_t \in C([-r, 0], E)_\mathrm{u}$ is defined by
	\begin{equation*}
		x_t(\theta) = x(t + \theta) \mspace{20mu} (\theta \in [-r, 0]).
	\end{equation*}
Then under appropriate assumptions,
the corresponding autonomous RFDE generates a continuous semiflow
$\varPhi_F \colon \mathbb{R}_+ \times \dom(F) \to \dom(F)$ $(\mathbb{R}_+ := [0, \infty))$ via the relation
	\begin{equation*}
		\varPhi_F(t, \phi) = x_F(\cdot; \phi)_t,
	\end{equation*}
where $x_F(\cdot; \phi) \colon [-r, +\infty) \to E$ is the unique solution of the RFDE
satisfying the initial condition $x_F(\cdot; \phi)_0 = \phi$.
For non-autonomous DDEs, the corresponding RFDEs are also non-autonomous,
and continuous processes should be used instead of continuous semiflows.
The terminology of processes was introduced by Dafermos~\cite{Dafermos 1971},
and periodic processes were also studied by Hale~\cite{Hale 1988}.
See Appendix~\ref{sec:maximal semiflows and processes} for the definition of processes.
See also the latter discussion of this introduction.

A class of infinite-dimensional dynamical systems is obtained in this way.
Here the well-posedness issue of the initial value problems (IVPs) for DDEs with initial history data
enters in order to obtain continuous semiflows or continuous processes,
i.e., (non-autonomous local) topological semi-dynamical systems, generated by DDEs.
This was treated in Hale~\cite{Hale 1977} and Hale \& Verduyn Lunel~\cite{Hale--Lunel 1993}, for example.
However, we have the following difficulties:
\begin{itemize}
\item DDEs with state-dependent time-lag or, called state-dependent DDEs, cannot be covered by those theories.
A class of state-dependent DDEs is given by
	\begin{equation*}
		\dot{x}(t) = f \Bigl( t, x(t), x \bigl( t - \tau(x(t)) \bigr) \Bigr)
			\mspace{20mu}
		(\text{$t \in \mathbb{R}$, $x(t) \in E$}),
	\end{equation*}
where $f \colon \mathbb{R} \times E \times E \to E$ and $\tau \colon E \to \mathbb{R}_+$ are continuous functions.
When $\tau(E) \subset [0, r]$ for some $r > 0$,
this equation is a DDE with finite lag.
A difficulty for the above DDE is the lack of smoothness
of the corresponding history functional $F \colon \mathbb{R} \times C([-r, 0], E)_\mathrm{u} \to E$ defined by
	\begin{equation*}
		F(t, \phi) = f \Bigl( t, \phi(0), \phi \bigl( -\tau(\phi(0)) \bigr) \Bigr)
	\end{equation*}
(see Mallet-Paret et al.~\cite{Mallet-Paret--Nussbaum--Paraskevopoulos 1994},
Louihi et al.~\cite{Louihi--Hbid--Arino 2002}, and Walther~\cite{Walther 2003c}, for example).
The smoothness of $f$ does not imply that of $F$ nor the Lipschitz condition of $F$ in general.
We refer the reader to Hartung, Krisztin, Walther, \& Wu~\cite{Hartung--Krisztin--Walther--Wu 2006}
for a general reference of state-dependent DDEs.
See Walther~\cite{Walther 2016} for DDEs with state-dependent unbounded time-lag.
\item We have another difficulty for DDEs when the time-lag is infinite or unbounded.
By the non-compactness of $\mathbb{R}_- := (-\infty, 0]$ which is the domain of definition of whole histories,
it is possible to choose various spaces of histories depending the equations
(ref.\ Hino, Murakami, \& Naito~\cite{Hino--Murakami--Naito 1991}).
\end{itemize}

\vspace{0.5\baselineskip}

We summarize the theory and results in \cite{Nishiguchi 2017}
to make clear the motivation, objectives, and results of this paper.
Let
	$I \subset \mathbb{R}_-$ be an interval,
	$H$ be a linear topological space which consists of maps from $I$ to $E$ with linear operations for functions, and
	$F \colon \mathbb{R} \times H \supset \dom(F) \to E$ be a map.
Then we consider an \textit{RFDE with history space $H$}
	\begin{equation}\label{eq:RFDE}
		\dot{x}(t) = F(t, I_tx) \mspace{20mu} (\text{$t \in \mathbb{R}$, $x(t) \in E$})
	\end{equation}
and its IVP
	\begin{equation}\label{eq:IVP}
		\left\{
		\begin{alignedat}{2}
			\dot{x}(t) &= F(t, I_tx), & \mspace{20mu} & t \ge t_0, \\
			I_{t_0}x &= \phi_0, & & (t_0, \phi_0) \in \dom(F).
		\end{alignedat}
		\right. \tag{$*$}
	\end{equation}
Here
\begin{itemize}
\item $H$ is taken by the first letter of history (but, $X$ is used in \cite{Nishiguchi 2017}), and
\item $I_tx \colon I \to E$ is the history of $x = x(\cdot)$ at $t$ defined by
	\begin{equation*}
		I_tx(\theta) = x(t + \theta).
	\end{equation*}
\end{itemize}
In \cite{Nishiguchi 2017}, we adopted the usual notation $x_t$ for the history,
however, we will adopt the notation $I_tx$ in this paper to clarify the domain of definition of histories.
See Section~\ref{sec:formulation and notions}, in particular Subsection~\ref{subsec:RFDEs with history spaces}
for the details.
In this setting, we discussed the well-posedness issue for IVP~\eqref{eq:IVP}.
As requisite properties for $H$,
we assume that $H$ is \textit{prolongable} and \textit{regulated by prolongations}.
We note that these properties were collectively called the \textit{continuability} in \cite{Nishiguchi 2017}.
See Subsections~\ref{subsec:prolongations} and \ref{subsec:prolongability and regulation}
for the definitions of these properties.
Then one of the main result of \cite{Nishiguchi 2017} is the following.

\begin{Theorem}[\cite{Nishiguchi 2017}]\label{thm:maximal WP}
Suppose that $H$ is prolongable and regulated by prolongations.
Then the following statements are equivalent:
\begin{enumerate}
\item[\emph{(a)}] IVP~\eqref{eq:IVP} is maximally well-posed for any history functional $F$
which is continuous, uniformly locally Lipschitzian about prolongations, and defined on some open set.
\item[\emph{(b)}] $\mathbb{R}_+ \times H \ni (t, \phi) \mapsto S_0(t)\phi \in H$ is continuous.
\end{enumerate}
\end{Theorem}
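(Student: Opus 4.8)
The plan is to prove the two implications separately; $(a) \Rightarrow (b)$ is essentially a specialization to a trivial equation, while $(b) \Rightarrow (a)$ carries the real content and is where the structure of $H$ and the continuity of $S_0$ must be exploited.

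For $(a) \Rightarrow (b)$, I would apply the hypothesis to the trivial history functional. Take $F \equiv 0$ (and, for the parametrized version $\dot{x} = v$, the constant functional $F \equiv v$ with $v \in E$). Such an $F$ is continuous, is Lipschitzian about prolongations with constant $0$, and is defined on the open set $\mathbb{R} \times H$; hence (a) applies and asserts that IVP~\eqref{eq:IVP} is maximally well-posed. But the solution of $\dot{x} = 0$ issuing from $I_{t_0}x = \phi$ is precisely the constant prolongation of $\phi$, and its history at time $t_0 + t$ is by definition $S_0(t)\phi$. Since maximal well-posedness includes continuity of the generated process, continuity of $\mathbb{R}_+ \times H \ni (t, \phi) \mapsto S_0(t)\phi$ follows at once; the parameter $v$ contributes only the affine term $vt$ at the present time, so its continuity is automatic once the $v = 0$ case is settled.

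For $(b) \Rightarrow (a)$, I would reformulate IVP~\eqref{eq:IVP} as a fixed-point equation and run a Banach contraction argument for an arbitrary admissible $F$. Fix $(t_0, \phi_0) \in \dom(F)$. A solution on $[t_0, t_0 + a]$ is encoded by its extension $u(s) = x(t_0 + s)$ on $[0, a]$ with $u(0) = \phi_0(0)$, which must satisfy
\begin{equation*}
	u(s) = \phi_0(0) + \int_0^s F\bigl(t_0 + \sigma, I_{t_0 + \sigma}x\bigr)\, d\sigma.
\end{equation*}
Here $I_{t_0 + \sigma}x$ is the prolongation of $\phi_0$ determined by $u|_{[0, \sigma]}$, and prolongability guarantees that these histories lie in $H$. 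I would then seek a fixed point of the map $\Gamma$ sending $u$ to the right-hand side, in a closed ball of $C([0, a], E)_\mathrm{u}$. The uniform local Lipschitz condition of $F$ about prolongations, together with the estimate of how $I_{t_0 + \sigma}x$ depends on $u$, should make $\Gamma$ a contraction for $a$ small, uniformly for $(t_0, \phi_0)$ in a neighborhood, yielding local existence and uniqueness.

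The hard part will be controlling the history map $u \mapsto I_{t_0 + \sigma}x$ in the topology of $H$, and this is exactly where continuity of $S_0$ and regulation by prolongations are indispensable: the iteration lives in the sup-normed space of extensions $u$, whereas the Lipschitz hypothesis on $F$ is phrased in the $H$-topology. I therefore expect to decompose $I_{t_0 + \sigma}x$ into the shifted constant prolongation $S_0(\sigma)\phi_0$, whose continuous dependence on $(\sigma, \phi_0)$ is furnished by (b) and which keeps the histories inside the open set $\dom(F)$, plus a correction built from $u - \phi_0(0)$, whose uniform smallness is transferred into $H$-smallness precisely because $H$ is regulated by prolongations. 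Once this bridge is established, continuous dependence on $(t_0, \phi_0)$ follows from the uniformity of the contraction constants, and continuity of the generated process $(t, t_0, \phi_0) \mapsto I_t x_F(\cdot; t_0, \phi_0)$ combines this with the continuity of $S_0$ once more. Finally I would pass to the maximal interval of existence by the standard continuation argument, establish the blow-up/escape alternative and the openness of the domain of the resulting maximal process, thereby obtaining maximal well-posedness and completing $(b) \Rightarrow (a)$.
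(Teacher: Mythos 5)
Your proposal is correct and follows essentially the same route as the paper: the direction (a) $\Rightarrow$ (b) by specializing to $F \equiv 0$, and the direction (b) $\Rightarrow$ (a) by a contraction argument on the sup-normed space of extensions of $\boldsymbol{0}$ (the paper's prolongation space $\varGamma_{0, \boldsymbol{0}}(T, \delta)$ with the normalized operator $\mathcal{S}^0_{t_0, \phi_0}$), combined with the decomposition of the history into $S_0(\tau)\phi_0$ plus a compactly supported correction whose sup-norm smallness is converted to $H$-smallness by regulation by prolongations --- exactly the decomposition \eqref{eq:decomposition} the paper uses. The only step you leave as ``standard'' that the paper treats with dedicated machinery is the upgrade from local continuity with uniform constants to continuity of the \emph{maximal} process (lower semi-continuity of the escape time and local equi-continuity, via the H\'{a}jek-type criterion of Appendix~\ref{sec:maximal semiflows and processes}), but this does not affect the correctness of your outline.
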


It should be noted that this theorem is valid for the case that $E$ is infinite dimensional.
In (b), $\mathbb{R}_+ \times H \ni (t, \phi) \mapsto S_0(t)\phi \in H$ is the semiflow
generated by the trivial RFDE $\dot{x} = 0$ with history space $H$.
Therefore, Theorem~\ref{thm:maximal WP} claims that
under the assumptions of $F$ given in (a),
in order to obtain the \textit{maximal well-posedness} of IVP~\eqref{eq:IVP},
we only have to check the well-posedness for the trivial RFDE $\dot{x} = 0$ with history space $H$.
Here the terminology of maximal well-posedness is not standard.
See Subsection~\ref{subsec:maximal well-posedness} for the definition.
The part (a) $\Rightarrow$ (b) is trivial.
The idea of the proof of (b) $\Rightarrow$ (a) is that
under the assumption of the unique existence of a maximal solution $x_F(\cdot; t_0, \phi_0)$ of RFDE~\eqref{eq:RFDE}
satisfying the initial condition $I_{t_0}[x_F(\cdot; t_0, \phi_0)] = \phi_0$,
we decompose the \textit{solution process} $\mathcal{P}_F$ generated by RFDE~\eqref{eq:RFDE} by
	\begin{align}\label{eq:decomposition}
		\mathcal{P}_F(\tau, t_0, \phi_0)
		&:= I_{t_0 + \tau}[x_F(\cdot; t_0, \phi_0)] \notag \\
		&= I_\tau[y(\cdot; t_0, \phi_0)] + S_0(\tau)\phi_0,
	\end{align}
where $y(\cdot; t_0, \phi_0)$ is obtained by some normalization of $x_F(\cdot; t_0, \phi_0)$
and is a prolongation of the trivial history $\boldsymbol{0}$.
See Subsection~\ref{subsec:maximal well-posedness} for the definition of $\mathcal{P}_F$.
This decomposition reminds us the perturbation theory
developed in \cite[Chapter II]{Diekmann--vanGils--Lunel--Walther 1995}.

In this way, the continuity of the semiflow $\mathbb{R}_+ \times H \ni (t, \phi) \mapsto S_0(t)\phi \in H$,
appearing the second term of \eqref{eq:decomposition},
enters for the proof of the maximal well-posedness of IVP~\eqref{eq:IVP}.
For the first term, the Lipschitz condition called \textit{uniform local Lipschitz about prolongations} is related,
which is a main key notion introduced in \cite{Nishiguchi 2017}.
We next explain the point of the notion of (uniform) local Lipschitz about prolongations briefly.
Basically, the inequality used for this Lipschitz condition is of the form
	\begin{equation}\label{eq:Lipschitzian}
		\|F(t, \phi_1) - F(t, \phi_2)\|_E
		\le L \cdot \|\phi_1 - \phi_2\|_\infty,
		\tag{Lip}
	\end{equation}
where $L > 0$ is some constant, and $\|\cdot\|_\infty$ is the infinity norm for maps.
When $H = C([-r, 0], E)_\mathrm{u}$ and \eqref{eq:Lipschitzian} holds
for some neighborhood of some base point $(t_0, \phi_0) \in \dom(F)$,
\eqref{eq:Lipschitzian} gives the usual Lipschitz condition for the history functional $F$.
However, in general, \eqref{eq:Lipschitzian} does not give the usual Lipschitz condition by the following reasons:
	\begin{itemize}
	\item It is not assumed that $H$ is a metric space.
	If $H$ has some metric structure, the metric is not necessarily given by $\|\cdot\|_\infty$.
	\item When $I = \mathbb{R}_-$, the upper bound given by $\|\phi_1 - \phi_2\|_\infty$ does not make sense.
	\end{itemize}
The condition which the property of local Lipschitz about prolongations requires is that
\eqref{eq:Lipschitzian} holds for all $(t, \phi_1), (t, \phi_2)$ obtained as the histories of prolongations, that is to say,
we require that \eqref{eq:Lipschitzian} holds for all $(t, \phi_1), (t, \phi_2)$ satisfying
	\begin{equation*}
		(t, \phi_1) = (t, I_t\gamma_1)
			\mspace{10mu} \text{and} \mspace{10mu}
		(t, \phi_2) = (t, I_t\gamma_2)
	\end{equation*}
for some prolongations $\gamma_1, \gamma_2$ of the base point $(t_0, \phi_0)$.
See Subsection~\ref{subsec:prolongations} for the definition of prolongations.
Then for such $(t, \phi_1)$ and $(t, \phi_2)$,
$\phi_1 - \phi_2 = I_t[\gamma_1 - \gamma_2]$ has the support in $[-t, 0]$,
and therefore, the right-hand side of \eqref{eq:Lipschitzian} is finite.
For the property of uniform local Lipschitz about prolongations,
it is required that the Lipschitz constant can be chosen uniformly in the base point $(t_0, \phi_0)$.
See Subsection~\ref{subsec:Lip conditions} or \cite[Definitions V and VII]{Nishiguchi 2017}
for the precise definition of the property of (uniform) local Lipschitz about prolongations.

As explained above, the uniform local Lipschitz about prolongations is an weak property
in the sense that we do not need to compare the difference of $F$ between histories having different tails.
Therefore, the important meaning in Theorem~\ref{thm:maximal WP} is that
under the well-posedness for the trivial RFDE $\dot{x} = 0$ with history space $H$,
the assumption that the history functional $F$ is uniformly locally Lipschitzian and the mild conditions are sufficient
to ensure the maximal well-posedness of IVP~\eqref{eq:IVP}.
Here Decomposition~\eqref{eq:decomposition} plays an important role.
This fact reveals a mechanism of the continuity of the solution process $\mathcal{P}_F$.
Furthermore, this is unexpected
because even for the continuity of $\mathcal{P}_F$, the weak Lipschitz condition is sufficient.
We refer the reader to Kappel \& Schappacher~\cite[Remark 1.6]{Kappel--Schappacher 1978}
about the discussion for a scalar DDE
	\begin{equation*}
		\dot{x}(t) = \sgn(x(t - 1)),
	\end{equation*}
where $\sgn(\cdot) \colon \mathbb{R} \to \{-1, 0, +1\}$ is the signum function.

\vspace{0.5\baselineskip}

An interesting feature related to the Lipschitz condition about prolongations can be seen
in DDEs with a single constant lag of the form
	\begin{equation}\label{eq:DDE with a single constant lag}
		\dot{x}(t) = f(x(t - r))
			\mspace{20mu}
		(\text{$t \in \mathbb{R}$, $x(t) \in E$}),
	\end{equation}
where $f \colon E \to E$ is a continuous function, and $r > 0$ is a constant lag.
This is an autonomous DDE with finite lag and can be written as an autonomous RFDE
	\begin{equation*}
		\dot{x}(t) = F([-r, 0]_tx)
	\end{equation*}
with the history functional $F \colon C([-r, 0], E)_\mathrm{u} \to E$ defined by
	\begin{equation*}
		F(\phi) = f(\phi(-r)).
	\end{equation*}
Then one can see that the above $F$ is \textit{constant about memories} for any map $f$ because
	\begin{equation*}
		F(\phi_1) - F(\phi_2)
		= f(\phi_1(-r)) - f(\phi_2(-r))
		= 0
	\end{equation*}
holds for all $\phi_1, \phi_2 \in C([-r, 0], E)$ satisfying
	\begin{equation*}
		\supp(\phi_1 - \phi_2) \subset [-R, 0]
	\end{equation*}
for some $0 < R < r$.
Here the constancy about memories implies the Lipschitz about memories,
which also implies the uniform local Lipschitz about prolongations.
This property of $F$ should be compared to the property that
the Lipschitz condition of $f$ is necessary to ensure the usual Lipschitz condition for this $F$.
This property also corresponds to the result about unique global existence for DDE~\eqref{eq:DDE with a single constant lag}
via \textit{step method},
by which the unique existence of a global solution of DDE~\eqref{eq:DDE with a single constant lag}
under a specified initial condition follows.
We refer the reader to Smith~\cite[Chapter 3]{Smith 2011} as a reference of step method and related results.
We come to the conclusion that Theorem~\ref{thm:maximal WP} contains step method in some sense.

Theorem~\ref{thm:maximal WP} can cover some classes of DDEs with infinite or unbounded lag
by the following reasons:
\begin{itemize}
\item The axiom established by Hale \& Kato~\cite{Hale--J.Kato 1978} and Kato~\cite{Kato 1978}
imply the prolongability and the regulation by prolongations of history spaces.
\item The Lipschitz condition with respect to seminorm implies the uniform local Lipschitz about prolongations.
\end{itemize}
Therefore, the theory of well-posedness in \cite{Nishiguchi 2017} contains the results under the \textit{Hale--Kato axiom}.
Indeed, the history space $H = C(\mathbb{R}_-, E)_\mathrm{co}$,
where the symbol co represents the compact-open topology,
cannot be covered by the Hale--Kato axiom.
However, this history space is within the scope of Theorem~\ref{thm:maximal WP}.
See \cite[Subsections 2.3 and 6.1]{Nishiguchi 2017} for the relationship
between (i) the Hale--Kato axiom and (ii) the prolongability and the regulation by prolongations.

Theorem~\ref{thm:maximal WP} can also cover a class of state-dependent DDEs of the form
	\begin{equation}\label{eq:sdDDE}
		\dot{x}(t) = f \bigl( t, x(t), x(t - \tau(t, I_tx)) \bigr),
	\end{equation}
where $f \colon \mathbb{R} \times E \times E \to E$ is a continuous function, and
$\tau$ is a continuous delay functional which \textit{ignores} the values of histories on some interval $(-R, 0]$.
This notion was introduced by Rezounenko~\cite{Rezounenko 2009}
for the cases that (i) $\tau$ is independent from $t$, and (ii) $I = [-r, 0]$ for some $r > 0$.
In \cite{Nishiguchi 2017}, it was shown that the history functional $F \colon \mathbb{R} \times C(I, E)_\mathrm{co} \to E$ defined by
	\begin{equation*}
		F(t, \phi) = f \bigl( t, \phi(0), \phi(-\tau(t, \phi)) \bigr)
	\end{equation*}
is uniformly locally Lipschitzian about prolongations
under the assumption that $\tau$ is \textit{constant about memories}.
See Definition~\ref{dfn:constancy about memories} for the definition of the constancy about memories.
However, Theorem~\ref{thm:maximal WP} cannot be applied to state-dependent DDEs
with general delay functionals by the lack of smoothness for the history functional.
Furthermore, relationships between the (uniform) local Lipschitz about prolongations
and the almost local Lipschitz which is the Lipschitz condition
introduced by Mallet-Paret, Nussbaum, Paraskevopoulos~\cite{Mallet-Paret--Nussbaum--Paraskevopoulos 1994}
for state-dependent DDEs are missing.

\vspace{0.5\baselineskip}

In this paper, to overcome the above mentioned difficulty for state-dependent DDEs
and to extend the theory developed in \cite{Nishiguchi 2017},
we introduce
(i) ($C^1$-) prolongations and ($C^1$-) prolongation spaces,
(ii) ($C^1$-) prolongabilities and regulation of topology by ($C^1$-) prolongations of history spaces,
(iii) rectangles by ($C^1$-) prolongations, and neighborhoods by ($C^1$-) prolongations, and
(iv) (uniform) local Lipschitz about $C^1$-prolongations for history functionals, etc.
We note that prolongations are only considered in \cite{Nishiguchi 2017}.
After that, we investigate properties of these notions, e.g.,
a necessary condition of ($C^1$-) prolongabilities for history spaces,
a characterization of regulation of topology by ($C^1$-) prolongations, and
relationships between Lipschitz conditions.
Based on these notions,
we prove the existence and uniqueness result,
study a mechanism of the continuity of solution processes,
and finally apply the obtained results to the maximal well-posedness of state-dependent DDEs.

We summarize the results which will be obtained in this paper as follows.

\vspace{0.2\baselineskip}

\textbf{$\bullet$ ($C^1$-) prolongability:}
The prolongability or $C^1$-prolongability of a history space $H$ implies
	\begin{equation*}
		C_\mathrm{c}(I, E) \subset H
			\mspace{10mu} \text{or} \mspace{10mu}
		C^1_\mathrm{c}(I, E) \subset H,
	\end{equation*}
respectively
(see Propositions~\ref{prop:closedness under prolongations} and \ref{prop:closedness under C^1-prolongations}).
These inclusions reveal a connection between ($C^1$-) prolongability and regularity of histories.

\vspace{0.2\baselineskip}

\textbf{$\bullet$ Regulation by ($C^1$-) prolongations:}
When $I = [-r, 0]$ for some $r > 0$,
the regulation properties of topology of $H$ by prolongations or $C^1$-prolongations are characterized by
the continuity of inclusions
	\begin{equation*}
		C(I, E)_\mathrm{u} \subset H
			\mspace{10mu} \text{or} \mspace{10mu}
		\left( C^1(I, E), \|\cdot\|_{C^1} \right) \subset H,
	\end{equation*}
respectively (see Theorems~\ref{thm:r < infty, regulation} and \ref{thm:r < infty, C^1-regulation}).
Corresponding results are also obtained when $I = \mathbb{R}_-$
(see Theorems~\ref{thm:r = infty, regulation} and \ref{thm:r = infty, C^1-regulation}).
These results shows that the regulation by prolongations or $C^1$-prolongations are characterized topologically.

\vspace{0.2\baselineskip}

\textbf{$\bullet$ Neighborhoods by ($C^1$-) prolongations:}
Theorem~\ref{thm:nbd by C^1-prolongations} shows that
when $H$ is $C^1$-prolongable and regulated by $C^1$-prolongations,
a neighborhood $W$ of $(\sigma, \psi) \in \mathbb{R} \times H$ in $[\sigma, +\infty) \times H$ is
a neighborhood by $C^1$-prolongations.
Furthermore, if
	\begin{equation*}
		\mathbb{R}_+ \times E \times H \ni (t, v, \phi) \mapsto S_v(t)\phi \in H
	\end{equation*}
is continuous, then $W$ is a uniform neighborhood by $C^1$-prolongations.
Here the above map is the solution semiflow with a parameter $v \in E$
generated by RFDEs $\dot{x} = v$ with history space $H$.

\vspace{0.2\baselineskip}

\textbf{$\bullet$ Relationships between Lipschitz conditions about ($C^1$-) prolongations:}
Propositions~\ref{prop:comparison of Lip about prolongations} and \ref{prop:comparison of uniform Lip about prolongations}
state the following relation:
	\begin{align*}
		&\text{(uniform) local Lipschitz about prolongations} \\
		&\mspace{40mu} \Rightarrow \text{(uniform) local Lipschitz about $C^1$-prolongations}.
	\end{align*}
We also show that the almost local Lipschitz for a history functional $F$ implies that
the restriction of $F$ to the space of local Lipschitz continuous maps $C^{0, 1}_\mathrm{loc}(I, E)$ is
locally Lipschitzian about $C^1$-prolongations.
(see Theorems~\ref{thm:almost local Lip for I = I^r} and \ref{thm:almost local Lip for I = I^infty}).

\vspace{0.2\baselineskip}

\textbf{$\bullet$ Existence and uniqueness:}
The main result about the existence and uniqueness is following.

\begin{Theorem}\label{thm:existence and uniqueness}
Let $(t_0, \phi_0) \in \dom(F)$.
Suppose that
(i) $H$ is $C^1$-prolongable and regulated by $C^1$-prolongations,
(ii) $F$ is continuous, and
(iii) $\dom(F)$ is a neighborhood by $C^1$-prolongations of $(t_0, \phi_0)$.
If $F$ is locally Lipschitzian about $C^1$-prolongations at $(t_0, \phi_0)$,
then there exist $T > 0$ such that
$\eqref{eq:IVP}_{t_0, \phi_0}$ has a unique $C^1$-solution $x \colon [t_0, t_0 + T] + I \to E$.
\end{Theorem}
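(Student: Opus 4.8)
The plan is to recast $\eqref{eq:IVP}_{t_0,\phi_0}$ as a fixed-point equation for an integral operator acting on a complete metric space of $C^1$-prolongations of the base point $(t_0,\phi_0)$, and to solve it by the Banach contraction principle. Concretely, for a candidate function $x$ defined on $[t_0,t_0+T]+I$ with $I_{t_0}x = \phi_0$, set
\begin{equation*}
(\mathcal{T}x)(t) =
\begin{cases}
\phi_0(t-t_0), & t \in t_0 + I, \\
\phi_0(0) + \displaystyle\int_{t_0}^{t} F(s, I_sx)\,ds, & t \in [t_0, t_0+T].
\end{cases}
\end{equation*}
A $C^1$-prolongation $x$ of $(t_0,\phi_0)$ is a $C^1$-solution of $\eqref{eq:IVP}_{t_0,\phi_0}$ on $[t_0,t_0+T]+I$ exactly when $x = \mathcal{T}x$, the junction regularity of $\mathcal{T}x$ at $t_0$ reflecting the defining property of a $C^1$-solution. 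The whole argument is organised so that the weak Lipschitz condition \eqref{eq:Lipschitzian} about $C^1$-prolongations is the only estimate on $F$ that is used, mirroring the mechanism behind Decomposition~\eqref{eq:decomposition}.

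First I would fix the data. By $C^1$-prolongability of $H$ there is a reference $C^1$-prolongation $\gamma^*$ of $(t_0,\phi_0)$; since $\dom(F)$ is a neighborhood by $C^1$-prolongations of $(t_0,\phi_0)$ and $F$ is locally Lipschitzian about $C^1$-prolongations there, I may choose a common size $\rho > 0$, a time $T_0 > 0$, and a constant $L > 0$ so that every $C^1$-prolongation $\gamma$ of $(t_0,\phi_0)$ lying within the resulting rectangle satisfies $(s, I_s\gamma) \in \dom(F)$ for $s \in [t_0, t_0+T_0]$, and so that \eqref{eq:Lipschitzian} holds with constant $L$ for the histories $\phi_1 = I_s\gamma_1$, $\phi_2 = I_s\gamma_2$ of any two such prolongations. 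Let $\mathcal{X}_T$ be the set of $C^1$-prolongations $x$ of $(t_0,\phi_0)$ on $[t_0,t_0+T]+I$ that remain inside this rectangle, metrised by $d(x_1,x_2) = \sup_{t_0 \le s \le t_0+T}\|x_1(s)-x_2(s)\|_E$. The point that makes $d$ well defined even when $I = \R_-$ is that $x_1 - x_2$ vanishes on $t_0 + I$, so $I_s(x_1-x_2)$ has support in $[-(s-t_0),0]$ and the relevant suprema are finite; with $E$ a Banach space, $(\mathcal{X}_T, d)$ is complete.

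Next I would verify that $\mathcal{T}$ is a self-map of $\mathcal{X}_T$ for $T$ small. Because $H$ is regulated by $C^1$-prolongations, the history curve $s \mapsto I_sx \in H$ is continuous for every $x \in \mathcal{X}_T$; combined with the continuity of $F$ this makes $s \mapsto F(s,I_sx)$ continuous, so $\mathcal{T}x$ is genuinely $C^1$ on $[t_0,t_0+T]$ with $\dot{(\mathcal{T}x)}(s) = F(s,I_sx)$, hence a $C^1$-prolongation of $(t_0,\phi_0)$. That $\mathcal{T}x$ again lies inside the rectangle follows, for $T \le T_0$ sufficiently small, from the continuity of $F$ near $(t_0,\phi_0)$: the derivative $F(s,I_sx)$ stays close to $F(t_0,\phi_0)$ and the increment of $\mathcal{T}x$ over $[t_0,s]$ stays small, so $\mathcal{T}x$ remains within size $\rho$ of $\gamma^*$. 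I expect this self-mapping step to be the main obstacle, since it is where the abstract hypotheses must be converted into quantitative control: the translation between the function-space smallness I can steer (the size of $T$ and of $\dot{x} - F(t_0,\phi_0)$) and membership in the neighborhood by $C^1$-prolongations is exactly what the regulation-by-$C^1$-prolongations property and the structure of such neighborhoods (Theorem~\ref{thm:nbd by C^1-prolongations}) are designed to supply.

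Finally I would establish the contraction estimate. For $x_1, x_2 \in \mathcal{X}_T$ both are $C^1$-prolongations of the base point, so \eqref{eq:Lipschitzian} about $C^1$-prolongations applies to $I_sx_1, I_sx_2$ and gives
\begin{equation*}
\|F(s,I_sx_1) - F(s,I_sx_2)\|_E \le L\,\|I_s(x_1-x_2)\|_\infty \le L\,d(x_1,x_2),
\end{equation*}
using once more that the difference is supported in $[-(s-t_0),0]$. Integrating yields $d(\mathcal{T}x_1,\mathcal{T}x_2) \le LT\,d(x_1,x_2)$, a contraction once $T < 1/L$ (and $T \le T_0$). The Banach fixed-point theorem then produces a unique fixed point in $\mathcal{X}_T$, i.e.\ a $C^1$-solution on $[t_0,t_0+T]+I$. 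Uniqueness among all $C^1$-solutions, not merely those in $\mathcal{X}_T$, follows from the same Lipschitz estimate by a Gronwall argument: any two $C^1$-solutions are $C^1$-prolongations of $(t_0,\phi_0)$ and, by continuity of their histories, remain in the rectangle for $s$ close to $t_0$, whence the inequality $\sup_{t_0\le s\le t}\|x_1(s)-x_2(s)\|_E \le L\int_{t_0}^{t}\sup_{t_0\le\sigma\le s}\|x_1(\sigma)-x_2(\sigma)\|_E\,ds$ forces $x_1 = x_2$ after possibly shrinking $T$.
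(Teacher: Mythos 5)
Your overall strategy---recasting $\eqref{eq:IVP}_{t_0, \phi_0}$ as a fixed-point problem for the integral operator on a space of $C^1$-prolongations inside the rectangle $\varLambda^1_{t_0, \phi_0}(T, \delta; F)$, using the Lipschitz condition about $C^1$-prolongations for the contraction estimate, and proving uniqueness by a separate short-time estimate---is exactly the paper's (Lemma~\ref{lem:contraction on C^1-prolongation space}, Theorem~\ref{thm:local existence with Lip}, Theorem~\ref{thm:local uniqueness for C^1-sols with Lip}). But there is one genuine gap: the claim that $(\mathcal{X}_T, d)$ is complete, where $\mathcal{X}_T$ consists of \emph{$C^1$}-prolongations in the rectangle and $d$ is the \emph{sup} metric $\rho^0$. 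A $d$-Cauchy sequence in $\mathcal{X}_T$ converges uniformly to a continuous prolongation, but the limit need not be of class $C^1$ (the $C^1$ maps are not closed under uniform convergence), and for a merely continuous limit $x_*$ neither $(s, I_sx_*) \in \dom(F)$ nor the inequality \eqref{eq:Lipschitzian} is available, since hypothesis (iii) and the Lipschitz condition are only guaranteed on $\varLambda^1_{t_0, \phi_0}(T, \delta; F)$. So the Banach fixed point theorem cannot be invoked on $(\mathcal{X}_T, d)$ as you have set it up.

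The paper closes exactly this gap by working with the $C^1$-metric $\rho^1$: the space $\bigl( \varGamma^1_{t_0, \phi_0}(T, \delta; F), \rho^1 \bigr)$ is complete (Proposition~\ref{prop:completeness of space of C^1-prolongations}), and the operator is also a contraction with respect to $\rho^1$, because the derivative of $\mathcal{T}x_1 - \mathcal{T}x_2$ equals $F(s, I_sx_1) - F(s, I_sx_2)$, which is bounded by $L \cdot \|I_s(x_1 - x_2)\|_\infty \le LT \cdot \sup_u \|x_1'(u) - x_2'(u)\|_E$ via the elementary inequality $\|\gamma_1 - \gamma_2\|_{C[t_0, t_0 + T]} \le T \cdot \|\gamma_1' - \gamma_2'\|_{C[t_0, t_0 + T]}$ valid for two $C^1$-prolongations of the same $(t_0, \phi_0)$. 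Your argument can be repaired with the same observation (equivalently: the Picard iterates are Cauchy in the $C^1$-norm because consecutive derivatives differ by at most $L$ times the preceding $C^0$-distance, so the uniform limit is in fact a $C^1$-limit). The remaining steps of your outline---the self-mapping step via continuity of $F$ together with $C^1$-prolongability and regulation, the contraction estimate exploiting the compact support of $I_s(x_1 - x_2)$, and the uniqueness argument with $LT < 1$ after checking that any $C^1$-solution enters the rectangle (note $x_i'(t_0) = F(t_0, \phi_0)$ automatically)---match the paper's proof.
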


This theorem contains and extends the result about existence and uniqueness in \cite{Nishiguchi 2017}
because of the following implications:
	\begin{gather*}
		\text{prolongability $\Rightarrow$ $C^1$-prolongability}, \\
		\text{Regulation by prolongations $\Rightarrow$ Regulation by $C^1$-prolongations}, \\
		\text{Neighborhood by prolongations $\Rightarrow$ Neighborhood by $C^1$-prolongations}.
	\end{gather*}
Theorem~\ref{thm:existence and uniqueness} is obtained by a combination of
Corollaries~\ref{cor:local existence with Lip, (E1)} and \ref{cor:local uniqueness (E1)}.

\vspace{0.2\baselineskip}

\textbf{$\bullet$ Continuity of solution processes:}
One of the main result about the continuity of solution process $\mathcal{P}_F$ is following.

\begin{Theorem}\label{thm:maximal WP, C^1}
Suppose that
(i) $H$ is $C^1$-prolongable and regulated by $C^1$-prolongations,
(ii) $F$ is continuous, and
(iii) $\dom(F)$ is a uniform neighborhood by $C^1$-prolongations of each $(t_0, \phi_0) \in \dom(F)$.
If $F$ is uniformly locally Lipschitzian about $C^1$-prolongations, and if
	\begin{equation*}
		\mathbb{R}_+ \times E \times H \ni (t, v, \phi) \mapsto S_v(t)\phi \in H
	\end{equation*}
is continuous,
then IVP~\eqref{eq:IVP} is maximally well-posed for $C^1$-solutions.
\end{Theorem}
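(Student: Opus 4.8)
The plan is to split the assertion of maximal well-posedness (in the sense of Subsection~\ref{subsec:maximal well-posedness}) into two parts: (A) the unique existence of a noncontinuable $C^1$-solution for each $(t_0, \phi_0) \in \dom(F)$, which produces the solution process $\mathcal{P}_F$ on an open domain in $\mathbb{R}_+ \times \dom(F)$; and (B) the continuity of $\mathcal{P}_F$, which is the substantive part. The mechanism for (B) will be a decomposition of $\mathcal{P}_F$ parallel to \eqref{eq:decomposition}, but with the parametrized family $S_v$ in place of $S_0$, so that the continuity hypothesis on $(t, v, \phi) \mapsto S_v(t)\phi$ can be exploited.

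For (A), I would invoke Theorem~\ref{thm:existence and uniqueness}: hypotheses (i)--(iii) together with the uniform local Lipschitz about $C^1$-prolongations supply, at each base point, a $T > 0$ and a unique $C^1$-solution of $\eqref{eq:IVP}_{t_0, \phi_0}$ on $[t_0, t_0 + T] + I$. A routine continuation argument --- patching local solutions and passing to the supremum of the existence times --- then yields a unique noncontinuable $C^1$-solution $x_F(\cdot\,; t_0, \phi_0)$, and hence $\mathcal{P}_F(\tau, t_0, \phi_0) = I_{t_0 + \tau}[x_F(\cdot\,; t_0, \phi_0)]$ on its natural domain; the evolution (process) identity for $\mathcal{P}_F$ is then a consequence of uniqueness.

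For (B), the idea is to normalize each maximal solution against the affine $C^1$-prolongation of slope $v_0 := F(t_0, \phi_0)$. Writing $w$ for the extension of $\phi_0$ equal to $\phi_0(0) + v_0 s$ on the right (so that $I_\tau w = S_{v_0}(\tau)\phi_0$) and putting $y := x_F(t_0 + \cdot\,; t_0, \phi_0) - w$, one verifies that $y$ is a $C^1$-prolongation of the trivial history $\boldsymbol{0}$: the $C^1$-compatibility of a solution forces $\dot\phi_0(0) = F(t_0, \phi_0) = v_0$, which is exactly what makes $\dot y$ vanish at $0$ from both sides. This gives the decomposition
\begin{equation*}
	\mathcal{P}_F(\tau, t_0, \phi_0) = I_\tau[y(\cdot\,; t_0, \phi_0)] + S_{v_0}(\tau)\phi_0,
	\qquad v_0 = F(t_0, \phi_0).
\end{equation*}
Continuity of the second summand is immediate from the hypotheses: $F$ continuous makes $(t_0, \phi_0) \mapsto v_0$ continuous, and composing with the assumed continuity of $(t, v, \phi) \mapsto S_v(t)\phi$ gives joint continuity of $(\tau, t_0, \phi_0) \mapsto S_{v_0}(\tau)\phi_0$.

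The remaining step, and the main obstacle, is the continuity of the nonlinear summand $(\tau, t_0, \phi_0) \mapsto I_\tau[y]$. Here I would compare two normalized solutions $y_1, y_2$ arising from nearby data. Since each $I_\tau y_i$ is supported in $[-\tau, 0]$ and comes from a $C^1$-prolongation of $\boldsymbol{0}$, the uniform local Lipschitz about $C^1$-prolongations applies along the solution histories and produces an inequality of type \eqref{eq:Lipschitzian} on the relevant tails; a Gronwall estimate then bounds $\|y_1 - y_2\|_\infty$, together with the companion derivative discrepancy, on compact time intervals in terms of the initial gap and the difference of slopes $v_0^{(1)} - v_0^{(2)}$. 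The uniform neighborhood hypothesis (iii) is what keeps this comparison inside $\dom(F)$ with constants independent of the base point. Sup-norm (and $C^1$) control is not yet convergence in $H$, however; the final and most delicate point is to feed these uniform estimates into the regulation of topology by $C^1$-prolongations from hypothesis (i), which is precisely the device converting $C^1$-smallness of the compactly supported increments into smallness in the topology of $H$. This yields joint continuity of $I_\tau[y]$, and with the decomposition, the continuity of $\mathcal{P}_F$ and hence the claimed maximal well-posedness for $C^1$-solutions.
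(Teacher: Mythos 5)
Your overall architecture coincides with the paper's: the same decomposition \eqref{eq:decomposition} with $S_{v_0}$, $v_0 = F(\sigma,\psi)$, in place of $S_0$; the second summand handled by continuity of $F$ composed with the parametrized semiflow; and the regulation by $C^1$-prolongations used at the end to convert $C^1$-smallness of the compactly supported increment $I_\tau[y_1 - y_2]$ into smallness in $H$, uniformly in $\tau$. Where you differ is in how the continuity of the normalized summand is extracted. The paper (Theorem~\ref{thm:uniform contraction on C^1-prolongations}) shows that the family $\bigl( \mathcal{S}^1_{\sigma,\psi} \bigr)_{(\sigma,\psi) \in W}$ is a jointly continuous uniform contraction on $\varGamma_{0,\boldsymbol{0}}^1(T,\delta,0)$ and then quotes the uniform contraction theorem (Theorem~\ref{thm:uniform contraction thm}) to get continuous dependence of the fixed point $\eta(\cdot\,;\sigma,\psi)$ in the $\rho^1$-metric; you propose a direct Gronwall comparison of $y_1, y_2$. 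These are essentially the same estimate in different packaging (the contraction inequality $d(x(\lambda), x(\lambda_0)) \le (1-c)^{-1} d(f_\lambda(x(\lambda_0)), f_{\lambda_0}(x(\lambda_0)))$ plays the role of Gronwall), and in both cases the error term measuring the change of base point is exactly what Proposition~\ref{prop:continuity about C^1-prolongations} controls, using the continuity of $(t,v,\phi) \mapsto S_v(t)\phi$. Your route is fine; the paper's buys a cleaner separation between the analytic input (uniform contraction) and the continuity bookkeeping.

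Two points are asserted rather than proved and deserve attention. First, you state that $\mathcal{P}_F$ is defined "on an open domain"; openness of $\dom(\mathcal{P}_F)$ (equivalently, lower semi-continuity of the escape time $T_F$) does not follow from pointwise maximal existence --- it requires a $T > 0$ and a neighborhood $W$ of each base point with $[0,T] \times W \subset \dom(\mathcal{P}_F)$, i.e.\ a uniform-in-base-point existence time. This is exactly what the uniform contraction (or your uniform Gronwall setup) delivers, but it must be said. Second, your Gronwall estimate only lives on a short interval $[0,T]$ on which both normalized solutions remain in the rectangle $\varLambda^1_{\sigma,\psi}(T,\delta;F)$; to conclude continuity of $\mathcal{P}_F$ on all of its (open) domain, and not just for small $\tau$, you need the local-to-global propagation step, which the paper obtains from H\'{a}jek's theorem via Corollary~\ref{cor:continuity of maximal processes} (continuity of orbits plus local joint continuity imply continuity of the maximal process and openness of its domain). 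Without invoking that result or reproving it, the argument only establishes continuity of $\mathcal{P}_F(\tau,\cdot)$ for $\tau$ in a small initial interval.
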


Theorem~\ref{thm:maximal WP, C^1} can be considered as
a generalization of Theorem~\ref{thm:maximal WP},
however, the continuity of the semiflow
	$\mathbb{R}_+ \times H \ni (t, \phi) \mapsto S_0(t)\phi \in H$
in Theorem~\ref{thm:maximal WP} is replaced with
that of the parametrized semiflow
	$\mathbb{R}_+ \times E \times H \ni (t, v, \phi) \mapsto S_v(t)\phi \in H$
in Theorem~\ref{thm:maximal WP, C^1}.

Another result about the continuity of solution process $\mathcal{P}_F$ is obtained when $E = \mathbb{R}^n$.

\begin{Theorem}\label{thm:maximal WP without uniform Lip}
Let $E = \mathbb{R}^n$.
Suppose that
(i) $H$ is prolongable and regulated by prolongations,
(ii) $F$ is continuous, and
(iii) $\dom(F)$ is a uniform neighborhood by prolongations of $(t_0, \phi_0)$.
If $F$ is locally Lipschitzian about prolongations, and if the semiflow
	\begin{equation*}
		\mathbb{R}_+ \times H \ni (t, \phi) \mapsto S_0(t)\phi \in H
	\end{equation*}
is continuous, then IVP~\eqref{eq:IVP} is maximally well-posed.
\end{Theorem}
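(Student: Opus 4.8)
The plan is to follow the decomposition strategy behind Theorem~\ref{thm:maximal WP}, but to replace the uniform-Lipschitz contraction estimate by a compactness argument that is available precisely because $E = \mathbb{R}^n$. First I would record that the hypotheses propagate to the $C^1$-setting: prolongability gives $C^1$-prolongability, regulation by prolongations gives regulation by $C^1$-prolongations, a uniform neighborhood by prolongations is one by $C^1$-prolongations, and, by Proposition~\ref{prop:comparison of Lip about prolongations}, local Lipschitz about prolongations gives local Lipschitz about $C^1$-prolongations. Hence Theorem~\ref{thm:existence and uniqueness} applies at each $(t_0, \phi_0) \in \dom(F)$ and yields a unique local solution (which is automatically $C^1$ in forward time since $F$ is continuous). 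Using that $F$ is continuous and $E = \mathbb{R}^n$, so that $F$ is bounded on a neighborhood of each base point, together with the uniform neighborhood by prolongations hypothesis, I would upgrade this to a uniform lower bound $T > 0$ on the length of the existence interval, valid for all initial data in some neighborhood of a given base point.

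Patching these local solutions by uniqueness, I would obtain for each $(t_0, \phi_0)$ a unique maximal solution $x_F(\cdot; t_0, \phi_0)$ on a maximal interval $[t_0, t_0 + T_{\max})$, and then prove the escape property: if $T_{\max} < \infty$ then $t \mapsto (t, I_t x_F)$ eventually leaves every compact subset of $\dom(F)$. Finite dimensionality enters again here: were the trajectory to remain in a compact subset of $\dom(F)$, then $F$ along it would be bounded, so $\dot{x}_F$ would be bounded and $x_F$ would extend continuously to $t_0 + T_{\max}$ with history still in $\dom(F)$; the uniform existence time from the previous step would then permit continuation beyond $T_{\max}$, contradicting maximality. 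This identifies the domain of the solution process $\mathcal{P}_F$ as an open set and reduces the theorem to the continuity of $\mathcal{P}_F$ on that domain.

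For the continuity I would invoke Decomposition~\eqref{eq:decomposition},
	\begin{equation*}
		\mathcal{P}_F(\tau, t_0, \phi_0) = I_\tau[y(\cdot; t_0, \phi_0)] + S_0(\tau)\phi_0,
	\end{equation*}
whose second summand is continuous by the assumed continuity of $(t, \phi) \mapsto S_0(t)\phi$, and whose first summand is carried by the normalized solution $y$, a prolongation of the trivial history $\boldsymbol{0}$ supported in forward time. Arguing by sequences, given $(\tau_k, t_0^{(k)}, \phi_0^{(k)}) \to (\tau_*, t_0^*, \phi_0^*)$ in the domain of $\mathcal{P}_F$ and writing $x^* := x_F(\cdot; t_0^*, \phi_0^*)$, the uniform existence time together with continuation along the reference solution guarantees that the solutions $x^{(k)}$ exist up to $t_0^{(k)} + \tau_k$ for large $k$, while local boundedness of $F$ makes the family $\{x^{(k)}\}$ uniformly bounded and uniformly Lipschitz on the relevant compact forward interval. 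By the Arzela--Ascoli theorem (valid since $E = \mathbb{R}^n$) I would extract a subsequence converging uniformly on this interval to some $\tilde{x}$; continuity of $F$ and regulation by prolongations---used to pass from uniform convergence of the forward segments to convergence of the histories $I_t x^{(k)}$ in $H$, the infinite-past part being controlled by continuity of $S_0$---show that $\tilde{x}$ solves the integral equation with initial datum $\phi_0^*$, whence $\tilde{x} = x^*$ by uniqueness. Since every subsequence admits a further subsequence with this same limit, the whole sequence converges; regulation by prolongations then converts the forward convergence of the $y^{(k)}$ into convergence of $I_{\tau_k}[y^{(k)}]$ in $H$, giving the continuity of $\mathcal{P}_F$.

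I expect the main obstacle to be exactly this continuity step. In the general Banach-space Theorems~\ref{thm:maximal WP} and \ref{thm:maximal WP, C^1} the continuity of $\mathcal{P}_F$ is extracted from a \emph{uniform} Lipschitz estimate, which produces a data-dependent contraction, whereas here only a non-uniform local Lipschitz condition is assumed and only the unparametrized shift semiflow $S_0$---rather than the parametrized $S_v$---is available. The device replacing the uniform estimate is the compactness furnished by finite dimensionality: equicontinuity of the solution family from local boundedness of $F$, plus Arzela--Ascoli, yields convergent subsequences, and uniqueness pins the limit. The delicate points are ensuring that the existence times do not collapse along the approximating sequence (handled by the uniform neighborhood by prolongations hypothesis and the escape property) and the twofold use of regulation by prolongations---first to make $F$ continuous along the approximating solutions, and then to transfer the resulting forward convergence back into the topology of $H$.
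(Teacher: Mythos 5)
Your proposal is correct and follows essentially the same route as the paper: the paper proves this statement as the special case $\bar{H} = H$, $\bar{F} = F$ of Theorem~\ref{thm:maximal WP without uniform Lip, extension}, whose proof likewise combines a uniform local existence time obtained from compactness of the integral operator on the prolongation space (Proposition~\ref{prop:compactness of transformation on prolongation space, uniform}, i.e.\ your Arzel\`a--Ascoli step), continuity of the normalized solutions via the extract-a-convergent-subnet-and-invoke-uniqueness argument (Theorem~\ref{thm:continuity and uniqueness}), and the decomposition $\mathcal{P}_F(\tau, \sigma, \psi) = I_\tau[\eta(\cdot; \sigma, \psi)] + S_0(\tau)\psi$ with regulation by prolongations transferring the forward convergence into $H$. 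The only adjustments worth making are to run the limiting argument with nets rather than sequences, since $H$ is not assumed metrizable (the paper's Theorem~\ref{thm:continuity and uniqueness} does exactly this), and to note that your escape-property detour is harmless but unnecessary, because openness of $\dom(\mathcal{P}_F)$ already follows from the uniform existence time via the H\'ajek-type characterization in Corollary~\ref{cor:continuity of maximal processes}.
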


Theorem~\ref{thm:maximal WP without uniform Lip} is an extension of
the part (b) $\Rightarrow$ (a) in Theorem~\ref{thm:maximal WP}.
The differences from Theorem~\ref{thm:maximal WP} is that
$E$ is finite-dimensional, and
the local Lipschitz about prolongations for $F$ is sufficient for the maximal well-posedness of IVP~\eqref{eq:IVP}.
See also Theorem~\ref{thm:maximal WP without uniform Lip, extension},
which is a result containing Theorem~\ref{thm:maximal WP without uniform Lip}.

\vspace{0.2\baselineskip}

\textbf{$\bullet$ Maximal well-posedness for state-dependent DDEs:}
Applying Theorem~\ref{thm:maximal WP without uniform Lip, extension} to
a general class of state-dependent DDEs of the form~\eqref{eq:sdDDE},
we obtain the maximal well-posedness for such equations
(see Corollary~\ref{cor:maximal WP, state-dependent}).

\vspace{0.5\baselineskip}

This paper is organized as follows.
In Section~\ref{sec:formulation and notions},
we give a mathematical formulation of DDEs.
We also give notions about history spaces obtained by using ($C^1$-) prolongations, and
Lipschitz conditions about ($C^1$-) prolongations.
In Sections~\ref{sec:prolongations and history spaces} and \ref{sec:Lip conditions},
we investigate properties of the notions about history spaces and relationships between the Lipschitz conditions.
Section~\ref{sec:existence and uniqueness} is about the existence and uniqueness theorem,
and in Section~\ref{sec:mechanisms}, we find a mechanism for the maximal well-posedness of IVP~\eqref{eq:IVP}.
In Section~\ref{sec:state-dependent DDEs},
we finally apply the result about maximal well-posedness to state-dependent DDEs.
We have five appendixes about
Appendix~\ref{sec:maximal semiflows and processes}:
	definitions and continuity properties of maximal semiflows and maximal processes,
Appendix~\ref{ap:maximal sol}:
	fundamental properties of maximal solutions,
Appendix~\ref{sec:family of maps}:
	relationships between continuity and equi-continuity for families of maps,
Appendix~\ref{sec:Lip about prolongations for ODEs}:
	an equivalence of the usual Lipschitz condition and the Lipschitz about prolongations for ODEs,
and Appendix~\ref{sec:proofs}:
	proofs of some propositions and theorem in Sections~\ref{sec:existence and uniqueness} and \ref{sec:mechanisms}.

\subsubsection*{Notation for function spaces}

Let $J \subset \mathbb{R}$ be an interval and $X = (X, \|\cdot\|_X)$ be a Banach space.
Let $\Map(J, X)$ be the set of all maps from $J$ to $X$, which is a linear space with the linear operations for maps.

\vspace{0.2\baselineskip}

\textbf{$\bullet$ Spaces of continuous maps:}
The set of all continuous maps from $J$ to $X$ is denoted by $C(J, X)$,
which is a linear subspace of $\Map(J, X)$.
For $f \in C(J, X)$, let
	\begin{equation*}
		\|f\|_C
		:= \|f\|_\infty
		= \sup_{t \in J} \|f(t)\|_X.
	\end{equation*}
For a sub-interval $J_0 \subset J$, we write $\|f\|_{C(J_0)} := \|f|_{J_0}\|_C$.
Let $C_\mathrm{c}(J, X)$ be the set of all continuous maps from $J$ to $X$ with compact support.
Here the support of a map $f \colon J \to X$ is given by
	\begin{equation*}
		\supp(f) := \cls \{\mspace{2mu} t \in J : f(t) \ne 0 \mspace{2mu}\}
		\mspace{20mu} (\text{$\cls$ is the closure operator}).
	\end{equation*}
$C_\mathrm{c}(J, X)$ is a linear subspace of $C(J, X)$.
By definition, $C_\mathrm{c}(J, E) = C(J, E)$ when $J$ is compact.

\vspace{0.2\baselineskip}

\textbf{$\bullet$ Spaces of continuously differentiable maps:}
The set of all $C^1$-maps (i.e., all continuously differentiable maps) from $J$ to $X$ is denoted by $C^1(J, X)$,
which is a linear subspace of $C(J, X)$.
For $f \in C^1(J, X)$, let
	\begin{equation*}
		\|f\|_{C^1} := \|f\|_C + \|f'\|_C.
	\end{equation*}
For a sub-interval $J_0 \subset J$, we write $\|f\|_{C^1(J_0)} := \|f|_{J_0}\|_{C^1}$.
Let $C^1_\mathrm{c}(J, X)$ be the set of all $C^1$-maps from $J$ to $X$ with compact support.
By definition, $C^1_\mathrm{c}(J, X) = C_\mathrm{c}(J, X) \cap C^1(J, X)$.
Therefore, $C^1_\mathrm{c}(J, X) = C^1(J, X)$ holds when $J$ is compact.

\vspace{0.2\baselineskip}

\textbf{$\bullet$ Spaces of Lipschitz continuous maps:}
For a constant $L > 0$, $f \in \Map(J, X)$ is said to be \textit{$L$-Lipschitz continuous} if
	\begin{equation*}
		\|f(t_1) - f(t_2)\|_X \le L \cdot |t_1 - t_2| \mspace{20mu} (\forall t_1, t_2 \in J)
	\end{equation*}
holds.
For $f \in \Map(J, X)$, let
	\begin{equation*}
		\lip(f) := \inf \{\mspace{2mu} L > 0 : \text{$f$ is $L$-Lipschitz continuous} \mspace{2mu}\}.
	\end{equation*}
$f$ is said to be \textit{Lipschitz continuous} if $\lip(f) < \infty$.
Let $C^{0, 1}(J, X)$ denote the set of all Lipschitz continuous maps from $J$ to $X$,
which is a linear subspace of $C(J, X)$.
For $f \in C(J, X)$, let
	\begin{equation*}
		\|f\|_{C^{0, 1}} := \|f\|_C + \lip(f).
	\end{equation*}
Let $C^{0, 1}_\mathrm{loc}(J, X)$ denote the set of all locally Lipschitz continuous maps from $J$ to $X$,
which is a linear subspace of $C(J, X)$.
$C^1(J, X)$ is a linear subspace of $C^{0, 1}_\mathrm{loc}(J, X)$.
When $J$ is compact, $C^{0, 1}_\mathrm{loc}(J, X) = C^{0, 1}(J, X)$, and for all $f \in C^1(J, X)$,
	\begin{equation*}
		\lip(f) = \|f'\|_C.
	\end{equation*}

\paragraph{Terminologies for convergence}
Let $J \subset \mathbb{R}$ be an compact interval and $X = (X, \|\cdot\|_X)$ be a Banach space.
$\|\cdot\|_{C(J)}$-norm topology on $C(J, X)$ and $\|\cdot\|_{C^1(J)}$-norm topology on $C^1(J, X)$ are expressed by
the \textit{topology of uniform convergence} on $C(J, X)$ and
the \textit{topology of uniform $C^1$-convergence} on $C^1(J, X)$,
respectively.
Let $C(J, X)_\mathrm{u}$ be the Banach space $\bigl( C(J, X), \|\cdot\|_{C(J)} \bigr)$.

\paragraph{Notation}

For a Banach space $X = (X, \|\cdot\|_X)$,
the open (resp.\ closed) ball with the center $x \in X$ and the radius $r > 0$ is denoted by
$B_X(x; r)$ (resp.\ $\bar{B}_X(x; r)$):
	\begin{align*}
		B_X(x; r) := \{\mspace{2mu} y \in X : \|y - x\|_X < r \mspace{2mu}\}, \mspace{15mu}
		\bar{B}_X(x; r) := \{\mspace{2mu} y \in X : \|y - x\|_X \le r \mspace{2mu}\}.
	\end{align*}

\section{Formulation and notions}\label{sec:formulation and notions}

The purpose of this section is to introduce
\begin{itemize}
\item the formulation of delay differential equations (DDEs)
as retarded functional differential equations (RFDEs) with history spaces, and
\item various notions related to RFDEs with history spaces which will be used in this paper.
\end{itemize}

Let
	\begin{equation*}
		0 \in I \subset \mathbb{R}_- := (-\infty, 0]
	\end{equation*}
be an interval and $E = (E, \|\cdot\|_E)$ be a Banach space.
We interpret $I$ as the domain of definition of histories and call it a \textit{past interval}.
In this sense, $\mathbb{R}_-$ and $\Map(\mathbb{R}_-, E)$ stand for the whole past interval and the whole space of histories, respectively.

For an interval $J \subset \mathbb{R}$, we write
	\begin{equation*}
		J + I := \{\mspace{2mu} t + \theta : \text{$t \in J$, $\theta \in I$} \mspace{2mu}\}.
	\end{equation*}
Let $\gamma \colon \mathbb{R} \supset \dom(\gamma) \to E$ be a map where $\dom(\gamma)$ contains $J + I$.
For each $t \in J$, $I_t\gamma \in \Map(I, E)$ is defined by
	\begin{equation*}
		I_t\gamma(\theta) = \gamma(t + \theta).
	\end{equation*}
We call $I_t\gamma$ the \textit{history} of $\gamma$ at $t \in J$ with the past interval $I$.

We choose a subset $H \subset \Map(I, E)$ with properties that
\begin{enumerate}
\item[(i)] $H$ is a linear subspace of $\Map(I, E)$, and
\item[(ii)] the topology of $H$ is given so that the linear operations on $H$ are continuous with respect to that topology.
\end{enumerate}
Then $H$ becomes a linear topological space.
$H$ can be interpreted as a space of histories with a past interval $I$, and we call it a \textit{history space}.
Let $\boldsymbol{0} \colon I \to E$ be the map whose value is identically equal to $0 \in E$.
Then $\boldsymbol{0}$ belongs to $H$ which is the zero element of $H$.

\subsection{Retarded functional differential equations with history spaces}\label{subsec:RFDEs with history spaces}

We formulate RFDEs with history space $H$ as follows.

\begin{definition}
Let $H \subset \Map(I, E)$ be a history space and $F \colon \mathbb{R} \times H \supset \dom(F) \to E$ be a map.
We call a differential equation~\eqref{eq:RFDE}
	\begin{equation*}
		\dot{x}(t) = F(t, I_tx) \mspace{20mu} (\text{$t \in \mathbb{R}$, $x(t) \in E$})
	\end{equation*}
a \textit{retarded functional differential equation} (RFDE) with history space $H$.
We call $F$ the \textit{history functional} of \eqref{eq:RFDE}.
For a non-degenerate interval $J \subset \mathbb{R}$,
a map $x \colon J + I \to E$ is called a \textit{solution} of \eqref{eq:RFDE} if the following hold:
\begin{enumerate}
\item[(i)] $(t, I_tx) \in \dom(F)$ for all $t \in J$.
\item[(ii)] $x|_J \colon J \to E$ is differentiable, and
	\begin{equation*}
		(x|_J)'(t) = F(t, I_tx) \mspace{20mu} (\forall t \in J).
	\end{equation*}
\end{enumerate}
\end{definition}

We have a variant of notions of solutions.
When $x|_J$ is of class $C^1$ in the condition (ii), $x$ is called a \textit{$C^1$-solution} of \eqref{eq:RFDE}.

For each $0 \le r \le \infty$, let
	\begin{equation*}
		I^r :=
		\begin{cases}
			[-r, 0], & r < \infty, \\
			\mathbb{R}_-, & r = \infty
		\end{cases}
	\end{equation*}
throughout this paper.
Then $I^0 = \{0\}$ is the degenerate interval.

The following are comments about RFDEs with history space $H$.

\begin{itemize}
\item We note that the case $I = I^0$ corresponds to ordinary differential equations (ODEs)
under the identification
	\begin{equation*}
		\Map(I^0, E) = C(I^0, E) = E.
	\end{equation*}
For this case, the history space is chosen as $H = C(I^0, E)_\mathrm{u}$.

\item When $I = I^r$ for $0 < r < \infty$, \eqref{eq:RFDE} is a functional differential equations (FDEs) with \textit{finite retardation}.
Usually, $H$ is chosen as
	\begin{equation*}
		H = C([-r, 0], E)_\mathrm{u}
	\end{equation*}
which is the Banach space of continuous maps from $[-r, 0]$ to $E$ with the topology of uniform convergence.
We refer the reader to
Hale~\cite{Hale 1977},
Hale \& Verduyn Lunel~\cite{Hale--Lunel 1993},
and Diekmann, van Gils, Verduyn Lunel, \& Walther~\cite{Diekmann--vanGils--Lunel--Walther 1995}
as general references for the theory of RFDEs with history space $H = C([-r, 0], \mathbb{R}^n)_\mathrm{u}$.

\item When $I = I^\infty$, \eqref{eq:RFDE} is an FDE with \textit{infinite retardation}.
By the character of the non-compactness of $I^\infty$,
there are various choices of history spaces depending on differential equations.
We refer the reader to Hino, Murakami, \& Naito~\cite{Hino--Murakami--Naito 1991}
as a general reference of the theory of FDEs with infinite retardation.
\end{itemize}

The initial value problem of \eqref{eq:RFDE} is formulated as follows.

\begin{definition}
Let $H \subset \Map(I, E)$ be a history space and $F \colon \mathbb{R} \times H \supset \dom(F) \to E$ be a map.
We consider the family of systems of equations~\eqref{eq:IVP}
	\begin{equation*}
		\left\{
		\begin{alignedat}{2}
			\dot{x}(t) &= F(t, I_tx), & \mspace{20mu} & t \ge t_0, \\
			I_{t_0}x &= \phi_0, & & (t_0, \phi_0) \in \dom(F)
		\end{alignedat}
		\right.
	\end{equation*}
with a parameter $(t_0, \phi_0)$.
This is called the \textit{initial value problem} (IVP) of \eqref{eq:RFDE}.
For a specific $(t_0, \phi_0)$, the corresponding system will be denoted by $\eqref{eq:IVP}_{t_0, \phi_0}$ in this paper.
A solution $x \colon J + I \to E$ of \eqref{eq:RFDE} is called a solution of $\eqref{eq:IVP}_{t_0, \phi_0}$ if
\begin{enumerate}
\item[(i)] $J$ is a left-closed interval with the left end point $t_0$, and
\item[(ii)] $I_{t_0}x = \phi_0$.
\end{enumerate}
We call a solution $x \colon J + I \to E$ of $\eqref{eq:IVP}_{t_0, \phi_0}$ a \textit{$C^1$-solution}
if $x|_J$ is of class $C^1$.
\end{definition}

For an interval $J \subset \mathbb{R}$, let
	\begin{equation*}
		|J| \in [0, \infty]
	\end{equation*}
denote its length.
Then for a solution $x \colon J + I \to E$ of $\eqref{eq:IVP}_{t_0, \phi_0}$,
$J$ is expressed as follows:
\begin{itemize}
\item Case 1: $|J| < \infty$. Then $J = [t_0, t_0 + |J|]$ or $J = [t_0, t_0 + |J|)$.
\item Case 2: $|J| = \infty$. Then $J = [t_0, t_0 + |J|) = [t_0, +\infty)$.
\end{itemize}

\subsection{Maximal well-posedness}\label{subsec:maximal well-posedness}

In this paper, we will use the following terminologies for IVP~\eqref{eq:IVP}:
\begin{itemize}
\item We say that \eqref{eq:IVP} satisfies the \textit{local existence} for $C^1$-solutions
if for each $(t_0, \phi_0) \in \dom(F)$, $\eqref{eq:IVP}_{t_0, \phi_0}$ has a $C^1$-solution.
\item We say that \eqref{eq:IVP} satisfies the \textit{local uniqueness} for $C^1$-solutions
if the following statement holds for every $(t_0, \phi_0) \in \dom(F)$:
For any $C^1$-solutions $x_i \colon J_i + I \to E$ $(i = 1, 2)$ of $\eqref{eq:IVP}_{t_0, \phi_0}$,
there exists $T > 0$ such that $x_1|_{[t_0, t_0 + T]} = x_2|_{[t_0, t_0 + T]}$.
\end{itemize}

When $\eqref{eq:IVP}_{t_0, \phi_0}$ has the unique maximal $C^1$-solution
	\begin{equation*}
		x_F(\cdot; t_0, \phi_0) \colon [t_0, t_0 + T_F(t_0, \phi_0)) + I \to E
		\mspace{20mu}
		(0 < T_F(t_0, \phi_0) \le \infty)
	\end{equation*}
for every $(t_0, \phi_0) \in \dom(F)$,
we define the map $\mathcal{P}_F \colon \mathbb{R}_+ \times \dom(F) \supset \dom(\mathcal{P}_F) \to H$ by
	\begin{equation}\label{eq:sol process}
		\begin{aligned}
			\dom(\mathcal{P}_F)
			&= \bigcup_{(t_0, \phi_0) \in \dom(F)} [0, T_F(t_0, \phi_0)) \times \{(t_0, \phi_0)\}, \\
			\mathcal{P}_F(\tau, t_0, \phi_0)
			&= I_{t_0 + \tau} [x_F(\cdot; t_0, \phi_0)].
		\end{aligned}
	\end{equation}
We call $\mathcal{P}_F$ the \textit{solution process} generated by RFDE~\eqref{eq:RFDE}.
See Appendix~\ref{ap:maximal sol} for the notion of maximal $C^1$-solutions.

\begin{definition}
We say that IVP~\eqref{eq:IVP} is \textit{maximally well-posed} for $C^1$-solutions
if both of the following conditions are satisfied:
\begin{enumerate}
\item[(i)] For every $(t_0, \phi_0) \in \dom(F)$, $\eqref{eq:IVP}_{t_0, \phi_0}$ has the unique maximal $C^1$-solution.
\item[(ii)] The solution process $\mathcal{P}_F$ is a continuous maximal process in $\dom(F)$.
\end{enumerate}
\end{definition}

See Appendix~\ref{sec:maximal semiflows and processes} for the notion of maximal processes and those continuity.

\subsection{Translations on history spaces}

\subsubsection{Family of transformations determined by trivial ODEs}

In the theory which will be developed in this paper, the family of IVPs
	\begin{equation}\label{eq:IVP, trivial RFDE}
		\left\{
		\begin{alignedat}{2}
			\dot{x}(t) &= v, & \mspace{20mu} & t \ge 0, \\
			I_0x &= \phi_0, & & \phi_0 \in H
		\end{alignedat}
		\right.
	\end{equation}
with a parameter $v \in E$ plays an important role.
The unique global solution for a specific $(v, \phi_0) \in E \times H$ is equal to $\phi_0^{\wedge v}$:
for any $(\psi, v) \in \Map(I, E) \times E$,
	\begin{equation*}
		\psi^{\wedge v} \colon \mathbb{R}_+ + I \to E
			\mspace{20mu}
		(\mathbb{R}_+ := [0, +\infty))
	\end{equation*}
is defined by
	\begin{equation*}
		\psi^{\wedge v}(t) =
		\begin{cases}
			\psi(t), & t \in I, \\
			\psi(0) + tv, & t \in \mathbb{R}_+.
		\end{cases}
	\end{equation*}
We write $\bar{\psi} := \psi^{\wedge 0}$, which is a prolongation of $\psi$ by the constant $\psi(0)$.

For each $t \in \mathbb{R}_+$, let
	\begin{equation*}
		S(t) \colon E \times \Map(I, E) \to \Map(I, E)
	\end{equation*}
be the transformation defined by
	\begin{equation*}
		S(t)(v, \phi)
		:= S_v(t)\phi
		:= I_t[\phi^{\wedge v}].
	\end{equation*}
We note that
	\begin{equation*}
		S_v(t) \colon \Map(I, E) \to \Map(I, E)
	\end{equation*}
is not linear when $v \ne 0$.
However, $S(t) \colon E \times \Map(I, E) \to \Map(I, E)$ is linear because
	\begin{align*}
		(\phi + \psi)^{\wedge (v + w)}
		&=
		\begin{cases}
			\phi(t) + \psi(t), & t \in I, \\
			\phi(0) + \psi(0) + t(v + w), & t \in \mathbb{R}_+
		\end{cases} \\
		&= \phi^{\wedge v} + \psi^{\wedge w}.
	\end{align*}

\subsubsection{Translations}
Let $(\sigma, \psi) \in \mathbb{R} \times \Map(I, E)$ and $v \in E$.
We define the map
	\begin{equation*}
		\tau_{\sigma, \psi}^v \colon \mathbb{R}_+ \times \Map(I, E) \to [\sigma, +\infty) \times \Map(I, E)
	\end{equation*}
by
	\begin{equation*}
		\tau_{\sigma, \psi}^v(t, \phi) = \bigl( \sigma + t, I_t[\psi^{\wedge v}] + \phi \bigr).
	\end{equation*}
The inverse $\bigl( \tau_{\sigma, \psi}^v \bigr)^{-1} \colon [\sigma, +\infty) \times \Map(I, E) \to \mathbb{R}_+ \times \Map(I, E)$
is given by
	\begin{equation*}
		\bigl( \tau_{\sigma, \psi}^v \bigr)^{-1} \bigl( \tilde{t}, \tilde{\phi} \bigr)
		= \bigl( \tilde{t} - \sigma, \tilde{\phi} - I_{\tilde{t} - \sigma}[\psi^{\wedge v}] \bigr),
	\end{equation*}
which is obtained by solving $\tau_{\sigma, \psi}^v(t, \phi) = \bigl( \tilde{t}, \tilde{\phi} \bigr)$.

The map $\tau_{\sigma, \psi}^v$ appears as a translation.
In fact, if $x \colon [t_0, t_0 + T] + I \to E$ is a solution of $\eqref{eq:IVP}_{t_0, \phi_0}$
for a given $(t_0, \phi_0) \in \dom(F)$,
the map $y \colon [0, T] + I \to E$ defined by
	\begin{equation*}
		y(s) = x(t_0 + s) - \phi_0^{\wedge F(t_0, \phi_0)}(s)
	\end{equation*}
satisfies the following: $I_0y = \boldsymbol{0}$, and for all $s \in [0, T]$
	\begin{align*}
		y'(s)
		&= F \Bigl( t_0 + s, I_s \Bigl[ \phi_0^{\wedge F(t_0, \phi_0)} \Bigr] + I_sy \Bigr) \\
		&= F \circ \tau_{t_0, \phi_0}^{F(t_0, \phi_0)}(s, I_sy).
	\end{align*}
Then the system
	\begin{equation*}
		\left\{
		\begin{alignedat}{2}
			y'(s) &= F \circ \tau_{t_0, \phi_0}^{F(t_0, \phi_0)}(s, I_sy), & \mspace{20mu} & s \ge 0, \\
			I_0y &= \boldsymbol{0}
		\end{alignedat}
		\right.
	\end{equation*}
is considered as the normalization of $\eqref{eq:IVP}_{t_0, \phi_0}$.

When $I = I^0$,
	\begin{equation*}
		\tau_{\sigma, \psi}^v(t, \phi)
		= \bigl( \sigma + t, I^0_t[\psi^{\wedge v}] + \phi \bigr)
		= (\sigma, \psi) + (t, tv + \phi).
	\end{equation*}
Here the identification $\psi(0) = \psi$ is used.
Therefore, $\tau_{\sigma, \psi}^0$ equals to the usual translation in $\mathbb{R} \times E$.

\subsection{Prolongations, and prolongation spaces}\label{subsec:prolongations}

\subsubsection{Prolongations}

Let $(\sigma, \psi) \in \mathbb{R} \times \Map(I, E)$.
We call a map $\gamma \colon J + I \to E$ a \textit{prolongation} of $(\sigma, \psi)$ if
\begin{enumerate}
\item[(i)] $J$ is a left-closed interval with the left end point $\sigma$,
\item[(ii)] $I_{\sigma}\gamma = \psi$, and
\item[(iii)] $\gamma|_J$ is continuous.
\end{enumerate}
Furthermore, when $\gamma|_J$ is of class $C^k$ ($k \in \mathbb{Z}_{\ge 0}$),
we call $\gamma$ a \textit{$C^k$-prolongation} of $(\sigma, \psi)$.
We include the degenerate case $J = \{\sigma\}$ and adopt a convention that
any prolongation $\gamma \colon \sigma + I \to E$ of $(\sigma, \psi)$ is a $C^k$-prolongation.
When $\sigma = 0$, we simply call $\gamma$ a $C^k$-prolongation of $\psi$.

This notion of prolongations has appeared in \cite{Hale 1969}.
In this paper, the prolongations and the $C^1$-prolongations are only used.
When a $C^1$-prolongation $\gamma$ of $\boldsymbol{0}$ satisfies $\gamma'(0) = 0$,
$\gamma$ is called a $C^1$-prolongation of $\boldsymbol{0}$ \textit{with $0$-derivative}.

\begin{remark}
Let $\gamma_1, \gamma_2 \colon [\sigma, \sigma + T] + I \to E$ be $C^1$-prolongations of $(\sigma, \psi)$.
Then for all $t \in [\sigma, \sigma + T]$, we have
	\begin{equation*}
		\gamma_i(t) = \psi(0) + \int_\sigma^t \gamma_i'(u) \mspace{2mu} \mathrm{d}u
			\mspace{20mu}
		(i = 1, 2).
	\end{equation*}
Therefore,
	\begin{align*}
		\sup_{t \in [\sigma, \sigma + T]} \|\gamma_1(t) - \gamma_2(t)\|_E
		&\le \int_\sigma^{\sigma + T} \|\gamma_1'(u) - \gamma_2'(u)\|_E \mspace{2mu} \mathrm{d}u \\
		&\le T \cdot \sup_{t \in [\sigma, \sigma + T]} \|\gamma_1'(t) - \gamma_2'(t)\|_E.
	\end{align*}
\end{remark}

\subsubsection{Prolongation spaces, and those topologies}

Let $(\sigma, \psi) \in \mathbb{R} \times \Map(I, E)$.
We consider the following spaces of prolongations and $C^1$-prolongations, respectively:
Let $T \ge 0$, $0 \le \delta \le \infty$, and $v \in E$.
\begin{itemize}
\item We define $\varGamma_{\sigma, \psi}(T, \delta)$
as the set of all prolongations $\gamma \colon [\sigma, \sigma + T] + I \to E$ of $(\sigma, \psi)$ satisfying
	\begin{equation*}
		\bigl\| \gamma(\sigma + \cdot) - \bar{\psi}(\cdot) \bigr\|_{C[0, T]} \le \delta.
	\end{equation*}
\item We define $\varGamma_{\sigma, \psi}^1(T, \delta, v)$ as the set of all $C^1$-prolongations
$\gamma \colon [\sigma, \sigma + T] + I \to E$ of $(\sigma, \psi)$ satisfying
	\begin{equation*}
		\bigl\| \gamma(\sigma + \cdot) - \psi^{\wedge v}(\cdot) \bigr\|_{C^1[0, T]} \le \delta,
		\mspace{15mu} \gamma'(\sigma) = v.
	\end{equation*}
Here $\gamma'(\sigma)$ means the right-hand derivative of $\gamma$ at $\sigma$.
\end{itemize}
The set $\varGamma_{\sigma, \psi}(T, \delta)$ has appeared and was used in \cite{Kappel--Schappacher 1980}.

For the above spaces of prolongations,
we consider metrics
	\begin{gather*}
		\rho^0 \colon \varGamma_{\sigma, \psi}(T, \delta) \times \varGamma_{\sigma, \psi}(T, \delta) \to \mathbb{R}_+, \\
		\rho^1 \colon \varGamma_{\sigma, \psi}^1(T, \delta, v) \times \varGamma_{\sigma, \psi}^1(T, \delta, v) \to \mathbb{R}_+
	\end{gather*}
defined by
	\begin{equation*}
		\rho^0(\gamma_1, \gamma_2) = \|\gamma_1 - \gamma_2\|_\infty,
			\mspace{15mu}
		\rho^1(\gamma_1, \gamma_2) = \|\gamma_1 - \gamma_2\|_{C^1[0, T]},
	\end{equation*}
respectively.

\begin{remark}\label{rmk:embedding of prolongations}
For $0 < T' \le T$, one can interpret
	\begin{equation*}
		\varGamma_{\sigma, \psi}(T', \delta) \subset \varGamma_{\sigma, \psi}(T, \delta)
	\end{equation*}
by considering the prolongation by constant of each element of $\varGamma_{\sigma, \psi}(T', \delta)$.
\end{remark}

The following are easy remarks.
\begin{itemize}
\item When $v = 0$, we have
	\begin{equation*}
		\varGamma_{\sigma, \psi}^1(T, \delta, 0) \subset \varGamma_{\sigma, \psi}(T, \delta)
	\end{equation*}
because
	$\bigl\| \gamma(\sigma + \cdot) - \bar{\psi}(\cdot) \bigr\|_{C[0, T]}
	\le \bigl\| \gamma(\sigma + \cdot) - \psi^{\wedge 0}(\cdot) \bigr\|_{C^1[0, T]}$.
\item When $T = 0$,
	\begin{equation*}
		\varGamma_{\sigma, \psi}(0, \delta)
		= \varGamma_{\sigma, \psi}^1(0, \delta, v)
		= \{\psi(\cdot - \sigma) \colon \sigma + I \to E\}
	\end{equation*}
for any $0 \le \delta \le \infty$ and $v \in E$.
\item When $\delta = 0$,
	\begin{align*}
		\varGamma_{\sigma, \psi}(T, 0)
		&= \left\{ \bar{\psi}(\cdot - \sigma) \colon [\sigma, \sigma + T] + I \to E \right\}, \\
		\varGamma_{\sigma, \psi}^1(T, 0, v)
		&= \left\{ \psi^{\wedge v}(\cdot - \sigma) \colon [\sigma, \sigma + T] + I \to E \right\}
	\end{align*}
for any $T \ge 0$ and $v \in E$.
\end{itemize}

\subsubsection{Transformations between prolongation spaces}

Let $(\sigma, \psi) \in \mathbb{R} \times \Map(I, E)$ and $v \in E$.
\begin{itemize}
\item We consider the transformation $A_{\sigma, \psi}^v$
for any prolongation $\beta \colon J_0 + I \to E$ of $\boldsymbol{0}$ defined by
	\begin{equation*}
		A_{\sigma, \psi}^v\beta(t)
		= \psi^{\wedge v}(t - \sigma) + \beta(t - \sigma)
		\mspace{20mu} (t \in (\sigma + J_0) + I).
	\end{equation*}
Then
	\begin{equation*}
		A_{\sigma, \psi}^v\beta \colon (\sigma + J_0) + I \to E
	\end{equation*}
is a prolongation of $(\sigma, \psi)$.
\item We also consider the transformation $N_{\sigma, \psi}^v$
for any prolongation $\gamma \colon J_\sigma + I \to E$ of $(\sigma, \psi)$ defined by
	\begin{equation*}
		N_{\sigma, \psi}^v\gamma(s)
		= \gamma(\sigma + s) - \psi^{\wedge v}(s) \mspace{20mu} (s \in (-\sigma + J_\sigma) + I).
	\end{equation*}
Then $N_{\sigma, \psi}^v$ is the inverse transformation of $A_{\sigma, \psi}^v$, and
	\begin{equation*}
		N_{\sigma, \psi}^v\gamma \colon (-\sigma + J_\sigma) + I \to E
	\end{equation*}
is a prolongation of $\boldsymbol{0}$.
\end{itemize}
These are transformations about the addition and the normalization, respectively.

\subsection{Prolongabilities, and regulation of topology by prolongations}\label{subsec:prolongability and regulation}

\subsubsection{Closedness under prolongations, and prolongabilities}

\begin{definition}\label{dfn:prolongabilities}
Let $H \subset \Map(I, E)$ be a history space.
\begin{itemize}
\item We say that $\psi \in \Map(I, E)$ is \textit{closed under $C^k$-prolongations} ($k \in \mathbb{Z}_{\ge 0}$) in $H$
if for every $C^k$-prolongation $\gamma \colon \mathbb{R}_+ + I \to E$ of $\psi$,
	\begin{equation*}
		I_t\gamma \in H \mspace{20mu} (\forall t \in \mathbb{R}_+)
	\end{equation*}
holds.
When every $\psi \in H$ is closed under $C^k$-prolongations in $H$,
we say that $H$ is closed under $C^k$-prolongations.
\item We say that $H$ is \textit{$C^k$-prolongable} if
	\begin{enumerate}
	\item[(i)] $H$ is closed under $C^k$-prolongations, and
	\item[(ii)] for every $C^k$-prolongation $\gamma \colon \mathbb{R}_+ + I \to E$ of each $\psi \in H$,
	the curve
		\begin{equation*}
			\mathbb{R}_+ \ni t \mapsto I_t\gamma \in H
		\end{equation*}
	is continuous.
	\end{enumerate}
\end{itemize}
\end{definition}

The notion of prolongability appeared in \cite{Hale 1969} for the case of $I = I^\infty$.

\begin{remark}
By the above definition, we have the following properties:
	\begin{equation*}
		\text{the closedness under prolongations $\Longrightarrow$ the closedness under $C^1$-prolongations},
	\end{equation*}
and
	\begin{equation*}
		\{\text{Prolongable history spaces}\} \subset \{\text{$C^1$-prolongable history spaces}\}.
	\end{equation*}
\end{remark}

\subsubsection{Rectangles by prolongations}

Let $(\sigma, \psi) \in \mathbb{R} \times \Map(I, E)$, $T \ge 0$, $0 \le \delta \le \infty$, and $v \in E$.
\begin{itemize}
\item We define a subset $\varLambda_{\sigma, \psi}(T, \delta)$ by
	\begin{equation*}
		\varLambda_{\sigma, \psi}(T, \delta)
		= \bigcup_{\tau \in [0, T]}
			\{ \mspace{2mu}
			(\sigma + \tau, I_{\sigma + \tau}\gamma) : \gamma \in \varGamma_{\sigma, \psi}(\tau, \delta) 
			\mspace{2mu}\}.
	\end{equation*}
We call $\varLambda_{\sigma, \psi}(T, \delta)$ a \textit{rectangle by prolongations}.
\item We define a subset $\varLambda^1_{\sigma, \psi}(T, \delta, v)$ by
	\begin{equation*}
		\varLambda^1_{\sigma, \psi}(T, \delta, v)
		= \bigcup_{\tau \in [0, T]}
			\bigl\{ \mspace{2mu}
			(\sigma + \tau, I_{\sigma + \tau}\gamma) : \gamma \in \varGamma_{\sigma, \psi}^1(\tau, \delta, v) 
			\mspace{2mu} \bigr\}.
	\end{equation*}
We call $\varLambda_{\sigma, \psi}^1(T, \delta, v)$ a \textit{rectangle by $C^1$-prolongations}.
\end{itemize}

\begin{remark}\label{rmk:comparison of rectangle by prolongations}
For all $0 \le T \le T'$ and all $0 \le \delta \le \delta' \le \infty$,
	\begin{equation*}
		\varLambda_{\sigma, \psi}(T, \delta) \subset \varLambda_{\sigma, \psi}(T', \delta'), \mspace{15mu}
		\varLambda_{\sigma, \psi}^1(T, \delta, v) \subset \varLambda_{\sigma, \psi}^1(T', \delta', v) .
	\end{equation*}
Since $\varGamma^1_{\sigma, \psi}(T, \delta, 0) \subset \varGamma_{\sigma, \psi}(T, \delta)$,
	\begin{equation*}
		\varLambda^1_{\sigma, \psi}(T, \delta, 0) \subset \varLambda_{\sigma, \psi}(T, \delta)
	\end{equation*}
holds.
\end{remark}

Rectangles by prolongations are ``thin'' subsets of $\mathbb{R} \times H$.
The following example is illustrative.

\begin{example}
Let $H = C([-1, 0], \mathbb{R})_\mathrm{u}$.
For each $\delta > 0$, let $\phi_\delta$ be the map whose value identically equals to $\delta$.
Then for any $T \in (0, 1)$ and any $\delta > 0$,
$(T, \phi_\delta) \not\in \varLambda_{0, \boldsymbol{0}}(T, \delta)$.
\end{example}

As this example shows,
the thinness of rectangles by prolongations originates in the property that
the history $I_t\gamma$ of a prolongation $\gamma$ of $\psi$ has an almost same information of $\psi$
when $t > 0$ is very small.
See also Proposition~\ref{prop:rectangle by prolongation, cpt support}.

\subsubsection{Regulation of topology by prolongations}

\begin{definition}[cf.\ \cite{Nishiguchi 2017}]
\label{dfn:regulation}
Let $H \subset \Map(I, E)$ be a history space.
\begin{itemize}
\item We say that $H$ is \textit{regulated by prolongations}
if for every $T > 0$ and every neighborhood $N$ of $\boldsymbol{0}$ in $H$,
there exists $\delta > 0$ such that
	\begin{equation*}
		\varLambda_{0, \boldsymbol{0}}(T, \delta) \subset [0, T] \times N.
	\end{equation*}
\item We say that $H$ is \textit{regulated by $C^1$-prolongations}
if for every $T > 0$ and every neighborhood $N$ of $\boldsymbol{0}$ in $H$,
there exists $\delta > 0$ such that
	\begin{equation*}
		\varLambda_{0, \boldsymbol{0}}^1(T, \delta, 0) \subset [0, T] \times N.
	\end{equation*}
\end{itemize}
\end{definition}

\begin{remark}\label{rmk:comparison of regulations by prolongations}
In view of $\varLambda^1_{\sigma, \psi}(T, \delta, 0) \subset \varLambda_{\sigma, \psi}(T, \delta)$,
the relationship
	\begin{equation*}
		\text{Regulation by prolongations $\Longrightarrow$ Regulation by $C^1$-prolongations}
	\end{equation*}
holds.
\end{remark}

\subsubsection{Neighborhoods by prolongations}

\begin{definition}\label{dfn:nbd by prolongations}
Suppose that $H$ is closed under prolongations.
Let $W$ be a subset of $\mathbb{R} \times H$ and $(\sigma_0, \psi_0) \in \mathbb{R} \times H$.
We say that $W$ is a \textit{neighborhood by prolongations}
of $(\sigma_0, \psi_0)$
if there exist $T, \delta > 0$ such that
	\begin{equation*}
		\varLambda_{\sigma_0, \psi_0}(T, \delta) \subset W
	\end{equation*}
holds.
We say that $W$ is a \textit{uniform neighborhood by prolongations} of $(\sigma_0, \psi_0)$
if there exist $T, \delta > 0$ and a neighborhood $W_0$ of $(\sigma_0, \psi_0)$ in $W$ such that
	\begin{equation*}
		\bigcup_{(\sigma, \psi) \in W_0} \varLambda_{\sigma, \psi}(T, \delta) \subset W
	\end{equation*}
holds.
\end{definition}

\begin{definition}\label{dfn:nbd by C^1-prolongations}
Suppose that $H$ is closed under $C^1$-prolongations.
Let $W$ be a subset of $\mathbb{R} \times H$ and $(\sigma_0, \psi_0) \in \mathbb{R} \times H$.
We say that $W$ is a \textit{neighborhood by $C^1$-prolongations} of $(\sigma_0, \psi_0)$
if for every $v \in E$, there exist $T, \delta > 0$ such that
	\begin{equation*}
		\varLambda_{\sigma_0, \psi_0}^1(T, \delta, v) \subset W
	\end{equation*}
holds.
We say that $W$ is a \textit{uniform neighborhood by $C^1$-prolongations} of $(\sigma_0, \psi_0)$
if for every $v_0 \in E$, there exist $T, \delta > 0$, a neighborhood $W_0$ of $(\sigma_0, \psi_0)$ in $W$,
and a neighborhood $V_0$ of $v_0$ in $E$ such that
	\begin{equation*}
		\bigcup_{(\sigma, \psi, v) \in W_0 \times V_0} \varLambda_{\sigma, \psi}^1(T, \delta, v) \subset W
	\end{equation*}
holds.
\end{definition}

\subsection{Lipschitz conditions}\label{subsec:Lip conditions}

In this paper, we introduce some Lipschitz conditions suited for RFDEs with history space $H$.
For this purpose, we consider the following type of Lipschitz condition for a parameter $L > 0$:
For $(t, \phi_1), (t, \phi_2) \in \dom(F)$ satisfying that $\supp(\phi_1 - \phi_2)$ is compact,
	\begin{equation*}
		\|F(t, \phi_1) - F(t, \phi_2)\|_E \le L \cdot \|\phi_1 - \phi_2\|_\infty. \tag{Lip}
	\end{equation*}
The inequality for a specific $L$ is denoted by $L$-\eqref{eq:Lipschitzian}.
The compactness of $\supp(\phi_1 - \phi_2)$ is used to ensure
	\begin{equation*}
		\|\phi_1 - \phi_2\|_\infty < \infty
	\end{equation*}
in the right-hand side.
We note that the inequality is independent from whether $H$ has a metric structure or not.
Relationships between Lipschitz conditions introduced here will be investigated in Section~\ref{sec:Lip conditions}.

\subsubsection{Lipschitz about prolongations and $C^1$-prolongations}

\begin{definition}[cf.\ \cite{Kappel--Schappacher 1980}, \cite{Nishiguchi 2017}]
\label{dfn:Lip about prolongations}
Suppose that $H$ is closed under prolongations.
We say that $F$ is \textit{locally Lipschitzian about prolongations} at $(\sigma_0, \psi_0) \in \dom(F)$
if there exist $T, \delta, L > 0$ such that
$L$-\eqref{eq:Lipschitzian} holds for all $(t, \phi_1), (t, \phi_2) \in \varLambda_{\sigma_0, \psi_0}(T, \delta) \cap \dom(F)$.
We simply say that $F$ is locally Lipschitzian about prolongations
when $F$ is locally Lipschitzian about prolongations at each $(\sigma_0, \psi_0) \in \dom(F)$.
\end{definition}

\begin{remark}
When $\dom(F)$ is a neighborhood of prolongations of $(\sigma_0, \psi_0)$,
$\varLambda_{\sigma_0, \psi_0}(T, \delta) \subset \dom(F)$ holds by choosing sufficiently small $T, \delta > 0$.
Therefore,
	\begin{equation*}
		\varLambda_{\sigma_0, \psi_0}(T, \delta) \cap \dom(F) \ne \emptyset.
	\end{equation*}
\end{remark}

The Carath\'{e}odory type version of the above condition was introduced in \cite[(A3) in pp.156]{Kappel--Schappacher 1980}.
Kappel \& Schappacher obtained the uniqueness result by using the Carath\'{e}odory type condition.
In \cite{Kappel--Schappacher 1980} and \cite{Nishiguchi 2017},
the condition was stated without rectangles by prolongations.

The uniform version of the above Lipschitz condition with respect to an initial condition
was introduced in \cite{Nishiguchi 2017}.

\begin{definition}[cf.\ \cite{Nishiguchi 2017}]
\label{dfn:Lip about uniform prolongations}
Suppose that $H$ is closed under prolongations.
We say that $F$ is \textit{uniformly locally Lipschitzian about prolongations} at $(\sigma_0, \psi_0) \in \dom(F)$
if there exist a neighborhood $W_0$ of $(\sigma_0, \psi_0)$ in $\dom(F)$ and $T, \delta, L > 0$ such that
$L$-\eqref{eq:Lipschitzian} holds
for all $(\sigma, \psi) \in W_0$ and all $(t, \phi_1), (t, \phi_2) \in \varLambda_{\sigma, \psi}(T, \delta) \cap \dom(F)$.
We simply say that $F$ is uniformly locally Lipschitzian about prolongations
when $F$ is uniformly locally Lipschitzian about prolongations at each $(\sigma_0, \psi_0) \in \dom(F)$.
\end{definition}

We now introduce the Lipschitz condition about $C^1$-prolongations by using rectangles by $C^1$-prolongations.
For $v = F(\sigma, \phi)$, we write
	\begin{equation*}
		\varLambda_{\sigma, \psi}^1(T, \delta; F) := \varLambda_{\sigma, \psi}^1(T, \delta, v)
	\end{equation*}
throughout the paper.

\begin{definition}\label{dfn:Lip about C^1-prolongations}
Suppose that $H$ is closed under $C^1$-prolongations.
We say that $F$ is \textit{locally Lipschitzian about $C^1$-prolongations} at $(\sigma_0, \psi_0) \in \dom(F)$
if there exist $T, \delta, L > 0$ such that $L$-\eqref{eq:Lipschitzian} holds
for all $(t, \phi_1), (t, \phi_2) \in \varLambda_{\sigma_0, \psi_0}^1(T, \delta; F) \cap \dom(F)$.
We simply say that $F$ is locally Lipschitzian about $C^1$-prolongations
when $F$ is locally Lipschitzian about $C^1$-prolongations at each point $(\sigma_0, \psi_0) \in \dom(F)$.
\end{definition}

\begin{definition}\label{dfn:uniform Lip about C^1-prolongations}
Suppose that $H$ is closed under $C^1$-prolongations.
We say that $F$ is \textit{uniformly locally Lipschitzian about $C^1$-prolongations} at $(\sigma_0, \psi_0) \in \dom(F)$
if there exist a neighborhood $W_0$ of $(\sigma_0, \psi_0)$ in $\dom(F)$ and $T, \delta, L > 0$ such that
$L$-\eqref{eq:Lipschitzian} holds
for all $(\sigma, \psi) \in W_0$ and all $(t, \phi_1), (t, \phi_2) \in \varLambda_{\sigma, \psi}^1(T, \delta; F) \cap \dom(F)$.
We simply say that $F$ is uniformly locally Lipschitzian about $C^1$-prolongations
when $F$ is uniformly locally Lipschitzian about $C^1$-prolongations at each $(\sigma_0, \psi_0) \in \dom(F)$.
\end{definition}

\subsubsection{Lipschitz about memories}

\begin{definition}[\cite{Nishiguchi 2017}]\label{dfn:Lip about memories}
We say that $F$ is \textit{Lipschitzian about memories}
if there exist $R, L > 0$ such that
$L$-\eqref{eq:Lipschitzian} holds for all $(t, \phi_1), (t, \phi_2) \in \dom(F)$ satisfying the conditions
\begin{enumerate}
\item[(i)] $\supp(\phi_1 - \phi_2) \subset [-R, 0] \subset I$, and
\item[(ii)] $\phi_1 - \phi_2$ is continuous.
\end{enumerate}
We say that $F$ is \textit{locally Lipschitzian about memories} at $(\sigma_0, \psi_0) \in \dom(F)$
if there exists a neighborhood $W$ of $(\sigma_0, \psi_0)$ in $\mathbb{R} \times H$ such that
$F|_{W \cap \dom(F)}$ is Lipschitzian about memories.
When $F$ is locally Lipschitzian about memories at each $(\sigma_0, \psi_0) \in \dom(F)$,
we simply say that $F$ is locally Lipschitzian about memories.
\end{definition}

We generalize the notion of local Lipschitz about memories as follows.

\begin{definition}\label{dfn:almost Lip about memories}
Suppose that $H$ is closed under $C^1$-prolongations.
We say that $F$ is \textit{locally Lipschitzian about Lip-memories} at $(\sigma_0, \psi_0) \in \dom(F)$
if for every $M > 0$, there exist a neighborhood $W$ of $(\sigma_0, \psi_0)$ in $\mathbb{R} \times H$ and $R, L > 0$ such that
$L$-\eqref{eq:Lipschitzian} holds for all $(t, \phi_1), (t, \phi_2) \in W \cap \dom(F)$ satisfying the conditions
\begin{enumerate}
\item[(i)] $\supp(\phi_1 - \phi_2) \subset [-R, 0] \subset I$, and
\item[(ii)] $\lip \bigl( \phi_1|_{[-R, 0]} \bigr), \lip \bigl( \phi_2|_{[-R, 0]} \bigr) \le M$.
\end{enumerate}
When $F$ is locally Lipschitzian about Lip-memories at each $(\sigma_0, \psi_0) \in \dom(F)$,
we simply say that $F$ is locally Lipschitzian about Lip-memories.
\end{definition}

\begin{remark}
Condition (ii) implies the continuity of $(\phi_1 - \phi_2)|_{[-R, 0]}$,
and condition (i) implies the left-continuity of $\phi_1 - \phi_2$ at $-R$.
Therefore, by combining (i) and (ii), the continuity of $\phi_1 - \phi_2$ is derived.
Thus, we have
	\begin{equation*}
		\text{Local Lipschitz about memories $\imply$ Local Lipschitz about Lip-memories}
	\end{equation*}
by the definitions.
\end{remark}

\subsubsection{Almost local Lipschitz}

\begin{definition}[ref.\ \cite{Mallet-Paret--Nussbaum--Paraskevopoulos 1994}]
\label{dfn:almost locally Lip for I^r}
Let $I = I^r$ for some $r > 0$.
Suppose $H \supset C^{0, 1}(I^r, E)$.
We say that $F$ is \textit{almost locally Lipschitzian} at $(\sigma_0, \psi_0) \in \dom(F)$
if for every $M > 0$, there exist a neighborhood $W$ of $(\sigma_0, \psi_0)$ in $\mathbb{R} \times H$ and $L > 0$ such that
$L$-\eqref{eq:Lipschitzian} holds for all $(t, \phi_1), (t, \phi_2) \in W \cap \dom(F)$
satisfying $\lip(\phi_i) \le M$ $(i = 1, 2)$.
\end{definition}

The notion of almost local Lipschitz was originally introduced by
Mallet-Paret, Nussbaum, \& Paraskevopoulos~\cite[Definition 1.1]{Mallet-Paret--Nussbaum--Paraskevopoulos 1994}
for autonomous RFDEs with the history space $C([-r, 0], \mathbb{R}^n)_\mathrm{u}$ ($r > 0$).

We generalize the notion of almost local Lipschitz for the case $I = I^\infty$.

\begin{definition}\label{dfn:almost locally Lip for I^infty}
Let $I = I^\infty$.
Suppose $H \supset C^{0, 1}_\mathrm{loc}(I^\infty, E)$.
We say that $F$ is \textit{almost locally Lipschitzian} at $(\sigma_0, \psi_0) \in \dom(F)$
if for every sequence $(M_k)_{k = 1}^\infty$ of positive numbers,
there exist a neighborhood $W$ of $(\sigma_0, \psi_0)$ in $\mathbb{R} \times H$ and $L > 0$ such that
$L$-\eqref{eq:Lipschitzian} holds for all $(t, \phi_1), (t, \phi_2) \in W \cap \dom(F)$
satisfying the conditions
\begin{enumerate}
\item[(i)] $\supp(\phi_1 - \phi_2)$ is compact, and
\item[(ii)] $\lip \bigl( \phi_1|_{[-k, 0]} \bigr), \lip \bigl( \phi_2|_{[-k, 0]} \bigr) \le M_k$ for all $k \ge 1$.
\end{enumerate}
\end{definition}

\section{Prolongations and history spaces}\label{sec:prolongations and history spaces}

Let $I$ be a past interval and $E = (E, \|\cdot\|_E)$ be a Banach space.
The purpose of this section is to examine the notions introduced in Section~\ref{sec:formulation and notions}
except the Lipschitz conditions.

\subsection{Fundamental properties}

\subsubsection{Closedness under prolongations}

In this subsection, let $H \subset \Map(I, E)$ be a history space.
The properties of closedness under prolongations bring us relations
with the spaces of continuous maps and $C^1$-maps with compact support.

\begin{proposition}[\cite{Nishiguchi 2017}, \cite{Hino--Murakami--Naito 1991}]
\label{prop:closedness under prolongations}
Let $I = I^r$ for some $0 \le r \le \infty$.
If $\boldsymbol{0}$ is closed under prolongations in $H$, then $C_\mathrm{c}(I^r, E) \subset H$ holds.
\end{proposition}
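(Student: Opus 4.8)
The plan is to realize every $\phi \in C_\mathrm{c}(I^r, E)$ as a history $I_t\gamma$ of a suitable continuous prolongation $\gamma$ of $\boldsymbol{0}$, and then to invoke the hypothesis directly. Concretely, if for a given $\phi$ I can construct a prolongation $\gamma \colon \mathbb{R}_+ + I^r \to E$ of $\boldsymbol{0}$ together with some $t \ge 0$ for which $I_t\gamma = \phi$, then the closedness of $\boldsymbol{0}$ under prolongations in $H$ gives $I_t\gamma \in H$, hence $\phi \in H$; since $\phi$ is arbitrary this yields $C_\mathrm{c}(I^r, E) \subset H$. Thus the entire proof reduces to an explicit construction of $\gamma$, and I note that linearity of $H$ is not even needed here, only the closedness property applied to $\boldsymbol{0}$.

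First I would fix $\phi$ and locate its support. When $r = \infty$, compactness of $\supp(\phi)$ lets me choose $R > 0$ with $\phi(\theta) = 0$ for all $\theta \le -R$; when $r < \infty$ the support is automatically compact (because $I^r$ is compact, so $C_\mathrm{c}(I^r, E) = C(I^r, E)$) and contained in $[-r, 0]$. The construction then differs slightly in the two cases. For $r = \infty$ I would set $t := R$ and define $\gamma$ on $\mathbb{R} = \mathbb{R}_+ + I^\infty$ to be $0$ on $\mathbb{R}_-$, to equal $\phi(\cdot - R)$ on $[0, R]$, and to be the constant $\phi(0)$ on $[R, +\infty)$. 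For $r < \infty$ I would choose any $t > r$ and define $\gamma$ on $[-r, +\infty) = \mathbb{R}_+ + I^r$ to be $0$ on $[-r, 0]$, a straight ramp from $0$ to $\phi(-r)$ on $[0, t - r]$, equal to $\phi(\cdot - t)$ on $[t - r, t]$, and the constant $\phi(0)$ on $[t, +\infty)$. In both cases a direct check of the junction values shows that $\gamma$ is continuous and $\gamma|_{I^r} = \boldsymbol{0}$, so $\gamma$ is a prolongation of $\boldsymbol{0}$, and evaluating $I_t\gamma(\theta) = \gamma(t + \theta)$ over $\theta \in I^r$ recovers $\phi$ exactly.

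The only point needing care is the compatibility at the junction between the region forced to vanish (coming from $I_0\gamma = \boldsymbol{0}$) and the region forced to equal a shifted copy of $\phi$ (coming from $I_t\gamma = \phi$). In the infinite-delay case this is handled for free by compact support: the choice $t = R$ makes $\phi$ vanish at the left edge of the window, so that $\gamma(0) = \phi(-R) = 0$ matches the value coming from $\mathbb{R}_-$ and no mismatch occurs. In the finite-delay case $\phi(-r)$ may well be nonzero, which is precisely why I take $t$ strictly larger than $r$ and insert the ramp on $[0, t - r]$ to interpolate continuously from $\gamma(0) = 0$ to $\gamma(t - r) = \phi(-r)$. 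Once continuity at the three junction points $0$, $t - r$, and $t$ is verified, the argument closes with no further subtlety, and the conclusion $\phi = I_t\gamma \in H$ follows immediately from the closedness assumption.
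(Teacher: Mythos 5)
Your proof is correct and follows essentially the same route as the paper: for $r < \infty$ the paper uses exactly your construction (with the ramp taken on $[0,1]$, i.e.\ $t = r+1$), and for $r = \infty$ the paper simply cites \cite[Proposition 2.1]{Hino--Murakami--Naito 1991} ``by the translation,'' which is precisely the translated-copy-of-$\phi$ argument you write out. Your observations that only the closedness of $\boldsymbol{0}$ is needed and that the junction values must be checked (using $\phi(-R) = 0$ in the infinite case, the ramp in the finite case) are both accurate.
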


The case $r = \infty$ is treated in \cite[Proposition 2.1]{Hino--Murakami--Naito 1991} by the translation.
The case $r < \infty$ was obtained in \cite{Nishiguchi 2017}.

\begin{proof}[\textbf{Proof of Proposition~\ref{prop:closedness under prolongations}}]
Suppose $r < \infty$ and let $\phi \in C(I^r, E)$.
We define a continuous map $\gamma \colon \mathbb{R}_+ + I \to E$ by
	\begin{equation*}
		\gamma(t) =
		\begin{cases}
			0 & (-r \le t \le 0), \\
			t\phi(-r) & (0 \le t \le 1), \\
			\bar{\phi}(t - (r + 1)) & (t \ge 1).
		\end{cases}
	\end{equation*}
Then $\gamma$ is a prolongation of $\boldsymbol{0}$.
Therefore, we have $\phi = I^r_{r + 1}\gamma \in H$ by the assumption.
\end{proof}

\begin{proposition}\label{prop:C^1 subset H}
\label{prop:closedness under C^1-prolongations}
Let $I = I^r$ for some $0 \le r \le \infty$.
If $\boldsymbol{0}$ is closed under $C^1$-prolongations with $0$-derivative in $H$,
then $C^1_\mathrm{c}(I^r, E) \subset H$ holds.
\end{proposition}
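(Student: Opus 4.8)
The plan is to follow the template of the proof of Proposition~\ref{prop:closedness under prolongations}: given $\phi \in C^1_\mathrm{c}(I^r, E)$, I will exhibit one $C^1$-prolongation $\gamma \colon \mathbb{R}_+ + I^r \to E$ of $\boldsymbol{0}$ \emph{with $0$-derivative} such that $\phi$ is realized as a history $I^r_{t^*}\gamma$ at a suitable time $t^* > 0$. The hypothesis then gives $\phi = I^r_{t^*}\gamma \in H$ at once. The only new feature compared with Proposition~\ref{prop:closedness under prolongations} is that $\gamma$ must be continuously differentiable on $[0, +\infty)$ and satisfy the normalization $\gamma'(0) = 0$.

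Consider first the finite case $r < \infty$. Since $I^r = [-r, 0]$ is compact we have $C^1_\mathrm{c}(I^r, E) = C^1([-r, 0], E)$, so $\phi$ is an arbitrary $C^1$-map, with no vanishing imposed at $-r$. Put $t^* = r + 1$ and define $\gamma$ to be $\boldsymbol{0}$ on $[-r, 0]$, a bridge $b$ on $[0, 1]$, and the shifted copy $\gamma(s) = \phi^{\wedge \phi'(0)}(s - (r + 1))$ on $[1, +\infty)$. The constant-slope extension with slope $\phi'(0)$ is chosen precisely so that this last piece is $C^1$ across $s = r + 1$. For the bridge I take a $C^1$-map $b \colon [0, 1] \to E$ with boundary data $b(0) = 0$, $b'(0) = 0$, $b(1) = \phi(-r)$, $b'(1) = \phi'(-r)$; the cubic Hermite polynomial $b(t) = (3t^2 - 2t^3)\phi(-r) + (t^3 - t^2)\phi'(-r)$ works. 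The two conditions at $t = 0$ yield both $I^r_0\gamma = \boldsymbol{0}$ and the required $\gamma'(0) = 0$, while the two at $t = 1$ give $C^1$-matching with the shifted copy of $\phi$. After verifying $C^1$-smoothness at the junctions $s = 1$ and $s = r + 1$, one computes $I^r_{r + 1}\gamma(\theta) = \gamma(r + 1 + \theta) = \phi^{\wedge \phi'(0)}(\theta) = \phi(\theta)$ for $\theta \in [-r, 0]$, hence $\phi = I^r_{r + 1}\gamma \in H$.

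In the infinite case $r = \infty$ the bridge is unnecessary, and this is exactly where the compact-support hypothesis is used. Writing $\supp(\phi) \subset [-a, 0]$ with $a > 0$, the map $\phi$ vanishes on $(-\infty, -a)$, so continuity of $\phi$ and $\phi'$ forces $\phi(-a) = 0$ and $\phi'(-a) = 0$. I then set $\gamma(s) = 0$ for $s \le 0$ and $\gamma(s) = \phi^{\wedge \phi'(0)}(s - a)$ for $s \ge 0$. The flatness of $\phi$ at $-a$ makes $\gamma$ of class $C^1$ on $[0, +\infty)$ with $\gamma|_{\mathbb{R}_-} = \boldsymbol{0}$ and $\gamma'(0) = \phi'(-a) = 0$, and the same computation as above gives $I^\infty_a\gamma = \phi \in H$.

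The main point requiring care --- and the only genuine obstacle --- is the interplay between the $C^1$-matching at the junctions and the normalization $\gamma'(0) = 0$: the hypothesis only provides histories of those $C^1$-prolongations of $\boldsymbol{0}$ whose right-hand derivative at $0$ vanishes, so the construction is forced to start flat at $s = 0$. This is what dictates the use of the Hermite bridge in the finite case and of the support-driven flatness of $\phi$ at the left edge of its support in the infinite case; the remaining verifications are the routine junction checks already present in Proposition~\ref{prop:closedness under prolongations}.
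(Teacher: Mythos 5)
Your proposal is correct and coincides with the paper's own proof: in the finite case your Hermite bridge $b(t) = (3t^2 - 2t^3)\phi(-r) + (t^3 - t^2)\phi'(-r)$ is exactly the paper's cubic $\ell$, followed by the shifted copy of $\phi^{\wedge \phi'(0)}$, and in the infinite case you use the same support-driven translation $\gamma(t) = \phi^{\wedge \phi'(0)}(t - a)$. Your explicit remark that compact support forces $\phi(-a) = \phi'(-a) = 0$, which yields both the $C^1$-matching at $0$ and the normalization $\gamma'(0) = 0$, is a point the paper leaves implicit.
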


\begin{proof}
We divide the proof into the following two cases: $r = \infty$ and $r < \infty$.

\begin{itemize}
\item Case 1: $r = \infty$.
Let $\phi \in C^1_\mathrm{c}(I^\infty, E)$.
We choose $R > 0$ so that $\supp(\phi) \subset [-R, 0]$.
We consider the map $\gamma \colon \mathbb{R}_+ + I \to E$ given by
	\begin{equation*}
		\gamma(t) = \phi^{\wedge \phi'(0)}(t - R).
	\end{equation*}
Then $\gamma|_{\mathbb{R}^+}$ is of class $C^1$, where $\gamma'(0) = 0$.
Therefore,
$\gamma$ is a $C^1$-prolongation of $\boldsymbol{0}$ with $0$-derivative.
Therefore, $\phi = I^\infty_R\gamma \in H$ by the assumption.
\item Case 2: $r < \infty$.
Let $\phi \in C^1(I^r, E)$.
We construct a map $\ell \colon [0, 1] \to E$ by
	\begin{equation*}
		\ell(s) = s^2[-\phi'(-r) + 3\phi(-r)] + s^3[\phi'(-r) - 2\phi(-r)]
	\end{equation*}
so that
	\begin{alignat*}{2}
		\ell(0) &= 0, & \mspace{20mu} \ell(1) &= \phi(-r), \\
		\ell'(0) &= 0, & \ell'(1) &= \phi'(-r).
	\end{alignat*}
We define $\gamma \colon \mathbb{R}_+ + I \to E$ by
	\begin{equation*}
		\gamma(t) =
		\begin{cases}
			0 & (-r \le t \le 0), \\
			\ell(t) & (0 \le t \le 1), \\
			\phi^{\wedge \phi'(0)}(t - (r + 1)) & (t \ge 1).
		\end{cases}
	\end{equation*}
By the construction, $\gamma$ is a $C^1$-prolongation of $\boldsymbol{0}$ with $0$-derivative.
Therefore, $\phi = I^r_{r + 1}\gamma \in H$ holds by the assumption.
\end{itemize}
This completes the proof.
\end{proof}

\subsubsection{Prolongation spaces}

By definition, the following properties hold:
\begin{itemize}
\item $\gamma \in \varGamma_{\sigma, \psi}(T, \delta)$ is equivalent to
$N_{\sigma, \psi}^0\gamma \in \varGamma_{0, \boldsymbol{0}}(T, \delta)$, and
	\begin{equation*}
		N_{\sigma, \psi}^0 \colon \varGamma_{\sigma, \psi}(T, \delta) \to \varGamma_{0, \boldsymbol{0}}(T, \delta)
	\end{equation*}
is an isometry with respect to $\rho^0$.
\item $\gamma \in \varGamma_{\sigma, \psi}^1(T, \delta, v)$ is equivalent to
$N_{\sigma, \psi}^v\gamma \in \varGamma_{0, \boldsymbol{0}}^1(T, \delta, 0)$, and
	\begin{equation*}
		N_{\sigma, \psi}^v \colon \varGamma_{\sigma, \psi}^1(T, \delta, v) \to \varGamma_{0, \boldsymbol{0}}^1(T, \delta, 0)
	\end{equation*}
is an isometry with respect to $\rho^1$.
\end{itemize}

\paragraph{Completeness}

We have the following completeness results for the prolongation spaces.

\begin{proposition}[\cite{Nishiguchi 2017}]\label{prop:completeness of space of prolongations}
Let $(\sigma, \psi) \in \mathbb{R} \times \Map(I, E)$ and $T, \delta > 0$.
Then the metric space $\bigl( \varGamma_{\sigma, \psi}(T, \delta), \rho^0 \bigr)$ is complete.
\end{proposition}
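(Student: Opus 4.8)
The plan is to reduce to a normalized base point and then recognize the space as a closed subset of a Banach space. The isometry recorded just above the proposition,
$N^0_{\sigma,\psi}\colon \varGamma_{\sigma,\psi}(T,\delta) \to \varGamma_{0,\boldsymbol{0}}(T,\delta)$, is a bijection (its inverse is $A^0_{\sigma,\psi}$) that preserves $\rho^0$. Since completeness is transported across a surjective isometry — a Cauchy sequence maps to a Cauchy sequence, and a limit of the image pulls back to a limit of the original — it suffices to prove the assertion for $\varGamma_{0,\boldsymbol{0}}(T,\delta)$. This reduction eliminates $(\sigma,\psi)$ from all subsequent bookkeeping.

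Next I would identify $\varGamma_{0,\boldsymbol{0}}(T,\delta)$ with a concrete subset of a Banach space. Each $\gamma \in \varGamma_{0,\boldsymbol{0}}(T,\delta)$ satisfies $I_0\gamma = \boldsymbol{0}$, hence $\gamma \equiv 0$ on $I$; thus any two elements agree on $I$, their difference is supported in $[0,T]$, and $\rho^0(\gamma_1,\gamma_2) = \|\gamma_1|_{[0,T]} - \gamma_2|_{[0,T]}\|_{C[0,T]}$. Consequently the restriction map $\gamma \mapsto \gamma|_{[0,T]}$ is an isometry of $\varGamma_{0,\boldsymbol{0}}(T,\delta)$ onto the set
\[
    K := \bigl\{\, g \in C([0,T], E) : g(0) = 0,\ \|g\|_{C[0,T]} \le \delta \,\bigr\}
\]
sitting inside the Banach space $C([0,T], E)_\mathrm{u}$. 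Surjectivity onto $K$ holds because any such $g$ extends by $0$ on $I$ to a genuine prolongation of $\boldsymbol{0}$: continuity on $[0,T]$ is clear, $I_0\gamma = \boldsymbol{0}$ holds by construction, and since $\bar{\boldsymbol{0}} = 0$ the defining bound reads $\|g\|_{C[0,T]} \le \delta$.

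Finally I would observe that $K$ is closed in $C([0,T], E)_\mathrm{u}$, being the intersection of the closed ball $\bar{B}_{C([0,T], E)_\mathrm{u}}(0; \delta)$ with the preimage of $\{0\}$ under the continuous evaluation $g \mapsto g(0)$. Because $E$ is a Banach space, $C([0,T], E)_\mathrm{u}$ is complete, so the closed subset $K$ is itself a complete metric space. Transporting completeness back through the restriction isometry and then through $N^0_{\sigma,\psi}$ yields that $\bigl(\varGamma_{\sigma,\psi}(T,\delta), \rho^0\bigr)$ is complete.

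The argument is essentially bookkeeping, and the only point demanding care is the verification that the restriction map is a \emph{bijective} isometry onto $K$ — specifically that the extension-by-zero of a $g \in K$ is genuinely an element of $\varGamma_{0,\boldsymbol{0}}(T,\delta)$. The defining condition $g(0) = 0$, inherited from $I_0\gamma = \boldsymbol{0}$, is exactly what makes the two-piece definition consistent and continuous at the junction $t = 0$; without it neither the closedness of $K$ nor the completeness conclusion would go through.
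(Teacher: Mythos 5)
Your proof is correct and follows essentially the same route the paper uses: the paper omits the proof of this proposition (citing \cite{Nishiguchi 2017}) but proves the $C^1$ analogue, Proposition~\ref{prop:completeness of space of C^1-prolongations}, by exactly this device of restricting to $[0,T]$ and identifying the prolongation space with a closed bounded subset of a Banach space, and it reuses the same identification $j(\beta) = \beta|_{[0,T]}$ for the $C^0$ case in the proof of Proposition~\ref{prop:compactness of transformation on prolongation space}. Your added reduction via the isometry $N^0_{\sigma,\psi}$ and the explicit check that extension-by-zero of $g \in K$ with $g(0)=0$ really lies in $\varGamma_{0,\boldsymbol{0}}(T,\delta)$ are exactly the bookkeeping the paper leaves implicit.
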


\begin{proposition}\label{prop:completeness of space of C^1-prolongations}
Let $(\sigma, \psi) \in \mathbb{R} \times \Map(I, E)$, $v \in E$, and $T, \delta > 0$.
Then the metric space $\bigl( \varGamma_{\sigma, \psi}^1(T, \delta, v), \rho^1 \bigr)$ is complete.
\end{proposition}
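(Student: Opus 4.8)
The plan is to prove completeness of $\bigl(\varGamma_{\sigma,\psi}^1(T,\delta,v),\rho^1\bigr)$ by reducing to the already-established completeness of the prolongation space (Proposition~\ref{prop:completeness of space of prolongations}) via the isometry $N_{\sigma,\psi}^v$, and then by exhibiting the relevant space as a closed subset of a complete ambient metric space. Since $N_{\sigma,\psi}^v \colon \varGamma_{\sigma,\psi}^1(T,\delta,v) \to \varGamma_{0,\boldsymbol{0}}^1(T,\delta,0)$ is a $\rho^1$-isometry, it suffices to treat the normalized case $\sigma = 0$, $\psi = \boldsymbol{0}$, $v = 0$. So I would work throughout with Cauchy sequences in $\varGamma_{0,\boldsymbol{0}}^1(T,\delta,0)$, the set of $C^1$-prolongations $\gamma \colon [0,T]+I \to E$ of $\boldsymbol{0}$ with $\gamma'(0)=0$ and $\|\gamma(\cdot)\|_{C^1[0,T]} \le \delta$.

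First I would set up the ambient complete space. The natural container is the Banach space of continuously differentiable maps on the compact interval $[0,T]$ with values in $E$, namely $\bigl(C^1([0,T],E),\|\cdot\|_{C^1[0,T]}\bigr)$, which is complete because $E$ is a Banach space and uniform convergence of both a sequence of $C^1$-maps and their derivatives yields a $C^1$-limit with the derivative of the limit equal to the limit of the derivatives. Given a $\rho^1$-Cauchy sequence $(\gamma_n)$ in $\varGamma_{0,\boldsymbol{0}}^1(T,\delta,0)$, each $\gamma_n$ is determined on $I$ by the condition $I_0\gamma_n = \boldsymbol{0}$ and on $[0,T]$ by a $C^1$-map; since $\rho^1(\gamma_1,\gamma_2)=\|\gamma_1-\gamma_2\|_{C^1[0,T]}$ depends only on the restrictions to $[0,T]$ (the histories on $I$ all coincide with $\boldsymbol{0}$, so they agree on $(-\infty,0]\cap I$ automatically), the restrictions $\gamma_n|_{[0,T]}$ form a Cauchy sequence in $C^1([0,T],E)$ and converge in $\|\cdot\|_{C^1[0,T]}$ to some $g \in C^1([0,T],E)$.

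Next I would verify that the limit $g$, extended by $\boldsymbol{0}$ on $I$, lands back inside $\varGamma_{0,\boldsymbol{0}}^1(T,\delta,0)$, which is precisely the closedness step. The defining constraints are all preserved under $\|\cdot\|_{C^1[0,T]}$-convergence: the boundary matching $\gamma_n(0)=\boldsymbol{0}(0)=0$ passes to $g(0)=0$ by uniform convergence, so the extension $\gamma \colon [0,T]+I \to E$ given by $\gamma|_I = \boldsymbol{0}$ and $\gamma|_{[0,T]}=g$ is continuous at the junction $0$; the derivative condition $\gamma_n'(0)=0$ passes to $g'(0)=0$ because $\gamma_n' \to g'$ uniformly, so $\gamma$ is a $C^1$-prolongation of $\boldsymbol{0}$ with $0$-derivative; and the size constraint $\|\gamma_n(\cdot)-\boldsymbol{0}^{\wedge 0}(\cdot)\|_{C^1[0,T]} \le \delta$ passes to the limit since the $C^1$-norm is continuous, giving $\|g\|_{C^1[0,T]} \le \delta$. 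I would also confirm that the glued map is genuinely a $C^1$-prolongation in the sense of the definition in Subsection~\ref{subsec:prolongations}, i.e.\ that one-sided differentiability at $0$ together with $g'(0)=0$ matching the (zero) left derivative of $\boldsymbol{0}$ yields an honest $C^1$-map on $[0,T]+I$; here the convention on $\gamma'(\sigma)$ as a right-hand derivative is exactly what is needed.

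The main obstacle, and the step requiring the most care, is handling the interface between the fixed history part on $I$ and the variable $C^1$-part on $[0,T]$, and in particular making sure that the $\rho^1$ metric really controls the full prolongation and not merely its restriction to $[0,T]$. This is where the condition $\gamma_n'(0)=v$ ($=0$ after normalization) is essential: without pinning the right-hand derivative at the junction, the glued limit need not be $C^1$ across $0$, and the space would fail to be closed. Once the isometry $N_{\sigma,\psi}^v$ transports this back to general $(\sigma,\psi,v)$, completeness follows, so the overall structure is: (i) normalize via the isometry, (ii) identify the Cauchy sequence with a Cauchy sequence in the complete space $C^1([0,T],E)$, (iii) extend the $C^1$-limit by $\boldsymbol{0}$ and check all defining conditions survive the limit, exhibiting $\varGamma_{0,\boldsymbol{0}}^1(T,\delta,0)$ as a closed, hence complete, subset.
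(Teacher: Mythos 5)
Your proof is correct and follows essentially the same route as the paper: the paper also reduces to the normalized case via the isometry $N_{\sigma, \psi}^v$ and identifies $\varGamma_{0, \boldsymbol{0}}^1(T, \delta, 0)$, via the restriction map $\gamma \mapsto \gamma|_{[0, T]}$, with the closed subset $\{\chi \in C^1([0, T], E) : \chi(0) = 0, \ \chi'(0) = 0, \ \|\chi\|_{C^1} \le \delta\}$ of the Banach space $\bigl( C^1([0, T], E), \|\cdot\|_{C^1} \bigr)$. Your extra verifications at the junction point are harmless but not strictly needed, since the definition of a $C^1$-prolongation only requires $\gamma|_{[0, T]}$ to be of class $C^1$ (with $\gamma'(0)$ a right-hand derivative), not smoothness of $\gamma$ across $0$.
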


\begin{proof}
We consider a closed linear subspace $X$ of the Banach space $(C^1([0, T], E), \|\cdot\|_{C^1})$ given by
	\begin{equation*}
		X := \{\mspace{2mu} \chi \in C^1([0, T], E) : \chi(0) = 0, \chi'(0) = 0, \|\chi\|_{C^1} \le \delta \mspace{2mu}\}.
	\end{equation*}
Therefore, $(X, \|\cdot\|_{C^1})$ is also a Banach space.
The completeness of $\bigl( \varGamma_{0, \boldsymbol{0}}^1(T, \delta, 0), \rho^1 \bigr)$ is obtained
because $j \colon \varGamma_{0, \boldsymbol{0}}^1(T, \delta, 0) \to X$ defined by
	\begin{equation*}
		j(\gamma) = \gamma|_{[0, T]}
	\end{equation*}
is isometrically isomorphic.
\end{proof}

\paragraph{Comparison of prolongations spaces}

\begin{lemma}\label{lem:addition and spaces of prolongations}
Let $B$ be a bounded subset of $E$ and $T, \delta \in (0, \infty)$.
Then there exist $0 < T_0 \le T$ and $0 < \delta_0 \le \delta/2$ such that
for all $0 < T' \le T_0$, all $(\sigma, \psi) \in \mathbb{R} \times \Map(I, E)$, and all $v \in B$,
	\begin{equation*}
		A_{\sigma, \psi}^v(\varGamma_{0, \boldsymbol{0}}(T', \delta_0)) \subset \varGamma_{\sigma, \psi}(T', \delta)
	\end{equation*}
holds.
\end{lemma}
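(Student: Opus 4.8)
The plan is to reduce the claim to a single supremum-norm estimate after unwinding the definitions, the point being that the base-point value $\psi(0)$ cancels so that the estimate becomes independent of $(\sigma, \psi)$. Since $B$ is bounded, set $M := \sup_{v \in B}\|v\|_E < \infty$. Fix $0 < T' \le T_0$, $(\sigma, \psi) \in \mathbb{R} \times \Map(I, E)$, $v \in B$, and $\beta \in \varGamma_{0, \boldsymbol{0}}(T', \delta_0)$. By the definition of $\varGamma_{0, \boldsymbol{0}}(T', \delta_0)$ (and since $\bar{\boldsymbol{0}} \equiv 0$), the element $\beta$ is a prolongation of $\boldsymbol{0}$ on $[0, T'] + I$ with $\sup_{s \in [0, T']}\|\beta(s)\|_E \le \delta_0$. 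That $A_{\sigma, \psi}^v\beta$ is itself a prolongation of $(\sigma, \psi)$, defined on $[\sigma, \sigma + T'] + I$, is already recorded in the discussion of the transformation $A_{\sigma, \psi}^v$, so the only remaining task is to verify the norm condition that defines membership in $\varGamma_{\sigma, \psi}(T', \delta)$.

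First I would compute the relevant difference pointwise. For $s \in [0, T']$, the definition of $A_{\sigma, \psi}^v$ gives
\begin{equation*}
(A_{\sigma, \psi}^v\beta)(\sigma + s) = \psi^{\wedge v}(s) + \beta(s).
\end{equation*}
Since $s \ge 0$, we have $\psi^{\wedge v}(s) = \psi(0) + sv$ and $\bar{\psi}(s) = \psi^{\wedge 0}(s) = \psi(0)$, so the constant $\psi(0)$ cancels and
\begin{equation*}
(A_{\sigma, \psi}^v\beta)(\sigma + s) - \bar{\psi}(s) = sv + \beta(s).
\end{equation*}
This identity is the crux: the difference does not involve $\psi$ at all, which is exactly what will make the bound uniform in $(\sigma, \psi)$.

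Then I would estimate
\begin{equation*}
\bigl\| (A_{\sigma, \psi}^v\beta)(\sigma + \cdot) - \bar{\psi}(\cdot) \bigr\|_{C[0, T']} = \sup_{s \in [0, T']}\|sv + \beta(s)\|_E \le T' \|v\|_E + \delta_0 \le T_0 M + \delta_0,
\end{equation*}
using $\|sv + \beta(s)\|_E \le s\|v\|_E + \|\beta(s)\|_E$, together with $\|v\|_E \le M$ and $\sup_{s}\|\beta(s)\|_E \le \delta_0$. Choosing $\delta_0 := \delta/2$ and $T_0 := \min\{T, \delta/(2(M+1))\}$ yields $T_0 M + \delta_0 \le \delta/2 + \delta/2 = \delta$, and these choices satisfy $0 < T_0 \le T$ and $0 < \delta_0 \le \delta/2$, as required. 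Because the estimate used $(\sigma, \psi)$ and $v$ only through the single bound $\|v\|_E \le M$, the inclusion $A_{\sigma, \psi}^v(\varGamma_{0, \boldsymbol{0}}(T', \delta_0)) \subset \varGamma_{\sigma, \psi}(T', \delta)$ holds simultaneously for all admissible $(\sigma, \psi)$, all $v \in B$, and all $0 < T' \le T_0$.

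I do not expect a genuine obstacle here: the entire content lies in correctly unwinding the definitions of $\varGamma_{0, \boldsymbol{0}}$, $A_{\sigma, \psi}^v$, $\psi^{\wedge v}$, and $\bar{\psi}$, and in observing the cancellation of $\psi(0)$. The only point requiring care — and the reason the lemma is phrased for a bounded set $B$ rather than a single $v$ — is to keep the bound $T_0 M + \delta_0$ uniform over all $v \in B$, which forces $T_0$ to shrink as $M$ grows; taking $M + 1$ in the denominator is merely a convenience guarding against the degenerate case $B = \{0\}$.
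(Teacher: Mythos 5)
Your proof is correct and follows essentially the same route as the paper's: the same cancellation of $\psi(0)$, the same estimate $\sup_{s \in [0, T']}\|sv + \beta(s)\|_E \le T_0 M + \delta_0$, and the same choice of parameters (your $M+1$ in the denominator is only a cosmetic safeguard against $M = 0$, which the paper avoids by taking $M > 0$ with $B \subset \bar{B}_E(0; M)$).
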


\begin{proof}
Choose $M > 0$ so that $B \subset \bar{B}_E(0; M)$.
Let $T_0$ and $\delta_0$ be given so that
	\begin{equation*}
		0 < T_0 \le \min\{\delta/(2M), T\}, \mspace{15mu} 0 < \delta_0 \le \delta/2.
	\end{equation*}
Let $0 < T' \le T_0$, $(\sigma, \psi) \in \mathbb{R} \times \Map(I, E)$, and $v \in B$.
Then for all $\beta \in \varGamma_{0, \boldsymbol{0}}(T', \delta_0)$, we have
	\begin{align*}
		\bigl\| (A_{\sigma, \psi}^v\beta)(\sigma + \cdot) - \bar{\psi}(\cdot) \bigr\|_{C[0, T']}
		&= \sup_{s \in [0, T']} \|sv + \beta(s)\|_E \\
		&\le T'\|v\|_E + \|\beta\|_\infty \\
		&\le T_0M + \delta_0.
	\end{align*}
Since $T_0M + \delta_0 \le (\delta/2) + (\delta/2) = \delta$,
this shows $A_{\sigma, \psi}^v\beta \in \varGamma_{\sigma, \psi}(T', \delta)$.
\end{proof}

\begin{proposition}\label{prop:comparison of spaces of prolongations}
Let $B$ be a bounded subset of $E$ and $T, \delta \in (0, \infty)$.
Then there exist $0 < T_0 \le T$ and $0 < \delta_0 \le \delta/2$ such that
for all $0 < T' \le T_0$, all $(\sigma, \psi) \in \mathbb{R} \times \Map(I, E)$, and all $v \in B$,
	\begin{equation*}
		\varGamma_{\sigma, \psi}^1(T', \delta_0, v) \subset \varGamma_{\sigma, \psi}(T', \delta)
	\end{equation*}
holds.
\end{proposition}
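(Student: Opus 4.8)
The plan is to notice that every element $\gamma$ of $\varGamma_{\sigma, \psi}^1(T', \delta_0, v)$ is, by definition, a $C^1$-prolongation of $(\sigma, \psi)$ and hence in particular an ordinary prolongation of $(\sigma, \psi)$; so conditions (i)--(iii) in the definition of a prolongation are automatic, and the only thing that remains to verify for membership in $\varGamma_{\sigma, \psi}(T', \delta)$ is the size bound $\bigl\| \gamma(\sigma + \cdot) - \bar{\psi}(\cdot) \bigr\|_{C[0, T']} \le \delta$. The entire matter therefore reduces to comparing the two reference curves $\psi^{\wedge v}$ and $\bar{\psi} = \psi^{\wedge 0}$ appearing in the two definitions, which is exactly the estimate already performed in Lemma~\ref{lem:addition and spaces of prolongations}. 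Accordingly I would fix $M > 0$ with $B \subset \bar{B}_E(0; M)$ and choose the same constants as there, namely $0 < T_0 \le \min\{\delta/(2M), T\}$ and $0 < \delta_0 \le \delta/2$.

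For the core estimate, given $\gamma \in \varGamma_{\sigma, \psi}^1(T', \delta_0, v)$ I would decompose, for $s \in [0, T']$,
\[
	\gamma(\sigma + s) - \bar{\psi}(s)
	= \bigl[ \gamma(\sigma + s) - \psi^{\wedge v}(s) \bigr]
	+ \bigl[ \psi^{\wedge v}(s) - \bar{\psi}(s) \bigr].
\]
The first bracket is controlled by the defining inequality of $\varGamma_{\sigma, \psi}^1(T', \delta_0, v)$ together with the elementary comparison $\|\cdot\|_{C[0, T']} \le \|\cdot\|_{C^1[0, T']}$, which bounds it by $\delta_0$. For the second bracket, since $s \ge 0$ both curves lie on their forward branches, so $\psi^{\wedge v}(s) - \bar{\psi}(s) = (\psi(0) + sv) - \psi(0) = sv$, whose $C[0, T']$-norm equals $T'\|v\|_E \le T_0 M$. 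Adding and invoking the choice of constants gives $\delta_0 + T_0 M \le \delta/2 + \delta/2 = \delta$, which is the desired bound.

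Alternatively, and more in the spirit of the surrounding development, I would deduce the statement directly from Lemma~\ref{lem:addition and spaces of prolongations}: setting $\beta := N_{\sigma, \psi}^v\gamma$, the isometry $N_{\sigma, \psi}^v \colon \varGamma_{\sigma, \psi}^1(T', \delta_0, v) \to \varGamma_{0, \boldsymbol{0}}^1(T', \delta_0, 0)$ together with the easy inclusion $\varGamma_{0, \boldsymbol{0}}^1(T', \delta_0, 0) \subset \varGamma_{0, \boldsymbol{0}}(T', \delta_0)$ shows $\beta \in \varGamma_{0, \boldsymbol{0}}(T', \delta_0)$; since $A_{\sigma, \psi}^v$ is the inverse of $N_{\sigma, \psi}^v$ one has $\gamma = A_{\sigma, \psi}^v\beta$, whence the lemma (with its own $T_0, \delta_0$) yields $\gamma \in \varGamma_{\sigma, \psi}(T', \delta)$. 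I do not anticipate a genuine obstacle, since the result is a routine consequence of the triangle inequality and the linear growth of $\psi^{\wedge v} - \bar{\psi}$. The only point demanding care is the uniformity in $(\sigma, \psi)$ and $v$; this is automatic because the second bracket depends on $v$ only through $\|v\|_E \le M$ and is wholly independent of $(\sigma, \psi)$, so a single pair $T_0, \delta_0$ works simultaneously for all $(\sigma, \psi) \in \mathbb{R} \times \Map(I, E)$ and all $v \in B$.
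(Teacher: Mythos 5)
Your proposal is correct and matches the paper's argument: the paper proves this proposition exactly as in your second variant, by choosing $T_0, \delta_0$ from Lemma~\ref{lem:addition and spaces of prolongations} and writing $\varGamma_{\sigma, \psi}^1(T', \delta_0, v) = A_{\sigma, \psi}^v(\varGamma_{0, \boldsymbol{0}}^1(T', \delta_0, 0)) \subset A_{\sigma, \psi}^v(\varGamma_{0, \boldsymbol{0}}(T', \delta_0)) \subset \varGamma_{\sigma, \psi}(T', \delta)$. Your first, direct estimate is just the same computation unrolled, with the identical choice of constants $T_0 \le \min\{\delta/(2M), T\}$ and $\delta_0 \le \delta/2$.
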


\begin{proof}
By Lemma~\ref{lem:addition and spaces of prolongations},
we choose $0 < T_0 \le T$ and $0 < \delta_0 \le \delta/2$ so that
	\begin{equation*}
		A_{\sigma, \psi}^v(\varGamma_{0, \boldsymbol{0}}(T', \delta_0)) \subset \varGamma_{\sigma, \psi}(T', \delta)
	\end{equation*}
holds for all $0 < T' \le T_0$, $(\sigma, \psi) \in \mathbb{R} \times \Map(I, E)$, and $v \in B$.
Then we have
	\begin{equation*}
		\varGamma_{\sigma, \psi}^1(T', \delta_0, v)
		= A_{\sigma, \psi}^v(\varGamma_{0, \boldsymbol{0}}^1(T', \delta_0, 0))
		\subset A_{\sigma, \psi}^v(\varGamma_{0, \boldsymbol{0}}(T', \delta_0)).
	\end{equation*}
Therefore, the conclusion is obtained.
\end{proof}

\subsubsection{Rectangles by prolongations and $C^1$-prolongations}

The following lemma states the relationship between rectangles by prolongations and $C^1$-prolongations.

\begin{lemma}\label{lem:comparison of rectangles}
Let $B \subset E$ be a bounded set, and let $T, \delta > 0$.
Then there exist $0 < T_0 \le T$ and $0 < \delta_0 \le \delta/2$ such that
for all $(\sigma, \psi) \in \mathbb{R} \times \Map(I, E)$ and all $v \in B$,
	\begin{equation*}
		\varLambda_{\sigma, \psi}^1(T_0, \delta_0, v) \subset \varLambda_{\sigma, \psi}(T_0, \delta)
	\end{equation*}
holds.
\end{lemma}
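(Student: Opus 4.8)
The plan is to reduce the stated inclusion of rectangles by ($C^1$-) prolongations to the slice-wise inclusion of prolongation spaces already furnished by Proposition~\ref{prop:comparison of spaces of prolongations}. Recall that both rectangles are defined as unions over $\tau \in [0, T_0]$ of the $\tau$-slices
\[
\{(\sigma + \tau, I_{\sigma + \tau}\gamma) : \gamma \in \varGamma_{\sigma, \psi}^1(\tau, \delta_0, v)\}
\quad\text{and}\quad
\{(\sigma + \tau, I_{\sigma + \tau}\gamma) : \gamma \in \varGamma_{\sigma, \psi}(\tau, \delta)\},
\]
so it suffices to produce $T_0, \delta_0$ for which $\varGamma_{\sigma, \psi}^1(\tau, \delta_0, v) \subset \varGamma_{\sigma, \psi}(\tau, \delta)$ holds for every $\tau \in [0, T_0]$, uniformly in $(\sigma, \psi)$ and $v \in B$.

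First I would apply Proposition~\ref{prop:comparison of spaces of prolongations} to the bounded set $B$ and the given $T, \delta$, obtaining $0 < T_0 \le T$ and $0 < \delta_0 \le \delta/2$ with the property that $\varGamma_{\sigma, \psi}^1(T', \delta_0, v) \subset \varGamma_{\sigma, \psi}(T', \delta)$ for all $0 < T' \le T_0$, all $(\sigma, \psi) \in \mathbb{R} \times \Map(I, E)$, and all $v \in B$. The point is that this inclusion holds for every admissible length $T'$, not merely for $T' = T_0$; hence we may apply it with $T' = \tau$ for each $\tau \in (0, T_0]$ to conclude $\varGamma_{\sigma, \psi}^1(\tau, \delta_0, v) \subset \varGamma_{\sigma, \psi}(\tau, \delta)$ on every nondegenerate slice.

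It remains to treat the degenerate slice $\tau = 0$, which the Proposition does not cover since it requires $T' > 0$. Here I would invoke the easy remark that $\varGamma_{\sigma, \psi}(0, \delta) = \varGamma_{\sigma, \psi}^1(0, \delta, v) = \{\psi(\cdot - \sigma)\}$, so the two spaces coincide and the inclusion is trivial. Combining the two cases, $\varGamma_{\sigma, \psi}^1(\tau, \delta_0, v) \subset \varGamma_{\sigma, \psi}(\tau, \delta)$ for all $\tau \in [0, T_0]$; taking the corresponding $\tau$-slices of histories $I_{\sigma + \tau}\gamma$ and forming the union over $\tau \in [0, T_0]$ then yields $\varLambda_{\sigma, \psi}^1(T_0, \delta_0, v) \subset \varLambda_{\sigma, \psi}(T_0, \delta)$, as required.

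Finally, I anticipate no genuine obstacle: all the substantive work is already done inside Proposition~\ref{prop:comparison of spaces of prolongations} (ultimately in Lemma~\ref{lem:addition and spaces of prolongations}, where the bound $T'\|v\|_E + \|\beta\|_\infty \le T_0 M + \delta_0 \le \delta$ converts $C^1$-closeness into sup-closeness uniformly over $v \in B$). The only mild care needed is to verify that the single choice of $T_0, \delta_0$ serves simultaneously for all slice lengths $\tau \le T_0$ and for the degenerate slice — precisely the uniformity in $T'$ built into the Proposition, together with the triviality at $\tau = 0$.
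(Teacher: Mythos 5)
Your proposal is correct and follows essentially the same route as the paper: the paper's proof also takes an arbitrary $(t, \phi) = (\sigma + \tau, I_{\sigma+\tau}\gamma)$ in $\varLambda_{\sigma,\psi}^1(T_0, \delta_0, v)$ and applies Proposition~\ref{prop:comparison of spaces of prolongations} with $T' = \tau$ to conclude $\gamma \in \varGamma_{\sigma,\psi}(\tau,\delta)$. Your explicit handling of the degenerate slice $\tau = 0$ is a small extra care the paper leaves implicit, but it does not change the argument.
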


\begin{proof}
From Proposition~\ref{prop:comparison of spaces of prolongations},
we choose $0 < T_0 \le T$ and $0 < \delta_0 \le \delta/2$ so that
	\begin{equation*}
		\varGamma_{\sigma, \psi}^1(T', \delta_0, v) \subset \varGamma_{\sigma, \psi}(T', \delta)
	\end{equation*}
holds
for all $0 < T' \le T_0$, $(\sigma, \psi) \in \mathbb{R} \times \Map(I, E)$, and $v \in B$.

Let $(\sigma, \psi) \in \mathbb{R} \times \Map(I, E)$, $v \in B$,
and $(t, \phi) \in \varLambda_{\sigma, \psi}^1(T_0, \delta_0, v)$.
Then $t = \sigma + \tau$ for some $\tau \in [0, T_0]$, and
$\phi = I_{\sigma + \tau}\gamma$ for some $\gamma \in \varGamma^1_{\sigma, \psi}(\tau, \delta_0, v)$.
This implies $(t, \phi) \in \varLambda_{\sigma, \psi}(T_0, \delta)$
in view of $\varGamma_{\sigma, \psi}^1(\tau, \delta_0, v) \subset \varGamma_{\sigma, \psi}(\tau, \delta)$.
\end{proof}

\begin{remark}
From Lemma~\ref{lem:comparison of rectangles},
	\begin{equation*}
		\{\mspace{2mu} \text{Neighborhoods by prolongations} \mspace{2mu}\}
		\subset
		\{\mspace{2mu} \text{Neighborhoods by $C^1$-prolongations} \mspace{2mu}\}.
	\end{equation*}
The inclusion is also true for uniform neighborhoods by prolongations and $C^1$-prolongations.
\end{remark}

As the following lemma shows, the translations act on rectangles by prolongations as the change of base points.

\begin{lemma}\label{lem:translation on rectangles}
Let $(\sigma, \psi) \in \mathbb{R} \times \Map(I, E)$, $T, \delta > 0$, and $v \in E$.
Then the following statements hold:
\begin{enumerate}
\item[\emph{(i)}]
	$\tau_{\sigma, \psi}^0 \colon \varLambda_{0, \boldsymbol{0}}(T, \delta) \to \varLambda_{\sigma, \psi}(T, \delta)$
is an well-defined bijection.
\item[\emph{(ii)}]
	$\tau_{\sigma, \psi}^v \colon
	\varLambda_{0, \boldsymbol{0}}^1(T, \delta, 0) \to \varLambda_{\sigma, \psi}^1(T, \delta, v)$
is an well-defined bijection.
\end{enumerate}
\end{lemma}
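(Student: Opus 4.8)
The plan is to reduce both statements to the bijectivity of the normalization/addition maps $N_{\sigma, \psi}^v$ and $A_{\sigma, \psi}^v$ between the prolongation spaces, which has already been recorded: $\gamma \in \varGamma_{\sigma, \psi}^1(\tau, \delta, v)$ is equivalent to $N_{\sigma, \psi}^v\gamma \in \varGamma_{0, \boldsymbol{0}}^1(\tau, \delta, 0)$, with $N_{\sigma, \psi}^v$ a $\rho^1$-isometry and $A_{\sigma, \psi}^v$ its inverse (the two displayed equivalences preceding Proposition~\ref{prop:completeness of space of prolongations}), and analogously for $\rho^0$ in the non-$C^1$ setting. The geometric content is that $\tau_{\sigma, \psi}^v$ realizes, at the level of histories, precisely the transformation $A_{\sigma, \psi}^v$ acting on the underlying prolongations, while preserving the parameter $\tau \in [0, T]$ that indexes the union defining each rectangle.

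First I would record the commutation identity on which everything rests. Given $\tau \in [0, T]$ and a ($C^1$-)prolongation $\beta \colon [0, \tau] + I \to E$ of $\boldsymbol{0}$, set $\gamma := A_{\sigma, \psi}^v\beta \colon [\sigma, \sigma + \tau] + I \to E$. For $\theta \in I$ one computes
\[
	I_{\sigma + \tau}\gamma(\theta) = \gamma(\sigma + \tau + \theta) = \psi^{\wedge v}(\tau + \theta) + \beta(\tau + \theta) = I_\tau[\psi^{\wedge v}](\theta) + I_\tau\beta(\theta),
\]
so that $I_{\sigma + \tau}\gamma = I_\tau[\psi^{\wedge v}] + I_\tau\beta$. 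Directly from the definition of $\tau_{\sigma, \psi}^v$ this gives $\tau_{\sigma, \psi}^v(\tau, I_\tau\beta) = (\sigma + \tau, I_{\sigma + \tau}\gamma)$, the single identity driving both parts.

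For part (ii), well-definedness (that the image lands in the target) follows by taking an arbitrary $(t, \phi) \in \varLambda_{0, \boldsymbol{0}}^1(T, \delta, 0)$, writing $(t, \phi) = (\tau, I_\tau\beta)$ with $\tau \in [0, T]$ and $\beta \in \varGamma_{0, \boldsymbol{0}}^1(\tau, \delta, 0)$, and using that $A_{\sigma, \psi}^v$ carries $\varGamma_{0, \boldsymbol{0}}^1(\tau, \delta, 0)$ into $\varGamma_{\sigma, \psi}^1(\tau, \delta, v)$; the identity above then places $\tau_{\sigma, \psi}^v(t, \phi)$ in $\varLambda_{\sigma, \psi}^1(T, \delta, v)$ at the same parameter $\tau$. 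Surjectivity is the mirror argument: any $(\tilde{t}, \tilde{\phi}) \in \varLambda_{\sigma, \psi}^1(T, \delta, v)$ equals $(\sigma + \tau, I_{\sigma + \tau}\gamma)$ for some $\gamma \in \varGamma_{\sigma, \psi}^1(\tau, \delta, v)$, and setting $\beta := N_{\sigma, \psi}^v\gamma \in \varGamma_{0, \boldsymbol{0}}^1(\tau, \delta, 0)$ yields $\tau_{\sigma, \psi}^v(\tau, I_\tau\beta) = (\tilde{t}, \tilde{\phi})$ by the same identity. Injectivity is free, since $\tau_{\sigma, \psi}^v$ is already a bijection of $\mathbb{R}_+ \times \Map(I, E)$ with the explicit inverse $\bigl(\tau_{\sigma, \psi}^v\bigr)^{-1}$ exhibited when it was defined, so its restriction to any subset is injective.

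Part (i) is the verbatim analogue with $v = 0$ and ordinary prolongations: replace $\varGamma^1$, $\varLambda^1$, $A_{\sigma, \psi}^v$, $N_{\sigma, \psi}^v$ by $\varGamma$, $\varLambda$, $A_{\sigma, \psi}^0$, $N_{\sigma, \psi}^0$, observe $\psi^{\wedge 0} = \bar{\psi}$, and invoke the $\rho^0$-isometry in place of the $\rho^1$ one. I expect no genuine obstacle here; the only points needing care are bookkeeping the shrinking domains $[0, \tau] + I \mapsto [\sigma, \sigma + \tau] + I$ as $\tau$ ranges over $[0, T]$, and confirming that the first-coordinate shift by $\sigma$ forces any preimage to sit at the identical index $\tau$ of the defining union, so that the norm and derivative constraints (including $\gamma'(\sigma) = v \Leftrightarrow \beta'(0) = 0$) transfer cleanly through the isometry.
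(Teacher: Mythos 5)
Your proposal is correct and follows essentially the same route as the paper: both rest on the identity $\tau_{\sigma, \psi}^v(\tau, I_\tau\beta) = \bigl( \sigma + \tau, I_{\sigma + \tau}[A_{\sigma, \psi}^v\beta] \bigr)$ and reduce bijectivity on each slice to the already-recorded bijectivity of $A_{\sigma, \psi}^v \colon \varGamma_{0, \boldsymbol{0}}^1(\tau, \delta, 0) \to \varGamma_{\sigma, \psi}^1(\tau, \delta, v)$ (resp.\ its $\rho^0$ analogue). Your additional remark that injectivity is inherited from the globally defined inverse $\bigl( \tau_{\sigma, \psi}^v \bigr)^{-1}$ is a harmless shortcut for what the paper phrases as bijectivity of $A_{\sigma, \psi}^0$ fiberwise.
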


\begin{proof}
Let $(t, \phi) \in \varLambda_{0, \boldsymbol{0}}(T, \delta)$.
Then $t \in [0, T]$, and $\phi = I_t\beta$ for some $\beta \in \varGamma_{0, \boldsymbol{0}}(t, \delta)$.
Therefore,
	\begin{align*}
		\tau_{\sigma, \psi}^0(t, \phi)
		&= \bigl( \sigma + t, I_t\bar{\psi} + \phi \bigr) \\
		&= \bigl( \sigma + t, I_t \bigl[ \bar{\psi} + \beta \bigr] \bigr) \\
		&= \bigl( \sigma + t, I_{\sigma + t}[A_{\sigma, \psi}^0\beta] \bigr).
	\end{align*}
Since $A_{\sigma, \psi}^0\beta \in \varGamma_{\sigma, \psi}(t, \delta)$,
we have $\tau_{\sigma, \psi}^0(t, \phi) \in \varLambda_{\sigma, \psi}(T, \delta)$.
The bijectivity of the restricted map follows by the bijectivity of
	\begin{equation*}
		A_{\sigma_0, \psi_0}^0 \colon
		\varGamma_{0, \boldsymbol{0}}(t, \delta)
		\to
		\varGamma_{\sigma, \psi}(t, \delta)
	\end{equation*}
for every $t \in [0, T]$.

(ii) We omit the proof because the similar proof to (i) is valid.
\end{proof}

The next proposition shows the thinness of the rectangles by prolongations and $C^1$-prolongations.

\begin{proposition}\label{prop:rectangle by prolongation, cpt support}
Suppose $|I| > 0$.
Then for all $0 < T < |I|$ and all $0 \le \delta \le \infty$,
	\begin{align*}
		\varLambda_{0, \boldsymbol{0}}(T, \delta)
		&= \bigcup_{\tau \in [0, T]} \{\mspace{2mu} (\tau, \phi) \in [0, T] \times C_\mathrm{c}(I, E) :
			\text{$\supp(\phi) \subset [-\tau, 0]$, $\|\phi\|_\infty \le \delta$} \mspace{2mu}\}, \\
		\varLambda_{0, \boldsymbol{0}}^1(T, \delta, 0)
		&= \bigcup_{\tau \in [0, T]} \{\mspace{2mu} (\tau, \phi) \in [0, T] \times C^1_\mathrm{c}(I, E) :
			\text{$\supp(\phi) \subset [-\tau, 0]$, $\|\phi\|_{C^1} \le \delta$} \mspace{2mu}\}
	\end{align*}
hold.
\end{proposition}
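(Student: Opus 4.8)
The plan is to prove each of the two set equalities by establishing both inclusions, carrying the continuous case and the $C^1$ case in parallel since they differ only in the amount of derivative bookkeeping. As a preliminary step I would unwind the definitions at the base point $(0, \boldsymbol{0})$: an element $\gamma \in \varGamma_{0, \boldsymbol{0}}(\tau, \delta)$ is precisely a map on $[0,\tau] + I$ with $\gamma|_I \equiv 0$ (this is the content of $I_0\gamma = \boldsymbol{0}$), with $\gamma|_{[0,\tau]}$ continuous and $\sup_{s \in [0,\tau]} \|\gamma(s)\|_E \le \delta$; the corresponding member of the rectangle is $(\tau, \phi)$ with $\phi = I_\tau\gamma$, that is $\phi(\theta) = \gamma(\tau + \theta)$ for $\theta \in I$. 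For the $C^1$ version one adds that $\gamma|_{[0,\tau]}$ is of class $C^1$, that $\|\gamma|_{[0,\tau]}\|_{C^1} \le \delta$, and that $\gamma'(0) = 0$.

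For the inclusion $\subset$ I would start from such a $\gamma$ and read off the three defining properties of $\phi = I_\tau\gamma$. The support bound is the key structural observation: since $\gamma \equiv 0$ on $I$ and $\tau + \theta \le 0$ (hence $\tau + \theta \in I$) whenever $\theta \le -\tau$, one gets $\phi(\theta) = 0$ for all $\theta \le -\tau$, so $\{\phi \ne 0\} \subset (-\tau, 0]$ and $\supp(\phi) \subset [-\tau, 0]$; in particular $\phi$ has compact support. Continuity (resp.\ $C^1$-regularity) of $\phi$ is inherited from that of $\gamma$ under the shift $\theta \mapsto \tau + \theta$; in the $C^1$ case the regularity of $\phi$ across $\theta = -\tau$ is exactly where the defining condition $\gamma'(0) = 0$ is used, matching the left derivative $0$ coming from $\gamma|_I$. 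The norm bound $\|\phi\|_\infty \le \delta$ (resp.\ $\|\phi\|_{C^1} \le \delta$) then follows because the shift does not increase the relevant suprema and the contributions from the part of $\tau + I$ lying inside $I$ vanish.

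For the reverse inclusion $\supset$, given $(\tau, \phi)$ from the right-hand side I would reconstruct a prolongation by gluing: define $\gamma(u) = \phi(u - \tau)$ for $u \in [0, \tau]$ and $\gamma(u) = 0$ for $u \in I$. By construction $\gamma|_I \equiv 0$, so $I_0\gamma = \boldsymbol{0}$ and $I_\tau\gamma = \phi$, while the norm constraints transfer back verbatim from $\phi$ to $\gamma|_{[0,\tau]}$. The only nontrivial point is to confirm that $\gamma$ is a genuine (resp.\ $C^1$-) prolongation, i.e.\ that the two branches fit together at the junction $u = 0$: continuity demands $\phi(-\tau) = 0$, and the $C^1$ case additionally demands $\phi'(-\tau) = 0$, the latter being exactly the condition $\gamma'(0) = 0$ required for membership in $\varGamma_{0, \boldsymbol{0}}^1(\tau, \delta, 0)$.

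This junction condition is the main (and essentially the only) obstacle, and it is precisely where the hypothesis $0 < T < |I|$ enters. Since $0 < \tau \le T < |I|$, the point $-\tau$ lies in the interior of $I$, so $I$ contains points strictly to the left of $-\tau$; combined with $\supp(\phi) \subset [-\tau, 0]$, which forces $\phi$ (and, in the $C^1$ case, $\phi'$) to vanish on $\{\theta < -\tau\}$, continuity yields $\phi(-\tau) = 0$ (resp.\ $\phi'(-\tau) = 0$) as a one-sided limit. Were $-\tau$ allowed to be the left endpoint of $I$, these boundary values could be nonzero and the reconstructed $\gamma$ would fail to be continuous at $0$, so the strict inequality $T < |I|$ is genuinely needed. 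Finally I would dispose of the degenerate slice $\tau = 0$ separately; there both sides collapse to $\{(0, \boldsymbol{0})\}$, since a continuous $\phi$ with $\supp(\phi) \subset \{0\}$ must vanish identically.
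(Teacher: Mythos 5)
Your proof is correct and follows essentially the same route as the paper: the inclusion $\subset$ by reading off support and norm from $\phi = I_\tau\gamma$, and the inclusion $\supset$ by reconstructing $\beta(t) = \phi(t - \tau)$ on $[0, \tau]$ and $\beta \equiv 0$ on $I$. You are in fact more explicit than the paper about the junction conditions $\phi(-\tau) = 0$ (and $\phi'(-\tau) = 0$ in the $C^1$ case) and about where the hypothesis $T < |I|$ is actually used, which the paper's proof leaves implicit when it asserts that $\beta$ is a prolongation of $\boldsymbol{0}$.
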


\begin{proof}
We only show the expression of $\varLambda_{0, \boldsymbol{0}}(T, \delta)$
because the same proof is valid for $\varLambda_{0, \boldsymbol{0}}^1(T, \delta, 0)$.

($\subset$)
Let $(\tau, \phi) \in \varLambda_{0, \boldsymbol{0}}(T, \delta)$.
Then $\tau \in [0, T]$, and $\phi = I_\tau\beta$ for some $\beta \in \varGamma_{0, \boldsymbol{0}}(T, \delta)$.
Therefore, we have $\supp(\phi) = \supp(I_\tau\beta) \subset [-\tau, 0]$
and $\|\phi\|_\infty = \|I_\tau\beta\|_\infty = \|\beta\|_\infty \le \delta$.

($\supset$)
Let $\tau \in [0, T]$ and $\phi \in C_\mathrm{c}(I, E)$
satisfying $\supp(\phi) \subset [-\tau, 0]$ and $\|\phi\|_\infty \le \delta$.
We define $\beta \colon [0, \tau] + I \to E$ by
	\begin{equation*}
		\beta(t) =
		\begin{cases}
			\phi(t - \tau), & 0 \le t \le \tau, \\
			0, & t \in I.
		\end{cases}
	\end{equation*}
Then $\beta$ is a prolongation of $\boldsymbol{0}$ and $\|\beta\|_\infty \le \delta$.
Since $\phi = I_\tau\beta$, we have $(\tau, \phi) \in \varLambda_{0, \boldsymbol{0}}(T, \delta)$.

This completes the proof.
\end{proof}

\subsection{Characterizations of regulation by prolongations}

Let $H \subset \Map(I, E)$ be a history space.
In this subsection, we give some characterizations of the properties of regulation by prolongations and $C^1$-prolongations.

\subsubsection{Compact past interval}

\begin{theorem}\label{thm:r < infty, regulation}
Let $0 \le r < \infty$ and $H \subset \Map(I^r, E)$ be a history space.
We assume that $\boldsymbol{0}$ is closed under prolongations in $H$.
Then the following properties are equivalent:
\begin{enumerate}
\item[\emph{(a)}] $H$ is regulated by prolongations.
\item[\emph{(b)}] The topology of $H$ is coarser than (or equal to) the topology of uniform convergence on $C(I^r, E)$.
\end{enumerate}
\end{theorem}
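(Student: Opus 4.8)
The plan is to read condition (b) as the statement that the natural inclusion $\iota \colon C(I^r, E)_\mathrm{u} \to H$ is continuous. This makes sense because $\boldsymbol{0}$ being closed under prolongations gives $C_\mathrm{c}(I^r, E) \subset H$ by Proposition~\ref{prop:closedness under prolongations}, and $C_\mathrm{c}(I^r, E) = C(I^r, E)$ since $I^r$ is compact ($r < \infty$). As both spaces are linear topological spaces and $\iota$ is linear, continuity of $\iota$ is equivalent to continuity at $\boldsymbol{0}$. The one elementary estimate driving both implications is that every $\gamma \in \varGamma_{0, \boldsymbol{0}}(\tau, \delta)$ satisfies $\gamma \equiv 0$ on $I^r$, so its history lies in $C(I^r, E)$ and obeys $\|I_\tau\gamma\|_\infty \le \|\gamma\|_{C[0, \tau]} \le \delta$.

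For (b) $\Rightarrow$ (a), I would argue directly. Fix $T > 0$ and a neighborhood $N$ of $\boldsymbol{0}$ in $H$. Continuity of $\iota$ at $\boldsymbol{0}$ yields $\delta > 0$ with $\{\phi \in C(I^r, E) : \|\phi\|_\infty < \delta\} \subset N$. For any $(\tau, I_\tau\gamma) \in \varLambda_{0, \boldsymbol{0}}(T, \delta/2)$ the estimate above gives $\|I_\tau\gamma\|_\infty \le \delta/2 < \delta$, hence $I_\tau\gamma \in N$; thus $\varLambda_{0, \boldsymbol{0}}(T, \delta/2) \subset [0, T] \times N$, which is exactly regulation by prolongations.

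The content is in (a) $\Rightarrow$ (b), and here the key idea is to exploit the freedom of choosing $T$ in the regulation property and to take $T$ strictly larger than $r$. Fix $T > r$ (e.g.\ $T = r + 1$). Given a neighborhood $N$ of $\boldsymbol{0}$, regulation provides $\delta > 0$ with $\varLambda_{0, \boldsymbol{0}}(T, \delta) \subset [0, T] \times N$. The crucial realization step is to show that every $\phi \in C(I^r, E)$ with $\|\phi\|_\infty \le \delta$ arises as $\phi = I_T\gamma$ for some $\gamma \in \varGamma_{0, \boldsymbol{0}}(T, \delta)$. I would build $\gamma$ on $[0, T] = [0, T - r] \cup [T - r, T]$ by setting $\gamma(s) = \phi(s - T)$ on $[T - r, T]$ (so that $I_T\gamma = \phi$) and interpolating linearly from $\gamma(0) = 0$ to $\gamma(T - r) = \phi(-r)$ on $[0, T - r]$. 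Both pieces have norm at most $\|\phi\|_\infty \le \delta$, they match at $T - r$, and $\gamma(0) = 0$ forces $I_0\gamma = \boldsymbol{0}$, so $\gamma \in \varGamma_{0, \boldsymbol{0}}(T, \delta)$. Then $(T, \phi) \in \varLambda_{0, \boldsymbol{0}}(T, \delta) \subset [0, T] \times N$ gives $\phi \in N$, so $\{\phi : \|\phi\|_\infty \le \delta\} \subset N$ and $\iota$ is continuous at $\boldsymbol{0}$.

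I expect the main obstacle — and the reason the theorem is nontrivial — to be precisely this realization step. It genuinely requires $T > r$: the interval $[0, T - r]$ must be nondegenerate in order to bend $\gamma$ away from the forced value $\gamma(0) = 0$ up to the prescribed boundary value $\phi(-r)$ while keeping the sup-norm under $\delta$. For $\tau \le r$ one can only realize histories with $\phi(-r) = 0$ (cf.\ the thinness recorded in Proposition~\ref{prop:rectangle by prolongation, cpt support}, which is stated for $T < |I|$), so the naive attempt to read (b) off from small-$T$ rectangles fails. The degenerate case $r = 0$ (the ODE case, $C(I^0, E) = E$) is handled by the same construction with any $T > 0$, where the realization is just the straight path $s \mapsto (s/T)\phi$ from $0$ to $\phi \in E$.
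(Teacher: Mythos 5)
Your proof is correct and follows essentially the same route as the paper's: the (b) $\Rightarrow$ (a) direction via the estimate $\|I^r_\tau\gamma\|_\infty \le \|\gamma\|_{C[0,\tau]}$, and the (a) $\Rightarrow$ (b) direction by realizing each $\phi$ with $\|\phi\|_\infty \le \delta$ as the history at time $T = r+1$ of a prolongation of $\boldsymbol{0}$ built from a linear ramp on $[0,1]$ followed by a shifted copy of $\phi$, which is exactly the paper's $\gamma_\phi$. Your remark that $T > r$ is forced (since for $\tau \le r$ only histories with $\phi(-r) = 0$ are realizable) correctly identifies why the construction must use a rectangle of width exceeding $r$.
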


In the above statement, the inclusion
	\begin{equation*}
		C(I^r, E) \subset H
	\end{equation*}
for $r < \infty$ (see Proposition~\ref{prop:closedness under prolongations}) is fundamental.
For the proof of Theorem~\ref{thm:r < infty, regulation}, we use the following fact:
For linear topological spaces $(X, \tau)$ and $(X, \tau')$,
$\tau$ is finer than (or equal to) $\tau'$ if and only if
for every neighborhood $N'$ of $0$ with respect to $\tau'$,
there exists a neighborhood $N$ of $0$ with respect to $\tau$ such that $N \subset N'$.
Therefore, the property (b) is equivalent to the continuity of the inclusion map
	\begin{equation*}
		i \colon C(I^r, E)_\mathrm{u} \to H.
	\end{equation*}

\begin{proof}[\textbf{Proof of Theorem~\ref{thm:r < infty, regulation}}]
(a) $\Rightarrow$ (b):
Let $N$ be a neighborhood of $\boldsymbol{0}$ in $H$.
By the assumption, there is $\delta > 0$ such that
	\begin{equation*}
		\varLambda_{0, \boldsymbol{0}}(r + 1, \delta) \subset [0, r + 1] \times N.
	\end{equation*}
For each $\phi \in C(I^r, E)$, we consider the map $\gamma_\phi \colon [0, r + 1] + I^r \to E$ defined by
	\begin{equation*}
		\gamma_\phi(t) =
		\begin{cases}
			0 & (-r \le t \le 0), \\
			t\phi(-r) & (0 \le t \le 1), \\
			\phi(t - (r + 1)) & (1 \le t \le r + 1).
		\end{cases}
	\end{equation*}
Then $\gamma_\phi$ is a prolongation of $\boldsymbol{0}$ and satisfies
	\begin{equation*}
		I^r_{r + 1}\gamma_\phi = \phi, \mspace{15mu} \|\gamma_\phi\|_\infty \le \|\phi\|_\infty.
	\end{equation*}
Therefore, $\|\phi\|_\infty \le \delta$ implies
	\begin{equation*}
		(r + 1, \phi) = (r + 1, I^r_{r + 1}\gamma_\phi) \in \varLambda_{0, \boldsymbol{0}}(r + 1, \delta).
	\end{equation*}
By combining this and $\varLambda_{0, \boldsymbol{0}}(r + 1, \delta) \subset [0, r + 1] \times N$,
we have $\phi \in N$.
This shows
	\begin{equation*}
		\{\mspace{2mu} \phi \in C(I^r, E) : \|\phi\|_\infty \le \delta \mspace{2mu}\} \subset N,
	\end{equation*}
and we obtain (b).

(b) $\Rightarrow$ (a):
Let $T > 0$ and $N$ be a neighborhood of $\boldsymbol{0}$ in $H$.
By the assumption, there is $\delta > 0$ such that
	\begin{equation*}
		\{\mspace{2mu} \phi \in C(I^r, E) : \|\phi\|_\infty \le \delta \mspace{2mu}\} \subset N.
	\end{equation*}
For every $\tau \in [0, T]$ and every prolongation $\gamma \colon [0, \tau] + I^r \to E$ of $\boldsymbol{0}$,
	\begin{equation*}
		I^r_\tau\gamma \in C(I^r, E), \mspace{15mu} \|I^r_\tau\gamma\|_\infty \le \|\gamma\|_{C[0, \tau]}.
	\end{equation*}
This shows $\varLambda_{0, \boldsymbol{0}}(T, \delta) \subset [0, T] \times N$.
\end{proof}

The following is a $C^1$-version of Theorem~\ref{thm:r < infty, regulation}.
The result $C^1(I^r, E) \subset H$ for $r < \infty$
(see Proposition~\ref{prop:closedness under C^1-prolongations}) is fundamental,
and the property (b) in Theorem~\ref{thm:r < infty, C^1-regulation} is equivalent to the continuity of the inclusion
	\begin{equation*}
		i \colon \bigl( C^1(I^r, E), \|\cdot\|_{C^1} \bigr) \to H.
	\end{equation*}

\begin{theorem}\label{thm:r < infty, C^1-regulation}
Let $0 \le r < \infty$ and $H \subset \Map(I^r, E)$ be a history space.
We assume that $\boldsymbol{0}$ is closed under $C^1$-prolongations with $0$-derivative in $H$.
Then the following properties are equivalent:
\begin{enumerate}
\item[\emph{(a)}] $H$ is regulated by $C^1$-prolongations.
\item[\emph{(b)}] The topology of $H$ is coarser than (or equal to) the topology of uniform $C^1$-convergence on $C^1(I^r, E)$.
\end{enumerate}
\end{theorem}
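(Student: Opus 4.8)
The plan is to follow the proof of Theorem~\ref{thm:r < infty, regulation} almost verbatim, replacing the sup-norm by the $C^1$-norm and the piecewise-linear attaching map by a $C^1$ one. As noted in the text preceding the statement, property (b) is equivalent to the continuity of the inclusion $i \colon (C^1(I^r, E), \|\cdot\|_{C^1}) \to H$; concretely, (b) asserts that for every neighborhood $N$ of $\boldsymbol{0}$ in $H$ there is $\delta > 0$ with $\{\phi \in C^1(I^r, E) : \|\phi\|_{C^1} \le \delta\} \subset N$. I would prove the two implications separately. The fundamental inclusion $C^1(I^r, E) \subset H$ supplied by Proposition~\ref{prop:closedness under C^1-prolongations} is what makes both sides of the equivalence meaningful. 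Also note that since $\boldsymbol{0}^{\wedge 0} = \boldsymbol{0}$, membership $\gamma \in \varGamma^1_{0, \boldsymbol{0}}(T, \delta, 0)$ reduces to $\gamma$ being a $C^1$-prolongation of $\boldsymbol{0}$ with $\gamma'(0) = 0$ and $\|\gamma\|_{C^1[0, T]} \le \delta$.

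For (a)~$\Rightarrow$~(b): given a neighborhood $N$ of $\boldsymbol{0}$, I would apply regulation by $C^1$-prolongations with $T = r + 1$ to obtain $\delta' > 0$ with $\varLambda^1_{0, \boldsymbol{0}}(r + 1, \delta', 0) \subset [0, r + 1] \times N$. For each $\phi \in C^1(I^r, E)$ I would attach $\phi$ to $\boldsymbol{0}$ by the cubic Hermite segment $\ell$ used in the proof of Proposition~\ref{prop:closedness under C^1-prolongations}, setting $\gamma_\phi$ equal to $\boldsymbol{0}$ on $[-r, 0]$, to $\ell$ on $[0, 1]$, and to $\phi^{\wedge \phi'(0)}(\cdot - (r + 1))$ on $[1, r + 1]$; this is a $C^1$-prolongation of $\boldsymbol{0}$ with $0$-derivative satisfying $I^r_{r + 1}\gamma_\phi = \phi$. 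The crucial step is the norm estimate $\|\gamma_\phi\|_{C^1[0, r + 1]} \le K \|\phi\|_{C^1}$ for a constant $K$ independent of $\phi$ and of $r$; this follows from the explicit coefficients of $\ell$ (which involve only $\phi(-r)$ and $\phi'(-r)$, each bounded by $\|\phi\|_{C^1}$) together with the identity $\gamma_\phi = \phi(\cdot - (r+1))$ on $[1, r+1]$. Taking $\delta := \delta'/K$, any $\phi$ with $\|\phi\|_{C^1} \le \delta$ yields $\gamma_\phi \in \varGamma^1_{0, \boldsymbol{0}}(r + 1, \delta', 0)$, hence $(r + 1, \phi) \in \varLambda^1_{0, \boldsymbol{0}}(r + 1, \delta', 0) \subset [0, r + 1] \times N$, so $\phi \in N$, giving (b).

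For (b)~$\Rightarrow$~(a): fix $T > 0$ and a neighborhood $N$; then (b) supplies $\delta > 0$ with $\{\phi : \|\phi\|_{C^1} \le \delta\} \subset N$. I would take any $\tau \in [0, T]$ and any $\gamma \in \varGamma^1_{0, \boldsymbol{0}}(\tau, \delta, 0)$. Since $\gamma \equiv \boldsymbol{0}$ on $I^r$ and $\gamma'(0) = 0$, both $\gamma$ and $\gamma'$ vanish on $[-r, 0]$ and $\gamma$ is genuinely $C^1$ across the junction at $0$, so $I^r_\tau\gamma \in C^1(I^r, E)$; a direct estimate (using that the translate $[\tau - r, \tau]$ meets $[-r,0]$ only where $\gamma$ and $\gamma'$ vanish) gives $\|I^r_\tau\gamma\|_{C^1} \le \|\gamma\|_{C^1[0, \tau]} \le \delta$. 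Thus $I^r_\tau\gamma \in N$, which shows $\varLambda^1_{0, \boldsymbol{0}}(T, \delta, 0) \subset [0, T] \times N$, i.e.\ (a).

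The \emph{main obstacle}, and the only genuine departure from the $C^0$ argument of Theorem~\ref{thm:r < infty, regulation}, is the construction and the $C^1$-norm control of $\gamma_\phi$ in the direction (a)~$\Rightarrow$~(b). In the continuous case a linear ramp attaches $\phi$ to $\boldsymbol{0}$ with no loss in sup-norm; in the $C^1$ case one must additionally match the derivative $\phi'(-r)$ at the join while keeping the $0$-derivative at $0$, which forces the cubic interpolation, and one must verify that passing to $\|\cdot\|_{C^1}$ costs only a fixed multiplicative constant $K$ rather than something depending on $\phi$ or $r$. Everything else is a routine transcription of the earlier proof.
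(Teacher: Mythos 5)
Your proof is correct and follows essentially the same route as the paper: the direction (a)~$\Rightarrow$~(b) uses exactly the cubic Hermite attachment of $\phi$ to $\boldsymbol{0}$ over $[0,1]$ with the uniform bound $\|\gamma_\phi\|_{C^1[0, r+1]} \le K\|\phi\|_{C^1}$ (the paper computes $K = 12$ from the explicit coefficients), and the direction (b)~$\Rightarrow$~(a) uses the same observation that $I^r_\tau\gamma \in C^1(I^r, E)$ with $\|I^r_\tau\gamma\|_{C^1} \le \|\gamma\|_{C^1[0,\tau]}$. You also correctly isolated the one genuinely new ingredient relative to the $C^0$ case, namely the derivative matching at the join and the resulting fixed multiplicative constant.
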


\begin{proof}
(a) $\Rightarrow$ (b):
Let $N$ be a neighborhood of $\boldsymbol{0}$ in $H$.
By the assumption, there is $\delta > 0$ such that
	\begin{equation*}
		\varLambda_{0, \boldsymbol{0}}^1(r + 1, \delta, 0) \subset [0, r + 1] \times N.
	\end{equation*}
For each $\phi \in C^1(I^r, E)$, we consider the $\gamma_\phi \colon [0, r + 1] + I^r \to E$ defined by
	\begin{equation*}
		\gamma_\phi(t) =
		\begin{cases}
			0 & (-r \le t \le 0) \\
			t^2[-\phi'(-r) + 3\phi(-r)] + t^3[\phi'(-r) - 2\phi(-r)] & (0 \le t \le 1) \\
			\phi(t - (r + 1)) & (1 \le t \le r + 1).
		\end{cases}
	\end{equation*}
Then $\gamma_\phi$ is a $C^1$-prolongation of $\boldsymbol{0}$ with $0$-derivative
(see Proposition~\ref{prop:C^1 subset H}) and $I^r_{r + 1}\gamma_\phi = \phi$.
Since for all $t \in [0, 1]$
	\begin{align*}
		&\|\gamma_\phi(t)\|_E \le 2\|\phi'(-r)\|_E + 5\|\phi(-r)\|_E \le 5\|\phi\|_{C^1}, \\
		&\|(\gamma_\phi)'(t)\|_E \le 5\|\phi'(-r)\|_E + 12\|\phi(-r)\|_E \le 12\|\phi\|_{C^1},
	\end{align*}
we have $\|\gamma_\phi\|_{C^1[0, r + 1]} \le 12\|\phi\|_{C^1}$.
Therefore, $\|\phi\|_{C^1} \le \delta/12$ implies
	\begin{equation*}
		(r + 1, \phi) = (r + 1, I^r_{r + 1}\gamma_\phi) \in \varLambda_{0, \boldsymbol{0}}^1(r + 1, \delta, 0).
	\end{equation*}
By combining this and $\varLambda_{0, \boldsymbol{0}}^1(r + 1, \delta, 0) \subset [0, r + 1] \times N$,
we have $\phi \in N$.
This shows
	\begin{equation*}
		\{\mspace{2mu} \phi \in C^1(I^r, E) : \|\phi\|_\infty \le \delta/12 \mspace{2mu}\} \subset N,
	\end{equation*}
and we obtain (b).

(b) $\Rightarrow$ (a):
Let $T > 0$ and $N$ be a neighborhood of $\boldsymbol{0}$ in $H$.
By assumption, there is $\delta > 0$ such that
	\begin{equation*}
		\{\mspace{2mu} \phi \in C^1(I^r, E) : \|\phi\|_{C^1} \le \delta \mspace{2mu}\} \subset N.
	\end{equation*}
For every $\tau \in [0, T]$
and every $C^1$-prolongation $\gamma \colon [0, \tau] + I^r \to E$ of $\boldsymbol{0}$ with $0$-derivative,
	\begin{equation*}
		I^r_\tau\gamma \in C^1(I^r, E), \mspace{15mu} \|I^r_\tau\gamma\|_{C^1} \le \|\gamma\|_{C^1[0, \tau]}.
	\end{equation*}
This shows $\varLambda_{0, \boldsymbol{0}}^1(T, \delta, 0) \subset [0, T] \times N$.
\end{proof}

\subsubsection{Whole past interval}

\begin{theorem}[\cite{Nishiguchi 2017}]
\label{thm:r = infty, regulation}
Let $H \subset \Map(I^\infty, E)$ be a history space.
We assume that $\boldsymbol{0}$ is closed under prolongations in $H$.
Then the following properties are equivalent:
\begin{enumerate}
\item[\emph{(a)}] $H$ is regulated by prolongations.
\item[\emph{(b)}] For each $R > 0$,
the topology of $H$ is coarser than (or equal to) the topology of uniform convergence on
	\begin{equation*}
		\{\mspace{2mu} \phi \in C_\mathrm{c}(I^\infty, E) : \supp(\phi) \subset [-R, 0] \mspace{2mu} \}.
	\end{equation*}
\end{enumerate}
\end{theorem}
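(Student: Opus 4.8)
The plan is to run the same strategy as in the proof of Theorem~\ref{thm:r < infty, regulation}, but now reading condition (b) as a \emph{family} of inclusion-continuity statements indexed by $R$. For each $R > 0$ set $C_R := \{\mspace{2mu}\phi \in C_\mathrm{c}(I^\infty, E) : \supp(\phi) \subset [-R, 0]\mspace{2mu}\}$, a normed space under $\|\cdot\|_\infty$ that sits inside $H$ as a set, since $\boldsymbol{0}$ closed under prolongations gives $C_\mathrm{c}(I^\infty, E) \subset H$ (Proposition~\ref{prop:closedness under prolongations}). Using the same linear-topological-space fact quoted after Theorem~\ref{thm:r < infty, regulation}, statement (b) is equivalent to: for every $R > 0$ the inclusion $i_R \colon (C_R, \|\cdot\|_\infty) \to H$ is continuous, i.e.\ for every neighborhood $N$ of $\boldsymbol{0}$ in $H$ there is $\delta > 0$ with $\{\mspace{2mu}\phi \in C_R : \|\phi\|_\infty \le \delta\mspace{2mu}\} \subset N$. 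The single tool that makes both implications short is Proposition~\ref{prop:rectangle by prolongation, cpt support}: since $|I^\infty| = \infty$, for every $T > 0$ and every $0 \le \delta \le \infty$ the rectangle $\varLambda_{0, \boldsymbol{0}}(T, \delta)$ consists exactly of the pairs $(\tau, \phi)$ with $\tau \in [0, T]$, $\supp(\phi) \subset [-\tau, 0]$, and $\|\phi\|_\infty \le \delta$.

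For (a) $\Rightarrow$ (b) I would fix $R > 0$ and a neighborhood $N$ of $\boldsymbol{0}$, apply the regulation hypothesis with $T = R$ to obtain $\delta > 0$ with $\varLambda_{0, \boldsymbol{0}}(R, \delta) \subset [0, R] \times N$, and then observe that any $\phi \in C_R$ with $\|\phi\|_\infty \le \delta$ yields $(R, \phi) \in \varLambda_{0, \boldsymbol{0}}(R, \delta)$ by Proposition~\ref{prop:rectangle by prolongation, cpt support} (taking $\tau = R$, which is legitimate because $\supp(\phi) \subset [-R, 0]$). Hence $\phi \in N$, which is exactly the continuity of $i_R$.

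For (b) $\Rightarrow$ (a) I would fix $T > 0$ and a neighborhood $N$, apply (b) with $R = T$ to obtain $\delta > 0$ with $\{\mspace{2mu}\phi \in C_T : \|\phi\|_\infty \le \delta\mspace{2mu}\} \subset N$, and then note that every $(\tau, \phi) \in \varLambda_{0, \boldsymbol{0}}(T, \delta)$ has, again by Proposition~\ref{prop:rectangle by prolongation, cpt support}, $\supp(\phi) \subset [-\tau, 0] \subset [-T, 0]$ and $\|\phi\|_\infty \le \delta$; thus $\phi \in C_T$ lies in $N$, giving $\varLambda_{0, \boldsymbol{0}}(T, \delta) \subset [0, T] \times N$, which is the regulation property.

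The proof is essentially bookkeeping once Proposition~\ref{prop:rectangle by prolongation, cpt support} is in hand; the only genuinely structural point — and the place where the $r = \infty$ case departs from the compact case — is the matching of the two index families $T \leftrightarrow R$. This matching is forced by the fact that prolongations running over a time window $[0, T]$ can only produce histories supported in $[-T, 0]$, so controlling $H$ near $\boldsymbol{0}$ along prolongations of length $T$ is precisely controlling it along $C_T$. The same observation is what I expect to be the main conceptual hurdle in writing the statement correctly: it explains why (b) cannot be collapsed into coarseness relative to uniform convergence on all of $C_\mathrm{c}(I^\infty, E)$ at once, since no finite $T$ ever reaches arbitrarily distant tails and the support bound must therefore be allowed to grow with $R$.
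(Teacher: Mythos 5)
Your proof is correct and follows essentially the same route as the paper's: both directions rest on matching the time window $T$ with the support bound $R$ and translating between the rectangle $\varLambda_{0, \boldsymbol{0}}(T, \delta)$ and the $\delta$-ball of maps supported in $[-T, 0]$. The only cosmetic difference is that you cite Proposition~\ref{prop:rectangle by prolongation, cpt support} for this identification, whereas the paper re-derives it inline via the explicit shift prolongation $\gamma_\phi(t) = \phi(t - R)$ — the same construction that proves that proposition.
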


\begin{proof}
(a) $\Rightarrow$ (b):
Fix $R > 0$.
Let $N$ be a neighborhood of $\boldsymbol{0}$ in $H$.
By the assumption, there is $\delta > 0$ such that
	\begin{equation*}
		\varLambda_{0, \boldsymbol{0}}(R, \delta) \subset [0, R] \times N.
	\end{equation*}
For each $\phi \in C_\mathrm{c}(I^\infty, E)$ with $\supp(\phi) \subset [-R, 0]$,
let $\gamma_\phi \colon [0, R] + I^\infty \to E$ be a prolongation of $\boldsymbol{0}$
given by
	\begin{equation*}
		\gamma_\phi(t) = \phi(t - R) \mspace{20mu} (t \in [0, R] + I^\infty).
	\end{equation*}
Then
	\begin{equation*}
		I^\infty_R\gamma_\phi = \phi, \mspace{20mu} \|\gamma_\phi\|_{C[0, R]} = \|\phi\|_{\infty}.
	\end{equation*}
Therefore, $\|\phi\|_\infty \le \delta$ implies
	\begin{equation*}
		(R, \phi)
		= (R, I^\infty_R\gamma_\phi)
		\in \varLambda_{0, \boldsymbol{0}}(R, \delta).
	\end{equation*}
By combining this and $\varLambda_{0, \boldsymbol{0}}(R, \delta) \subset [0, R] \times N$, we have $\phi \in N$.
This shows that (b) holds.

(b) $\Rightarrow$ (a):
Let $T > 0$ and $N$ be a neighborhood of $\boldsymbol{0}$ in $H$.
By applying the property (b) as $R = T$, there is $\delta > 0$ such that
for all $\phi \in C_\mathrm{c}(I^\infty, E)$,
	\begin{equation*}
		\supp(\phi) \subset [-T, 0] \mspace{10mu} \text{and} \mspace{10mu} \|\phi\|_\infty \le \delta \imply \phi \in N.
	\end{equation*}

For every $\tau \in [0, T]$ and every prolongation $\gamma \colon [0, \tau] + I^\infty \to E$ of $\boldsymbol{0}$,
	\begin{equation*}
		I^\infty_\tau\gamma \in C_\mathrm{c}(I^\infty, E), \mspace{15mu}
		\supp(I^\infty_\tau\gamma) \subset [-\tau, 0], \mspace{15mu}
		\|I^\infty_\tau\gamma\|_\infty = \|\gamma\|_{C[0, \tau]}.
	\end{equation*}
This shows $\varLambda_{0, \boldsymbol{0}}(T, \delta) \subset [0, T] \times N$.
\end{proof}

The condition (b) in Theorem~\ref{thm:r = infty, regulation} is equivalent to the following property:
For each $R > 0$, the inclusion
	\begin{equation*}
		i \colon \{\mspace{2mu} \phi \in C_\mathrm{c}(I^\infty, E) : \supp(\phi) \subset [-R, 0] \mspace{2mu} \} \to H
	\end{equation*}
is continuous with respect to the topology of uniform convergence.
This is a one of the hypotheses of phase spaces used by Schumacher~\cite{Schumacher 1978}.

The following is a $C^1$-version of Theorem~\ref{thm:r = infty, regulation}.
We omit the proof because the essentially same argument of that proof is valid
($\varLambda_{0, \boldsymbol{0}}(R, \delta)$ should be replaced with $\varLambda_{0, \boldsymbol{0}}^1(R, \delta, 0)$).
The condition (b) in Theorem~\ref{thm:r = infty, C^1-regulation} is equivalent to the following property:
For each $R > 0$, the inclusion
	\begin{equation*}
		i \colon
		\bigl\{ \mspace{2mu} \phi \in C_\mathrm{c}^1(I^\infty, E) : \supp(\phi) \subset [-R, 0] \mspace{2mu} \bigr\} \to H
	\end{equation*}
is continuous with respect to the topology of uniform $C^1$-convergence.

\begin{theorem}\label{thm:r = infty, C^1-regulation}
Let $H \subset \Map(I^\infty, E)$ be a history space.
We assume that $\boldsymbol{0}$ is closed under $C^1$-prolongations with $0$-derivative in $H$.
Then the following properties are equivalent:
\begin{enumerate}
\item[\emph{(a)}] $H$ is regulated by $C^1$-prolongations.
\item[\emph{(b)}] For each $R > 0$,
the topology of $H$ is coarser than (or equal to) the topology of uniform $C^1$-convergence on
	\begin{equation*}
		\{\mspace{2mu} \phi \in C^1_\mathrm{c}(I^\infty, E) : \supp(\phi) \subset [-R, 0] \mspace{2mu} \}.
	\end{equation*}
\end{enumerate}
\end{theorem}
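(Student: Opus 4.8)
The plan is to mirror the proof of Theorem~\ref{thm:r = infty, regulation}, replacing continuous prolongations and the supremum norm throughout by $C^1$-prolongations with $0$-derivative and the $C^1$-norm, so that every occurrence of the rectangle $\varLambda_{0, \boldsymbol{0}}(R, \delta)$ is exchanged for $\varLambda_{0, \boldsymbol{0}}^1(R, \delta, 0)$. As noted just before the statement, condition (b) is equivalent, for each $R > 0$, to the continuity of the inclusion $\{\phi \in C^1_\mathrm{c}(I^\infty, E) : \supp(\phi) \subset [-R, 0]\} \to H$ with respect to the topology of uniform $C^1$-convergence; using the standard fact that the topology of $H$ is coarser than a given topology if and only if each neighborhood of $0$ in that topology contains a neighborhood of $0$ in $H$, the whole argument reduces to comparing neighborhood bases of $\boldsymbol{0}$.

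For (a) $\Rightarrow$ (b), I would fix $R > 0$, take a neighborhood $N$ of $\boldsymbol{0}$ in $H$, and use regulation by $C^1$-prolongations to produce $\delta > 0$ with $\varLambda_{0, \boldsymbol{0}}^1(R, \delta, 0) \subset [0, R] \times N$. Given $\phi \in C^1_\mathrm{c}(I^\infty, E)$ with $\supp(\phi) \subset [-R, 0]$, set $\gamma_\phi(t) = \phi(t - R)$ on $(-\infty, R]$. Since $\phi$ is of class $C^1$ and vanishes on $(-\infty, -R)$, continuity of $\phi$ and $\phi'$ forces $\phi(-R) = 0$ and $\phi'(-R) = 0$; this is exactly what makes $\gamma_\phi$ a genuine $C^1$-prolongation of $\boldsymbol{0}$ with $0$-derivative, satisfying $\gamma_\phi|_{I^\infty} = 0$, $\gamma_\phi'(0) = \phi'(-R) = 0$, and $I^\infty_R \gamma_\phi = \phi$. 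A short computation gives $\|\gamma_\phi\|_{C^1[0, R]} = \|\phi\|_{C^1}$, so $\|\phi\|_{C^1} \le \delta$ places $\gamma_\phi$ in $\varGamma_{0, \boldsymbol{0}}^1(R, \delta, 0)$ and hence $(R, \phi) = (R, I^\infty_R \gamma_\phi) \in \varLambda_{0, \boldsymbol{0}}^1(R, \delta, 0) \subset [0, R] \times N$, giving $\phi \in N$.

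For (b) $\Rightarrow$ (a), I would fix $T > 0$ and a neighborhood $N$ of $\boldsymbol{0}$ in $H$, apply (b) with $R = T$ to obtain $\delta > 0$ such that $\phi \in C^1_\mathrm{c}(I^\infty, E)$ with $\supp(\phi) \subset [-T, 0]$ and $\|\phi\|_{C^1} \le \delta$ implies $\phi \in N$. Then for any $\tau \in [0, T]$ and any $\gamma \in \varGamma_{0, \boldsymbol{0}}^1(\tau, \delta, 0)$, I would verify that $I^\infty_\tau \gamma$ lies in $C^1_\mathrm{c}(I^\infty, E)$ with $\supp(I^\infty_\tau \gamma) \subset [-\tau, 0] \subset [-T, 0]$ and $\|I^\infty_\tau \gamma\|_{C^1} = \|\gamma\|_{C^1[0, \tau]} \le \delta$, so that $I^\infty_\tau \gamma \in N$. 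This yields $\varLambda_{0, \boldsymbol{0}}^1(T, \delta, 0) \subset [0, T] \times N$, which is precisely (a).

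The delicate point, and the only place where the argument genuinely differs from the continuous case, is checking that $I^\infty_\tau \gamma$ is of class $C^1$ across the junction $\theta = -\tau$ where the prolongation meets the constant segment $\boldsymbol{0}$. Both the value $I^\infty_\tau \gamma(-\tau) = \gamma(0) = 0$ and the one-sided derivative $\gamma'(0)$ must match the value $0$ and derivative $0$ coming from $\boldsymbol{0}$; the first is automatic, while the matching of derivatives is guaranteed exactly by the $0$-derivative condition $\gamma'(0) = 0$ built into $\varGamma_{0, \boldsymbol{0}}^1(\tau, \delta, 0)$. Without it, $I^\infty_\tau \gamma$ would be merely continuous and would fall outside the class appearing in (b). The symmetric fact $\phi'(-R) = 0$ plays the same role in the forward direction, and everything else is a routine transcription of the proof of Theorem~\ref{thm:r = infty, regulation}.
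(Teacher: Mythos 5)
Your proposal is correct and follows exactly the route the paper intends: the paper omits this proof precisely because it is the argument of Theorem~\ref{thm:r = infty, regulation} with $\varLambda_{0, \boldsymbol{0}}(R, \delta)$ replaced by $\varLambda_{0, \boldsymbol{0}}^1(R, \delta, 0)$, which is what you carry out. Your explicit verification of the $C^1$-matching at the junctions (via $\phi'(-R) = 0$ and $\gamma'(0) = 0$) is the right point to check and is handled correctly.
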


\subsection{Relationships between neighborhoods and prolongations}

Let $H \subset \Map(I, E)$ be a history space.
In this subsection, we examine relationships between neighborhoods and neighborhoods by prolongations.
The following theorem was obtained in \cite{Nishiguchi 2017}.

\begin{theorem}[\cite{Nishiguchi 2017}]\label{thm:nbd by prolongations}
Suppose that $H$ is prolongable and regulated by prolongations.
Let $W$ be a subset of $\mathbb{R} \times H$ and $(\sigma_0, \psi_0) \in \mathbb{R} \times H$.
Then the following statements hold:
\begin{enumerate}
\item[\emph{(i)}] For every neighborhood $W$ of $(\sigma_0, \psi_0)$ in $[\sigma_0, +\infty) \times H$,
$W$ is a neighborhood by prolongations of $(\sigma_0, \psi_0)$.
\item[\emph{(ii)}] Furthermore, we assume that the semiflow
	\begin{equation*}
		\mathbb{R}_+ \times H \ni (t, \phi) \mapsto S_0(t)\phi \in H
	\end{equation*}
is continuous.
Then every neighborhood $W$ of $(\sigma_0, \psi_0)$ in $\mathbb{R} \times H$
is a uniform neighborhood by prolongations of $(\sigma_0, \psi_0)$.
\end{enumerate}
\end{theorem}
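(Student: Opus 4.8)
The plan is to reduce $W$ to a basic product neighborhood and then to exploit the additive splitting of the history of a prolongation into a \emph{drift} term carried by the base point and a \emph{fluctuation} term governed by regulation. For $(\sigma, \psi) \in \mathbb{R} \times H$, $\tau \ge 0$, and a prolongation $\gamma \in \varGamma_{\sigma, \psi}(\tau, \delta)$, I set $\beta := N_{\sigma, \psi}^0 \gamma$, which lies in $\varGamma_{0, \boldsymbol{0}}(\tau, \delta)$ by the normalization equivalence recorded in Subsection~\ref{subsec:prolongations}. A direct computation from $\gamma(\sigma + s) = \psi^{\wedge 0}(s) + \beta(s)$ then yields the decomposition
\begin{equation*}
	I_{\sigma + \tau}\gamma = S_0(\tau)\psi + I_\tau\beta, \mspace{20mu} S_0(\tau)\psi = I_\tau[\psi^{\wedge 0}],
\end{equation*}
mirroring \eqref{eq:decomposition}. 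This is the engine of both parts: the first summand depends only on the base point, and the second satisfies $(\tau, I_\tau\beta) \in \varLambda_{0, \boldsymbol{0}}(\tau, \delta) \subset \varLambda_{0, \boldsymbol{0}}(T, \delta)$ for $\tau \le T$, hence is controlled by the regulation hypothesis.

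For part (i), I would first use that $W$ is a neighborhood of $(\sigma_0, \psi_0)$ in $[\sigma_0, +\infty) \times H$ to extract $T_1 > 0$ and a neighborhood $N_0$ of $\boldsymbol{0}$ in $H$ with $[\sigma_0, \sigma_0 + T_1) \times (\psi_0 + N_0) \subset W$, then pick a neighborhood $N_1$ of $\boldsymbol{0}$ with $N_1 + N_1 \subset N_0$ using continuity of addition in $H$. Since $\psi_0^{\wedge 0}$ is a prolongation of $\psi_0$ and $H$ is prolongable, the curve $\tau \mapsto S_0(\tau)\psi_0 = I_\tau[\psi_0^{\wedge 0}]$ is continuous with value $\psi_0$ at $\tau = 0$, so I may choose $T \in (0, T_1)$ with $S_0(\tau)\psi_0 - \psi_0 \in N_1$ for $\tau \in [0, T]$. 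Regulation by prolongations then supplies $\delta > 0$ with $\varLambda_{0, \boldsymbol{0}}(T, \delta) \subset [0, T] \times N_1$, so $I_\tau\beta \in N_1$. Feeding both estimates into the decomposition gives $I_{\sigma_0 + \tau}\gamma - \psi_0 \in N_1 + N_1 \subset N_0$ for every $(\sigma_0 + \tau, I_{\sigma_0 + \tau}\gamma) \in \varLambda_{\sigma_0, \psi_0}(T, \delta)$, whence this rectangle is contained in $W$.

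Part (ii) follows the same scheme but must hold uniformly as the base point $(\sigma, \psi)$ ranges over a neighborhood $W_0$ of $(\sigma_0, \psi_0)$. After reducing $W$ to $(\sigma_0 - T_1, \sigma_0 + T_1) \times (\psi_0 + N_0)$ and splitting $N_0 \supset N_1 + N_1$, the fluctuation term $I_\tau\beta$ is handled by regulation exactly as before and uniformly in the base point, because $\beta$ is a normalized prolongation of $\boldsymbol{0}$. The drift term is where the hypothesis strengthens: I would invoke the assumed continuity of $\mathbb{R}_+ \times H \ni (t, \phi) \mapsto S_0(t)\phi \in H$ at $(0, \psi_0)$ (noting $S_0(0)\psi_0 = \psi_0$) to obtain $T' > 0$ and a neighborhood $N_2$ of $\psi_0$ with $S_0(\tau)\psi - \psi_0 \in N_1$ for all $\tau \in [0, T']$ and all $\psi \in N_2$. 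Setting $T := \min\{T', T_1/2\}$, choosing $\delta$ from regulation, and taking $W_0 := \bigl( (\sigma_0 - T_1/2, \sigma_0 + T_1/2) \times N_2 \bigr) \cap W$, the decomposition yields $\phi - \psi_0 \in N_1 + N_1 \subset N_0$ together with the time bound $\sigma + \tau \in (\sigma_0 - T_1, \sigma_0 + T_1)$ for every $(\sigma, \psi) \in W_0$ and every $(\sigma + \tau, \phi) \in \varLambda_{\sigma, \psi}(T, \delta)$, which is the required uniform inclusion.

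The main obstacle is precisely the passage from the pointwise estimate of part (i) to the uniform one of part (ii). In (i) only continuity of $\tau \mapsto S_0(\tau)\psi_0$ for the single base point $\psi_0$ is needed, and prolongability supplies this for free; the uniform statement instead demands control of $S_0(\tau)\psi$ jointly in $(\tau, \psi)$ near $(0, \psi_0)$, which continuity in $\tau$ alone cannot deliver. This is exactly why the semiflow-continuity hypothesis is imposed in (ii) and absent from (i). A secondary point requiring care is the order of quantifiers: $T$ enters both the semiflow-continuity step (through $T'$) and the regulation step (through $\delta$), so $T$ must be fixed \emph{before} $\delta$ and chosen small enough to respect simultaneously the time window $T_1$ and the modulus $T'$ of the semiflow.
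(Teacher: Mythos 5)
Your proof is correct and is essentially the argument the paper uses: your decomposition $I_{\sigma+\tau}\gamma = S_0(\tau)\psi + I_\tau\beta$ is exactly the translation map $\tau_{\sigma,\psi}^0$ of Lemma~\ref{lem:translation on rectangles} written out, and your $N_1 + N_1 \subset N_0$ splitting together with the semiflow hypothesis is the explicit verification of the (joint) continuity of that translation, which is how the paper proves the $C^1$-analogue Theorem~\ref{thm:nbd by C^1-prolongations} (the theorem itself being cited from \cite{Nishiguchi 2017}). Your closing remark correctly identifies why prolongability suffices in (i) while joint continuity of the semiflow is needed in (ii).
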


This theorem is extended in the following way
for a history space $H$ which is $C^1$-prolongable and regulated by $C^1$-prolongations.

\begin{theorem}\label{thm:nbd by C^1-prolongations}
Suppose that $H$ is $C^1$-prolongable and regulated by $C^1$-prolongations.
Let $W$ be a subset of $\mathbb{R} \times H$ and $(\sigma_0, \psi_0) \in \mathbb{R} \times H$.
Then the following statements hold:
\begin{enumerate}
\item[\emph{(i)}] For every neighborhood $W$ of $(\sigma_0, \psi_0)$ in $[\sigma_0, +\infty) \times H$,
$W$ is a neighborhood by $C^1$-prolongations of $(\sigma_0, \psi_0)$.
\item[\emph{(ii)}] Furthermore, we assume that
	\begin{equation*}
		\mathbb{R}_+ \times E \times H \ni (t, v, \phi) \mapsto S(t)(v, \phi) \in H
	\end{equation*}
is continuous.
Then every neighborhood $W$ of $(\sigma_0, \psi_0)$ in $\mathbb{R} \times H$
is a uniform neighborhood by $C^1$-prolongations of $(\sigma_0, \psi_0)$.
\end{enumerate}
\end{theorem}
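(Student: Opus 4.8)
The plan is to mirror the structure of Theorem~\ref{thm:nbd by prolongations}, replacing the unparametrized flow $S_0$ by the parametrized family $S_v$ and working throughout with $C^1$-rectangles. The organizing idea is to transport everything to the base point $(0,\boldsymbol{0})$ by the translation $\tau_{\sigma,\psi}^v$, which by Lemma~\ref{lem:translation on rectangles}(ii) maps $\varLambda_{0,\boldsymbol{0}}^1(T,\delta,0)$ bijectively onto $\varLambda_{\sigma,\psi}^1(T,\delta,v)$. Concretely, every point of $\varLambda_{\sigma,\psi}^1(T,\delta,v)$ has the form $(\sigma+\tau,\,I_\tau[\psi^{\wedge v}]+\phi_0)=(\sigma+\tau,\,S_v(\tau)\psi+\phi_0)$ with $(\tau,\phi_0)\in\varLambda_{0,\boldsymbol{0}}^1(T,\delta,0)$. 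This decomposition separates the \emph{base-point evolution} $S_v(\tau)\psi$ from the bounded $C^1$-\emph{fluctuation} $\phi_0$, and the latter lives in a rectangle anchored at $(0,\boldsymbol{0})$ that is independent of the base point; this independence is what will eventually yield the uniformity in part (ii).

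For part (i), I would fix $v\in E$, pick $T_1>0$ and a neighborhood $N$ of $\psi_0$ with $[\sigma_0,\sigma_0+T_1]\times N\subset W$, and by continuity of addition choose neighborhoods $N_1$ of $\psi_0$ and $N_2$ of $\boldsymbol{0}$ with $N_1+N_2\subset N$. Since $\psi_0^{\wedge v}$ is a $C^1$-prolongation of $\psi_0\in H$, the $C^1$-prolongability of $H$ (Definition~\ref{dfn:prolongabilities}) gives continuity of $\tau\mapsto I_\tau[\psi_0^{\wedge v}]=S_v(\tau)\psi_0$ with value $\psi_0$ at $\tau=0$; hence there is $T_2>0$ with $S_v(\tau)\psi_0\in N_1$ for $\tau\in[0,T_2]$. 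Setting $T=\min\{T_1,T_2\}$, regulation by $C^1$-prolongations (Definition~\ref{dfn:regulation}) supplies $\delta>0$ with $\varLambda_{0,\boldsymbol{0}}^1(T,\delta,0)\subset[0,T]\times N_2$, so $\phi_0\in N_2$. Then $S_v(\tau)\psi_0+\phi_0\in N_1+N_2\subset N$ and $\sigma_0+\tau\in[\sigma_0,\sigma_0+T_1]$, whence $\varLambda_{\sigma_0,\psi_0}^1(T,\delta,v)\subset W$; as $v$ was arbitrary, this is exactly the definition of a neighborhood by $C^1$-prolongations (Definition~\ref{dfn:nbd by C^1-prolongations}).

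For part (ii), I would fix $v_0\in E$, choose $\eta>0$ and a neighborhood $N$ of $\psi_0$ with $(\sigma_0-\eta,\sigma_0+\eta)\times N\subset W$, and split $N_1+N_2\subset N$ as before. Here the full joint continuity of $(t,v,\phi)\mapsto S(t)(v,\phi)=S_v(t)\phi$ at $(0,v_0,\psi_0)$ (with value $\psi_0$) produces $T>0$, a neighborhood $V_0$ of $v_0$, and a neighborhood $U$ of $\psi_0$ with $S_v(\tau)\psi\in N_1$ for all $\tau\in[0,T]$, $v\in V_0$, $\psi\in U$; regulation again yields $\delta>0$ forcing $\phi_0\in N_2$. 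Taking $T\le\eta/2$ and $W_0:=W\cap\big((\sigma_0-\eta/2,\sigma_0+\eta/2)\times U\big)$ gives $\sigma+\tau\in(\sigma_0-\eta,\sigma_0+\eta)$ and $S_v(\tau)\psi+\phi_0\in N$ for every $(\sigma,\psi)\in W_0$ and $v\in V_0$, so the required union lies in $W$. The main obstacle is precisely the passage from (i) to the uniform statement: the regulation estimate is intrinsically anchored at $(0,\boldsymbol{0})$ and already delivers a single $\delta$ valid for \emph{all} fluctuations, so it transports uniformly under the translation with no further work; it is the evolution term $S_v(\tau)\psi$ that must be controlled uniformly in $(\psi,v)$ near $(\psi_0,v_0)$, and this is exactly where the joint continuity of the parametrized semiflow—rather than the pointwise continuity furnished by $C^1$-prolongability alone in part (i)—is indispensable.
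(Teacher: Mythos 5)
Your proposal is correct and follows essentially the same route as the paper: both transport the rectangle to the base point $(0, \boldsymbol{0})$ via Lemma~\ref{lem:translation on rectangles}(ii), use regulation by $C^1$-prolongations to pin down the fluctuation term, and control the evolution term $S_v(\tau)\psi$ by $C^1$-prolongability in (i) and by the joint continuity of the parametrized semiflow in (ii). The only cosmetic difference is that you unpack the continuity of the translation map $\tau_{\sigma,\psi}^v$ (which the paper invokes directly) into continuity of addition plus continuity of $S$, which is exactly how that continuity is justified anyway.
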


\begin{proof}
(i) Fix $v \in E$ and let $W$ be a neighborhood of $(\sigma_0, \psi_0)$ in $[\sigma_0, +\infty) \times H$.
By the continuity of $\tau_{\sigma_0, \psi_0}^v$ at $(0, \boldsymbol{0})$,
there exist $T > 0$ and a neighborhood $N$ of $\boldsymbol{0}$ in $H$ such that
$\tau_{\sigma_0, \psi_0}^v([0, T] \times N) \subset W$.
Since $H$ is regulated by $C^1$-prolongations, there is $\delta > 0$ such that
	\begin{equation*}
		\varLambda_{0, \boldsymbol{0}}^1(T, \delta, 0) \subset [0, T] \times N.
	\end{equation*}
Therefore, we have
	\begin{equation*}
		\varLambda_{\sigma_0, \psi_0}^1(T, \delta, v)
		= \tau_{\sigma_0, \psi_0}^v(\varLambda_{0, \boldsymbol{0}}^1(T, \delta, 0))
		\subset W
	\end{equation*}
from Lemma~\ref{lem:translation on rectangles}.
This means that $W$ is a neighborhood by $C^1$-prolongations of $(\sigma_0, \psi_0)$.

(ii) Let $W$ be a neighborhood of $(\sigma_0, \psi_0)$ in $\mathbb{R} \times H$.
Fix $v_0 \in E$.
We consider the map
	\begin{equation*}
		\tau(\cdot) \colon E \times (\mathbb{R} \times H) \times (\mathbb{R}_+ \times H) \to \mathbb{R} \times H
	\end{equation*}
defined by
	\begin{equation*}
		\tau \bigl( v, (\sigma, \psi), (t, \phi) \bigr) = \tau_{\sigma, \psi}^v(t, \phi).
	\end{equation*}
Then $\tau(\cdot)$ is continuous by the continuity of
$\mathbb{R}_+ \times E \times H \ni (t, v, \phi) \mapsto S(t)(v, \phi) \in H$.
Since
	\begin{equation*}
		\tau \bigl( v_0, (\sigma_0, \psi_0), (0, \boldsymbol{0}) \bigr) = (\sigma_0, \psi_0),
	\end{equation*}
there are a neighborhood $V_0$ of $v_0$ in $E$, a neighborhood $W_0$ of $(\sigma_0, \psi_0)$ in $\mathbb{R} \times H$,
$T > 0$, and a neighborhood $N$ of $\boldsymbol{0}$ in $H$ such that
	\begin{equation*}
		\tau \bigl( V_0 \times W_0 \times ([0, T] \times N) \bigr) \subset W.
	\end{equation*}
This means that for all $(v, \sigma, \psi) \in V_0 \times W_0$, $\tau_{\sigma, \psi}^v([0, T] \times N) \subset W$ holds.
In the same way as the proof of (i), we have
	\begin{equation*}
		\bigcup_{(\sigma, \psi, v) \in W_0 \times V_0} \varLambda_{\sigma, \psi}^1(T, \delta, v)
		= \bigcup_{(\sigma, \psi, v) \in W_0 \times V_0} \tau_{\sigma, \psi}^v(\varLambda_{0, \boldsymbol{0}}^1(T, \delta, 0))
		\subset W.
	\end{equation*}
This asserts the conclusion.
\end{proof}

\begin{remark}\label{rmk:continuity and equi-continuity}
From Corollaries~\ref{cor:continuity of global semiflows} and \ref{cor:locally equi-continuous C_0-semigroup},
the following property holds:
Under the assumption that $H$ is $C^1$-prolongable,
	\begin{equation*}
		\mathbb{R}_+ \times E \times H \ni (t, v, \phi) \mapsto S(t)(v, \phi) \in H
	\end{equation*}
is continuous if and only if for every $T > 0$,
$(S(t))_{t \in [0, T]}$ is equi-continuous at $(0, \boldsymbol{0})$.
\end{remark}

\section{Properties of Lipschitz conditions}\label{sec:Lip conditions}

Let $I$ be a past interval, $E = (E, \|\cdot\|_E)$ be a Banach space,
$H \subset \Map(I, E)$ be a history space, and $F \colon \mathbb{R} \times H \supset \dom(F) \to E$ be a map.
In this section, we investigate properties of various Lipschitz conditions
introduced in Subsection~\ref{subsec:Lip conditions}.

The next lemma will be used in the following subsections.

\begin{lemma}\label{lem:Lipschitz const for rectangles by prolongations}
Let $W_0 \subset \mathbb{R} \times \Map(I, E)$ be a subset, $B \subset E$ be a bounded set,
and $(I_k)_{k = 1}^\infty$ be a sequence of past intervals contained in $I$.
If
	\begin{equation*}
		\sup_{(\sigma, \psi) \in W_0} \lip(\psi|_{I_k}) < \infty
			\mspace{20mu}
		(\forall k \ge 1),
	\end{equation*}
then there exists a sequence $(M_k)_{k = 1}^\infty$ of positive numbers such that
for all $T > 0$ and all $0 < \delta < 1$,
	\begin{equation*}
		\sup
		\bigl\{ \mspace{2mu} \lip(\phi|_{I_k}) :
			(t, \phi) \in \textstyle\bigcup_{(\sigma, \psi, v) \in W_0 \times B} \varLambda_{\sigma, \psi}^1(T, \delta, v)
		\mspace{2mu} \bigr\} \le M_k
		\mspace{20mu} (\forall k \ge 1)
	\end{equation*}
holds.
\end{lemma}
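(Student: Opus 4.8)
The plan is to reduce the assertion to a single Lipschitz estimate and then to use the normalization transformation $N_{\sigma, \psi}^v$ to split a generic history in the rectangle into a controlled ``drift'' term and a small ``perturbation'' term. First I would set
\[
	M_B := \sup_{v \in B} \|v\|_E < \infty
		\mspace{15mu} \text{and} \mspace{15mu}
	L_k := \sup_{(\sigma, \psi) \in W_0} \lip(\psi|_{I_k}) < \infty,
\]
where finiteness of $M_B$ comes from the boundedness of $B$ and finiteness of $L_k$ is precisely the hypothesis. I then claim that
$M_k := \max\{M_B, L_k\} + 1$ works; the essential point is that this quantity is already independent of both $T$ and $\delta$.

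Next, fix $T > 0$ and $0 < \delta < 1$ and take any $(t, \phi)$ in the union, so that $(t, \phi) \in \varLambda_{\sigma, \psi}^1(T, \delta, v)$ for some $(\sigma, \psi, v) \in W_0 \times B$. By definition there are $\tau \in [0, T]$ and $\gamma \in \varGamma_{\sigma, \psi}^1(\tau, \delta, v)$ with $t = \sigma + \tau$ and $\phi = I_{\sigma + \tau}\gamma$. Setting $\beta := N_{\sigma, \psi}^v\gamma$, the equivalence that $\gamma \in \varGamma_{\sigma, \psi}^1(\tau, \delta, v)$ iff $\beta \in \varGamma_{0, \boldsymbol{0}}^1(\tau, \delta, 0)$, together with $\gamma = A_{\sigma, \psi}^v\beta$, gives for every $\theta \in I$ the decomposition
\[
	\phi(\theta) = \gamma(\sigma + \tau + \theta) = \psi^{\wedge v}(\tau + \theta) + \beta(\tau + \theta).
\]
Since the Lipschitz seminorm is subadditive, it suffices to bound on $I_k$ the Lipschitz constants of the two summands $\theta \mapsto \psi^{\wedge v}(\tau + \theta)$ and $\theta \mapsto \beta(\tau + \theta)$ separately.

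For the perturbation term I would use that $\beta$ vanishes on $I$, is of class $C^1$ on $[0, \tau]$ with $\|\beta'\|_{C[0, \tau]} \le \delta$ and $\beta'(0) = 0$. Hence $\theta \mapsto \beta(\tau + \theta)$ is identically $0$ for $\theta \le -\tau$, has derivative of norm at most $\delta$ for $\tau + \theta \in [0, \tau]$, and the two pieces agree in value and in derivative at $-\tau$; consequently its Lipschitz constant on $I_k$ is at most $\delta < 1$. For the drift term I would run a short case analysis on the sign of $\tau + \theta$: when both arguments lie in $I$ the map is $\theta \mapsto \psi(\tau + \theta)$, and because $\tau + \theta$ then remains inside $I_k$ this piece has Lipschitz constant at most $\lip(\psi|_{I_k}) \le L_k$; when both arguments are nonnegative the map is affine with slope $v$, giving Lipschitz constant $\|v\|_E \le M_B$; and a pair straddling the junction is handled by inserting the point $\theta = -\tau$, where $\psi^{\wedge v}(0) = \psi(0)$ lets the two estimates combine to $\max\{M_B, L_k\}$. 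Adding the two bounds gives $\lip(\phi|_{I_k}) \le \max\{M_B, L_k\} + \delta \le M_k$, as required.

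The individual estimates are elementary, so the main obstacle is organizational rather than analytic: I must control the fact that the shift $\tau$ can be arbitrarily large as $T$ ranges over all positive reals, and check that $\tau + \theta$ nevertheless stays in $I_k$ whenever $\tau + \theta \le 0$ — this is exactly what keeps $L_k$, and hence $M_k$, free of $T$. The junction analysis at $\tau + \theta = 0$, using the continuity of $\psi^{\wedge v}$ there with value $\psi(0)$, is the one spot where a little care is needed to preserve the uniformity in $T$ and $\delta$.
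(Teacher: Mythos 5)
Your proposal is correct and follows essentially the same route as the paper: both split the history at the junction between the old part (where $\phi$ is a shift of $\psi$, so the hypothesis $\lip(\psi|_{I_k}) \le L_k$ applies because $\tau + \theta$ stays in $I_k$ when it is nonpositive) and the new part (where the derivative is bounded by $\|v\|_E + \delta$ using $\bigl\| \gamma(\sigma + \cdot) - \psi^{\wedge v}(\cdot) \bigr\|_{C^1} \le \delta$ and the boundedness of $B$). The paper estimates $\lip \bigl( \gamma|_{[\sigma, \sigma + \tau] + I_k} \bigr)$ directly by the sum $\lip(\psi|_{I_k}) + \sup_u \|\gamma'(u)\|_E$, whereas you pass through the additive decomposition $\phi = I_\tau \psi^{\wedge v} + I_\tau \beta$ via $N_{\sigma, \psi}^v$; this is only a cosmetic repackaging yielding the marginally sharper constant $\max\{M_B, L_k\} + 1$ in place of $L_k + M' + 1$.
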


\begin{proof}
Choose $M' > 0$ so that $B \subset \bar{B}_E(0; M')$.
Let $T, \delta > 0$, $(\sigma, \psi, v) \in W_0 \times B$, and $(t, \phi) \in \varLambda_{\sigma, \psi}^1(T, \delta, v)$.
Then $t = \sigma + \tau$ for some $\tau \in [0, T]$,
and $\phi = I_{\sigma + \tau}\gamma$ for some $\gamma \in \varGamma^1_{\sigma, \psi}(\tau, \delta, v)$.
Since
	\begin{equation*}
		\lip(\phi|_{I_k})
		\le \lip \bigl( \gamma|_{[\sigma, \sigma + \tau] + I_k} \bigr)
		\le \lip(\psi|_{I_k}) + \sup_{u \in [\sigma, \sigma + \tau]} \|\gamma'(u)\|_E
	\end{equation*}
and
	\begin{align*}
		\sup_{u \in [\sigma, \sigma + \tau]} \|\gamma'(u)\|_E
		&\le \sup_{u \in [\sigma, \sigma + \tau]} \|\gamma'(u) - v\|_E + \|v\|_E \\
		&\le \bigl\| \gamma(\sigma + \cdot) - \psi^{\wedge v}(\cdot) \bigr\|_{C^1[0, \tau]} + \|v\|_E,
	\end{align*}
we have
	\begin{equation*}
		\lip(\phi|_{I_k})
		\le \sup_{(\sigma, \psi) \in W_0} \lip(\psi|_{I_k}) + 1 + M'.
	\end{equation*}
Therefore, the conclusion is obtained by choosing $M_k := \sup_{(\sigma, \psi) \in W_0} \lip(\psi|_{I_k}) + 1 + M'$.
\end{proof}

We note that when $I_k \equiv I'$ $(k \ge 1)$ for some $I' \subset I$,
one can choose $M_k \equiv M'$ $(k \ge 1)$ for some $M' > 0$.

\subsection{Lipschitz conditions about prolongations}

\begin{proposition}\label{prop:comparison of Lip about prolongations}
Suppose that $H$ is closed under prolongations.
Let $(\sigma_0, \psi_0) \in \dom(F)$.
If $F$ is locally Lipschitzian about prolongations at $(\sigma_0, \psi_0)$,
then $F$ is locally Lipschitzian about $C^1$-prolongations at $(\sigma_0, \psi_0)$.
\end{proposition}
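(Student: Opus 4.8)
The plan is to deduce the $C^1$-version from the prolongation version by shrinking the rectangle, using the set-theoretic inclusion between rectangles by $C^1$-prolongations and by prolongations furnished by Lemma~\ref{lem:comparison of rectangles}. Since $H$ is closed under prolongations, it is also closed under $C^1$-prolongations (by the remark following Definition~\ref{dfn:prolongabilities}), so the standing hypothesis of Definition~\ref{dfn:Lip about C^1-prolongations} is met and all the rectangles appearing below are well defined.

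First I would use the assumption that $F$ is locally Lipschitzian about prolongations at $(\sigma_0, \psi_0)$ to fix constants $T, \delta, L > 0$ such that $L$-\eqref{eq:Lipschitzian} holds for every pair $(t, \phi_1), (t, \phi_2) \in \varLambda_{\sigma_0, \psi_0}(T, \delta) \cap \dom(F)$. The target is to produce $T_0, \delta_0 > 0$ (keeping the same $L$) for which $L$-\eqref{eq:Lipschitzian} holds throughout $\varLambda_{\sigma_0, \psi_0}^1(T_0, \delta_0; F) \cap \dom(F)$, where by the convention preceding Definition~\ref{dfn:Lip about C^1-prolongations} one has $\varLambda_{\sigma_0, \psi_0}^1(T_0, \delta_0; F) = \varLambda_{\sigma_0, \psi_0}^1(T_0, \delta_0, v)$ with $v := F(\sigma_0, \psi_0)$.

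Next I would apply Lemma~\ref{lem:comparison of rectangles} to the single-point (hence bounded) set $B := \{v\} \subset E$ and to the given $T, \delta$. This yields $0 < T_0 \le T$ and $0 < \delta_0 \le \delta/2$ such that $\varLambda_{\sigma_0, \psi_0}^1(T_0, \delta_0, v) \subset \varLambda_{\sigma_0, \psi_0}(T_0, \delta)$. Composing with the monotonicity inclusion $\varLambda_{\sigma_0, \psi_0}(T_0, \delta) \subset \varLambda_{\sigma_0, \psi_0}(T, \delta)$ from Remark~\ref{rmk:comparison of rectangle by prolongations} (valid because $T_0 \le T$ with $\delta$ fixed), I obtain $\varLambda_{\sigma_0, \psi_0}^1(T_0, \delta_0; F) \subset \varLambda_{\sigma_0, \psi_0}(T, \delta)$.

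The conclusion is then immediate: intersecting both sides with $\dom(F)$, any pair $(t, \phi_1), (t, \phi_2) \in \varLambda_{\sigma_0, \psi_0}^1(T_0, \delta_0; F) \cap \dom(F)$ also lies in $\varLambda_{\sigma_0, \psi_0}(T, \delta) \cap \dom(F)$, so $L$-\eqref{eq:Lipschitzian} applies to it. Hence $F$ is locally Lipschitzian about $C^1$-prolongations at $(\sigma_0, \psi_0)$ with parameters $T_0, \delta_0, L$. I do not expect any genuine obstacle here; the only point requiring care is that Lemma~\ref{lem:comparison of rectangles} must be invoked with $B$ chosen to contain exactly the relevant derivative value $v = F(\sigma_0, \psi_0)$, so that the resulting $C^1$-rectangle is precisely the one written $\varLambda_{\sigma_0, \psi_0}^1(T_0, \delta_0; F)$ in Definition~\ref{dfn:Lip about C^1-prolongations}.
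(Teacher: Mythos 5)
Your proposal is correct and follows essentially the same route as the paper's own proof: both invoke Lemma~\ref{lem:comparison of rectangles} with $B = \{F(\sigma_0, \psi_0)\}$ to shrink the parameters and then transfer the Lipschitz inequality from $\varLambda_{\sigma_0, \psi_0}(T, \delta)$ to $\varLambda_{\sigma_0, \psi_0}^1(T_0, \delta_0; F)$. The only difference is that you spell out the monotonicity step $\varLambda_{\sigma_0, \psi_0}(T_0, \delta) \subset \varLambda_{\sigma_0, \psi_0}(T, \delta)$ from Remark~\ref{rmk:comparison of rectangle by prolongations}, which the paper leaves implicit.
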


\begin{proof}
Choose $T, \delta, L > 0$ so that $L$-\eqref{eq:Lipschitzian} holds
for all $(t, \phi_1), (t, \phi_2) \in \varLambda_{\sigma_0, \psi_0}(T, \delta) \cap \dom(F)$.
Applying Lemma~\ref{lem:comparison of rectangles} as $B = \{F(\sigma_0, \psi_0)\}$,
we choose $0 < T_0 \le T$ and $0 < \delta_0 \le \delta/2$ so that
	\begin{equation*}
		\varLambda_{\sigma_0, \psi_0}^1(T_0, \delta_0; F) \subset \varLambda_{\sigma_0, \psi_0}(T_0, \delta).
	\end{equation*}
Therefore, $L$-\eqref{eq:Lipschitzian} holds
for all $(t, \phi_1), (t, \phi_2) \in \varLambda_{\sigma_0, \psi_0}^1(T_0, \delta_0; F) \cap \dom(F)$.
This shows the conclusion.
\end{proof}

The following is a uniform version of Proposition~\ref{prop:comparison of Lip about prolongations}.

\begin{proposition}\label{prop:comparison of uniform Lip about prolongations}
Suppose that $H$ is closed under prolongations.
Let $(\sigma_0, \psi_0) \in \dom(F)$.
If $F$ is uniformly locally Lipschitzian about prolongations at $(\sigma_0, \psi_0)$,
and if $F$ is locally bounded at $(\sigma_0, \psi_0)$,
then $F$ is uniformly locally Lipschitzian about $C^1$-prolongations at $(\sigma_0, \psi_0)$.
\end{proposition}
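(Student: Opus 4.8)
The plan is to follow the proof of Proposition~\ref{prop:comparison of Lip about prolongations} almost verbatim, the single new ingredient being that local boundedness lets us pass from a \emph{single} derivative value $v = F(\sigma_0, \psi_0)$ to a \emph{bounded family} of values $v = F(\sigma, \psi)$ as the base point $(\sigma, \psi)$ ranges over a neighborhood. First I would unpack the two hypotheses. Since $F$ is uniformly locally Lipschitzian about prolongations at $(\sigma_0, \psi_0)$, Definition~\ref{dfn:Lip about uniform prolongations} furnishes a neighborhood $W_0$ of $(\sigma_0, \psi_0)$ in $\dom(F)$ together with constants $T, \delta, L > 0$ such that $L$-\eqref{eq:Lipschitzian} holds for every $(\sigma, \psi) \in W_0$ and every pair $(t, \phi_1), (t, \phi_2) \in \varLambda_{\sigma, \psi}(T, \delta) \cap \dom(F)$. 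Since $F$ is locally bounded at $(\sigma_0, \psi_0)$, there are a neighborhood $W_1$ of $(\sigma_0, \psi_0)$ in $\dom(F)$ and $M > 0$ with $\|F(\sigma, \psi)\|_E \le M$ for all $(\sigma, \psi) \in W_1$.

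The decisive step is to produce a single bounded set of derivatives to which Lemma~\ref{lem:comparison of rectangles} can be applied uniformly. I would set $B := \bar{B}_E(0; M)$, which is bounded, and invoke Lemma~\ref{lem:comparison of rectangles} to obtain $0 < T_0 \le T$ and $0 < \delta_0 \le \delta/2$ such that
\[
	\varLambda_{\sigma, \psi}^1(T_0, \delta_0, v) \subset \varLambda_{\sigma, \psi}(T_0, \delta)
\]
for every $(\sigma, \psi) \in \mathbb{R} \times \Map(I, E)$ and every $v \in B$. Combining this with $\varLambda_{\sigma, \psi}(T_0, \delta) \subset \varLambda_{\sigma, \psi}(T, \delta)$ (Remark~\ref{rmk:comparison of rectangle by prolongations}, using $T_0 \le T$) yields $\varLambda_{\sigma, \psi}^1(T_0, \delta_0, v) \subset \varLambda_{\sigma, \psi}(T, \delta)$ for all such $(\sigma, \psi)$ and all $v \in B$.

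Now I would restrict the base point to $W_0 \cap W_1$, which is again a neighborhood of $(\sigma_0, \psi_0)$ in $\dom(F)$. For any $(\sigma, \psi) \in W_0 \cap W_1$ the value $v := F(\sigma, \psi)$ lies in $B$ by local boundedness, so $\varLambda_{\sigma, \psi}^1(T_0, \delta_0; F) = \varLambda_{\sigma, \psi}^1(T_0, \delta_0, v) \subset \varLambda_{\sigma, \psi}(T, \delta)$ by the previous step. Consequently every pair $(t, \phi_1), (t, \phi_2) \in \varLambda_{\sigma, \psi}^1(T_0, \delta_0; F) \cap \dom(F)$ also lies in $\varLambda_{\sigma, \psi}(T, \delta) \cap \dom(F)$, and hence satisfies $L$-\eqref{eq:Lipschitzian}. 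Reading this against Definition~\ref{dfn:uniform Lip about C^1-prolongations} with the neighborhood $W_0 \cap W_1$ and constants $T_0, \delta_0, L$ gives precisely that $F$ is uniformly locally Lipschitzian about $C^1$-prolongations at $(\sigma_0, \psi_0)$.

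The only genuinely new point over Proposition~\ref{prop:comparison of Lip about prolongations} is the need for the single bounded set $B$: there the base point is fixed, so $v = F(\sigma_0, \psi_0)$ is one point and $B = \{F(\sigma_0, \psi_0)\}$ works trivially, whereas here the derivative $v = F(\sigma, \psi)$ varies with the moving base point. Lemma~\ref{lem:comparison of rectangles} delivers $T_0, \delta_0$ depending only on $B$ and not on the individual $v$, so the whole argument hinges on collecting all the relevant derivatives into one bounded set — which is exactly the content of local boundedness. I expect this to be the only place where care is needed; without boundedness the comparison radius $\delta_0$ supplied by the lemma would in general have to shrink as $\|v\|_E$ grows, and no uniform choice over $W_0 \cap W_1$ would survive.
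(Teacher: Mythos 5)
Your proof is correct and follows essentially the same route as the paper: both use local boundedness to collect the derivative values $F(\sigma,\psi)$ into a single bounded set $B$ and then apply Lemma~\ref{lem:comparison of rectangles} uniformly over that set. The only cosmetic difference is that the paper takes $B$ to be the image $\{F(\sigma,\psi) : (\sigma,\psi) \in W_0\}$ directly (after shrinking $W_0$ into the boundedness neighborhood), whereas you take the ball $\bar{B}_E(0;M)$ and intersect the two neighborhoods at the end; these are interchangeable.
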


\begin{proof}
By the local boundedness of $F$ at $(\sigma_0, \psi_0)$,
there is a neighborhood $W$ of $(\sigma_0, \psi_0)$ in $\dom(F)$ and $M > 0$ such that
	\begin{equation*}
		\sup_{(\sigma, \psi) \in W} \|F(\sigma, \psi)\|_E \le M.
	\end{equation*}
Since $F$ is uniformly locally Lipschitzian about prolongations at $(\sigma_0, \psi_0)$,
we choose a neighborhood $W_0$ in $\dom(F)$ so that $L$-\eqref{eq:Lipschitzian} holds
for all $(\sigma, \psi) \in W_0$ and all $(t, \phi_1), (t, \phi_2) \in \varLambda_{\sigma, \psi}(T, \delta) \cap \dom(F)$.
We may assume $W_0 \subset W$ by choosing small $W_0$.
Applying Lemma~\ref{lem:comparison of rectangles} as
$B := \{\mspace{2mu} F(\sigma, \psi) : (\sigma, \psi) \in W_0 \mspace{2mu}\}$,
there are $0 < T_0 \le T$ and $0 < \delta_0 \le \delta/2$ such that
	\begin{equation*}
		\bigcup_{(\sigma, \psi) \in W_0}\varLambda_{\sigma, \psi}^1(T_0, \delta_0; F)
		\subset \bigcup_{(\sigma, \psi) \in W_0} \varLambda_{\sigma, \psi}(T_0, \delta).
	\end{equation*}
Therefore, the conclusion follows.
\end{proof}

We note that the continuity of $F$ at $(\sigma_0, \psi_0)$ is sufficient
for the local boundedness of $F$ at $(\sigma_0, \psi_0)$
in Proposition~\ref{prop:comparison of uniform Lip about prolongations}.

\subsection{Lipschitz conditions about memories}

In this subsection, we investigate relationships
between the Lipschitz conditions about memories and the Lipschitz conditions about prolongations.

\begin{theorem}[\cite{Nishiguchi 2017}]\label{thm:Lip about memories}
Let $(\sigma_0, \psi_0) \in \dom(F)$.
Suppose that $H$ is prolongable and regulated by prolongations.
If $F$ is locally Lipschitzian about memories at $(\sigma_0, \psi_0)$,
then $F$ is locally Lipschitzian about prolongations at $(\sigma_0, \psi_0)$.
Furthermore, if the semiflow
	\begin{equation*}
		\mathbb{R}_+ \times H \ni (t, \phi) \mapsto S_0(t)\phi \in H
	\end{equation*}
is continuous,
then $F$ is uniformly locally Lipschitzian about prolongations at $(\sigma_0, \psi_0)$.
\end{theorem}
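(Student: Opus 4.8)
The plan is to derive the Lipschitz about prolongations from the Lipschitz about memories through a single structural observation: if two points $(t,\phi_1),(t,\phi_2)$ lie in the same rectangle $\varLambda_{\sigma,\psi}(T,\delta)$ and share the time coordinate $t=\sigma+\tau$, then $\phi_1=I_{\sigma+\tau}\gamma_1$ and $\phi_2=I_{\sigma+\tau}\gamma_2$ for prolongations $\gamma_1,\gamma_2$ of the common base point $(\sigma,\psi)$. Since both satisfy $I_\sigma\gamma_i=\psi$, the difference $\gamma_1-\gamma_2$ vanishes on the whole tail $\sigma+I$ and is continuous on $[\sigma,\sigma+\tau]$, so $\supp(\gamma_1-\gamma_2)\subset[\sigma,\sigma+\tau]$. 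Consequently $\phi_1-\phi_2=I_{\sigma+\tau}(\gamma_1-\gamma_2)$ is continuous and satisfies $\supp(\phi_1-\phi_2)\subset[-\tau,0]\subset[-T,0]$. Thus, as soon as $T\le R$, such a pair meets exactly the two hypotheses---continuity and $[-R,0]$-support---needed to invoke the Lipschitz about memories inequality. The remaining work is purely to place the relevant rectangles inside the neighborhood on which the memory condition holds, which is where Theorem~\ref{thm:nbd by prolongations} enters.

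For the first assertion I would start from the local Lipschitz about memories at $(\sigma_0,\psi_0)$, obtaining a neighborhood $W$ of $(\sigma_0,\psi_0)$ in $\mathbb{R}\times H$ and constants $R,L>0$ so that $L$-\eqref{eq:Lipschitzian} holds on $W\cap\dom(F)$ for pairs with continuous, $[-R,0]$-supported difference. Intersecting $W$ with $[\sigma_0,+\infty)\times H$ yields a neighborhood of $(\sigma_0,\psi_0)$ there; since $H$ is prolongable and regulated by prolongations, Theorem~\ref{thm:nbd by prolongations}(i) provides $T,\delta>0$ with $\varLambda_{\sigma_0,\psi_0}(T,\delta)\subset W$, and I may shrink $T$ so that $T\le R$. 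For any $(t,\phi_1),(t,\phi_2)\in\varLambda_{\sigma_0,\psi_0}(T,\delta)\cap\dom(F)$ the structural observation shows the pair satisfies the memory hypotheses and lies in $W\cap\dom(F)$, hence $L$-\eqref{eq:Lipschitzian} holds; this is precisely local Lipschitz about prolongations at $(\sigma_0,\psi_0)$.

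For the uniform assertion I would run the same argument but invoke the stronger conclusion of Theorem~\ref{thm:nbd by prolongations}(ii): under the additional continuity of the semiflow $\mathbb{R}_+\times H\ni(t,\phi)\mapsto S_0(t)\phi\in H$, the neighborhood $W$ is a \emph{uniform} neighborhood by prolongations, so there exist $T,\delta>0$ and a neighborhood $W_0'$ of $(\sigma_0,\psi_0)$ in $W$ with $\bigcup_{(\sigma,\psi)\in W_0'}\varLambda_{\sigma,\psi}(T,\delta)\subset W$; again take $T\le R$. Setting $W_0:=W_0'\cap\dom(F)$, a neighborhood of $(\sigma_0,\psi_0)$ in $\dom(F)$, I would check for every base point $(\sigma,\psi)\in W_0$ and every pair $(t,\phi_1),(t,\phi_2)\in\varLambda_{\sigma,\psi}(T,\delta)\cap\dom(F)$ that $\phi_1-\phi_2$ is continuous and supported in $[-T,0]\subset[-R,0]$ and that the pair lies in $W\cap\dom(F)$; the memory condition then delivers $L$-\eqref{eq:Lipschitzian} with a single constant valid uniformly over $W_0$, which is exactly uniform local Lipschitz about prolongations.

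I expect the only genuinely delicate point to be bookkeeping the localization of the support: one must verify that prolongations over a common base point agree on the entire tail $\sigma+I$ (not merely at the endpoint $\sigma$), so that the difference is truly supported in $[-\tau,0]$ and that its two glued pieces match continuously at $\sigma$. Everything else---extracting constants, intersecting neighborhoods, and shrinking $T$ below $R$---is routine once Theorem~\ref{thm:nbd by prolongations} is used to contain the rectangles inside the memory-Lipschitz neighborhood, which is where the hypotheses of prolongability, regulation by prolongations, and (in the uniform case) continuity of the semiflow are consumed.
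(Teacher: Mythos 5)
Your proposal is correct and follows essentially the same route the paper takes for this circle of results: the support/continuity observation for differences of prolongations over a common base point, containment of a rectangle with $T \le R$ inside the memory-Lipschitz neighborhood via Theorem~\ref{thm:nbd by prolongations} (part (i) for the local statement, part (ii) with the semiflow continuity for the uniform one), and then direct application of the memory condition; this is precisely the pattern of the paper's proof of the generalization in Proposition~\ref{prop:loc Lip about Lip-memories}. No gaps.
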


We generalize this theorem as follows.

\begin{proposition}\label{prop:loc Lip about Lip-memories}
Let $(\sigma_0, \psi_0) \in \dom(F)$.
Suppose that $H$ is $C^1$-prolongable and regulated by $C^1$-prolongations.
If $F$ is locally Lipschitzian about Lip-memories at $(\sigma_0, \psi_0)$,
and if there exists $R_0 > 0$ such that $[-R_0, 0] \subset I$ and $\lip \bigl( \psi_0|_{[-R_0, 0]}\bigr) < \infty$,
then $F$ is locally Lipschitzian about $C^1$-prolongations at $(\sigma_0, \psi_0)$.
\end{proposition}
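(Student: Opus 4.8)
The plan is to reduce the Lipschitz estimate along $C^1$-prolongations to the Lip-memories hypothesis, with Lemma~\ref{lem:Lipschitz const for rectangles by prolongations} supplying the uniform Lipschitz bound that condition (ii) of Definition~\ref{dfn:almost Lip about memories} demands. Write $v := F(\sigma_0, \psi_0)$, so that $\varLambda^1_{\sigma_0, \psi_0}(T, \delta; F) = \varLambda^1_{\sigma_0, \psi_0}(T, \delta, v)$. The starting observation is that any two $C^1$-prolongations $\gamma_1, \gamma_2 \in \varGamma^1_{\sigma_0, \psi_0}(\tau, \delta, v)$ carry the same history $\psi_0$ at $\sigma_0$, so $\gamma_1 - \gamma_2$ vanishes on $\sigma_0 + I$. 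Consequently, for the associated histories $\phi_i = I_{\sigma_0 + \tau}\gamma_i$ one has $\supp(\phi_1 - \phi_2) \subset [-\tau, 0]$ and $\phi_1 - \phi_2$ is continuous. Thus condition (i) of local Lipschitz about Lip-memories becomes automatic as soon as $\tau \le T$ is small enough that $[-T, 0] \subset [-R, 0]$.

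First I would fix the scale of the histories appearing in the rectangle. Applying Lemma~\ref{lem:Lipschitz const for rectangles by prolongations} with $W_0 = \{(\sigma_0, \psi_0)\}$, $B = \{v\}$, and the constant sequence $I_k \equiv [-R_0, 0]$ (whose hypothesis $\lip(\psi_0|_{[-R_0, 0]}) < \infty$ is precisely the standing assumption), I obtain a constant $M > 0$ such that $\lip(\phi|_{[-R_0, 0]}) \le M$ for every $(t, \phi) \in \varLambda^1_{\sigma_0, \psi_0}(T, \delta, v)$, uniformly over all $T > 0$ and all $0 < \delta < 1$. This $M$ is the candidate bound for verifying condition (ii).

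Next I would feed this $M$ into the local Lipschitz about Lip-memories hypothesis at $(\sigma_0, \psi_0)$, producing a neighborhood $W$ of $(\sigma_0, \psi_0)$ together with $R, L > 0$ (with $[-R, 0] \subset I$) for which $L$-\eqref{eq:Lipschitzian} holds on all pairs in $W \cap \dom(F)$ satisfying (i) and (ii). Since $H$ is $C^1$-prolongable and regulated by $C^1$-prolongations, Theorem~\ref{thm:nbd by C^1-prolongations}(i) shows that $W$ is a neighborhood by $C^1$-prolongations of $(\sigma_0, \psi_0)$, so there exist $T, \delta > 0$ with $\varLambda^1_{\sigma_0, \psi_0}(T, \delta, v) \subset W$; shrinking $T$ and $\delta$, I may in addition require $T \le R$ and $\delta < 1$. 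It then remains to check, for an arbitrary pair $(t, \phi_1), (t, \phi_2)$ in this rectangle (with $t = \sigma_0 + \tau$ and $\tau \le T$), that conditions (i) and (ii) hold: (i) follows from the support computation above together with $[-\tau, 0] \subset [-T, 0] \subset [-R, 0] \subset I$, while (ii) follows from $\lip(\phi_i|_{[-R, 0]}) \le \lip(\phi_i|_{[-R_0, 0]}) \le M$. Invoking the hypothesis then yields $L$-\eqref{eq:Lipschitzian} for all such pairs, which is exactly local Lipschitz about $C^1$-prolongations at $(\sigma_0, \psi_0)$.

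The step I expect to be the main obstacle is the compatibility of the two radii. The inequality $\lip(\phi_i|_{[-R, 0]}) \le \lip(\phi_i|_{[-R_0, 0]})$ used in (ii) presupposes $R \le R_0$, whereas the Lip-memories hypothesis only returns some $R = R(M)$ with no a priori control. The reason this matters is structural: on the ``old'' part of $[-R, 0]$ the history $\phi_i$ coincides with a shift of $\psi_0$, so its Lipschitz constant there is governed solely by $\psi_0$, which is only known to be Lipschitz on $[-R_0, 0]$; if $R > R_0$ and $\tau$ is small, condition (ii) can genuinely fail for the pairs in the rectangle. The plan is therefore to keep the Lemma's interval $[-R_0, 0]$ as the master scale and to arrange $R \le R_0$ when invoking the hypothesis, and the careful justification that the uniform bound from Lemma~\ref{lem:Lipschitz const for rectangles by prolongations} on $[-R_0, 0]$ really does control condition (ii) on $[-R, 0]$ is the crux of the argument.
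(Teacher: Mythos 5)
Your proposal is correct and follows the paper's proof essentially step for step: Lemma~\ref{lem:Lipschitz const for rectangles by prolongations} applied with $W_0 = \{(\sigma_0, \psi_0)\}$, $B = \{F(\sigma_0, \psi_0)\}$, and $I_k \equiv [-R_0, 0]$ gives the uniform bound $M$, the Lip-memories hypothesis is invoked for this $M$, and Theorem~\ref{thm:nbd by C^1-prolongations} shrinks the rectangle into $W$ with $T < R$ before conditions (i) and (ii) are verified exactly as you describe. The obstacle you flag at the end is genuine, and the paper resolves it precisely as you propose: it stipulates $R < R_0$ at the moment the neighborhood $W$ and constants $R, L$ are extracted from Definition~\ref{dfn:almost Lip about memories}, after which $\lip \bigl( \phi_i|_{[-R, 0]} \bigr) \le \lip \bigl( \phi_i|_{[-R_0, 0]} \bigr) \le M$ closes the argument, so $[-R_0, 0]$ does remain the master scale.
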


\begin{proof}
By applying Lemma~\ref{lem:Lipschitz const for rectangles by prolongations} as
	\begin{equation*}
		W_0 = \{(\sigma_0, \psi_0)\},
			\mspace{10mu}
		B = \{F(\sigma_0, \psi_0)\},
			\mspace{10mu} \text{and} \mspace{10mu}
		I_k \equiv [-R_0, 0],
	\end{equation*}
we choose $M > 0$ so that for all sufficiently small $T, \delta > 0$,
	\begin{equation*}
		\sup \bigl\{ \mspace{2mu} \lip \bigl( \phi|_{[-R_0, 0]} \bigr) :
		(t, \phi) \in \varLambda_{\sigma_0, \psi_0}^1(T, \delta; F)
		\mspace{2mu} \bigr\}
		\le M
	\end{equation*}
holds.
Since $F$ is locally Lipschitzian about Lip-memories at $(\sigma_0, \psi_0)$,
there is a neighborhood $W$ of $(\sigma_0, \psi_0)$ in $\mathbb{R} \times H$ and $R, L > 0$ for this $M > 0$ such that
$R < R_0$ and $L$-\eqref{eq:Lipschitzian} holds for all $(t, \phi_1), (t, \phi_2) \in W \cap \dom(F)$
satisfying the following conditions:
\begin{enumerate}
\item[(i)] $\supp(\phi_1 - \phi_2) \subset [-R, 0] \subset I$, and
\item[(ii)] $\lip \bigl( \phi_1|_{[-R, 0]} \bigr), \lip \bigl( \phi_2|_{[-R, 0]} \bigr) \le M$.
\end{enumerate}
By choosing sufficiently small $T, \delta > 0$,
we may assume $T < R$, and
	\begin{equation*}
		\varLambda_{\sigma_0, \psi_0}^1(T, \delta; F) \subset W
	\end{equation*}
from Theorem~\ref{thm:nbd by C^1-prolongations}.

Let $(t, \phi_1), (t, \phi_2) \in \varLambda_{\sigma_0, \psi_0}^1(T, \delta; F) \cap \dom(F)$.
Then
	\begin{equation*}
		\supp(\phi_1 - \phi_2) \subset [-T, 0] \subset [-R, 0],
	\end{equation*}
and we also have
	\begin{equation*}
		\lip \bigl( \phi_i|_{[-R, 0]} \bigr)
		\le \lip \bigl( \phi_i|_{[-R_0, 0]} \bigr)
		\le M
		\mspace{20mu} (i = 1, 2).
	\end{equation*}
Therefore, the above conditions (i) and (ii) are satisfied.
This implies that $F$ is locally Lipschitzian about $C^1$-prolongations at $(\sigma_0, \psi_0)$.
\end{proof}

The following is a corollary of Proposition~\ref{prop:loc Lip about Lip-memories}.

\begin{corollary}\label{cor:loc Lip about Lip-memories}
Suppose that $H$ is $C^1$-prolongable and regulated by $C^1$-prolongations.
Let
	\begin{equation*}
		H_0
		:= \bigl\{ \mspace{2mu} \phi \in H :
			\text{$\exists R_0 > 0$ s.t.\ $\lip \bigl( \phi|_{[-R_0, 0]} \bigr) < \infty$}
		\mspace{2mu} \bigr\}.
	\end{equation*}
We consider $F_0 \colon \mathbb{R} \times H_0 \supset \dom(F_0) \to E$
which is the restriction of $F$ to $\dom(F_0) := \dom(F) \cap (\mathbb{R} \times H_0)$.
If $F$ is locally Lipschitzian about Lip-memories,
then $F_0$ is locally Lipschitzian about $C^1$-prolongations.
\end{corollary}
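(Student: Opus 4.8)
The plan is to reduce the corollary to a pointwise application of Proposition~\ref{prop:loc Lip about Lip-memories}. Since local Lipschitz about $C^1$-prolongations is a condition imposed base point by base point, it is enough to fix an arbitrary $(\sigma_0, \psi_0) \in \dom(F_0)$ and to produce constants $T, \delta, L > 0$ witnessing the condition for $F_0$ at that point.

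Before that, I would record that $H_0$, with the subspace topology from $H$, is a history space which is closed under $C^1$-prolongations, since this is the standing hypothesis that makes Definition~\ref{dfn:Lip about C^1-prolongations} meaningful for $F_0$. That $H_0$ is a linear subspace is clear: if $\phi_1, \phi_2$ are Lipschitz on $[-R_1, 0]$ and $[-R_2, 0]$, then every linear combination is Lipschitz on $[-\min\{R_1, R_2\}, 0]$. For closedness, let $\phi \in H_0$ be Lipschitz on some $[-R_0, 0] \subset I$ and let $\gamma \colon \mathbb{R}_+ + I \to E$ be a $C^1$-prolongation of $\phi$. The $C^1$-prolongability of $H$ already gives $I_t\gamma \in H$ for all $t \ge 0$; moreover $I_t\gamma \in H_0$, because for $t > 0$ the restriction of $I_t\gamma$ to $[-R', 0]$ with $R' := \min\{t, R_0\}$ is a translate of $\gamma|_{[t - R', t]}$, and $[t - R', t] \subset [0, t]$ where $\gamma$ is $C^1$ on a compact interval, hence Lipschitz, while for $t = 0$ one has $I_0\gamma = \phi \in H_0$. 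The same reasoning, applied to the finite-time $C^1$-prolongations of $\psi_0 \in H_0$, shows that every history $I_{\sigma_0 + \tau}\gamma$ occurring in a rectangle $\varLambda^1_{\sigma_0, \psi_0}(T, \delta; F)$ belongs to $H_0$; in particular such a rectangle is contained in $\mathbb{R} \times H_0$.

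Next I would invoke the proposition. Fix $(\sigma_0, \psi_0) \in \dom(F_0)$. By the defining property of $H_0$ there is $R_0 > 0$ with $[-R_0, 0] \subset I$ and $\lip \bigl( \psi_0|_{[-R_0, 0]} \bigr) < \infty$, and by hypothesis $F$ is locally Lipschitzian about Lip-memories at $(\sigma_0, \psi_0)$. Thus Proposition~\ref{prop:loc Lip about Lip-memories} applies and yields $T, \delta, L > 0$ such that $L$-\eqref{eq:Lipschitzian} holds for all $(t, \phi_1), (t, \phi_2) \in \varLambda^1_{\sigma_0, \psi_0}(T, \delta; F) \cap \dom(F)$. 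To transfer this to $F_0$, observe that $F_0(\sigma_0, \psi_0) = F(\sigma_0, \psi_0)$, so the base derivative $v$ is unchanged and the rectangle $\varLambda^1_{\sigma_0, \psi_0}(T, \delta; F_0)$ coincides with $\varLambda^1_{\sigma_0, \psi_0}(T, \delta; F)$; combined with the containment in $\mathbb{R} \times H_0$ noted above, this gives $\varLambda^1_{\sigma_0, \psi_0}(T, \delta; F) \cap \dom(F) = \varLambda^1_{\sigma_0, \psi_0}(T, \delta; F_0) \cap \dom(F_0)$. Hence $L$-\eqref{eq:Lipschitzian} holds on the latter set, which is exactly the assertion that $F_0$ is locally Lipschitzian about $C^1$-prolongations at $(\sigma_0, \psi_0)$; as this point was arbitrary in $\dom(F_0)$, the conclusion follows.

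I do not anticipate a serious obstacle, as the statement is essentially a repackaging of Proposition~\ref{prop:loc Lip about Lip-memories}. The one genuinely load-bearing verification is the closedness of $H_0$ under $C^1$-prolongations, where the smoothing effect of the $C^1$-extension is used to make the histories Lipschitz near $0$; once this is in hand, the matching of the two rectangles and the descent of the Lipschitz inequality to the subset $\dom(F_0)$ are routine, the crucial point being merely that the base derivative $v = F(\sigma_0, \psi_0)$ is not altered by passing to the restriction $F_0$.
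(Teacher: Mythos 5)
Your proof is correct and takes essentially the same route as the paper: the corollary is obtained by applying Proposition~\ref{prop:loc Lip about Lip-memories} pointwise at each $(\sigma_0, \psi_0) \in \dom(F_0)$, the membership $\psi_0 \in H_0$ supplying the hypothesis $\lip \bigl( \psi_0|_{[-R_0, 0]} \bigr) < \infty$, combined with the observation (recorded in the paper immediately after the corollary) that $H_0$ is a history space closed under $C^1$-prolongations, so that the Lipschitz condition for $F_0$ is meaningful and the estimate descends from $\varLambda_{\sigma_0, \psi_0}^1(T, \delta; F) \cap \dom(F)$ to the subset $\varLambda_{\sigma_0, \psi_0}^1(T, \delta; F_0) \cap \dom(F_0)$. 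The only embellishment is your claimed equality of these two sets; the trivial inclusion of the $F_0$-set into the $F$-set already suffices, since $F_0(\sigma_0, \psi_0) = F(\sigma_0, \psi_0)$ makes the rectangles coincide and $\dom(F_0) \subset \dom(F)$.
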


By the definition of $H_0$, the following hold:
\begin{itemize}
\item $H_0$ is a linear topological subspace of $H$, where the subspace topology of $H$ is given.
\item $H_0$ is closed under $C^1$-prolongations.
\end{itemize}
Therefore, $H_0$ is also $C^1$-prolongable and regulated by $C^1$-prolongations.

\subsection{Almost local Lipschitz}\label{subsec:almost loc Lip}

When $H \supset C^{0, 1}_\mathrm{loc}(I, E)$,
let $F^{0, 1} \colon \mathbb{R} \times C^{0, 1}_\mathrm{loc}(I, E) \supset \dom \bigl( F^{0, 1} \bigr) \to E$ be the restriction of $F$ to
	\begin{equation*}
		\dom \bigl( F^{0, 1} \bigr) := \dom(F) \cap \left( \mathbb{R} \times C^{0, 1}_\mathrm{loc}(I, E) \right).
	\end{equation*}

\begin{theorem}\label{thm:almost local Lip for I = I^r}
Suppose $I = I^r$ for some $r > 0$,
and $H$ is prolongable and regulated by prolongations.
If $F$ is almost locally Lipschitzian at $(\sigma_0, \psi_0) \in \dom \bigl( F^{0, 1} \bigr)$,
then $F^{0, 1}$ is locally Lipschitzian about $C^1$-prolongations at $(\sigma_0, \psi_0)$.
\end{theorem}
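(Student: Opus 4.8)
The plan is to extract a uniform Lipschitz bound $M$ on the histories that occur in a rectangle by $C^1$-prolongations based at $(\sigma_0, \psi_0)$, feed that specific $M$ into the definition of almost local Lipschitz, and then shrink the rectangle until it lies inside the neighborhood that this property supplies. Throughout, write $v := F(\sigma_0, \psi_0)$, so that $\varLambda^1_{\sigma_0, \psi_0}(T, \delta; F) = \varLambda^1_{\sigma_0, \psi_0}(T, \delta, v)$, and recall that $F^{0,1}(\sigma_0, \psi_0) = v$ as well.

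First I would record that, since $(\sigma_0, \psi_0) \in \dom(F^{0,1})$ and $I^r = [-r, 0]$ is compact, we have $\psi_0 \in C^{0,1}(I^r, E) = C^{0,1}_{\mathrm{loc}}(I^r, E)$, hence $\lip(\psi_0) < \infty$. I would then apply Lemma~\ref{lem:Lipschitz const for rectangles by prolongations} with $W_0 = \{(\sigma_0, \psi_0)\}$, $B = \{v\}$, and the constant sequence $I_k \equiv [-r, 0]$; its hypothesis reads $\sup_{(\sigma, \psi) \in W_0} \lip(\psi|_{[-r, 0]}) = \lip(\psi_0) < \infty$. Because the $I_k$ are constant, the note following that lemma yields a single constant $M > 0$ with
\[
	\sup\bigl\{ \lip(\phi) : (t, \phi) \in \varLambda^1_{\sigma_0, \psi_0}(T, \delta, v) \bigr\} \le M
\]
for all $T > 0$ and all $0 < \delta < 1$. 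Here each such $\phi = I_{\sigma_0 + \tau}\gamma$ is genuinely Lipschitz, being $\psi_0$ on the past part glued to the $C^1$-extension, so $\lip(\phi) = \lip(\phi|_{[-r, 0]})$ and $\phi \in C^{0,1}(I^r, E)$.

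With this $M$ fixed, I would invoke almost local Lipschitz of $F$ at $(\sigma_0, \psi_0)$ (Definition~\ref{dfn:almost locally Lip for I^r}): there are a neighborhood $W$ of $(\sigma_0, \psi_0)$ in $\mathbb{R} \times H$ and $L > 0$ such that $L$-\eqref{eq:Lipschitzian} holds for all $(t, \phi_1), (t, \phi_2) \in W \cap \dom(F)$ with $\lip(\phi_i) \le M$. To push the rectangle into $W$, note that $H$ is $C^1$-prolongable and regulated by $C^1$-prolongations — both follow from prolongability and regulation by prolongations via the remarks in Subsection~\ref{subsec:prolongability and regulation} — so Theorem~\ref{thm:nbd by C^1-prolongations}(i) applies and $W$ is a neighborhood by $C^1$-prolongations of $(\sigma_0, \psi_0)$. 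Hence there exist $T, \delta > 0$, which I may take with $\delta < 1$, such that $\varLambda^1_{\sigma_0, \psi_0}(T, \delta, v) \subset W$.

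Finally I would combine the steps: any $(t, \phi_1), (t, \phi_2) \in \varLambda^1_{\sigma_0, \psi_0}(T, \delta; F) \cap \dom(F^{0,1})$ lie in $W \cap \dom(F)$ and satisfy $\lip(\phi_i) \le M$ by the first step, so $L$-\eqref{eq:Lipschitzian} holds for them — exactly the assertion that $F^{0,1}$ is locally Lipschitzian about $C^1$-prolongations at $(\sigma_0, \psi_0)$. The crux is the order of quantifiers rather than any delicate estimate: the bound $M$ must be produced first and uniformly in small $T, \delta$ (precisely the content of Lemma~\ref{lem:Lipschitz const for rectangles by prolongations}), because $M$ fixes both $W$ and $L$ coming from almost local Lipschitz; only afterwards may $T, \delta$ be shrunk to force the rectangle inside $W$. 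Keeping this dependency straight is the main thing to get right.
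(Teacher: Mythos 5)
Your proposal is correct and follows essentially the same route as the paper: apply Lemma~\ref{lem:Lipschitz const for rectangles by prolongations} with $W_0 = \{(\sigma_0, \psi_0)\}$, $B = \{F(\sigma_0, \psi_0)\}$, $I_k \equiv I^r$ to fix $M$ first, feed $M$ into the almost local Lipschitz hypothesis to obtain $W$ and $L$, and then shrink $T, \delta$ via Theorem~\ref{thm:nbd by C^1-prolongations} so that the rectangle lies in $W$. Your added remarks (that $\lip(\psi_0) < \infty$ because $I^r$ is compact, and that prolongability and regulation by prolongations imply their $C^1$ counterparts so that Theorem~\ref{thm:nbd by C^1-prolongations} applies) are details the paper leaves implicit, and your emphasis on the quantifier order is exactly the crux of the argument.
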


\begin{proof}
Fix $(\sigma_0, \psi_0) \in \dom \bigl( F^{0, 1} \bigr)$.
By applying Lemma~\ref{lem:Lipschitz const for rectangles by prolongations} as
	\begin{equation*}
		W_0 = \{(\sigma_0, \psi_0)\},
			\mspace{10mu}
		B = \{F(\sigma_0, \psi_0)\},
			\mspace{10mu} \text{and} \mspace{10mu}
		I_k \equiv I = I^r,
	\end{equation*}
we choose $M > 0$ so that for all sufficiently small $T, \delta > 0$,
	\begin{equation*}
		\sup \bigl\{ \mspace{2mu} \lip(\phi) :
		(t, \phi) \in \varLambda_{\sigma_0, \psi_0}^1(T, \delta; F) \mspace{2mu} \bigr\}
		\le M
	\end{equation*}
holds.
Since $F$ is almost locally Lipschitzian at $(\sigma_0, \psi_0)$,
there are a neighborhood $W$ of $(\sigma_0, \psi_0)$ in $\mathbb{R} \times H$ and $L > 0$ such that
$L$-\eqref{eq:Lipschitzian} holds for all $(t, \phi_1), (t, \phi_2) \in W \cap \dom(F)$
satisfying $\lip(\phi_i) \le M$ $(i = 1, 2)$.
By choosing sufficiently small $T, \delta > 0$, we may assume
	\begin{equation*}
		\varLambda_{\sigma_0, \psi_0}^1(T, \delta; F) \subset W
	\end{equation*}
from Theorem~\ref{thm:nbd by C^1-prolongations}.
This shows that for all
	\begin{equation*}
		(t, \phi_1), (t, \phi_2)
		\in \varLambda_{\sigma_0, \psi_0}^1(T, \delta; F) \cap \dom(F)
		= \varLambda_{\sigma_0, \psi_0}^1(T, \delta; F) \cap \dom \bigl( F^{0, 1} \bigr),
	\end{equation*}
$L$-\eqref{eq:Lipschitzian} holds.
\end{proof}

When $H = C(I^r, E)_\mathrm{u}$, it is unfortunate that the restricted map
	\begin{equation*}
		F_0 \colon \mathbb{R} \times \bigl( C^{0, 1}(I^r, E), \|\cdot\|_\infty \bigr) \supset \dom(F_0) \to E
	\end{equation*}
is not necessarily uniformly locally Lipschitzian about $C^1$-prolongations
under the assumption of Theorem~\ref{thm:almost local Lip for I = I^r}.
The reason is that the Lipschitz constant is unbounded
in a neighborhood with respect to the topology of uniform convergence,
and therefore, the assumption $\sup_{(\sigma, \psi) \in W_0} \lip(\psi) < \infty$
in Lemma~\ref{lem:Lipschitz const for rectangles by prolongations} is not satisfied.

\begin{theorem}\label{thm:almost local Lip for I = I^infty}
Suppose $I = I^\infty$, $H \supset C^{0, 1}_\mathrm{loc}(I^\infty, E)$,
and $H$ is prolongable and regulated by prolongations.
If $F$ is almost locally Lipschitzian at $(\sigma_0, \psi_0) \in \dom \bigl( F^{0, 1} \bigr)$,
then $F^{0, 1}$ is locally Lipschitzian about $C^1$-prolongations at $(\sigma_0, \psi_0)$.
\end{theorem}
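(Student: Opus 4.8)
The plan is to mirror the proof of Theorem~\ref{thm:almost local Lip for I = I^r}, replacing the single Lipschitz bound on the compact interval $I^r$ by a sequence of bounds on the exhausting intervals $[-k, 0]$, and additionally verifying the compact-support requirement that is built into Definition~\ref{dfn:almost locally Lip for I^infty} (and which was automatic in the compact case).

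First I would note that $(\sigma_0, \psi_0) \in \dom \bigl( F^{0, 1} \bigr)$ forces $\psi_0 \in C^{0, 1}_\mathrm{loc}(I^\infty, E)$, so $\lip \bigl( \psi_0|_{[-k, 0]} \bigr) < \infty$ for every $k \ge 1$. This is exactly the hypothesis needed to apply Lemma~\ref{lem:Lipschitz const for rectangles by prolongations} with $W_0 = \{(\sigma_0, \psi_0)\}$, $B = \{F(\sigma_0, \psi_0)\}$, and $I_k = [-k, 0]$, which produces a sequence $(M_k)_{k = 1}^\infty$ of positive numbers such that
\begin{equation*}
\sup \bigl\{ \mspace{2mu} \lip \bigl( \phi|_{[-k, 0]} \bigr) : (t, \phi) \in \varLambda_{\sigma_0, \psi_0}^1(T, \delta; F) \mspace{2mu} \bigr\} \le M_k
\end{equation*}
holds for every $k \ge 1$, simultaneously for all $T > 0$ and all $0 < \delta < 1$.

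Next I would feed this particular sequence $(M_k)$ into Definition~\ref{dfn:almost locally Lip for I^infty}: since $F$ is almost locally Lipschitzian at $(\sigma_0, \psi_0)$, there exist a neighborhood $W$ of $(\sigma_0, \psi_0)$ in $\mathbb{R} \times H$ and $L > 0$ such that $L$-\eqref{eq:Lipschitzian} holds for every pair $(t, \phi_1), (t, \phi_2) \in W \cap \dom(F)$ whose difference has compact support and which satisfies $\lip \bigl( \phi_i|_{[-k, 0]} \bigr) \le M_k$ for all $k \ge 1$. Using Theorem~\ref{thm:nbd by C^1-prolongations}, I would then shrink $T$ and $\delta$ (keeping $0 < \delta < 1$) so that $\varLambda_{\sigma_0, \psi_0}^1(T, \delta; F) \subset W$.

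Finally, for any pair $(t, \phi_1), (t, \phi_2) \in \varLambda_{\sigma_0, \psi_0}^1(T, \delta; F) \cap \dom(F)$, condition (ii) of Definition~\ref{dfn:almost locally Lip for I^infty} is immediate from the bound of the first step. For condition (i) I would write $\phi_i = I_{\sigma_0 + \tau}\gamma_i$ for $C^1$-prolongations $\gamma_i$ of $(\sigma_0, \psi_0)$ sharing the common $\tau = t - \sigma_0 \in [0, T]$; since both $\gamma_i$ restrict to $\psi_0$ on $\sigma_0 + I^\infty$, the difference $\gamma_1 - \gamma_2$ vanishes on $(-\infty, \sigma_0]$, whence $\supp(\phi_1 - \phi_2) \subset [-\tau, 0] \subset [-T, 0]$ is compact. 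This verification of compact support is the only genuinely new point relative to the compact-interval case, and I expect it to be the main (though mild) obstacle. With both conditions satisfied, $L$-\eqref{eq:Lipschitzian} holds throughout the rectangle; moreover each such history obeys the $M_k$-bounds and hence lies in $C^{0, 1}_\mathrm{loc}(I^\infty, E)$, so that $\varLambda_{\sigma_0, \psi_0}^1(T, \delta; F) \cap \dom(F) = \varLambda_{\sigma_0, \psi_0}^1(T, \delta; F) \cap \dom \bigl( F^{0, 1} \bigr)$. This yields local Lipschitz about $C^1$-prolongations for $F^{0, 1}$ at $(\sigma_0, \psi_0)$.
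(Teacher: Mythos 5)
Your proposal is correct and follows essentially the same route as the paper's proof: apply Lemma~\ref{lem:Lipschitz const for rectangles by prolongations} with $I_k = [-k, 0]$ to produce the sequence $(M_k)$, feed it into the almost local Lipschitz hypothesis, and shrink the rectangle into the resulting neighborhood via Theorem~\ref{thm:nbd by C^1-prolongations}. Your explicit verification of the compact-support condition (two $C^1$-prolongations of the same $(\sigma_0, \psi_0)$ agree on $(-\infty, \sigma_0]$, so their histories at $\sigma_0 + \tau$ differ only on $[-\tau, 0]$) is a point the paper leaves implicit, and it is handled correctly.
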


\begin{proof}
By applying Lemma~\ref{lem:Lipschitz const for rectangles by prolongations} as
	\begin{equation*}
		W_0 = \{(\sigma_0, \psi_0)\},
			\mspace{10mu}
		B = \{F(\sigma_0, \psi_0)\},
			\mspace{10mu} \text{and} \mspace{10mu}
		I_k = [-k, 0],
	\end{equation*}
we choose a sequence $(M_k)_{k = 1}^\infty$ of positive numbers so that for all sufficiently small $T, \delta > 0$,
	\begin{equation*}
		\sup \bigl\{ \mspace{2mu} \lip \bigl( \phi|_{[-k, 0]} \bigr) :
		(t, \phi) \in \varLambda_{\sigma_0, \psi_0}^1(T, \delta; F) \mspace{2mu} \bigr\}
		\le M_k \mspace{20mu} (\forall k \ge 1)
	\end{equation*}
holds.
Since $F$ is almost locally Lipschitzian at $(\sigma_0, \psi_0) \in \dom \bigl( F^{0, 1} \bigr)$,
there are a neighborhood $W$ of $(\sigma_0, \psi_0)$ in $\mathbb{R} \times H$ and $L > 0$ such that
$L$-\eqref{eq:Lipschitzian} holds for all $(t, \phi_1), (t, \phi_2) \in W \cap \dom(F)$
satisfying the conditions
\begin{enumerate}
\item[(i)] $\supp(\phi_1 - \phi_2)$ is compact, and
\item[(ii)] $\lip \bigl( \phi_1|_{[-k, 0]} \bigr), \lip \bigl( \phi_2|_{[-k, 0]} \bigr) \le M_k$ for all $k \ge 1$.
\end{enumerate}
By choosing sufficiently small $T, \delta > 0$, we may assume
	\begin{equation*}
		\varLambda_{\sigma_0, \psi_0}^1(T, \delta; F) \subset W
	\end{equation*}
from Theorem~\ref{thm:nbd by C^1-prolongations}.
This shows that for all
	\begin{equation*}
		(t, \phi_1), (t, \phi_2)
		\in \varLambda_{\sigma_0, \psi_0}^1(T, \delta; F) \cap \dom(F)
		= \varLambda_{\sigma_0, \psi_0}^1(T, \delta; F) \cap \dom \bigl( F^{0, 1} \bigr),
	\end{equation*}
$L$-\eqref{eq:Lipschitzian} holds.
\end{proof}

\begin{remark}
In general, the property of almost local Lipschitz does not imply the uniformly local Lipschitz.
The reason is that for a neighborhood $W_0$ of $(\sigma_0, \psi_0)$ in $\dom(F)$,
	\begin{equation*}
		\sup_{(\sigma, \psi) \in W_0} \lip(\psi|_{I_k}) < \infty
	\end{equation*}
does not hold, and therefore, Lemma~\ref{lem:Lipschitz const for rectangles by prolongations} cannot be applied.
\end{remark}

Summarizing the above theorems, we arrive the following conclusion:
If $F$ is almost locally Lipschitzian on $\dom \bigl( F^{0, 1} \bigr)$,
then $F^{0, 1}$ is locally Lipschitzian about $C^1$-prolongations.

\subsection{Examples for infinite retardation}

There is an advantage of the notions of Lipschitz about memories in the property that
we only need to choose histories which have same tail.
The efficiency derived from this property can be seen
in Propositions~\ref{prop:metric for cpt-open top} and \ref{prop:gauge for cpt-open top}.
See also Subsection~\ref{subsec:constancy about memories} for this efficiency.

\begin{proposition}\label{prop:metric for cpt-open top}
Let $I = I^\infty$ and $H = C(I^\infty, E)_\mathrm{co}$.
We consider a metric $\rho$ on $C(I^\infty, E)$ defined by
	\begin{equation*}
		\rho(\phi, \psi)
		= \sum_{k = 1}^\infty \frac{1}{2^k} \cdot \frac{\|\phi - \psi\|_{C[-k, 0]}}{1 + \|\phi - \psi\|_{C[-k, 0]}}.
	\end{equation*}
If $F$ is Lipschitzian with respect to $\rho$, i.e, there exists $L > 0$ such that
	\begin{equation*}
		\|F(t, \phi_1) - F(t, \phi_2)\|_E \le L \cdot \rho(\phi_1, \phi_2)
	\end{equation*}
holds for all $(t, \phi_1), (t, \phi_2) \in \dom(F)$,
then $F$ is Lipschitzian about memories.
\end{proposition}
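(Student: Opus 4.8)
The plan is to verify the definition of Lipschitz about memories (Definition~\ref{dfn:Lip about memories}) directly, extracting the required constants from the given $\rho$-Lipschitz constant $L$. The whole point is that the metric $\rho$ is dominated by the infinity norm, so the assumed bound on $F$ with respect to $\rho$ transfers verbatim to \eqref{eq:Lipschitzian}. Concretely, I would take $R := 1$ and keep the same Lipschitz constant $L$.

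First I would fix $(t, \phi_1), (t, \phi_2) \in \dom(F)$ satisfying conditions (i) and (ii) of Definition~\ref{dfn:Lip about memories} with $R = 1$, that is, $\supp(\phi_1 - \phi_2) \subset [-1, 0]$ and $\phi_1 - \phi_2$ continuous. Since $\phi_1 - \phi_2$ is then continuous on the compact interval $[-1, 0]$ and vanishes outside it, $\|\phi_1 - \phi_2\|_\infty < \infty$, and moreover $\|\phi_1 - \phi_2\|_{C[-k, 0]} = \|\phi_1 - \phi_2\|_\infty$ for every $k \ge 1$ because $[-1, 0] \subset [-k, 0]$. The central estimate is then the elementary chain, using $s/(1 + s) \le s$ for $s \ge 0$ together with $\sum_{k = 1}^\infty 2^{-k} = 1$:
\begin{equation*}
	\rho(\phi_1, \phi_2)
	= \sum_{k = 1}^\infty \frac{1}{2^k} \cdot \frac{\|\phi_1 - \phi_2\|_{C[-k, 0]}}{1 + \|\phi_1 - \phi_2\|_{C[-k, 0]}}
	\le \sum_{k = 1}^\infty \frac{1}{2^k} \|\phi_1 - \phi_2\|_{C[-k, 0]}
	\le \|\phi_1 - \phi_2\|_\infty.
\end{equation*}
Substituting this into the hypothesis $\|F(t, \phi_1) - F(t, \phi_2)\|_E \le L \cdot \rho(\phi_1, \phi_2)$ yields $L$-\eqref{eq:Lipschitzian}, which is exactly the assertion with the chosen $R = 1$ and constant $L$.

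I do not anticipate any genuine obstacle: the entire content is the one-line domination $\rho(\phi_1, \phi_2) \le \|\phi_1 - \phi_2\|_\infty$, valid whenever the right-hand side is finite. The support condition in Definition~\ref{dfn:Lip about memories} is invoked solely to guarantee this finiteness; it plays no further role, and indeed any $R > 0$ would serve equally well, with the same constant $L$ surviving intact. This is the precise analytic counterpart of the heuristic that, on histories sharing a fixed compact support, the compact-open metric $\rho$ is bounded above by the uniform norm, so that a global $\rho$-Lipschitz condition is at least as strong as the Lipschitz condition about memories.
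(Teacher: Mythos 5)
Your proposal is correct and follows essentially the same route as the paper's proof: both reduce to the one-line domination $\rho(\phi_1, \phi_2) \le \|\phi_1 - \phi_2\|_\infty$ via $s/(1+s) \le s$ and $\sum_{k=1}^\infty 2^{-k} = 1$, using the compact support of $\phi_1 - \phi_2$ only to identify $\|\phi_1 - \phi_2\|_{C[-k,0]}$ with the finite quantity $\|\phi_1 - \phi_2\|_\infty$. The only cosmetic difference is that you fix $R = 1$ while the paper works with an arbitrary integer $k_0$; since the definition only asks for the existence of some $R > 0$, this is immaterial.
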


\begin{proof}
Let $f \colon \mathbb{R}_+ \to \mathbb{R}$ be a function given by $f(t) = t/(1 + t)$.
Then $f$ is monotonically increasing, and $f(t) \le t$ for all $t \ge 0$.
Let $k_0$ be a positive integer.
For every $\phi_1, \phi_2 \in C(I^\infty, E)$, $\supp(\phi_1 - \phi_2) \subset [-k_0, 0]$ implies
	\begin{equation*}
		\|\phi_1 - \phi_2\|_{C[-k, 0]} \le \|\phi_1 - \phi_2\|_{C[-k_0, 0]} = \|\phi_1 - \phi_2\|_\infty < \infty
	\end{equation*}
holds for all $k \ge 1$.
Therefore, we have
	\begin{equation*}
		\rho(\phi_1, \phi_2)
		= \sum_{k = 1}^\infty \frac{1}{2^k} f(\|\phi_1 - \phi_2\|_{C[-k, 0]})
		\le \|\phi_1 - \phi_2\|_\infty \sum_{k = 1}^\infty \frac{1}{2^k}
		= \|\phi_1 - \phi_2\|_\infty.
	\end{equation*}
This shows that $L$-\eqref{eq:Lipschitzian} holds
for all $(t, \phi_1), (t, \phi_2) \in \dom(F)$ satisfying $\supp(\phi_1 - \phi_2) \subset [-k_0, 0]$.
\end{proof}

See \cite[Section 4]{Hale 1965} for the treatment of RFDEs with infinite retardation
by a metric which generates the topology of uniform convergence on compact sets.
See also \cite[Section 3]{Kolmanovskii--Nosov 1986}.

\begin{proposition}\label{prop:gauge for cpt-open top}
Let $I = I^\infty$, $H = C(I^\infty, E)_\mathrm{co}$,
and $\{\rho_k\}_{k = 1}^\infty$ be a gauge (i.e., a set of pseudo-metrics) on $C(I^\infty, E)$ given by
	\begin{equation*}
		\rho_k(\phi, \psi) := \|\phi - \psi\|_{C[-k, 0]} \mspace{20mu} (k \ge 1).
	\end{equation*}
If $F$ is Lipschitzian with respect to the gauge $\{\rho_k\}_{k = 1}^\infty$,
i.e, there exists a sequence $\{L_k\}_{k = 1}^\infty$ of positive numbers such that
	\begin{equation*}
		\|F(t, \phi_1) - F(t, \phi_2)\|_E \le L_k \cdot \rho_k(\phi_1, \phi_2)
	\end{equation*}
holds for all $(t, \phi_1), (t, \phi_2) \in \dom(F)$ and all $k \ge 1$,
then $F$ is Lipschitzian about memories.
\end{proposition}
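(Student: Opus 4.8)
The plan is to verify Definition~\ref{dfn:Lip about memories} directly, producing the required constants $R, L > 0$ by a single application of the gauge hypothesis. The mechanism is identical to the one underlying Proposition~\ref{prop:metric for cpt-open top}: for differences supported in $[-k, 0]$, the pseudo-metric $\rho_k$ coincides with the full supremum norm $\|\cdot\|_\infty$, so that only one member of the gauge is ever needed.

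First I would fix a positive integer $k_0$ (for instance $k_0 = 1$) and set $R := k_0$. Let $(t, \phi_1), (t, \phi_2) \in \dom(F)$ satisfy $\supp(\phi_1 - \phi_2) \subset [-R, 0]$; the continuity of $\phi_1 - \phi_2$ demanded in condition (ii) of Definition~\ref{dfn:Lip about memories} is automatic here, since $\phi_1, \phi_2 \in H = C(I^\infty, E)_\mathrm{co}$. Because $\phi_1 - \phi_2$ vanishes outside $[-R, 0] = [-k_0, 0]$, I would observe that
	\begin{equation*}
		\rho_{k_0}(\phi_1, \phi_2) = \|\phi_1 - \phi_2\|_{C[-k_0, 0]} = \|\phi_1 - \phi_2\|_\infty < \infty.
	\end{equation*}

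Applying the gauge Lipschitz hypothesis with the single index $k = k_0$ then yields
	\begin{equation*}
		\|F(t, \phi_1) - F(t, \phi_2)\|_E \le L_{k_0} \cdot \rho_{k_0}(\phi_1, \phi_2) = L_{k_0} \cdot \|\phi_1 - \phi_2\|_\infty,
	\end{equation*}
so that setting $L := L_{k_0}$ makes $L$-\eqref{eq:Lipschitzian} hold for every such pair $(t, \phi_1), (t, \phi_2)$. This is exactly the assertion that $F$ is Lipschitzian about memories. I anticipate no genuine obstacle: the entire content is the elementary equality of the restriction norm and the full supremum norm on compactly supported differences, and the higher-index pseudo-metrics $\rho_k$ with $k > k_0$ are never invoked --- precisely the ``same tail'' efficiency emphasized in the surrounding discussion.
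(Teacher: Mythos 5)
Your proposal is correct and coincides with the paper's own proof: both fix a single index $k_0$, take $R := k_0$, note that a difference supported in $[-k_0, 0]$ satisfies $\rho_{k_0}(\phi_1, \phi_2) = \|\phi_1 - \phi_2\|_\infty$, and conclude with $L := L_{k_0}$. No differences worth noting.
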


\begin{proof}
Let $k_0$ be a positive integer.
Then for every $\phi_1, \phi_2 \in C(I^\infty, E)$, $\supp(\phi_1 - \phi_2) \subset [-k_0, 0]$ implies
$\rho_{k_0}(\phi_1, \phi_2) = \|\phi_1 - \phi_2\|_\infty$.
This shows that $L_{k_0}$-\eqref{eq:Lipschitzian} holds
for all $(t, \phi_1), (t, \phi_2) \in \dom(F)$ satisfying $\supp(\phi_1 - \phi_2) \subset [-k_0, 0]$.
\end{proof}

\section{Existence and uniqueness}\label{sec:existence and uniqueness}

Let $I$ be a past interval, $E = (E, \|\cdot\|_E)$ be a Banach space,
$H \subset \Map(I, E)$ be a history space, and $F \colon \mathbb{R} \times H \supset \dom(F) \to E$ be a map
throughout this section.

The purpose of this section is to extend the results about the local existence and uniqueness
obtained in \cite{Nishiguchi 2017} under the following assumptions:
\begin{itemize}
\item $H$ is prolongable and regulated by prolongations.
\item $F$ is continuous and locally Lipschitzian about prolongations.
\item $\dom(F)$ is open in $\mathbb{R} \times H$.
\end{itemize}

\subsection{Approaches for extensions of previous results}

For the purpose stated above, two approaches of extensions will be investigated:
\begin{description}
\item[(E1)] Suppose that
	\begin{enumerate}
	\item[(i)] $H$ is $C^1$-prolongable and regulated by $C^1$-prolongations,
	\item[(ii)] $F$ is continuous, and
	\item[(iii)] $\dom(F)$ is a neighborhood by $C^1$-prolongations of every $(t_0, \phi_0) \in \dom(F)$.
	\end{enumerate}
\item[(E2)] Suppose that
	\begin{enumerate}
	\item[(i)] $H$ is closed under $C^1$-prolongations, and
	\item[(ii)] there exists an extension $\bigl( \bar{H}, \bar{F} \bigr)$ of $(H, F)$ with the following properties:
		\begin{itemize}
		\item $\bar{H}$ is prolongable and regulated by prolongations.
		\item $\bar{F}$ is continuous.
		\item $\dom \bigl( \bar{F} \bigr)$ is a neighborhood by prolongations 
		of every $(t_0, \phi_0) \in \dom \bigl( \bar{F} \bigr)$.
		\end{itemize}
	\end{enumerate}
\end{description}
Here we call $\bigl( \bar{H}, \bar{F} \bigr)$ an \textit{extension} of $(H, F)$ if
\begin{itemize}
\item $\bar{H} \subset \Map(I, E)$ is a history space containing $H$, and
\item $\bar{F} \colon \mathbb{R} \times \bar{H} \supset \dom \bigl( \bar{F} \bigr) \to E$ is an extension of $F$ satisfying
	\begin{equation*}
		\dom(F) = \dom \bigl( \bar{F} \bigr) \cap (\mathbb{R} \times H).
	\end{equation*}
\end{itemize}

\begin{remark}
$H$ is not necessarily a linear topological subspace of $\bar{H}$
in the approach (E2).
For the above approaches (E1) and (E2), we will consider the Lipschitz condition for $F$.
Therefore, (E2) is not included in (E1).
\end{remark}

For an extension $\bigl( \bar{H}, \bar{F} \bigr)$ of $(H, F)$, we consider the family of systems of equations
	\begin{equation}\label{eq:extended IVP}
		\left\{
		\begin{alignedat}{2}
			\dot{x}(t) &= \bar{F}(t, I_tx), & \mspace{20mu} & t \ge t_0, \\
			I_{t_0}x &= \phi_0, & & (t_0, \phi_0) \in \dom \bigl( \bar{F} \bigr)
		\end{alignedat}
		\right. \tag{$\bar{*}$}
	\end{equation}
with a parameter $(t_0, \phi_0)$.
The following lemma combines IVPs~\eqref{eq:IVP} and \eqref{eq:extended IVP}.

\begin{lemma}\label{lem:sol of extended IVP}
Let $(t_0, \phi_0) \in \dom(F)$ and $x \colon J + I \to E$ be a prolongation of $(t_0, \phi_0)$.
Suppose that
(i) $H$ is closed under $C^1$-prolongations, and
(ii) $\bigl( \bar{H}, \bar{F} \bigr)$ is an extension of $(H, F)$.
If $\bar{H}$ is prolongable and $\bar{F}$ is continuous,
then the following properties are equivalent.
\begin{enumerate}
\item[\emph{(a)}] $x$ is a $C^1$-solution of $\eqref{eq:IVP}_{t_0, \phi_0}$.
\item[\emph{(b)}] $x$ is a solution of $\eqref{eq:extended IVP}_{t_0, \phi_0}$.
\end{enumerate}
\end{lemma}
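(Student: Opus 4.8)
The plan is to treat the two implications asymmetrically, since (a) $\Rightarrow$ (b) is essentially formal while (b) $\Rightarrow$ (a) carries all the analytic content. For (a) $\Rightarrow$ (b), I would simply unwind the definition of an extension: if $x$ is a $C^1$-solution of $\eqref{eq:IVP}_{t_0, \phi_0}$, then $x|_J$ is $C^1$ (hence differentiable), $I_{t_0}x = \phi_0$, and $(t, I_tx) \in \dom(F)$ with $(x|_J)'(t) = F(t, I_tx)$ for every $t \in J$. Because $\dom(F) = \dom \bigl( \bar{F} \bigr) \cap (\mathbb{R} \times H) \subset \dom \bigl( \bar{F} \bigr)$ and $\bar{F}$ restricts to $F$ on $\dom(F)$, each such relation transports verbatim to $\bar{F}$, giving $(t, I_tx) \in \dom \bigl( \bar{F} \bigr)$ and $(x|_J)'(t) = \bar{F}(t, I_tx)$. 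Thus $x$ is a solution of $\eqref{eq:extended IVP}_{t_0, \phi_0}$. Notably this direction uses neither the prolongability of $\bar{H}$ nor the continuity of $\bar{F}$.

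The substance is in (b) $\Rightarrow$ (a), which I would carry out in three steps. Suppose $x$ solves $\eqref{eq:extended IVP}_{t_0, \phi_0}$, so $x|_J$ is differentiable with $(x|_J)'(t) = \bar{F}(t, I_tx)$ and $(t, I_tx) \in \dom \bigl( \bar{F} \bigr) \subset \mathbb{R} \times \bar{H}$ for all $t \in J$. Step one upgrades differentiability to class $C^1$. Translating to base point $0$, so that $\tilde{x}(s) := x(t_0 + s)$ is a prolongation of $\phi_0 \in H \subset \bar{H}$, I would invoke the continuity clause (ii) in the definition of prolongability of $\bar{H}$ (Definition~\ref{dfn:prolongabilities}) to conclude that $s \mapsto I_s\tilde{x} \in \bar{H}$ is continuous. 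Since prolongability quantifies over prolongations defined on the full ray $\mathbb{R}_+ + I$ while $\tilde{x}$ lives only on $(-t_0 + J) + I$, the point is that continuity is local in $t$: for each fixed $t$ one restricts to a compact subinterval and prolongs by a constant to reduce to the stated form. Composing the continuous curve $t \mapsto (t, I_tx)$ with the continuous map $\bar{F}$ then shows $t \mapsto \bar{F}(t, I_tx) = (x|_J)'(t)$ is continuous on $J$, so $x|_J$ is $C^1$ and $x$ is a $C^1$-prolongation of $(t_0, \phi_0)$.

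Step two shows $I_tx \in H$ for every $t \in J$, which is where the closedness of $H$ under $C^1$-prolongations (Definition~\ref{dfn:prolongabilities}) enters. Fixing $t = t_0 + s$, I would build a genuine $C^1$-prolongation $\gamma \colon \mathbb{R}_+ + I \to E$ of $\phi_0$ by setting $\gamma|_I = \phi_0$, $\gamma|_{[0, s]} = \tilde{x}|_{[0, s]}$, and extending $\gamma$ on $[s, +\infty)$ affinely with slope $\tilde{x}'(s)$; this is $C^1$ on $\mathbb{R}_+$ (no matching at $0$ is required, since a $C^1$-prolongation only demands smoothness on the forward ray) and agrees with $\tilde{x}$ on $[0, s] + I$. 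Closedness then yields $I_s\gamma \in H$, and since $I_s\gamma = I_s\tilde{x} = I_{t_0 + s}x$ we obtain $I_tx \in H$. Combining this with $(t, I_tx) \in \dom \bigl( \bar{F} \bigr)$ and the extension identity $\dom(F) = \dom \bigl( \bar{F} \bigr) \cap (\mathbb{R} \times H)$ gives $(t, I_tx) \in \dom(F)$, and step three concludes $(x|_J)'(t) = \bar{F}(t, I_tx) = F(t, I_tx)$ because $\bar{F}$ restricts to $F$ there; hence $x$ is a $C^1$-solution of $\eqref{eq:IVP}_{t_0, \phi_0}$.

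I expect the main obstacle to be the bookkeeping in step one of (b) $\Rightarrow$ (a): bridging the gap between the definition of prolongability, which quantifies over prolongations defined on the full ray $\mathbb{R}_+ + I$, and the solution $x$, which a priori lives only on $J + I$ and is only differentiable. The localization-and-extension device handles this, but one must verify that the constant (respectively affine) prolongation used to reduce to the stated hypotheses does not alter the history $I_tx$ at the relevant times, and that ``differentiable with continuous derivative'' genuinely delivers class $C^1$ \emph{before} the closedness hypothesis—phrased for $C^1$-prolongations—can legitimately be applied.
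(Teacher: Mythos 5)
Your proposal is correct and follows essentially the same route as the paper's proof: (a) $\Rightarrow$ (b) by unwinding the definition of an extension, and (b) $\Rightarrow$ (a) by first using the prolongability of $\bar{H}$ and the continuity of $\bar{F}$ to upgrade $x|_J$ to class $C^1$, then the closedness of $H$ under $C^1$-prolongations to get $(t, I_tx) \in \mathbb{R} \times H$, hence $(t, I_tx) \in \dom(F)$. The only difference is that you make explicit the localization-and-constant/affine-extension device needed because Definition~\ref{dfn:prolongabilities} quantifies over prolongations on the full ray $\mathbb{R}_+ + I$; the paper leaves this routine step implicit.
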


\begin{proof}
(a) $\Rightarrow$ (b):
For a $C^1$-solution $x$ of $\eqref{eq:IVP}_{t_0, \phi_0}$,
$x$ is also a solution of $\eqref{eq:extended IVP}_{t_0, \phi_0}$
because $\bar{F}$ is an extension of $F$.

(b) $\Rightarrow$ (a):
Suppose that $x$ is a solution of $\eqref{eq:extended IVP}_{t_0, \phi_0}$.
Then $x|_J$ is of class $C^1$ by the prolongability of $\bar{H}$ and the continuity of $\bar{F}$.
That is, $x$ is a $C^1$-prolongation of $(t_0, \phi_0)$.
This shows that $(t, I_tx) \in \mathbb{R} \times H$ holds for all $t \in J$ by the assumption (i).
Then we have
	\begin{equation*}
		(t, I_tx) \in \dom(F)
			\mspace{10mu} \text{and} \mspace{10mu}
		(x|_J)'(t) = F(t, I_tx)
	\end{equation*}
for all $t \in J$.
This means that $x$ is a $C^1$-solution of $\eqref{eq:IVP}_{t_0, \phi_0}$.
\end{proof}

We see that the two approaches (E1) and (E2) can be treated in a unified way
by using the following notions.

\begin{definition}[cf.\ \cite{Kappel--Schappacher 1980}]
Suppose that $H$ is closed under $C^1$-prolongations.
We say that $F$ is \textit{continuous along $C^1$-prolongations} at $(\sigma_0, \psi_0) \in \dom(F)$
if for all $C^1$-prolongation $\gamma \colon J + I \to E$ of $(\sigma_0, \psi_0)$,
	\begin{equation*}
		(t, I_t\gamma) \in \dom(F) \mspace{20mu} (\forall t \in J)
	\end{equation*}
implies the continuity of $J \ni t \mapsto F(t, I_t\gamma) \in E$.
When $F$ is continuous along $C^1$-prolongations at every $(\sigma_0, \psi_0) \in \dom(F)$,
we simply say that $F$ is continuous along $C^1$-prolongations.
\end{definition}

\begin{definition}[cf.\ \cite{Kappel--Schappacher 1980}]
Suppose that $H$ is closed under $C^1$-prolongations.
We say that $F$ is \textit{locally bounded about $C^1$-prolongations} at $(\sigma_0, \psi_0) \in \dom(F)$
if there exist $T, \delta > 0$ such that $F$ is bounded on $\varLambda_{\sigma_0, \psi_0}^1(T, \delta; F) \cap \dom(F)$.
When $F$ is locally bounded about $C^1$-prolongations at every $(\sigma_0, \psi_0) \in \dom(F)$,
we simply say that $F$ is locally bounded about $C^1$-prolongations.
\end{definition}

The continuity along prolongations and the local boundedness about prolongations can be introduced
in the similar way.

\begin{remark}
Kappel \& Schappacher~\cite[(i) and (ii) of (A2)]{Kappel--Schappacher 1980} used the following conditions:
Let $(\sigma_0, \psi_0) \in \dom(F)$ and $T, \delta > 0$ be given.
\begin{itemize}
\item For every $\gamma \in \varGamma_{\sigma_0, \psi_0}(T, \delta)$,
the function $[t_0, t_0 + T] \ni t \mapsto F(t, I_t\gamma) \in E$ is integrable.
\item There exists a function $m(\cdot) \colon [t_0, t_0 + T] \to \mathbb{R}_+$ such that
for every $\gamma \in \varGamma_{\sigma_0, \psi_0}(T, \delta)$,
	\begin{equation*}
		\|F(t, I_t\gamma)\|_E \le m(t) \mspace{20mu} (\forall t \in [t_0, t_0 + T])
	\end{equation*}
holds.
\end{itemize}
The former property may be called the \textit{local integrability along prolongations}.
\end{remark}

We introduce the following condition for a unification of (E1) and (E2):
\begin{description}
\item[(E0)] Suppose that
	\begin{enumerate}
	\item[(i)] $H$ is closed under $C^1$-prolongations,
	\item[(ii)] $F$ is continuous along $C^1$-prolongations,
	\item[(iii)] $F$ is locally bounded about $C^1$-prolongations, and 
	\item[(iv)] $\dom(F)$ is a neighborhood by $C^1$-prolongations of every $(t_0, \phi_0)$.
	\end{enumerate}
\end{description}

The succeeding lemmas show that
	\begin{equation*}
		\text{(E1) or (E2) $\Longrightarrow$ (E0)}.
	\end{equation*}

\begin{lemma}\label{lem:(E1) implies (E0)}
Let $(\sigma_0, \psi_0) \in \dom(F)$.
Suppose that $H$ is $C^1$-prolongable and regulated by $C^1$-prolongations.
If $F$ is continuous, and if $\dom(F)$ is a neighborhood by $C^1$-prolongations of $(\sigma_0, \psi_0)$,
then the following properties hold:
\begin{enumerate}
\item $F$ is continuous along $C^1$-prolongations at $(\sigma_0, \psi_0)$.
\item $F$ is locally bounded about $C^1$-prolongations at $(\sigma_0, \psi_0)$.
\end{enumerate}
\end{lemma}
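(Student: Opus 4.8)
The plan is to treat both assertions as consequences of a single mechanism: under the stated hypotheses the curve $t \mapsto I_t\gamma$ attached to a $C^1$-prolongation is continuous into $H$, and rectangles by $C^1$-prolongations can be squeezed into prescribed neighborhoods. Throughout I write $v_0 := F(\sigma_0, \psi_0)$.

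For assertion (1), let $\gamma \colon J + I \to E$ be any $C^1$-prolongation of $(\sigma_0, \psi_0)$ with $(t, I_t\gamma) \in \dom(F)$ for all $t \in J$. Since $F$ is continuous on $\dom(F)$ and $J \ni t \mapsto t$ is trivially continuous, it suffices to prove that $J \ni t \mapsto I_t\gamma \in H$ is continuous; then $J \ni t \mapsto F(t, I_t\gamma)$ is the composition of the continuous curve $t \mapsto (t, I_t\gamma) \in \dom(F)$ with $F$. To get continuity of $t \mapsto I_t\gamma$ I would shift the base point to $0$ by setting $\eta(s) := \gamma(\sigma_0 + s)$, so that $\eta$ is a $C^1$-prolongation of $\psi_0 \in H$ satisfying $I_s\eta = I_{\sigma_0 + s}\gamma$ for all admissible $s$. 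The definition of $C^1$-prolongability is phrased for prolongations defined on all of $\mathbb{R}_+ + I$; since continuity of $t \mapsto I_t\gamma$ is a local property on $J$, I would, whenever $J$ is not already $[\sigma_0, +\infty)$, extend $\eta$ past the relevant point to a $C^1$-prolongation $\hat\eta \colon \mathbb{R}_+ + I \to E$ of $\psi_0$ (for instance by continuing $\eta$ affinely beyond its right endpoint using the one-sided derivative there). Applying property (ii) of $C^1$-prolongability to $\hat\eta$ yields continuity of $\mathbb{R}_+ \ni s \mapsto I_s\hat\eta \in H$, and restricting recovers continuity of $J \ni t \mapsto I_t\gamma$, giving (1).

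For assertion (2), I would exploit the continuity of $F$ at $(\sigma_0, \psi_0)$: there is a neighborhood $U$ of $(\sigma_0, \psi_0)$ in $\mathbb{R} \times H$ with, say, $\|F(t, \phi) - v_0\|_E < 1$ on $U \cap \dom(F)$, so $F$ is bounded on $U \cap \dom(F)$. Then $W := U \cap ([\sigma_0, +\infty) \times H)$ is a neighborhood of $(\sigma_0, \psi_0)$ in $[\sigma_0, +\infty) \times H$. Since $H$ is $C^1$-prolongable and regulated by $C^1$-prolongations, Theorem~\ref{thm:nbd by C^1-prolongations}(i) shows that $W$ is a neighborhood by $C^1$-prolongations of $(\sigma_0, \psi_0)$; applying this with the particular parameter $v_0$ furnishes $T, \delta > 0$ with $\varLambda_{\sigma_0, \psi_0}^1(T, \delta, v_0) \subset W \subset U$. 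Because $\varLambda_{\sigma_0, \psi_0}^1(T, \delta; F) = \varLambda_{\sigma_0, \psi_0}^1(T, \delta, v_0)$, we conclude that $F$ is bounded on $\varLambda_{\sigma_0, \psi_0}^1(T, \delta; F) \cap \dom(F) \subset U \cap \dom(F)$, which is exactly (2).

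The only delicate point is the domain bookkeeping in (1): the definition of $C^1$-prolongability is stated for prolongations on the full half-line $\mathbb{R}_+ + I$, whereas a general $C^1$-prolongation of $(\sigma_0, \psi_0)$ lives over a left-closed interval $J$ that may be bounded or half-open, and it is based at $\sigma_0$ rather than $0$. The reduction above, namely shifting to base point $0$ and then $C^1$-extending past the point of interest while invoking the locality of continuity, is routine but is the step that requires care. Assertion (2) is then an immediate combination of the continuity (hence local boundedness) of $F$ with Theorem~\ref{thm:nbd by C^1-prolongations}(i), and needs no further work.
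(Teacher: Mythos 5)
Your proposal is correct and follows essentially the same route as the paper: part (2) is verbatim the paper's argument (local boundedness of $F$ from continuity, then Theorem~\ref{thm:nbd by C^1-prolongations}(i) to fit a rectangle $\varLambda_{\sigma_0, \psi_0}^1(T, \delta; F)$ inside the bounding neighborhood), and part (1) is the one-line consequence of $C^1$-prolongability and continuity of $F$ that the paper asserts, for which you merely supply the routine base-point shift and extension details.
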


\begin{proof}
1. This is a consequence of the $C^1$-prolongability of $H$ and the continuity of $H$.

2. By the continuity of $F$,
there are a neighborhood $W$ of $(\sigma_0, \psi_0)$ in $\mathbb{R} \times H$ and $M > 0$ such that
	\begin{equation*}
		\sup_{(t, \phi) \in W \cap \dom(F)} \|F(t, \phi)\|_E \le M.
	\end{equation*}
From Theorem~\ref{thm:nbd by C^1-prolongations},
$W$ is a neighborhood by $C^1$-prolongations of $(\sigma_0, \psi_0)$.
Then there are $T, \delta > 0$ such that
	\begin{equation*}
		\varLambda_{\sigma_0, \psi_0}^1(T, \delta; F) \subset W.
	\end{equation*}
Therefore, $F$ is bounded on $\varLambda_{\sigma_0, \psi_0}^1(T, \delta; F) \cap \dom(F)$.
\end{proof}

\begin{lemma}\label{lem:(E2) implies (E0)}
Suppose that
(i) $H$ is closed under $C^1$-prolongations,
(ii) $\bigl( \bar{H}, \bar{F} \bigr)$ is an extension of $(H, F)$, and
(iii) $\bar{H}$ is prolongable and regulated by prolongations.
If $\bar{F}$ is continuous, and if $\dom \bigl( \bar{F} \bigr)$ is a neighborhood by prolongations
of every $(\sigma_0, \psi_0) \in \dom \bigl( \bar{F} \bigr)$,
then the following properties hold:
\begin{enumerate}
\item $F$ is continuous along $C^1$-prolongations.
\item $\dom(F)$ is a neighborhood by $C^1$-prolongations of every $(\sigma_0, \psi_0) \in \dom(F)$.
\item $F$ is locally bounded about $C^1$-prolongations.
\end{enumerate}
\end{lemma}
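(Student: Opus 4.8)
The plan is to prove the three conclusions by moving information back and forth between the extension space $\bar{H}$, where prolongability, the continuity of $\bar{F}$, and the neighborhood-by-prolongations property of $\dom(\bar{F})$ hold, and the original $H$, where the identity $\dom(F) = \dom(\bar{F}) \cap (\mathbb{R} \times H)$ and the closedness of $H$ under $C^1$-prolongations are available. The device that makes this possible is that each rectangle $\varLambda_{\sigma_0, \psi_0}(T, \delta)$ and $\varLambda^1_{\sigma_0, \psi_0}(T, \delta, v)$ is a set-theoretic subset of $\mathbb{R} \times \Map(I, E)$, insensitive to the ambient history space. First I would record that, since prolongability implies $C^1$-prolongability, regulation by prolongations implies regulation by $C^1$-prolongations, and (by Lemma~\ref{lem:comparison of rectangles}) a neighborhood by prolongations is a neighborhood by $C^1$-prolongations, the pair $(\bar{H}, \bar{F})$ satisfies every hypothesis of approach (E1); hence Lemma~\ref{lem:(E1) implies (E0)} and Theorem~\ref{thm:nbd by C^1-prolongations} apply with $\bar{H}$ in place of $H$.

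To prove conclusion~2, I would fix $(\sigma_0, \psi_0) \in \dom(F) \subset \dom(\bar{F})$ and an arbitrary $v \in E$. Since $\dom(\bar{F})$ is a neighborhood by prolongations of $(\sigma_0, \psi_0)$, there are $T, \delta > 0$ with $\varLambda_{\sigma_0, \psi_0}(T, \delta) \subset \dom(\bar{F})$. Applying Lemma~\ref{lem:comparison of rectangles} with $B = \{v\}$ yields $0 < T_0 \le T$ and $0 < \delta_0 \le \delta/2$ with $\varLambda^1_{\sigma_0, \psi_0}(T_0, \delta_0, v) \subset \varLambda_{\sigma_0, \psi_0}(T_0, \delta) \subset \varLambda_{\sigma_0, \psi_0}(T, \delta) \subset \dom(\bar{F})$, the middle inclusion being Remark~\ref{rmk:comparison of rectangle by prolongations}. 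The remaining point is that $\varLambda^1_{\sigma_0, \psi_0}(T_0, \delta_0, v) \subset \mathbb{R} \times H$: each of its elements has the form $(\sigma_0 + \tau, I_{\sigma_0 + \tau}\gamma)$ with $\gamma$ a $C^1$-prolongation of $(\sigma_0, \psi_0)$, and since $\psi_0 \in H$ and $H$ is closed under $C^1$-prolongations, $I_{\sigma_0 + \tau}\gamma \in H$. Intersecting the two inclusions gives $\varLambda^1_{\sigma_0, \psi_0}(T_0, \delta_0, v) \subset \dom(\bar{F}) \cap (\mathbb{R} \times H) = \dom(F)$, and since $v$ was arbitrary this is exactly the assertion that $\dom(F)$ is a neighborhood by $C^1$-prolongations of $(\sigma_0, \psi_0)$.

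For conclusions~1 and~3 I would exploit that $F = \bar{F}$ on $\dom(F)$ and $\dom(F) \subset \dom(\bar{F})$. For conclusion~1, given a $C^1$-prolongation $\gamma \colon J + I \to E$ of $(\sigma_0, \psi_0)$ with $(t, I_t\gamma) \in \dom(F)$ for all $t \in J$, the same pairs lie in $\dom(\bar{F})$ and $F(t, I_t\gamma) = \bar{F}(t, I_t\gamma)$ there; prolongability of $\bar{H}$ makes $J \ni t \mapsto I_t\gamma \in \bar{H}$ continuous, and continuity of $\bar{F}$ then makes $t \mapsto \bar{F}(t, I_t\gamma) = F(t, I_t\gamma)$ continuous, which is continuity of $F$ along $C^1$-prolongations. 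For conclusion~3, continuity of $\bar{F}$ supplies a neighborhood $W$ of $(\sigma_0, \psi_0)$ in $[\sigma_0, +\infty) \times \bar{H}$ and $M > 0$ with $\|\bar{F}\|_E \le M$ on $W \cap \dom(\bar{F})$; Theorem~\ref{thm:nbd by C^1-prolongations}(i) applied to $\bar{H}$ makes $W$ a neighborhood by $C^1$-prolongations, so with $v = F(\sigma_0, \psi_0)$ there are $T, \delta > 0$ with $\varLambda^1_{\sigma_0, \psi_0}(T, \delta; F) \subset W$, whence $\|F\|_E \le M$ on $\varLambda^1_{\sigma_0, \psi_0}(T, \delta; F) \cap \dom(F)$. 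Both conclusions also follow at once by applying Lemma~\ref{lem:(E1) implies (E0)} directly to $(\bar{H}, \bar{F})$ and restricting.

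The step to handle with care, rather than a deep obstacle, is the bookkeeping forced by the Remark that $H$ need not be a linear topological subspace of $\bar{H}$: every continuity and neighborhood argument must be carried out in the topology of $\bar{H}$, while every membership claim that a given history lies in $H$ must invoke the closedness of $H$ under $C^1$-prolongations. These two strands never clash precisely because rectangles are set-theoretic, so the proof goes through once one is disciplined about which space each hypothesis concerns. The only genuinely technical adjustment is that closedness under $C^1$-prolongations and prolongability are stated for prolongations defined on $\mathbb{R}_+ + I$, whereas the elements of $\varGamma^1_{\sigma_0, \psi_0}(\tau, \delta_0, v)$ live on the bounded interval $[\sigma_0, \sigma_0 + \tau] + I$; this is resolved routinely by the base-point translation of Subsection~\ref{subsec:prolongations} together with a $C^1$-extension of $\gamma$ beyond its right endpoint by its tangent line there (a plain constant extension suffices for the continuity argument of conclusion~1).
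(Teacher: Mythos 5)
Your proposal is correct and follows essentially the same route as the paper's proof: conclusion 1 from $F = \bar{F}$ on $\dom(F)$ together with the prolongability of $\bar{H}$ and continuity of $\bar{F}$; conclusion 2 by placing the $C^1$-rectangle inside $\dom\bigl(\bar{F}\bigr)$ and intersecting with $\mathbb{R} \times H$ via closedness under $C^1$-prolongations; conclusion 3 from local boundedness of $\bar{F}$ plus Theorem~\ref{thm:nbd by C^1-prolongations} applied in $\bar{H}$. Your version is in fact slightly more careful than the paper's, which in conclusion 2 only displays the rectangle for $v = F(\sigma_0, \psi_0)$ even though the definition of a neighborhood by $C^1$-prolongations quantifies over all $v \in E$; your explicit use of Lemma~\ref{lem:comparison of rectangles} with $B = \{v\}$ for arbitrary $v$ closes that cosmetic gap.
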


\begin{proof}
Fix $(\sigma_0, \psi_0) \in \dom(F)$.

1. Let $\gamma \colon J + I \to E$ be a $C^1$-prolongation of $(\sigma_0, \psi_0)$ satisfying
	\begin{equation*}
		(t, I_t\gamma) \in \dom(F) \mspace{20mu} (\forall t \in J).
	\end{equation*}
Then $F(t, I_t\gamma) = \bar{F}(t, I_t\gamma)$ for all $t \in J$.
Therefore, the map
	\begin{equation*}
		J \ni t \mapsto F(t, I_t\gamma) \in E
	\end{equation*}
is continuous by the prolongability of $H$ and by the continuity of $\bar{F}$.

2. There are $T, \delta > 0$ such that
	\begin{equation*}
		\varLambda_{\sigma_0, \psi_0}^1(T, \delta; F) \subset \dom \bigl( \bar{F} \bigr).
	\end{equation*}
By the assumption (i), this shows
	\begin{align*}
		\varLambda_{\sigma_0, \psi_0}^1(T, \delta; F)
		&= \varLambda_{\sigma_0, \psi_0}^1(T, \delta; F) \cap (\mathbb{R} \times H) \\
		&\subset \dom \bigl( \bar{F} \bigr) \cap (\mathbb{R} \times H) \\
		&= \dom(F).
	\end{align*}

3. By the continuity of $\bar{F}$,
there are a neighborhood $W$ of $(\sigma_0, \psi_0)$ in $\mathbb{R} \times \bar{H}$ and $M > 0$ such that
	\begin{equation*}
		\sup_{(t, \phi) \in W \cap \dom(\bar{F})} \bigl\| \bar{F}(t, \phi) \bigr\|_E \le M.
	\end{equation*}
We note that $\bar{H}$ is regulated by $C^1$-prolongations
(see Remark~\ref{rmk:comparison of regulations by prolongations}).
From Theorem~\ref{thm:nbd by C^1-prolongations},
$W$ is a neighborhood by $C^1$-prolongations of $(\sigma_0, \psi_0)$.
Therefore, there are $T, \delta > 0$ such that $\varLambda_{\sigma_0, \psi_0}^1(T, \delta; F) \subset W$.
Then we have
	\begin{equation*}
		\varLambda_{\sigma_0, \psi_0}^1(T, \delta; F) \cap \dom(F)
		\subset \varLambda_{\sigma_0, \psi_0}^1(T, \delta; F) \cap \dom \bigl( \bar{F} \bigr)
		\subset W \cap \dom \bigl( \bar{F} \bigr).
	\end{equation*}
Since $F = \bar{F}$ on $\dom(F)$,
this shows that $F$ is bounded on $\varLambda_{\sigma_0, \psi_0}^1(T, \delta; F) \cap \dom(F)$.
\end{proof}

\subsection{Equivalent integral equation}

To transform $\eqref{eq:IVP}_{t_0, \phi_0}$ an integral equation for each $(t_0, \phi_0) \in \dom(F)$,
we use the following fact for the Riemann integration of Banach space-valued functions.

\begin{fact}
Let $a < b$ be real numbers and $X$ be a Banach space.
For every continuous map $g \colon [a, b] \to X$, the following properties hold:
\begin{enumerate}
\item[\emph{(i)}] $g$ is Riemann integrable.
\item[\emph{(ii)}] The indefinite integral $G \colon [a, b] \to X$ of $g$ is an anti-derivative of $g$.
\item[\emph{(iii)}] Any anti-derivative $G \colon[a, b] \to X$ of $g$ satisfies
	\begin{equation*}
		G(x) - G(a) = \int_a^x g(t) \mspace{2mu} \mathrm{d}t
			\mspace{20mu}
		(\forall x \in [a, b]).
	\end{equation*}
\end{enumerate}
\end{fact}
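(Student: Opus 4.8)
The plan is to establish the three parts in order, since (ii) relies on (i) and (iii) relies on (ii); only the last part requires genuine care in the Banach-valued setting. For (i), I would invoke the Cauchy criterion for tagged Riemann sums together with the completeness of $X$. Since $[a, b]$ is compact and $g$ is continuous, $g$ is uniformly continuous, so for each $\varepsilon > 0$ there is $\eta > 0$ with $\|g(s) - g(t)\|_X < \varepsilon$ whenever $|s - t| < \eta$. Comparing a tagged partition of mesh less than $\eta$ with any of its refinements term by term bounds the difference of the corresponding Riemann sums by $\varepsilon(b - a)$; hence any two tagged partitions of mesh less than $\eta$ give Riemann sums within $2\varepsilon(b - a)$ of each other. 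Thus the net of Riemann sums is Cauchy, and the completeness of $X$ furnishes its limit, which is by definition $\int_a^b g(t) \, \mathrm{d}t$.

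For (ii), I would estimate the difference quotient of $G(x) = \int_a^x g(t) \, \mathrm{d}t$ directly. For $h \ne 0$ with $x + h \in [a, b]$,
\begin{equation*}
	\frac{G(x + h) - G(x)}{h} - g(x)
	= \frac{1}{h} \int_x^{x + h} \bigl( g(t) - g(x) \bigr) \, \mathrm{d}t,
\end{equation*}
and the norm of the right-hand side is at most $\sup \{ \|g(t) - g(x)\|_X : |t - x| \le |h| \}$. By the continuity of $g$ at $x$, this supremum tends to $0$ as $h \to 0$, so $G$ is differentiable at $x$ with $G'(x) = g(x)$.

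For (iii), let $\tilde{G}(x) := \int_a^x g(t) \, \mathrm{d}t$ denote the indefinite integral and set $D := G - \tilde{G}$, where $G$ is now an arbitrary anti-derivative of $g$. By (ii) we have $D' \equiv 0$ on $[a, b]$, and it remains to deduce that $D$ is constant. The main obstacle is precisely that the classical mean value theorem fails for vector-valued maps, so this deduction is not automatic. I would circumvent it via the Hahn--Banach theorem: for every $\ell \in X^*$ the scalar function $\ell \circ D$ is differentiable with $(\ell \circ D)' = \ell \circ D' \equiv 0$, hence constant by the real mean value theorem; since $X^*$ separates the points of $X$, it follows that $D$ itself is constant. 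Evaluating at $a$ gives $D(x) = D(a) = G(a)$ for all $x$, that is $G(x) - G(a) = \tilde{G}(x) = \int_a^x g(t) \, \mathrm{d}t$, which is the asserted identity. Equivalently, one could replace the Hahn--Banach step by the mean value inequality $\|f(b) - f(a)\|_X \le (b - a) \sup_x \|f'(x)\|_X$ applied with $f = D$.
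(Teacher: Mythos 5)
Your proof is correct. Note that the paper itself does not prove this Fact: it refers to Gordon~\cite{Gordon 1991} for (i) and simply declares (ii) and (iii) standard, so there is no argument of the paper's to compare against. The three steps you supply --- the Cauchy criterion for Riemann sums via uniform continuity and completeness for (i), the difference-quotient estimate for (ii), and the reduction of ``zero derivative implies constant'' to the scalar mean value theorem via Hahn--Banach (or the mean value inequality) for (iii) --- are exactly the standard arguments the author has in mind, and you correctly identify the failure of the vector-valued mean value theorem as the one point in (iii) that genuinely needs attention.
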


Here a function $G \colon [a, b] \to X$ is called an anti-derivative of $g$ if $G' = g$ holds.
Both of the above (ii) and (iii) represent the fundamental theorem of calculus
for the Riemann integration of Banach space-valued functions.
The proofs of (ii) and (iii) are standard.
For the proof of (i), we refer the reader to Gordon~\cite{Gordon 1991}.

\begin{lemma}\label{lem:integral eq}
Suppose that $H$ is closed under $C^1$-prolongations.
Let $(t_0, \phi_0) \in \dom(F)$ and $x \colon J + I \to E$ be a $C^1$-prolongation of $(t_0, \phi_0)$.
If $F$ is continuous along $C^1$-prolongations at $(t_0, \phi_0)$,
then the following properties are equivalent:
\begin{enumerate}
\item[\emph{(a)}] $x$ is a solution of $\eqref{eq:IVP}_{t_0, \phi_0}$.
\item[\emph{(b)}] $(t, I_tx) \in \dom(F)$ holds for all $t \in J$, and $x$ satisfies
	\begin{equation*}
		x(t) = \phi_0(0) + \int_{t_0}^t F(u, I_ux) \mspace{2mu} \mathrm{d}u
		\mspace{20mu}
		(\forall t \in J).
	\end{equation*}
\end{enumerate}
\end{lemma}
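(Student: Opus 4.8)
The plan is to reduce both implications to the fundamental theorem of calculus for Banach space-valued functions (the Fact recalled just above), once the continuity of the integrand is secured. The decisive preliminary observation is this: since $x$ is a $C^1$-prolongation of $(t_0, \phi_0)$, and since both (a) and (b) entail $(t, I_tx) \in \dom(F)$ for every $t \in J$, the hypothesis that $F$ is continuous along $C^1$-prolongations at $(t_0, \phi_0)$ applies verbatim and yields the continuity of the map $g \colon J \to E$ defined by $g(u) = F(u, I_ux)$. (In both (a) and (b) the membership $(u, I_ux) \in \dom(F) \subset \mathbb{R} \times H$ already guarantees $I_ux \in H$, so these evaluations are meaningful; closedness of $H$ under $C^1$-prolongations is the ambient hypothesis keeping the framework consistent.) In particular $g$ is continuous on every compact subinterval $[t_0, t] \subset J$, hence Riemann integrable there by part (i) of the Fact.

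For (a) $\Rightarrow$ (b), I would start from the observation that a solution $x$ satisfies $(x|_J)'(u) = F(u, I_ux) = g(u)$ for all $u \in J$, so that $x|_J$ is an anti-derivative of the continuous function $g$. Part (iii) of the Fact then gives $x(t) - x(t_0) = \int_{t_0}^t g(u) \, \mathrm{d}u$ for every $t \in J$. Since $I_{t_0}x = \phi_0$ forces $x(t_0) = \phi_0(0)$, this is precisely the integral equation asserted in (b), while the clause $(t, I_tx) \in \dom(F)$ is already part of the definition of being a solution.

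For (b) $\Rightarrow$ (a), I would read the integral equation as $x(t) = \phi_0(0) + \int_{t_0}^t g(u) \, \mathrm{d}u$ with $g$ continuous, and invoke part (ii) of the Fact: the indefinite integral of $g$ is an anti-derivative of $g$, whence $(x|_J)'(t) = g(t) = F(t, I_tx)$ for all $t \in J$. Combined with the hypothesis $(t, I_tx) \in \dom(F)$ for all $t \in J$, and with the left-closedness of $J$ at $t_0$ and the identity $I_{t_0}x = \phi_0$ inherited from $x$ being a prolongation, this verifies every clause in the definition of a solution of $\eqref{eq:IVP}_{t_0, \phi_0}$ (and indeed of a $C^1$-solution, since $x|_J$ is of class $C^1$).

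The only genuine content is the continuity of $g$, which is exactly what the notion of \emph{continuous along $C^1$-prolongations} is designed to provide; everything afterwards is the standard Banach-valued fundamental theorem of calculus. I therefore expect no real obstacle, only the minor bookkeeping of noting that $g$ is continuous on each compact $[t_0, t]$ (rather than merely on the possibly half-open $J$) before applying the Fact.
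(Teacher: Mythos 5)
Your proposal is correct and follows essentially the same route as the paper: secure continuity of $u \mapsto F(u, I_ux)$ from the hypothesis of continuity along $C^1$-prolongations (which applies because both (a) and (b) supply $(t, I_tx) \in \dom(F)$ for all $t \in J$), then apply the two directions of the Banach-valued fundamental theorem of calculus on compact subintervals. Nothing is missing.
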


\begin{proof}
(a) $\Rightarrow$ (b):
The assumption means that
$x$ is an anti-derivative of the continuous map $t \mapsto F(t, I_tx)$ on any compact sub-interval.
Therefore, the integral equation is obtained by the second fundamental theorem of calculus.

(b) $\Rightarrow$ (a):
Since $x$ is a $C^1$-prolongation of $(t_0, \phi_0)$,
the integrand $t \mapsto F(t, I_tx)$ is continuous.
Therefore, (a) follows by the first fundamental theorem of calculus.
\end{proof}

In view of Lemma~\ref{lem:integral eq}, we introduce the following transformation.

\begin{notation}\label{notation:transformations for integral eq}
Let $(\sigma, \psi) \in \dom(F)$.
We define a transformation $\mathcal{T}_{\sigma, \psi}$
for any prolongation $\gamma \colon J_\sigma + I \to E$ of $(\sigma, \psi)$ as follows:
$I_\sigma[\mathcal{T}_{\sigma, \psi}\gamma] = \psi$ and
	\begin{equation*}
		\mathcal{T}_{\sigma, \psi}\gamma(t)
		= \psi(0) + \int_{\sigma}^t F(u, I_u\gamma) \mspace{2mu} \mathrm{d}u
			\mspace{20mu}
		(t \in J_\sigma).
	\end{equation*}
We also define transformations $\mathcal{S}_{\sigma, \psi}^0$ and $\mathcal{S}_{\sigma, \psi}^1$
for any prolongation of $\boldsymbol{0}$ by
	\begin{equation*}
		\mathcal{S}_{\sigma, \psi}^* = N_{\sigma, \psi}^v \circ \mathcal{T}_{\sigma, \psi} \circ A_{\sigma, \psi}^v,
			\mspace{20mu}
		\text{where $v =
		\begin{cases}
			0 & \text{for $* = 0$}, \\
			F(\sigma, \psi) & \text{for $* = 1$}.
		\end{cases}$}
	\end{equation*}
We define the map $F_{\sigma, \psi}^* \colon \mathbb{R} \times H \supset \dom \bigl( F_{\sigma, \psi}^* \bigr) \to E$ by
	\begin{equation*}
		\dom \bigl( F_{\sigma, \psi}^* \bigr) = \bigl( \tau_{\sigma, \psi}^v \bigr)^{-1}(\dom(F)),
			\mspace{15mu}
		F_{\sigma, \psi}^* = F \circ \tau_{\sigma, \psi}^v.
	\end{equation*}
in the same rule for $*$ and $v$.
\end{notation}

\begin{lemma}
Let $(\sigma, \psi) \in \dom(F)$.
Then we have the following expressions of $\mathcal{S}_{\sigma, \psi}^0$ and $\mathcal{S}_{\sigma, \psi}^1$:
For any prolongation $\beta \colon J_0 + I \to E$ of $\boldsymbol{0}$,
$I_0[\mathcal{S}_{\sigma, \psi}^*\beta] = \boldsymbol{0}$ and
	\begin{equation*}
		\mathcal{S}_{\sigma, \psi}^*\beta(s)
		= \int_0^s [F_{\sigma, \psi}^*(u, I_u\beta) - v] \mspace{2mu} \mathrm{d}u
			\mspace{20mu}
		(s \in J_0), \\
	\end{equation*}
where
	\begin{equation*}
		v =
		\begin{cases}
			0 & \text{for $* = 0$}, \\
			F(\sigma, \psi) & \text{for $* = 1$}.
		\end{cases}
	\end{equation*}
\end{lemma}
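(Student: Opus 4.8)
The plan is to prove both claims by a direct unwinding of the definition $\mathcal{S}_{\sigma, \psi}^* = N_{\sigma, \psi}^v \circ \mathcal{T}_{\sigma, \psi} \circ A_{\sigma, \psi}^v$, handling the two cases $* = 0, 1$ simultaneously since everything is indexed by the common value $v$ ($= 0$ or $= F(\sigma, \psi)$). Fix a prolongation $\beta \colon J_0 + I \to E$ of $\boldsymbol{0}$ and set $\gamma := A_{\sigma, \psi}^v\beta$, so that $\gamma \colon (\sigma + J_0) + I \to E$ is the prolongation of $(\sigma, \psi)$ given by $\gamma(t) = \psi^{\wedge v}(t - \sigma) + \beta(t - \sigma)$. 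Composing with $\mathcal{T}_{\sigma, \psi}$ and then $N_{\sigma, \psi}^v$, and recalling that $N_{\sigma, \psi}^v\chi(s) = \chi(\sigma + s) - \psi^{\wedge v}(s)$, I would write, for $s$ in the domain $J_0 + I$,
\[
	\mathcal{S}_{\sigma, \psi}^*\beta(s) = (\mathcal{T}_{\sigma, \psi}\gamma)(\sigma + s) - \psi^{\wedge v}(s),
\]
and then split the argument into the past part $s \in I$ and the future part $s \in J_0$.

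For $s \in I$, I would use $I_\sigma[\mathcal{T}_{\sigma, \psi}\gamma] = \psi$ to get $(\mathcal{T}_{\sigma, \psi}\gamma)(\sigma + s) = \psi(s)$, together with $\psi^{\wedge v}(s) = \psi(s)$ (since $\psi^{\wedge v}$ restricts to $\psi$ on $I$); the two terms cancel, which is exactly $I_0[\mathcal{S}_{\sigma, \psi}^*\beta] = \boldsymbol{0}$. For $s \in J_0 \subset \mathbb{R}_+$, I would substitute the integral formula for $\mathcal{T}_{\sigma, \psi}\gamma$ and the identity $\psi^{\wedge v}(s) = \psi(0) + sv$; the boundary terms $\psi(0)$ cancel, leaving
\[
	\mathcal{S}_{\sigma, \psi}^*\beta(s) = \int_\sigma^{\sigma + s} F(u, I_u\gamma) \, \mathrm{d}u - sv.
\]

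The only step beyond bookkeeping is to rewrite this integral in terms of $\beta$. I would change variables $u = \sigma + w$ and establish the history identity $I_{\sigma + w}\gamma = I_w[\psi^{\wedge v}] + I_w\beta$, which is immediate from the formula for $\gamma = A_{\sigma, \psi}^v\beta$. This identity says precisely that $\tau_{\sigma, \psi}^v(w, I_w\beta) = (\sigma + w, I_{\sigma + w}\gamma)$, whence $F(\sigma + w, I_{\sigma + w}\gamma) = F_{\sigma, \psi}^*(w, I_w\beta)$ by the definition $F_{\sigma, \psi}^* = F \circ \tau_{\sigma, \psi}^v$. Folding $sv = \int_0^s v \, \mathrm{d}w$ back under the integral then gives the asserted formula. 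I do not expect a genuine obstacle: the argument is a definitional chase, and the one point demanding care is keeping the shifted domains $J_0 + I$ and $(\sigma + J_0) + I$ aligned so that both the change of variables and the application of $N_{\sigma, \psi}^v$ are legitimate.
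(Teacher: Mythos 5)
Your proposal is correct and follows essentially the same route as the paper's proof: unwinding $\mathcal{S}_{\sigma,\psi}^* = N_{\sigma,\psi}^v \circ \mathcal{T}_{\sigma,\psi} \circ A_{\sigma,\psi}^v$, cancelling $\psi$ against $\psi^{\wedge v}$ on the past part $I$, and on $J_0$ changing variables $u = \sigma + w$ and using the history identity $I_{\sigma+w}[A_{\sigma,\psi}^v\beta] = I_w[\psi^{\wedge v}] + I_w\beta$ to identify the integrand with $F \circ \tau_{\sigma,\psi}^v(w, I_w\beta)$ before absorbing $sv$ into the integral. No gaps.
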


\begin{proof}
Fix $v \in E$.
Let $\beta \colon J_0 + I \to E$ be a prolongation of $\boldsymbol{0}$, and let
	\begin{equation*}
		\gamma := \mathcal{T}_{\sigma, \psi} \bigl( A_{\sigma, \psi}^v\beta \bigr).
	\end{equation*}
By definition,
	\begin{equation*}
		N_{\sigma, \psi}^v\gamma(s) = \gamma(\sigma + s) - \psi^{\wedge v}(s)
		\mspace{20mu}
		(\forall s \in J_0 + I).
	\end{equation*}
Then, for all $\theta \in I$, we have
	\begin{align*}
		I_0[N_{\sigma, \psi}^v\gamma](\theta)
		&= N_{\sigma, \psi}^v\gamma(\theta) \\
		&= \gamma(\sigma + \theta) - \psi^{\wedge v}(\theta) \\
		&= (I_\sigma\gamma - \psi)(\theta) \\
		&= 0.
	\end{align*}
For all $s \in J_0$, we have
	\begin{align*}
		\gamma(\sigma + s)
		&= \psi(0) + \int_{\sigma}^{\sigma + s} F \bigl( u, I_u[A_{\sigma, \psi}^v\beta] \bigr) \mspace{2mu} \mathrm{d}u \\
		&= \psi(0) + \int_0^s F \bigl( \sigma + u, I_{\sigma + u}[A_{\sigma, \psi}^v\beta] \bigr) \mspace{2mu} \mathrm{d}u,
	\end{align*}
where
	\begin{align*}
		I_{\sigma + u}[A_{\sigma, \psi}^v\beta]
		&= I_{\sigma + u}[\sigma + J_0 \ni t \mapsto \beta(t - \sigma) + \psi^{\wedge v}(t - \sigma)] \\
		&= I_u\beta + I_u\psi^{\wedge v}.
	\end{align*}
Therefore, we have
	\begin{align*}
		N_{\sigma, \psi}^v\gamma(s)
		&= -sv + \int_0^s F \bigl( \sigma + u, I_u\psi^{\wedge v} + I_u\beta \bigr) \mspace{2mu} \mathrm{d}u \\
		&= \int_0^s [F \circ \tau_{\sigma, \psi}^v(u, I_u\beta) - v] \mspace{2mu} \mathrm{d}u.
	\end{align*}
Thus, the expressions are obtained.
\end{proof}

\subsection{Local existence with Lipschitz condition}

\begin{lemma}\label{lem:Lip condition and local boundedness for C^1-prolongations}
Let $(t_0, \phi_0) \in \dom(F)$.
Suppose that
(i) $H$ is closed under $C^1$-prolongations,
(ii) $F$ is continuous along $C^1$-prolongations, and
(iii) $\dom(F)$ is a neighborhood by $C^1$-prolongations of $(t_0, \phi_0)$.
If $F$ is locally Lipschitzian about $C^1$-prolongations at $(t_0, \phi_0)$,
then $F$ is locally bounded about $C^1$-prolongations at $(t_0, \phi_0)$.
\end{lemma}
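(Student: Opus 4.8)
The plan is to bound $F$ over the rectangle $\varLambda_{t_0, \phi_0}^1(T, \delta; F)$ by comparing every point of the rectangle against a single distinguished reference point lying on the same vertical slice, and then to control that reference point by continuity along $C^1$-prolongations. Write $v := F(t_0, \phi_0)$. First I would invoke hypothesis (iii) with this particular $v$ to obtain $T', \delta' > 0$ with $\varLambda_{t_0, \phi_0}^1(T', \delta', v) \subset \dom(F)$, and the local Lipschitz hypothesis to obtain $T'', \delta'', L > 0$ such that $L$-\eqref{eq:Lipschitzian} holds on $\varLambda_{t_0, \phi_0}^1(T'', \delta''; F) \cap \dom(F)$. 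Taking $T := \min\{T', T''\}$ and $\delta := \min\{\delta', \delta''\}$ and using the monotonicity of rectangles (Remark~\ref{rmk:comparison of rectangle by prolongations}), on $\varLambda_{t_0, \phi_0}^1(T, \delta; F)$ both properties become simultaneously available and the whole rectangle is contained in $\dom(F)$; note that $v = F(t_0, \phi_0)$ identifies the $(\cdots, v)$ and $(\cdots; F)$ notations.

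The reference point I would use at parameter $\tau \in [0, T]$ is $\phi^*_\tau := S_v(\tau)\phi_0 = I_{t_0 + \tau}[\,t \mapsto \phi_0^{\wedge v}(t - t_0)\,]$, namely the history produced by the distinguished $C^1$-prolongation $\phi_0^{\wedge v}(\cdot - t_0)$ of $(t_0, \phi_0)$; since this prolongation trivially satisfies the defining constraints of $\varGamma_{t_0, \phi_0}^1(\tau, \delta, v)$ with $C^1$-distance $0 \le \delta$ and right derivative $v$ at $t_0$, the point $(t_0 + \tau, \phi^*_\tau)$ lies in $\varLambda_{t_0, \phi_0}^1(T, \delta; F)$. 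Now for an arbitrary $(t, \phi) = (t_0 + \tau, I_{t_0 + \tau}\gamma)$ in the rectangle, with $\gamma \in \varGamma_{t_0, \phi_0}^1(\tau, \delta, v)$, I would estimate the difference $\phi - \phi^*_\tau = I_{t_0 + \tau}[\gamma - \phi_0^{\wedge v}(\cdot - t_0)]$. The two prolongations agree at $t_0$ (both restrict to $\phi_0$ on $t_0 + I$), so their difference vanishes on $t_0 + I$; consequently $\supp(\phi - \phi^*_\tau) \subset [-\tau, 0]$ is compact, and moreover
\[
    \|\phi - \phi^*_\tau\|_\infty
    = \sup_{s \in [0, \tau]} \bigl\| \gamma(t_0 + s) - \phi_0^{\wedge v}(s) \bigr\|_E
    \le \bigl\| \gamma(t_0 + \cdot) - \phi_0^{\wedge v}(\cdot) \bigr\|_{C^1[0, \tau]}
    \le \delta.
\]
Applying $L$-\eqref{eq:Lipschitzian} to the pair $(t, \phi), (t, \phi^*_\tau)$ then yields $\|F(t, \phi)\|_E \le \|F(t_0 + \tau, \phi^*_\tau)\|_E + L\delta$.

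It remains to bound the reference term uniformly in $\tau \in [0, T]$. Here I would use hypothesis (ii): the map $[0, T] \ni \tau \mapsto F(t_0 + \tau, \phi^*_\tau) \in E$ is the evaluation of $F$ along the fixed $C^1$-prolongation $\phi_0^{\wedge v}(\cdot - t_0)$, whose histories remain in $\dom(F)$ by the previous paragraph, so continuity along $C^1$-prolongations makes this map continuous on the compact interval $[0, T]$ and hence bounded, say by $M_0$. Combining the two estimates gives $\|F(t, \phi)\|_E \le M_0 + L\delta$ for every $(t, \phi) \in \varLambda_{t_0, \phi_0}^1(T, \delta; F) \cap \dom(F)$, which is exactly local boundedness about $C^1$-prolongations at $(t_0, \phi_0)$. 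The step I expect to be the real crux is the compact-support computation together with the $\delta$-bound on $\|\phi - \phi^*_\tau\|_\infty$: this is precisely what makes the weak inequality \eqref{eq:Lipschitzian}---which only compares histories whose difference has compact support---applicable here, and it is the reason the distinguished prolongation $\phi_0^{\wedge v}$, rather than an arbitrary comparison point, must be selected as the reference.
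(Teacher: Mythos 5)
Your proof is correct and takes essentially the same route as the paper: the paper works in the normalized coordinates $F^1_{t_0,\phi_0} = F\circ\tau^{v}_{t_0,\phi_0}$ and compares $I_s\beta$ against $I_s\bar{\boldsymbol{0}}$, which after unwinding the translation is exactly your comparison of each $(t,\phi)$ in the rectangle against the reference history $I_{t_0+\tau}[\phi_0^{\wedge v}]$, with the same $L\delta$ Lipschitz estimate and the same use of continuity along $C^1$-prolongations to bound the reference term over the compact interval. The compact-support and $\|\phi-\phi^*_\tau\|_\infty\le\delta$ observations you flag as the crux are indeed the content of the paper's inequality $\|I_s\beta\|_\infty\le\delta$.
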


\begin{proof}
By the assumption (iii) and by the Lipschitz condition,
there are $T, \delta, L > 0$ such that
\begin{itemize}
\item $\varLambda_{t_0, \phi_0}^1(T, \delta; F) \subset \dom(F)$,
\item $L$-\eqref{eq:Lipschitzian} holds for all $(t, \phi_1), (t, \phi_2) \in \varLambda_{t_0, \phi_0}^1(T, \delta; F)$.
\end{itemize}
Let $\beta \in \varLambda_{t_0, \phi_0}^1(T, \delta, 0)$.
Then for all $s \in [0, T]$, we have
	\begin{align*}
		\bigl\| F_{t_0, \phi_0}^1(s, I_s\beta) \bigr\|_E
		&\le \bigl\| F_{t_0, \phi_0}^1(s, I_s\beta)
				- F_{t_0, \phi_0}^1(s, I_s\bar{\boldsymbol{0}}) \bigr\|_E
				+ \bigl\| F_{t_0, \phi_0}^1(s, I_s\bar{\boldsymbol{0}}) \bigr\|_E \\
		&\le L \cdot \|I_s\beta\|_\infty + \bigl\| F_{t_0, \phi_0}^1(s, I_s\bar{\boldsymbol{0}}) \bigr\|_E \\
		&\le L\delta + \sup_{u \in [0, T]} \bigl\| F_{t_0, \phi_0}^1(u, I_u\bar{\boldsymbol{0}}) \bigr\|_E,
	\end{align*}
where
	\begin{equation*}
		\sup_{u \in [0, T]} \bigl\| F_{t_0, \phi_0}^1(u, I_u\bar{\boldsymbol{0}}) \bigr\|_E < \infty
	\end{equation*}
by the assumption (ii).
This shows that $F$ is bounded on $\varLambda_{t_0, \phi_0}^1(T, \delta; F)$.
Therefore, the conclusion holds.
\end{proof}

\begin{lemma}\label{lem:contraction on C^1-prolongation space}
Let $(t_0, \phi_0) \in \dom(F)$.
Suppose that
(i) $H$ is closed under $C^1$-prolongations,
(ii) $F$ is continuous along $C^1$-prolongations, and
(iii) $\dom(F)$ is a neighborhood by $C^1$-prolongations of $(t_0, \phi_0)$.
If $F$ is locally Lipschitzian about $C^1$-prolongations at $(t_0, \phi_0)$,
then there exists $\delta > 0$ such that for all sufficiently small $T > 0$,
	\begin{equation*}
		\mathcal{T}_{t_0, \phi_0} \colon \varGamma_{t_0, \phi_0}^1(T, \delta; F) \to \varGamma_{t_0, \phi_0}^1(T, \delta; F)
	\end{equation*}
is an well-defined contraction with respect to the metrics $\rho^0$ and $\rho^1$.
\end{lemma}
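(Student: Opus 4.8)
The plan is to fix the radius $\delta>0$ first and then verify, for all sufficiently small $T>0$, two separate facts: that $\mathcal{T}_{t_0, \phi_0}$ maps $\varGamma_{t_0, \phi_0}^1(T, \delta; F)$ into itself, and that it contracts both $\rho^0$ and $\rho^1$. Write $v := F(t_0, \phi_0)$. Using assumption (iii) together with the hypothesis that $F$ is locally Lipschitzian about $C^1$-prolongations at $(t_0, \phi_0)$, I would first choose $\delta, L>0$ and a bound $T_1>0$ so that $\varLambda_{t_0, \phi_0}^1(T_1, \delta; F)\subset\dom(F)$ and $L$-\eqref{eq:Lipschitzian} holds on $\varLambda_{t_0, \phi_0}^1(T_1, \delta; F)$; by Remark~\ref{rmk:comparison of rectangle by prolongations} the same inclusion and Lipschitz estimate persist on $\varLambda_{t_0, \phi_0}^1(T, \delta; F)$ for every $0<T\le T_1$. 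The single elementary observation used throughout is that any two $\gamma_1,\gamma_2\in\varGamma_{t_0, \phi_0}^1(T, \delta; F)$ agree on $t_0+I$ and satisfy $\gamma_1'(t_0)=\gamma_2'(t_0)=v$, so that $\gamma_1-\gamma_2$ is supported forward of $t_0$ and
\[
	\|\gamma_1-\gamma_2\|_{C[t_0, t_0+T]}\le T\cdot\|\gamma_1'-\gamma_2'\|_{C[t_0, t_0+T]},
	\qquad
	\rho^0(\gamma_1,\gamma_2)=\|\gamma_1-\gamma_2\|_{C[t_0, t_0+T]}.
\]

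For the contraction estimates I would fix $\gamma_1,\gamma_2\in\varGamma_{t_0, \phi_0}^1(T, \delta; F)$. Their histories $I_u\gamma_1,I_u\gamma_2$ lie in $\varLambda_{t_0, \phi_0}^1(T, \delta; F)\cap\dom(F)$ for $u\in[t_0, t_0+T]$, and since the difference is supported forward of $t_0$ the Lipschitz estimate reads $\|F(u,I_u\gamma_1)-F(u,I_u\gamma_2)\|_E\le L\|\gamma_1-\gamma_2\|_{C[t_0, u]}$. Integrating the defining formula of $\mathcal{T}_{t_0, \phi_0}$ gives the value bound $\rho^0(\mathcal{T}_{t_0, \phi_0}\gamma_1,\mathcal{T}_{t_0, \phi_0}\gamma_2)\le LT\,\rho^0(\gamma_1,\gamma_2)$, a $\rho^0$-contraction once $LT<1$. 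Since the derivative of $\mathcal{T}_{t_0, \phi_0}\gamma_i$ is $u\mapsto F(u,I_u\gamma_i)$, the derivative difference is bounded by $L\|\gamma_1-\gamma_2\|_{C[t_0, t_0+T]}\le LT\|\gamma_1'-\gamma_2'\|_{C[t_0, t_0+T]}$ after invoking the displayed inequality; combining this with the value part (now $\le LT^2\rho^1$) yields $\rho^1(\mathcal{T}_{t_0, \phi_0}\gamma_1,\mathcal{T}_{t_0, \phi_0}\gamma_2)\le LT(1+T)\,\rho^1(\gamma_1,\gamma_2)$, a $\rho^1$-contraction once $LT(1+T)<1$. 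The conversion of the $C$-bound on the derivative into a factor $T$ is the one nonroutine move here, and it is exactly what the fundamental-theorem inequality above supplies.

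The step I expect to be the crux is the self-mapping property, i.e. the bound $\|\mathcal{T}_{t_0, \phi_0}\gamma(t_0+\cdot)-\phi_0^{\wedge v}(\cdot)\|_{C^1[0, T]}\le\delta$; that $\mathcal{T}_{t_0, \phi_0}\gamma$ is a $C^1$-prolongation of $(t_0, \phi_0)$ with right derivative $v$ at $t_0$ follows from continuity along $C^1$-prolongations (assumption (ii)) and the fundamental theorem of calculus. Writing $\mathcal{T}_{t_0, \phi_0}\gamma(t_0+s)-\phi_0^{\wedge v}(s)=\int_0^s[F(t_0+w,I_{t_0+w}\gamma)-v]\,\mathrm{d}w$, the $C^1[0, T]$-norm is at most $(1+T)\sup_{w\in[0, T]}\|F(t_0+w,I_{t_0+w}\gamma)-v\|_E$, so everything reduces to showing this integrand is small. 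The crude bound from local boundedness about $C^1$-prolongations (Lemma~\ref{lem:Lip condition and local boundedness for C^1-prolongations}) is useless here, since it only controls $\|F\|_E$ by a constant; instead I would compare $\gamma$ to the reference prolongation $\gamma_*:=\phi_0^{\wedge v}(\cdot-t_0)\in\varGamma_{t_0, \phi_0}^1(T, \delta; F)$. The Lipschitz estimate gives $\|F(t_0+w,I_{t_0+w}\gamma)-F(t_0+w,I_{t_0+w}\gamma_*)\|_E\le L\|\gamma-\gamma_*\|_{C[t_0, t_0+T]}\le LT\delta$, uniformly in $\gamma$, via the same fundamental-theorem inequality; continuity of $w\mapsto F(t_0+w,I_{t_0+w}\gamma_*)$ at $w=0$ (value $v$) gives $\varepsilon(T):=\sup_{w\in[0, T]}\|F(t_0+w,I_{t_0+w}\gamma_*)-v\|_E\to0$ as $T\downarrow0$. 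Hence $\|\mathcal{T}_{t_0, \phi_0}\gamma(t_0+\cdot)-\phi_0^{\wedge v}\|_{C^1[0, T]}\le(1+T)(LT\delta+\varepsilon(T))$, which tends to $0$ and is therefore $\le\delta$ for all sufficiently small $T$. The essential and only delicate point is that the integrand splits into an $O(T)$ Lipschitz term and an $o(1)$ continuity term rather than being merely bounded; this is where assumption (ii) and the Lipschitz hypothesis must both be used, and combining it with the contraction estimates finishes the proof.
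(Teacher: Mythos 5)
Your proposal is correct and follows essentially the same route as the paper's proof: the paper also reduces the self-map bound to the splitting $F(\cdot,I_\cdot\gamma)-v=[F(\cdot,I_\cdot\gamma)-F(\cdot,I_\cdot\gamma_*)]+[F(\cdot,I_\cdot\gamma_*)-v]$ (with $\gamma_*$ appearing there as $\bar{\boldsymbol{0}}$ after normalization) and uses the same fundamental-theorem inequality to turn $C$-bounds into $T$ times derivative bounds for the $\rho^1$-contraction. The only cosmetic differences are that the paper conjugates by $N_{t_0,\phi_0}^{F(t_0,\phi_0)}$ and works with $\mathcal{S}^1_{t_0,\phi_0}$ on $\varGamma^1_{0,\boldsymbol{0}}(T,\delta,0)$, and that it handles the zeroth-order part of the self-map estimate with the local boundedness constant $M$ (via $T\le\delta/4M$) rather than reusing your $LT\delta+\varepsilon(T)$ splitting there as well.
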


\begin{proof}
From Lemma~\ref{lem:Lip condition and local boundedness for C^1-prolongations},
$F$ is locally bounded about $C^1$-prolongations at $(t_0, \phi_0)$.
Then by combining the assumption (iii) and the Lipschitz condition,
there are $T_1, \delta, M, L > 0$ such that
\begin{itemize}
\item $\varLambda_{t_0, \phi_0}^1(T_1, \delta; F) \subset \dom(F)$,
\item
	$\sup
	\bigl\{ \mspace{2mu}
		\|F(t, \phi)\|_E : (t, \phi) \in \varLambda_{t_0, \phi_0}^1(T_1, \delta; F)
	\mspace{2mu} \bigr\}
	\le M$,
\item $L$-\eqref{eq:Lipschitzian} holds for all $(t, \phi_1), (t, \phi_2) \in \varLambda_{t_0, \phi_0}^1(T_1, \delta; F)$.
\end{itemize}

We show that by choosing sufficiently small $T > 0$ for the above $\delta > 0$,
	\begin{equation*}
		\mathcal{S}_{t_0, \phi_0}^1 \colon
		\varGamma_{0, \boldsymbol{0}}^1(T, \delta, 0) \to \varGamma_{0, \boldsymbol{0}}^1(T, \delta, 0)
	\end{equation*}
becomes an well-defined contraction with respect to $\rho^0$ and $\rho^1$.
Then the conclusion is obtained in view of the following diagram:
	\begin{equation*}
		\xymatrix{
			\varGamma_{t_0, \phi_0}^1(T, \delta; F)
				\ar[r]^{\mathcal{T}_{t_0, \phi_0}}
				\ar[d]_{N_{t_0, \phi_0}^{F(t_0, \phi_0)}}
				\ar@{}[dr]|\circlearrowleft
			& \varGamma_{t_0, \phi_0}^1(T, \delta; F)
				\ar[d]^{N_{t_0, \phi_0}^{F(t_0, \phi_0)}} \\
			\varGamma_{0, \boldsymbol{0}}^1(T, \delta, 0)
				\ar[r]_{\mathcal{S}_{t_0, \phi_0}^1}
			& \varGamma_{0, \boldsymbol{0}}^1(T, \delta, 0)
		}
	\end{equation*}
By the assumption (ii), there is $T_2 > 0$ such that
	\begin{equation*}
		\sup_{u \in [0, T_2]} \bigl\| F_{t_0, \phi_0}^1(u, I_u\bar{\boldsymbol{0}}) - F(t_0, \phi_0) \bigr\|_E
		\le \delta/4.
	\end{equation*}
We choose $T > 0$ so that $T \le \min\{T_1, T_2, \delta/4M, 1/4L\}$.

\begin{flushleft}
\textbf{Step 1. Well-definedness}
\end{flushleft}

Let $\beta \in \varGamma_{0, \boldsymbol{0}}^1(T, \delta, 0)$.
Then for all $s \in [0, T]$, we have
	\begin{align*}
		\bigl\| \mathcal{S}_{t_0, \phi_0}^1\beta(s) \bigr\|_E
		&\le \int_0^s
				\bigl\| F_{t_0, \phi_0}^1(u, I_u\beta) - F(t_0, \phi_0) \bigr\|_E
			\mspace{2mu} \mathrm{d}u \\
		&\le \int_0^T
				\left[ \bigl\| F_{t_0, \phi_0}^1(u, I_u\beta) \bigr\|_E + \|F(t_0, \phi_0)\|_E \right]
			\mspace{2mu} \mathrm{d}u  \\
		&\le 2MT
	\end{align*}
and
	\begin{align*}
		\bigl\| (\mathcal{S}_{t_0, \phi_0}^1\beta)'(s) \bigr\|_E
		&= \bigl\| F_{t_0, \phi_0}^1(s, I_s\beta) - F(t_0, \phi_0) \bigr\|_E \\
		&\le \bigl\| F_{t_0, \phi_0}^1(s, I_s\beta)
				- F_{t_0, \phi_0}^1(s, I_s\bar{\boldsymbol{0}}) \bigr\|_E \\
		&\mspace{60mu} + \bigl\| F_{t_0, \phi_0}^1(s, I_s\bar{\boldsymbol{0}}) - F(t_0, \phi_0) \bigr\|_E \\
		&\le L \cdot \|I_s\beta\|_\infty + (\delta/4) \\
		&\le \{LT + (1/4)\} \delta.
	\end{align*}
Therefore,
	\begin{align*}
		\bigl\| \mathcal{S}_{t_0, \phi_0}^1\beta \bigr\|_{C^1[0, T]}
		&\le 2MT + \{LT + (1/4)\} \delta \\
		&\le (\delta/2) + (\delta/2),
	\end{align*}
which shows
$\mathcal{S}^1_{t_0, \phi_0}\beta \in \varGamma_{0, \boldsymbol{0}}^1(T, \delta, 0)$.

\begin{flushleft}
\textbf{Step 2. Contraction}
\end{flushleft}

Let $\beta_1, \beta_2 \in \varGamma_{0, \boldsymbol{0}}^1(T, \delta, 0)$.
For all $s \in [0, T]$, we have
	\begin{align*}
		\bigl\| \mathcal{S}^1_{t_0, \phi_0}\beta_1(s) - \mathcal{S}^1_{t_0, \phi_0}\beta_2(s) \bigr\|_E
		&\le \int_0^s \bigl\| F_{t_0, \phi_0}^1(u, I_u\beta_1) - F_{t_0, \phi_0}^1(u, I_u\beta_2) \bigr\|_E \mspace{2mu} \mathrm{d}u \\
		&\le L \cdot \int_0^T \|I_u\beta_1 - I_u\beta_2\|_\infty \mspace{2mu} \mathrm{d}u \\
		&\le LT \cdot \sup_{u \in [0, T]} \|\beta_1(u) - \beta_2(u)\|_E
	\end{align*}
and
	\begin{align*}
		\bigl\| (\mathcal{S}^1_{t_0, \phi_0}\beta_1)'(s) - (\mathcal{S}^1_{t_0, \phi_0}\beta_2)'(s) \bigr\|_E
		&\le \bigl\| F_{t_0, \phi_0}^1(s, I_s\beta_1) - F_{t_0, \phi_0}^1(s, I_s\beta_2) \bigr\|_E \\
		&\le L \cdot \|I_s\beta_1 - I_s\beta_2\|_\infty \\
		&\le LT \cdot \sup_{u \in [0, T]} \|\beta_1'(u) - \beta_2'(u)\|_E.
	\end{align*}
Therefore, we have
	\begin{align*}
		&\rho^0 \bigl( \mathcal{S}^1_{t_0, \phi_0}\beta_1, \mathcal{S}^1_{t_0, \phi_0}\beta_2 \bigr)
		\le LT \cdot \rho^0(\beta_1, \beta_2)
		\le \frac{1}{4} \cdot \rho^0(\beta_1, \beta_2), \\
		&\rho^1 \bigl( \mathcal{S}^1_{t_0, \phi_0}\beta_1, \mathcal{S}^1_{t_0, \phi_0}\beta_2 \bigr)
		\le 2LT \cdot \rho^1(\beta_1, \beta_2)
		\le \frac{1}{2} \cdot \rho^1(\beta_1, \beta_2),
	\end{align*}
which show that $\mathcal{S}^1_{t_0, \phi_0}$ is a contraction with respect to $\rho^0$ and $\rho^1$.

This completes the proof.
\end{proof}

\begin{theorem}\label{thm:local existence with Lip}
Let $(t_0, \phi_0) \in \dom(F)$.
Suppose that
(i) $H$ is closed under $C^1$-prolongations,
(ii) $F$ is continuous along $C^1$-prolongations, and
(iii) $\dom(F)$ is a neighborhood by $C^1$-prolongations of $(t_0, \phi_0)$.
If $F$ is locally Lipschitzian about $C^1$-prolongations at $(t_0, \phi_0)$,
then there exist $T > 0$ such that $\eqref{eq:IVP}_{t_0, \phi_0}$ has a $C^1$-solution $x \colon [t_0, t_0 + T] + I \to E$.
\end{theorem}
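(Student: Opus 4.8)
The plan is to realise a $C^1$-solution as a fixed point of the integral operator $\mathcal{T}_{t_0, \phi_0}$ from Notation~\ref{notation:transformations for integral eq}, and to produce this fixed point by the Banach fixed point theorem on a complete space of $C^1$-prolongations. By construction, $I_{t_0}[\mathcal{T}_{t_0, \phi_0}\gamma] = \phi_0$ always holds, so a $C^1$-prolongation $\gamma$ of $(t_0, \phi_0)$ is a fixed point of $\mathcal{T}_{t_0, \phi_0}$ exactly when
\begin{equation*}
	\gamma(t) = \phi_0(0) + \int_{t_0}^t F(u, I_u\gamma) \, \mathrm{d}u \qquad (t \in [t_0, t_0 + T]),
\end{equation*}
which is the integral equation appearing in condition (b) of Lemma~\ref{lem:integral eq}. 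Thus the argument reduces to locating such a fixed point whose histories remain in $\dom(F)$.

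First I would invoke Lemma~\ref{lem:contraction on C^1-prolongation space}, whose hypotheses coincide verbatim with those of the present theorem. It supplies $\delta > 0$ so that, for all sufficiently small $T > 0$, the map
\begin{equation*}
	\mathcal{T}_{t_0, \phi_0} \colon \varGamma_{t_0, \phi_0}^1(T, \delta; F) \to \varGamma_{t_0, \phi_0}^1(T, \delta; F)
\end{equation*}
is well defined and is a contraction with respect to $\rho^1$. Since $\bigl( \varGamma_{t_0, \phi_0}^1(T, \delta; F), \rho^1 \bigr)$ is a complete metric space by Proposition~\ref{prop:completeness of space of C^1-prolongations}, the Banach fixed point theorem yields a fixed point $\gamma^* \in \varGamma_{t_0, \phi_0}^1(T, \delta; F)$. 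By the definition of the prolongation space, $\gamma^*$ is a $C^1$-prolongation of $(t_0, \phi_0)$ with $\gamma^*|_{[t_0, t_0 + T]}$ of class $C^1$.

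Next I would verify the domain condition required to apply Lemma~\ref{lem:integral eq}. In the proof of Lemma~\ref{lem:contraction on C^1-prolongation space} the constants $T, \delta$ are chosen so that $\varLambda_{t_0, \phi_0}^1(T, \delta; F) \subset \dom(F)$; since each restriction $\gamma^*|_{[t_0, t_0 + \tau] + I}$ ($0 \le \tau \le T$) lies in $\varGamma_{t_0, \phi_0}^1(\tau, \delta; F)$, it follows that $(t, I_t\gamma^*) \in \varLambda_{t_0, \phi_0}^1(T, \delta; F) \subset \dom(F)$ for every $t \in [t_0, t_0 + T]$. Together with the fixed-point identity, $\gamma^*$ satisfies condition (b) of Lemma~\ref{lem:integral eq}. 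Applying the implication (b) $\Rightarrow$ (a) of that lemma — whose hypotheses, namely closedness of $H$ under $C^1$-prolongations and continuity of $F$ along $C^1$-prolongations at $(t_0, \phi_0)$, are in force — shows that $\gamma^*$ is a solution of $\eqref{eq:IVP}_{t_0, \phi_0}$; as $\gamma^*|_{[t_0, t_0 + T]}$ is of class $C^1$, it is in fact a $C^1$-solution, which is the desired conclusion. The substantive work, namely the quantitative contraction estimates, has already been carried out in Lemma~\ref{lem:contraction on C^1-prolongation space}, so the only delicate points remaining are bookkeeping: confirming that the fixed point stays in $\dom(F)$ and matching the fixed-point equation with the integral equation of Lemma~\ref{lem:integral eq}.
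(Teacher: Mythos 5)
Your proposal is correct and follows essentially the same route as the paper: Lemma~\ref{lem:contraction on C^1-prolongation space} plus the completeness of $\bigl( \varGamma_{t_0, \phi_0}^1(T, \delta; F), \rho^1 \bigr)$ from Proposition~\ref{prop:completeness of space of C^1-prolongations} yield a fixed point of $\mathcal{T}_{t_0, \phi_0}$ via the Banach fixed point theorem, which Lemma~\ref{lem:integral eq} converts into a $C^1$-solution. Your extra bookkeeping (checking that the histories of the fixed point stay in $\dom(F)$ via $\varLambda_{t_0, \phi_0}^1(T, \delta; F) \subset \dom(F)$) is a detail the paper leaves implicit, and it is verified correctly.
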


\begin{proof}
Applying Lemma~\ref{lem:contraction on C^1-prolongation space},
there are $T, \delta > 0$ such that
$\mathcal{T}_{t_0, \phi_0}$ is a contraction on the metric space
	\begin{equation*}
		\left( \varGamma_{t_0, \phi_0}^1(T, \delta; F), \rho^1 \right),
	\end{equation*}
which is complete from Proposition~\ref{prop:completeness of space of C^1-prolongations}.
Therefore, by the Banach fixed point theorem,
$\mathcal{T}_{t_0, \phi_0}$ has a unique fixed point $\gamma_* \in \varGamma_{t_0, \phi_0}^1(T, \delta; F)$.
Lemma~\ref{lem:integral eq} shows that
$\gamma_* \colon [t_0, t_0 + T] + I \to E$ is a solution of $\eqref{eq:IVP}_{t_0, \phi_0}$.
\end{proof}

The following treat the existence theorem under the assumptions (E1) and (E2).
These are corollaries of Theorem~\ref{thm:local existence with Lip}
in view of Lemmas~\ref{lem:(E1) implies (E0)} and \ref{lem:(E2) implies (E0)}.

\begin{corollary}\label{cor:local existence with Lip, (E1)}
Let $(t_0, \phi_0) \in \dom(F)$.
Suppose that
(i) $H$ is $C^1$-prolongable and regulated by $C^1$-prolongations,
(ii) $F$ is continuous, and
(iii) $\dom(F)$ is a neighborhood by $C^1$-prolongations of $(t_0, \phi_0)$.
If $F$ is locally Lipschitzian about $C^1$-prolongations at $(t_0, \phi_0)$,
then there exist $T > 0$ such that $\eqref{eq:IVP}_{t_0, \phi_0}$ has a $C^1$-solution $x \colon [t_0, t_0 + T] + I \to E$.
\end{corollary}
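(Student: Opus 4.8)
The plan is to deduce this corollary directly from Theorem~\ref{thm:local existence with Lip}, whose conclusion is exactly the assertion to be proved, by checking that the present (stronger) hypotheses imply the (weaker) hypotheses of that theorem. Theorem~\ref{thm:local existence with Lip} requires that (i) $H$ be closed under $C^1$-prolongations, (ii) $F$ be continuous along $C^1$-prolongations, and (iii) $\dom(F)$ be a neighborhood by $C^1$-prolongations of $(t_0, \phi_0)$, together with the local Lipschitz condition about $C^1$-prolongations at $(t_0, \phi_0)$. Three of these four conditions require almost no work: condition (iii) and the Lipschitz condition are literally among the hypotheses of the corollary, while closedness under $C^1$-prolongations is part (i) of the definition of $C^1$-prolongability (Definition~\ref{dfn:prolongabilities}), so it follows from hypothesis (i) here.

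The only condition that is not immediate is the continuity of $F$ along $C^1$-prolongations. First I would invoke Lemma~\ref{lem:(E1) implies (E0)}, part~1, whose hypotheses---namely that $H$ is $C^1$-prolongable and regulated by $C^1$-prolongations, that $F$ is continuous, and that $\dom(F)$ is a neighborhood by $C^1$-prolongations of $(t_0, \phi_0)$---coincide exactly with the hypotheses of the present corollary. This yields that $F$ is continuous along $C^1$-prolongations at $(t_0, \phi_0)$.

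Having verified all four hypotheses, I would then apply Theorem~\ref{thm:local existence with Lip} to produce the desired $T > 0$ and the $C^1$-solution $x \colon [t_0, t_0 + T] + I \to E$ of $\eqref{eq:IVP}_{t_0, \phi_0}$, completing the argument. The hard part does not lie in this corollary at all: all of the substantive analysis---the reformulation as an integral equation (Lemma~\ref{lem:integral eq}), the passage from the Lipschitz condition to local boundedness (Lemma~\ref{lem:Lip condition and local boundedness for C^1-prolongations}), and the contraction-mapping argument on the complete metric space $\bigl( \varGamma_{t_0, \phi_0}^1(T, \delta; F), \rho^1 \bigr)$ (Lemma~\ref{lem:contraction on C^1-prolongation space})---is already packaged inside Theorem~\ref{thm:local existence with Lip}. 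The corollary is therefore purely a matter of recognizing that the regularity assumptions (E1) constitute a special case of the more abstract assumptions (E0) underlying that theorem.
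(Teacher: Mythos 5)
Your proposal is correct and coincides with the paper's own argument: the paper states explicitly that this corollary follows from Theorem~\ref{thm:local existence with Lip} via Lemma~\ref{lem:(E1) implies (E0)}, which is precisely the reduction you carry out. Your verification of the remaining hypotheses (closedness under $C^1$-prolongations from $C^1$-prolongability, and the two hypotheses carried over verbatim) is exactly what is needed and nothing is missing.
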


\begin{corollary}\label{cor:local existence with Lip, (E2)}
Let $(t_0, \phi_0) \in \dom(F)$.
Suppose that
(i) $H$ is closed under $C^1$-prolongations,
(ii) $\bigl( \bar{H}, \bar{F} \bigr)$ is an extension of $(H, F)$ with the properties that
	\begin{itemize}
	\item $\bar{H}$ is prolongable and regulated by prolongations,
	\item $\bar{F}$ is continuous, and
	\item $\dom \bigl( \bar{F} \bigr)$ is a neighborhood by prolongations
	of $(t_0, \phi_0)$.
	\end{itemize}
If $F$ is locally Lipschitzian about $C^1$-prolongations at $(t_0, \phi_0)$,
then there exist $T > 0$ such that $\eqref{eq:IVP}_{t_0, \phi_0}$ has a $C^1$-solution $x \colon [t_0, t_0 + T] + I \to E$.
\end{corollary}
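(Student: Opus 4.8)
The plan is to deduce this statement directly from the local existence result Theorem~\ref{thm:local existence with Lip}, whose hypotheses are exactly the conditions (E0), using Lemma~\ref{lem:(E2) implies (E0)} as the bridge from (E2) to (E0). Thus the proof reduces to checking that the three assumptions (i)--(iii) of Theorem~\ref{thm:local existence with Lip} hold at the base point $(t_0, \phi_0)$, after which the Lipschitz hypothesis of the corollary is simply carried over and the theorem is invoked. (I note in advance that local boundedness about $C^1$-prolongations is \emph{not} a separate hypothesis of Theorem~\ref{thm:local existence with Lip}, being produced internally from the Lipschitz condition, so only the first two conclusions of Lemma~\ref{lem:(E2) implies (E0)} are actually needed.)

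First I would observe that assumption (i) of the corollary, that $H$ is closed under $C^1$-prolongations, is already assumption (i) of Theorem~\ref{thm:local existence with Lip}, so nothing is required there. To obtain assumption (ii), that $F$ is continuous along $C^1$-prolongations, I would invoke the first conclusion of Lemma~\ref{lem:(E2) implies (E0)}: since $(\bar{H}, \bar{F})$ is an extension of $(H, F)$ with $\bar{H}$ prolongable and $\bar{F}$ continuous, the map $t \mapsto F(t, I_t\gamma) = \bar{F}(t, I_t\gamma)$ is continuous along every admissible $C^1$-prolongation $\gamma$. For assumption (iii), that $\dom(F)$ is a neighborhood by $C^1$-prolongations of $(t_0, \phi_0)$, I would use the second conclusion of the same lemma: because $\dom(\bar{F})$ is a neighborhood by prolongations of $(t_0, \phi_0)$, some rectangle $\varLambda_{t_0, \phi_0}^1(T, \delta; F)$ lies in $\dom(\bar{F})$ (via the inclusion $\varLambda^1 \subset \varLambda$ of Lemma~\ref{lem:comparison of rectangles} together with Theorem~\ref{thm:nbd by C^1-prolongations}), and intersecting with $\mathbb{R} \times H$ keeps this rectangle inside $\dom(F) = \dom(\bar{F}) \cap (\mathbb{R} \times H)$ by assumption (i).

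With (i)--(iii) verified and the hypothesis that $F$ is locally Lipschitzian about $C^1$-prolongations at $(t_0, \phi_0)$ taken unchanged from the corollary, I would then apply Theorem~\ref{thm:local existence with Lip} to produce $T > 0$ and a $C^1$-solution $x \colon [t_0, t_0 + T] + I \to E$ of $\eqref{eq:IVP}_{t_0, \phi_0}$, which is the desired conclusion. The one point requiring care—and the main thing to flag—is the quantifier mismatch between Lemma~\ref{lem:(E2) implies (E0)}, whose statement posits the neighborhood-by-prolongations property at \emph{every} point of $\dom(\bar{F})$, and the present corollary, which posits it only at $(t_0, \phi_0)$. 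This causes no difficulty: the first conclusion of that lemma needs no neighborhood hypothesis at all, while the derivation of the second conclusion is purely local, so applying it at the single point $(t_0, \phi_0)$ uses only the pointwise assumption. I expect this localization to be the only genuinely nontrivial step; everything else is a direct chaining of the cited results.
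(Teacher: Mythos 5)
Your proposal is correct and is essentially the paper's own proof: the corollary is obtained by feeding the first two conclusions of Lemma~\ref{lem:(E2) implies (E0)} into Theorem~\ref{thm:local existence with Lip}, with the local boundedness recovered internally from the Lipschitz condition. Your observation that the lemma's ``every point'' hypothesis can be localized to $(t_0, \phi_0)$ because its proof of conclusions 1 and 2 is pointwise is accurate and resolves the only quantifier discrepancy between the lemma and the corollary as stated.
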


The following example shows that
the topology of $H$ is less related to the existence of solutions.

\begin{example}\label{exa:const delay and L^p-topology}
For $r > 0$ and a continuous map $f \colon \mathbb{R} \times E \times E \to E$,
we consider a DDE
	\begin{equation*}
		\dot{x}(t) = f(t, x(t), x(t - r)).
	\end{equation*}
For $1 \le p < \infty$ and $\int_{-r}^0 \|\phi(\theta)\|_E^p \mspace{2mu} \mathrm{d}u < \infty$,
we introduce the notation
	\begin{equation*}
		|\phi|_{L^p \times E}
		:= \left( \int_{-r}^0 \|\phi(\theta)\|_E^p \mspace{2mu} \mathrm{d}\theta + \|\phi(0)\|_E^p \right)^{\frac{1}{p}}.
	\end{equation*}
Let $H := (C([-r, 0], E), |\cdot|_{L^p \times E})$ and $F \colon \mathbb{R} \times H \to E$ be a map given by
	\begin{equation*}
		F(t, \phi) := f(t, \phi(0), \phi(-r)).
	\end{equation*}
Then $F \colon \mathbb{R} \times H \to E$ is not continuous.
In this case, an extension $\bigl( \bar{H}, \bar{F} \bigr)$ of $(H, F)$ can be chosen as
	\begin{equation*}
		\bar{H} = C([-r, 0], E)_\mathrm{u}
			\mspace{10mu} \text{and} \mspace{10mu}
		\bar{F}(t, \phi) := f(t, \phi(0), \phi(-r)).
	\end{equation*}
Then the assumptions of Corollary~\ref{cor:local existence with Lip, (E2)} are satisfied.
Furthermore, if $f$ is locally Lipschitzian, then $F$ is locally Lipschitzian about $C^1$-prolongations.
\end{example}

\subsection{Local uniqueness}

\begin{theorem}\label{thm:local uniqueness for C^1-sols with Lip}
Let $(t_0, \phi_0) \in \dom(F)$.
Suppose that
(i) $H$ is closed under $C^1$-prolongations, and
(ii) $F$ is continuous along $C^1$-prolongations.
If $F$ is locally Lipschitzian about $C^1$-prolongations at $(t_0, \phi_0)$,
then for every $C^1$-solutions $x_i \colon J_i + I \to E$ $(i = 1, 2)$ of $\eqref{eq:IVP}_{t_0, \phi_0}$,
there exists $T > 0$ such that
	\begin{equation*}
		x_1|_{[t_0, t_0 + T]} = x_2|_{[t_0, t_0 + T]}.
	\end{equation*}
\end{theorem}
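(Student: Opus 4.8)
The plan is to reduce both solutions to the integral equation of Lemma~\ref{lem:integral eq}, confine their histories to a single rectangle by $C^1$-prolongations on which the Lipschitz estimate is available, and then close the argument with a Gronwall inequality. First I would record the data: since $F$ is locally Lipschitzian about $C^1$-prolongations at $(t_0, \phi_0)$, Definition~\ref{dfn:Lip about C^1-prolongations} furnishes $T, \delta, L > 0$ for which $L$-\eqref{eq:Lipschitzian} holds on $\varLambda_{t_0, \phi_0}^1(T, \delta; F) \cap \dom(F)$, where $v := F(t_0, \phi_0)$. Both $x_1, x_2$ are $C^1$-solutions of $\eqref{eq:IVP}_{t_0, \phi_0}$, hence $C^1$-prolongations of $(t_0, \phi_0)$ with right-hand derivative $x_i'(t_0) = F(t_0, I_{t_0} x_i) = v$, and by Lemma~\ref{lem:integral eq} each satisfies $x_i(t) = \phi_0(0) + \int_{t_0}^t F(u, I_u x_i)\,\mathrm{d}u$ on its domain.

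Next I would localize in time. Because $x_i$ is of class $C^1$ with $x_i(t_0) = \phi_0(0)$ and $x_i'(t_0) = v$, the quantity $\|x_i(t_0 + \cdot) - \phi_0^{\wedge v}(\cdot)\|_{C^1[0, \tau]}$ tends to $0$ as $\tau \downarrow 0$; so I can pick $\tau \in (0, T]$ with this quantity $\le \delta$ for both $i = 1, 2$. Then $x_i|_{[t_0, t_0 + s] + I} \in \varGamma_{t_0, \phi_0}^1(s, \delta, v)$ for every $s \in [0, \tau]$, whence $(t_0 + s, I_{t_0 + s} x_i) \in \varLambda_{t_0, \phi_0}^1(\tau, \delta, v) \subset \varLambda_{t_0, \phi_0}^1(T, \delta; F)$ by the monotonicity in Remark~\ref{rmk:comparison of rectangle by prolongations}; these points also lie in $\dom(F)$ since the $x_i$ are solutions. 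Crucially, because $x_1$ and $x_2$ share the history $\phi_0$ at $t_0$, the difference $x_1 - x_2$ vanishes on $t_0 + I$, so $\supp\bigl(I_{t_0 + s}(x_1 - x_2)\bigr) \subset [-s, 0]$ is compact and the Lipschitz inequality is legitimately applicable to the pair $\bigl(t_0 + s, I_{t_0 + s} x_1\bigr), \bigl(t_0 + s, I_{t_0 + s} x_2\bigr)$.

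Finally I would subtract the two integral equations and estimate. Writing $w(s) := \sup_{0 \le \xi \le s} \|x_1(t_0 + \xi) - x_2(t_0 + \xi)\|_E$, the vanishing of $x_1 - x_2$ on $t_0 + I$ gives $\|I_{t_0 + u}(x_1 - x_2)\|_\infty = w(u)$, so the Lipschitz estimate yields $\|x_1(t_0 + s) - x_2(t_0 + s)\|_E \le L \int_0^s w(u)\,\mathrm{d}u$ and hence $w(s) \le L \int_0^s w(u)\,\mathrm{d}u$ for $s \in [0, \tau]$. Since $w$ is continuous with $w(0) = 0$, the standard Gronwall inequality forces $w \equiv 0$, i.e.\ $x_1|_{[t_0, t_0 + \tau]} = x_2|_{[t_0, t_0 + \tau]}$, so $T := \tau$ works. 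I expect the main obstacle to be the localization step: verifying that both solutions' histories genuinely enter a common rectangle $\varLambda_{t_0, \phi_0}^1(T, \delta; F)$ for small time — this is what makes the weak Lipschitz condition about $C^1$-prolongations (with its automatically compact-supported differences) usable, and it hinges on the $C^1$-regularity at the left endpoint together with the matched initial derivative $v = F(t_0, \phi_0)$.
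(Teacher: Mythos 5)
Your proposal is correct and follows essentially the same route as the paper's proof: reduce to the integral equation via Lemma~\ref{lem:integral eq}, use the $C^1$-matching at $t_0$ (value $\phi_0(0)$ and derivative $v = F(t_0,\phi_0)$) to place both solutions' histories in a common rectangle $\varLambda_{t_0,\phi_0}^1(T,\delta;F)$ for small time, and then apply the Lipschitz estimate to the compactly supported difference. The only cosmetic difference is the closing step, where you invoke Gronwall while the paper simply shrinks $T$ so that $LT<1$ and concludes $\sup\|x_1-x_2\|_E \le LT\sup\|x_1-x_2\|_E = 0$.
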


\begin{proof}
By the Lipschitz condition for $F$, there are $T, \delta, L > 0$ such that $L$-\eqref{eq:Lipschitzian} holds
for all $(t, \phi_1), (t, \phi_2) \in \varLambda_{t_0, \phi_0}^1(T, \delta; F) \cap \dom(F)$.
Since
	\begin{align*}
		\Bigl\| x_i(t_0 + \cdot) - \phi_0^{\wedge F(t_0, \phi_0)}(\cdot) \Bigr\|_{C^1[0, T]}
		&\le \|x_i(t_0 + \cdot) - \phi_0(0)\|_{C[0, T]} + T \cdot \|F(t_0, \phi_0)\|_E \\
		&\mspace{100mu} + \|x_i'(t_0 + \cdot) - F(t_0, \phi_0)\|_{C[0, T]},
	\end{align*}
we may assume
	\begin{equation*}
		x_i|_{[t_0, t_0 + T] + I} \in \varLambda_{t_0, \phi_0}^1(T, \delta; F)
			\mspace{10mu} \text{and} \mspace{10mu}
		LT < 1
	\end{equation*}
by choosing small $T > 0$.
Therefore, for all $t \in [t_0, t_0 + T]$, we have
	\begin{align*}
		\|x_1(t) - x_2(t)\|_E
		&\le \int_{t_0}^t \|F(u, I_ux_1) - F(u, I_ux_2)\|_E \mspace{2mu} \mathrm{d}u \\
		&\le L \cdot \int_{t_0}^{t_0 + T} \|I_ux_1 - I_ux_2\|_\infty \mspace{2mu} \mathrm{d}u \\
		&\le LT \cdot \sup_{u \in [t_0, t_0 + T]} \|x_1(u) - x_2(u)\|_E
	\end{align*}
from Lemma~\ref{lem:integral eq}.
This shows $\sup_{u \in [t_0, t_0 + T]} \|x_1(u) - x_2(u)\|_E = 0$,
and the conclusion holds.
\end{proof}

An alternative proof of the local uniqueness is possible
when $\dom(F)$ is a neighborhood by $C^1$-prolongations of $(t_0, \phi_0)$.

\begin{proposition}\label{prop:local uniqueness for C^1-sols with contraction}
Let $(t_0, \phi_0) \in \dom(F)$.
Suppose that
(i) $H$ is closed under $C^1$-prolongations,
(ii) $F$ is continuous along $C^1$-prolongations, and 
(iii) $\dom(F)$ is a neighborhood by $C^1$-prolongations of $(t_0, \phi_0)$.
If $F$ is locally Lipschitzian about $C^1$-prolongations at $(t_0, \phi_0)$,
then for every $C^1$-solutions $x_i \colon J_i + I \to E$ $(i = 1, 2)$ of $\eqref{eq:IVP}_{t_0, \phi_0}$,
there exists $T > 0$ such that
	\begin{equation*}
		x_1|_{[t_0, t_0 + T]} = x_2|_{[t_0, t_0 + T]}.
	\end{equation*}
\end{proposition}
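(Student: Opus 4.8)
The plan is to exploit the contraction structure already isolated in Lemma~\ref{lem:contraction on C^1-prolongation space}, rather than the direct estimate used in Theorem~\ref{thm:local uniqueness for C^1-sols with Lip}. Under hypotheses (i)--(iii) together with the local Lipschitz condition about $C^1$-prolongations at $(t_0, \phi_0)$, that lemma supplies $\delta > 0$ such that, for all sufficiently small $T > 0$, the transformation $\mathcal{T}_{t_0, \phi_0} \colon \varGamma_{t_0, \phi_0}^1(T, \delta; F) \to \varGamma_{t_0, \phi_0}^1(T, \delta; F)$ is a well-defined contraction with respect to $\rho^1$. Since $\bigl( \varGamma_{t_0, \phi_0}^1(T, \delta; F), \rho^1 \bigr)$ is complete by Proposition~\ref{prop:completeness of space of C^1-prolongations}, the Banach fixed point theorem gives a \emph{unique} fixed point. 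The goal is then to show that the restrictions of the two given solutions both lie in this prolongation space and are fixed points of $\mathcal{T}_{t_0, \phi_0}$, which forces them to coincide.

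First I would fix $\delta > 0$ as above and then shrink $T$ to guarantee that both solutions enter $\varGamma_{t_0, \phi_0}^1(T, \delta; F)$. Each $C^1$-solution $x_i$ satisfies $I_{t_0}x_i = \phi_0$ and, evaluating the equation at $t_0$, also $x_i'(t_0) = F(t_0, \phi_0)$; hence the derivative-matching condition $\gamma'(t_0) = F(t_0, \phi_0)$ in the definition of $\varGamma_{t_0, \phi_0}^1(T, \delta; F)$ holds automatically. Using the same splitting of $\bigl\| x_i(t_0 + \cdot) - \phi_0^{\wedge F(t_0, \phi_0)}(\cdot) \bigr\|_{C^1[0, T]}$ as in the proof of Theorem~\ref{thm:local uniqueness for C^1-sols with Lip}, and invoking the continuity of $x_i$ and $x_i'$ together with $x_i(t_0) = \phi_0(0)$ and $x_i'(t_0) = F(t_0, \phi_0)$, each term of that splitting tends to $0$ as $T \downarrow 0$. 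Thus for sufficiently small $T$ we obtain $x_i|_{[t_0, t_0 + T] + I} \in \varGamma_{t_0, \phi_0}^1(T, \delta; F)$ for $i = 1, 2$.

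Next I would identify the two solutions with fixed points. By Lemma~\ref{lem:integral eq}, each $x_i|_{[t_0, t_0 + T] + I}$, being a $C^1$-solution of $\eqref{eq:IVP}_{t_0, \phi_0}$, satisfies $(t, I_t x_i) \in \dom(F)$ for all $t$ together with the integral equation that characterizes $\mathcal{T}_{t_0, \phi_0}\gamma = \gamma$; that is, both restrictions are fixed points of the contraction $\mathcal{T}_{t_0, \phi_0}$ on $\varGamma_{t_0, \phi_0}^1(T, \delta; F)$. Uniqueness of the fixed point then yields $x_1|_{[t_0, t_0 + T] + I} = x_2|_{[t_0, t_0 + T] + I}$, and restricting to $[t_0, t_0 + T]$ gives the conclusion.

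I expect the only genuine subtlety to be the second step---verifying that both solutions actually enter the prescribed space $\varGamma_{t_0, \phi_0}^1(T, \delta; F)$ for small $T$---since this is exactly where the $C^1$-regularity and the matching of the initial derivative $x_i'(t_0) = F(t_0, \phi_0)$ are essential. Once this is secured, the result follows immediately from the Banach fixed point theorem, giving a conceptually cleaner argument than the Gr\"onwall-type estimate of Theorem~\ref{thm:local uniqueness for C^1-sols with Lip}.
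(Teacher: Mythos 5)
Your proposal is correct and is essentially the paper's own proof of this proposition: both arguments place $x_i|_{[t_0, t_0 + T] + I}$ into $\varGamma_{t_0, \phi_0}^1(T, \delta; F)$ for small $T$ via the same $C^1$-norm splitting, identify the two restrictions as fixed points of the contraction $\mathcal{T}_{t_0, \phi_0}$ supplied by Lemma~\ref{lem:contraction on C^1-prolongation space}, and conclude from uniqueness of the fixed point. (The appeal to completeness and the Banach fixed point theorem is harmless but unnecessary here, since a contraction can have at most one fixed point regardless of completeness.)
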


\begin{proof}
Since
	\begin{align*}
		\Bigl\| x_i(t_0 + \cdot) - \phi_0^{\wedge F(t_0, \phi_0)}(\cdot) \Bigr\|_{C^1[0, T]}
		&\le \|x_i(t_0 + \cdot) - \phi_0(0)\|_{C[0, T]} + T \cdot \|F(t_0, \phi_0)\|_E \\
		&\mspace{100mu} + \|x_i'(t_0 + \cdot) - F(t_0, \phi_0)\|_{C[0, T]},
	\end{align*}
we may assume
	\begin{equation*}
		x_i|_{[t_0, t_0 + T] + I} \in \varLambda_{t_0, \phi_0}^1(T, \delta; F)
	\end{equation*}
by choosing sufficiently small $T > 0$.
Then the conclusion holds because $x_i|_{[t_0, t_0 + T] + I}$ are fixed points
of $\mathcal{T}_{t_0, \phi_0} \colon \varGamma_{t_0, \phi_0}^1(T, \delta; F) \to \varGamma_{t_0, \phi_0}^1(T, \delta; F)$,
which becomes a contraction from Lemma~\ref{lem:contraction on C^1-prolongation space}.
\end{proof}

The following are corollaries of Theorem~\ref{thm:local uniqueness for C^1-sols with Lip}
from Lemmas~\ref{lem:(E1) implies (E0)} and \ref{lem:(E2) implies (E0)}.

\begin{corollary}\label{cor:local uniqueness (E1)}
Let $(t_0, \phi_0) \in \dom(F)$.
Suppose that
(i) $H$ is $C^1$-prolongable and regulated by $C^1$-prolongations, and
(ii) $F$ is continuous.
If $F$ is locally Lipschitzian about $C^1$-prolongations at $(t_0, \phi_0)$,
then for every $C^1$-solutions $x_i \colon J_i + I \to E$ $(i = 1, 2)$ of $\eqref{eq:IVP}_{t_0, \phi_0}$,
there exists $T > 0$ such that
	\begin{equation*}
		x_1|_{[t_0, t_0 + T]} = x_2|_{[t_0, t_0 + T]}.
	\end{equation*}
\end{corollary}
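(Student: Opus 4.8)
The plan is to obtain this statement as a direct specialization of Theorem~\ref{thm:local uniqueness for C^1-sols with Lip}. That theorem already yields the desired $T > 0$ with $x_1|_{[t_0, t_0 + T]} = x_2|_{[t_0, t_0 + T]}$ as soon as one knows that (i) $H$ is closed under $C^1$-prolongations, (ii) $F$ is continuous along $C^1$-prolongations, and (iii) $F$ is locally Lipschitzian about $C^1$-prolongations at $(t_0, \phi_0)$. Condition (iii) is assumed verbatim in the corollary, so the whole task reduces to verifying (i) and (ii) from the two hypotheses $C^1$-prolongability of $H$ and continuity of $F$.

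First I would dispose of condition (i) by recalling Definition~\ref{dfn:prolongabilities}: the $C^1$-prolongability of $H$ has, as its first requirement, that $H$ be closed under $C^1$-prolongations. Hence (i) holds with no further argument.

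Next I would verify condition (ii), that $F$ is continuous along $C^1$-prolongations at $(t_0, \phi_0)$; this is precisely part~1 of Lemma~\ref{lem:(E1) implies (E0)}. Concretely, for any $C^1$-prolongation $\gamma \colon J + I \to E$ of $(t_0, \phi_0)$ with $(t, I_t\gamma) \in \dom(F)$ for all $t \in J$, the second clause of $C^1$-prolongability makes the curve $J \ni t \mapsto I_t\gamma \in H$ continuous, and composing with the continuous map $F$ gives the continuity of $t \mapsto F(t, I_t\gamma)$. I would stress that only the continuity-along-$C^1$-prolongations conclusion of Lemma~\ref{lem:(E1) implies (E0)} is used, so the neighborhood-by-$C^1$-prolongations hypothesis appearing in that lemma—and deliberately absent from the present corollary—is not needed here.

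With (i), (ii), and (iii) all in place, Theorem~\ref{thm:local uniqueness for C^1-sols with Lip} applies to the two $C^1$-solutions $x_1, x_2$ and produces the required $T > 0$, finishing the proof. I expect no genuine obstacle: the argument is a bookkeeping reduction to the already-established uniqueness theorem, and the only point deserving a moment's care is extracting exactly the part of Lemma~\ref{lem:(E1) implies (E0)} that avoids its unused neighborhood hypothesis, so that the weaker hypotheses of the corollary suffice.
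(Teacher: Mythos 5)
Your proposal is correct and follows exactly the route the paper intends: it derives the corollary from Theorem~\ref{thm:local uniqueness for C^1-sols with Lip} via part~1 of Lemma~\ref{lem:(E1) implies (E0)}, and your observation that only the continuity-along-$C^1$-prolongations conclusion is needed (so the neighborhood-by-$C^1$-prolongations hypothesis of that lemma can be dropped) is precisely the point that makes the weaker hypotheses of the corollary suffice. Nothing is missing.
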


\begin{corollary}\label{cor:local uniqueness (E2)}
Let $(t_0, \phi_0) \in \dom(F)$.
Suppose that
(i) $H$ is closed under $C^1$-prolongations,
(ii) $\left( \bar{H}, \bar{F} \right)$ is an extension of $(H, F)$ with the properties that
	\begin{itemize}
	\item $\bar{H}$ is prolongable and regulated by prolongations, and
	\item $\bar{F}$ is continuous.
	\end{itemize}
If $F$ is locally Lipschitzian about $C^1$-prolongations at $(t_0, \phi_0)$,
then for every $C^1$-solutions $x_i \colon J_i + I \to E$ $(i = 1, 2)$ of $\eqref{eq:IVP}_{t_0, \phi_0}$,
there exists $T > 0$ such that
	\begin{equation*}
		x_1|_{[t_0, t_0 + T]} = x_2|_{[t_0, t_0 + T]}.
	\end{equation*}
\end{corollary}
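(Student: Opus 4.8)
The plan is to deduce the statement from Theorem~\ref{thm:local uniqueness for C^1-sols with Lip} by verifying its three hypotheses at $(t_0, \phi_0)$. Two of them are handed to us directly: hypothesis (i), that $H$ is closed under $C^1$-prolongations, is assumption (i) of the corollary, and the local Lipschitz condition about $C^1$-prolongations at $(t_0, \phi_0)$ is exactly what is assumed. The only thing left to check is hypothesis (ii), namely that $F$ is continuous along $C^1$-prolongations. Once this is in place, Theorem~\ref{thm:local uniqueness for C^1-sols with Lip} applies verbatim and produces the desired $T > 0$ with $x_1|_{[t_0, t_0 + T]} = x_2|_{[t_0, t_0 + T]}$ for any two $C^1$-solutions $x_i \colon J_i + I \to E$ of $\eqref{eq:IVP}_{t_0, \phi_0}$.

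To establish that $F$ is continuous along $C^1$-prolongations I would reproduce the argument used for the first conclusion of Lemma~\ref{lem:(E2) implies (E0)}. Fix $(\sigma_0, \psi_0) \in \dom(F)$ and let $\gamma \colon J + I \to E$ be a $C^1$-prolongation of $(\sigma_0, \psi_0)$ with $(t, I_t\gamma) \in \dom(F)$ for all $t \in J$. Since $\gamma$ is in particular a prolongation and $\bar{H}$ is prolongable (hence closed under prolongations, with the curve $J \ni t \mapsto I_t\gamma \in \bar{H}$ continuous), composition with the continuous map $\bar{F}$ shows that $J \ni t \mapsto \bar{F}(t, I_t\gamma) \in E$ is continuous. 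Because $\bigl( \bar{H}, \bar{F} \bigr)$ is an extension of $(H, F)$, we have $\dom(F) = \dom \bigl( \bar{F} \bigr) \cap (\mathbb{R} \times H)$ and $F = \bar{F}$ on $\dom(F)$; hence $F(t, I_t\gamma) = \bar{F}(t, I_t\gamma)$ for all $t \in J$, so $J \ni t \mapsto F(t, I_t\gamma) \in E$ is continuous. As $(\sigma_0, \psi_0)$ was arbitrary, $F$ is continuous along $C^1$-prolongations.

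The one point worth flagging — and the only place any care is needed — is that the corollary assumes strictly less than Lemma~\ref{lem:(E2) implies (E0)}: it drops the requirement that $\dom \bigl( \bar{F} \bigr)$ be a neighborhood by prolongations of each of its points. This is legitimate precisely because the uniqueness theorem, unlike the existence theorem of Corollary~\ref{cor:local existence with Lip, (E2)}, requires neither that $\dom(F)$ be a neighborhood by $C^1$-prolongations nor any local boundedness of $F$. Consequently only the first of the three conclusions of Lemma~\ref{lem:(E2) implies (E0)} is invoked, and the argument above shows that the proof of that first conclusion uses only the prolongability of $\bar{H}$ and the continuity of $\bar{F}$, both of which are among the corollary's hypotheses. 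There is thus no genuine obstacle here; the statement is a short reduction, and the substantive work has already been carried out in Theorem~\ref{thm:local uniqueness for C^1-sols with Lip}.
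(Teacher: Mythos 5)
Your proof is correct and matches the paper's: the corollary is deduced from Theorem~\ref{thm:local uniqueness for C^1-sols with Lip} by verifying continuity along $C^1$-prolongations exactly as in part 1 of the proof of Lemma~\ref{lem:(E2) implies (E0)}. Your observation that this part of the lemma's proof uses only the prolongability of $\bar{H}$ and the continuity of $\bar{F}$ --- so the neighborhood-by-prolongations hypothesis can be dropped from the corollary's statement --- is exactly the reading the paper intends when it cites that lemma.
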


\subsection{Local existence without Lipschitz condition}

Here we study the local existence without the Lipschitz condition in the case $E = \mathbb{R}^n$ for some $n \ge 1$.
We consider the following conditions about the continuity and boundedness of $F$.

\begin{definition}\label{dfn:PC}
We say that $F$ is \textit{continuous about prolongation} at $(t_0, \phi_0) \in \dom(F)$
if there exist $T_0, \delta > 0$ such that for each fixed $\beta_0 \in \varGamma_{0, \boldsymbol{0}}(T_0, \delta)$,
	\begin{equation}\label{eq:PC}
		\int_0^{T_0}
			\bigl\| F_{t_0, \phi_0}^0(u, I_u\beta) - F_{t_0, \phi_0}^0(u, I_u\beta_0) \bigr\|_E
		\mspace{2mu} \mathrm{d}u
		\to 0
		\tag{PC}
	\end{equation}
as $\beta \to \beta_0$ in $\varGamma_{0, \boldsymbol{0}}(T_0, \delta)$.
We simply say that $F$ is continuous about prolongations
when $F$ is continuous about prolongations at every $(t_0, \phi_0) \in \dom(F)$.
\end{definition}

We write $\eqref{eq:PC}_{t_0, \phi_0}$
when the base point $(t_0, \phi_0)$ is specified.

\begin{remark}
When $F$ is locally Lipschitzian about prolongations at $(t_0, \phi_0)$,
the convergence $\eqref{eq:PC}_{t_0, \phi_0}$ is obvious by the following reason:
there exist $T_0, \delta, L > 0$ such that
for all $\beta_1, \beta_2 \in \varGamma_{0, \boldsymbol{0}}(T_0, \delta)$ and all $u \in [0, T_0]$,
	\begin{align*}
		\bigl\| F_{t_0, \phi_0}^0(u, I_u\beta_1) - F_{t_0, \phi_0}^0(u, I_u\beta_2) \bigr\|_E
		&\le L \cdot \|I_u\beta_1 - I_u\beta_2\|_\infty \\
		&\le L \cdot \rho^0(\beta_1, \beta_2),
	\end{align*}
which shows that
	\begin{equation*}
		\int_0^{T_0}
			\bigl\| F_{t_0, \phi_0}^0(u, I_u\beta) - F_{t_0, \phi_0}^0(u, I_u\beta_0) \bigr\|_E
		\mspace{2mu} \mathrm{d}u
		\le LT_0 \cdot \rho^0(\beta, \beta_0)
	\end{equation*}
holds.
\end{remark}

Definition~\ref{dfn:PC} is motivated by the following proposition.

\begin{proposition}[\cite{Nishiguchi 2017}]\label{prop:continuity about prolongations}
Let $(t_0, \phi_0) \in \dom(F)$.
Suppose that
(i) $H$ is prolongable and regulated by prolongations,
(ii) $F$ is continuous, and
(iii) there exist $T_0, \delta > 0$ such that $\varLambda_{t_0, \phi_0}(T_0, \delta) \subset \dom(F)$.
Then for each fixed $\beta_0 \in \varGamma_{0, \boldsymbol{0}}(T_0, \delta)$,
	\begin{equation*}
		\sup_{u \in [0, T]} \bigl\| F_{t_0, \phi_0}^0(u, I_u\beta) - F_{t_0, \phi_0}^0(u, I_u\beta_0) \bigr\|_E
		\to 0
	\end{equation*}
as $\beta \to \beta_0$ in $\varGamma_{0, \boldsymbol{0}}(T_0, \delta)$.
\end{proposition}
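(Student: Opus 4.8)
The plan is to unfold the composed functional and then combine a compactness argument on $[0, T_0]$ with the two structural hypotheses on $H$. First I would record the identity $F_{t_0, \phi_0}^0(u, I_u\beta) = F\bigl(t_0 + u, I_{t_0 + u}[A_{t_0, \phi_0}^0\beta]\bigr)$, which follows directly from the definitions of $\tau_{t_0, \phi_0}^0$ and $A_{t_0, \phi_0}^0$; writing $\gamma := A_{t_0, \phi_0}^0\beta$ and $\gamma_0 := A_{t_0, \phi_0}^0\beta_0$, these are prolongations of $(t_0, \phi_0)$ lying in $\varGamma_{t_0, \phi_0}(T_0, \delta)$, so for every $u \in [0, T_0]$ the evaluation points $(t_0 + u, I_{t_0 + u}\gamma)$ and $(t_0 + u, I_{t_0 + u}\gamma_0)$ belong to $\varLambda_{t_0, \phi_0}(T_0, \delta) \subset \dom(F)$ by assumption (iii). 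Since $\varGamma_{0, \boldsymbol{0}}(T_0, \delta)$ is a metric space (with $\rho^0$), it suffices to prove the sequential version: if the conclusion fails, there are $\ep > 0$, a sequence $\beta_n \to \beta_0$ in $\rho^0$, and points $u_n \in [0, T_0]$ with $\|F(t_0 + u_n, I_{t_0 + u_n}\gamma_n) - F(t_0 + u_n, I_{t_0 + u_n}\gamma_0)\|_E \ge \ep$, where $\gamma_n := A_{t_0, \phi_0}^0\beta_n$.

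Next I would extract the two convergences that drive the proof. Passing to a subsequence, compactness of $[0, T_0]$ gives $u_n \to u_* \in [0, T_0]$. For the history perturbation, observe that $I_{t_0 + u}\gamma_n - I_{t_0 + u}\gamma_0 = I_u[\beta_n - \beta_0]$ and that $\beta_n - \beta_0$ is a prolongation of $\boldsymbol{0}$ with $\|\beta_n - \beta_0\|_{C[0, T_0]} = \rho^0(\beta_n, \beta_0) \to 0$. Hence, given any neighborhood $N$ of $\boldsymbol{0}$ in $H$, regulation by prolongations supplies $\delta' > 0$ with $\varLambda_{0, \boldsymbol{0}}(T_0, \delta') \subset [0, T_0] \times N$, so for $n$ large enough that $\rho^0(\beta_n, \beta_0) \le \delta'$ one gets $I_u[\beta_n - \beta_0] \in N$ for every $u \in [0, T_0]$; this is exactly the uniform-in-$u$ statement $I_{u_n}[\beta_n - \beta_0] \to \boldsymbol{0}$ in $H$. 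For the base curve, prolongability of $H$ (applied to $\gamma_0$, extended beyond $T_0$ by a constant) makes $u \mapsto I_{t_0 + u}\gamma_0$ continuous, so $I_{t_0 + u_n}\gamma_0 \to I_{t_0 + u_*}\gamma_0$ in $H$. Continuity of the linear operations then yields $I_{t_0 + u_n}\gamma_n = I_{t_0 + u_n}\gamma_0 + I_{u_n}[\beta_n - \beta_0] \to I_{t_0 + u_*}\gamma_0$ as well.

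Finally I would invoke the continuity of $F$. Both sequences $(t_0 + u_n, I_{t_0 + u_n}\gamma_n)$ and $(t_0 + u_n, I_{t_0 + u_n}\gamma_0)$ converge in $\mathbb{R} \times H$ to the single point $(t_0 + u_*, I_{t_0 + u_*}\gamma_0) \in \dom(F)$, so by assumption (ii) both $F$-values converge to $F(t_0 + u_*, I_{t_0 + u_*}\gamma_0)$ in $E$; their difference therefore tends to $0$, contradicting the bound $\ge \ep$. The main obstacle is the \emph{uniformity} over $u \in [0, T_0]$: continuity of $F$ is only a pointwise hypothesis and by itself cannot control the supremum over $u$. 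The resolution is precisely to split the perturbation into a tail-preserving part and a base-point part: regulation by prolongations delivers uniform smallness of the former (this is where the weak $\|\cdot\|_\infty$-type control over histories with compact-support differences is essential), while compactness of $[0, T_0]$ together with the contradiction device reduces the latter to a single limiting base point where pointwise continuity of $F$ suffices.
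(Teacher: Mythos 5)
Your proof is correct and rests on essentially the same mechanism as the paper's: the same decomposition $\tau_{t_0, \phi_0}^0(u, I_u\beta) - \tau_{t_0, \phi_0}^0(u, I_u\beta_0) = (0, I_u[\beta - \beta_0])$, with prolongability supplying compactness of the base orbit $\{\tau_{t_0, \phi_0}^0(u, I_u\beta_0) : u \in [0, T_0]\}$ and regulation by prolongations supplying the uniform-in-$u$ smallness of $I_u[\beta - \beta_0]$. The only difference is procedural: the paper argues directly via a uniform-continuity statement for $F$ near that compact set, whereas you run a sequential contradiction argument that extracts a single limiting base point, which slightly reduces what must be assumed about $F$ beyond pointwise continuity.
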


See Appendix~\ref{subsec:proofs in Section 5} for the proof.
See also Proposition~\ref{prop:continuity about prolongations, uniform} for the related result.

\begin{lemma}\label{lem:PC implies LB}
Let $(t_0, \phi_0) \in \dom(F)$.
Suppose that
(i) $H$ is closed under prolongations,
(ii) $F$ is continuous along prolongations, and
(iii) $\dom(F)$ is a neighborhood by prolongations of $(t_0, \phi_0)$.
If $F$ is continuous about prolongation at $(t_0, \phi_0)$,
then there exist $T_0, \delta > 0$ such that for all $s_1, s_2 \in [0, T_0]$,
	\begin{equation*}
		\lim_{|s_1 - s_2| \to 0}
			\int_{s_2}^{s_1} \bigl\| F_{t_0, \phi_0}^0(u, I_u\beta) \bigr\|_E \mspace{2mu} \mathrm{d}u
		= 0
		\tag{LB}
	\end{equation*}
holds uniformly in $\beta \in \varGamma_{0, \boldsymbol{0}}(T_0, \delta)$.
\end{lemma}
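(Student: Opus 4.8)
The plan is to recast \eqref{eq:PC}$\,\Rightarrow\,$(LB) as a uniform-convergence (Dini-type) statement for a monotone family of moduli, and to concentrate the whole difficulty into a single compactness step. First I would take the $T_0, \delta > 0$ supplied by the hypothesis \eqref{eq:PC} and shrink them if necessary so that $\varLambda_{t_0, \phi_0}(T_0, \delta) \subset \dom(F)$, which is permissible since $\dom(F)$ is a neighborhood by prolongations of $(t_0, \phi_0)$. On $\varGamma_{0, \boldsymbol{0}}(T_0, \delta)$ I then introduce, for each integer $n \ge 1$, the modulus
\[
\omega_n(\beta)
:= \sup \Bigl\{ \textstyle\int_{s_2}^{s_1} \bigl\| F_{t_0, \phi_0}^0(u, I_u\beta) \bigr\|_E \, \mathrm{d}u : 0 \le s_2 \le s_1 \le T_0,\ s_1 - s_2 \le 1/n \Bigr\}.
\]
The content of (LB) is exactly that $\sup_{\beta} \omega_n(\beta) \to 0$ as $n \to \infty$, i.e. that $\omega_n \to 0$ uniformly on $\varGamma_{0, \boldsymbol{0}}(T_0, \delta)$.

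Two inputs feed this reduction. For each fixed $\beta$, the assumptions that $H$ is closed under prolongations and that $F$ is continuous along prolongations make $u \mapsto F_{t_0, \phi_0}^0(u, I_u\beta)$ continuous, hence bounded, on $[0, T_0]$; thus $\omega_n(\beta) \le \|F_{t_0, \phi_0}^0(\cdot, I_\cdot\beta)\|_\infty / n$, so $\omega_n(\beta) \downarrow 0$ pointwise. The hypothesis \eqref{eq:PC} provides equicontinuity with a modulus independent of $n$: setting $\mu(\beta, \beta') := \int_0^{T_0} \bigl\| F_{t_0, \phi_0}^0(u, I_u\beta) - F_{t_0, \phi_0}^0(u, I_u\beta') \bigr\|_E \, \mathrm{d}u$, one checks $|\omega_n(\beta) - \omega_n(\beta')| \le \mu(\beta, \beta')$ for every $n$, and $\mu(\beta, \beta') \to 0$ as $\beta \to \beta'$ by \eqref{eq:PC}. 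Hence $(\omega_n)_{n \ge 1}$ is a decreasing, equicontinuous sequence of continuous nonnegative functions converging pointwise to $0$.

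The main obstacle is the passage from pointwise to uniform convergence. Dini's theorem would finish the argument at once if $\varGamma_{0, \boldsymbol{0}}(T_0, \delta)$ were compact, but in the metric $\rho^0 = \|\cdot\|_\infty$ this set is a closed ball of an infinite-dimensional function space and is \emph{never} compact; moreover equicontinuity plus monotone pointwise decay alone does not force uniform decay on a non-compact domain, so this is precisely where the standing assumption $E = \mathbb{R}^n$ must enter. Concretely I would argue by contradiction: a failure of (LB) yields $\varepsilon_0 > 0$, intervals $[s_2^k, s_1^k]$ with $s_1^k - s_2^k \to 0$ and $s_i^k \to s_\ast$, and $\beta_k \in \varGamma_{0, \boldsymbol{0}}(T_0, \delta)$ with $\int_{s_2^k}^{s_1^k} \bigl\| F_{t_0, \phi_0}^0(u, I_u\beta_k) \bigr\|_E \, \mathrm{d}u \ge \varepsilon_0$, and then, splitting against a limiting prolongation $\beta_\ast$,
\[
\int_{s_2^k}^{s_1^k} \bigl\| F_{t_0, \phi_0}^0(u, I_u\beta_k) \bigr\|_E \, \mathrm{d}u
\le \int_{s_2^k}^{s_1^k} \bigl\| F_{t_0, \phi_0}^0(u, I_u\beta_\ast) \bigr\|_E \, \mathrm{d}u + \mu(\beta_k, \beta_\ast),
\]
where the first term vanishes since $s_1^k - s_2^k \to 0$ and the second by \eqref{eq:PC}, a contradiction. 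The delicate point — and the real crux of the lemma — is the extraction of such a limit $\beta_\ast$, equivalently the compactness of the relevant family of histories $\{I_u \beta_k\}$ in $H$; I expect this step, rather than any compactness of $\varGamma_{0, \boldsymbol{0}}(T_0, \delta)$ itself, to be where finite-dimensionality of $E$ and the continuity-versus-equicontinuity correspondence recorded in Appendix~\ref{sec:family of maps} (cf.\ Remark~\ref{rmk:continuity and equi-continuity}) are genuinely needed.
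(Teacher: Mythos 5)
There is a genuine gap, and it sits exactly where you locate ``the real crux'': the extraction of a limiting prolongation $\beta_*$ from the sequence $(\beta_k)$ cannot be carried out. The $\beta_k$ are arbitrary prolongations of $\boldsymbol{0}$ subject only to the sup-norm bound $\|\beta_k\|_\infty \le \delta$; they carry no equicontinuity, so Arzel\`{a}--Ascoli gives no convergent subsequence in $\rho^0$ even when $E = \mathbb{R}^n$, and a bounded ball of $C([0,T_0],\mathbb{R}^n)$ is not relatively compact. Nor can one substitute compactness of the histories $\{I_u\beta_k\}$ in $H$, since $H$ carries no a priori structure here. Moreover, the lemma is \emph{not} restricted to $E = \mathbb{R}^n$ (finite-dimensionality enters only later, in Proposition~\ref{prop:compactness of transformation on prolongation space}, where the Ascoli argument is applied to the \emph{images} $\mathcal{S}^0_{t_0,\phi_0}\beta$, which are $M$-Lipschitz --- not to the inputs $\beta$). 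So the contradiction scheme, as proposed, cannot be closed.

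The missing idea is that no compactness is needed at all: one should apply \eqref{eq:PC} at the \emph{single} base point $\beta_0 = \bar{\boldsymbol{0}}$ and then exploit the freedom to shrink the radius. Since $\rho^0(\beta, \bar{\boldsymbol{0}}) = \|\beta\|_\infty \le \delta$ for every $\beta \in \varGamma_{0, \boldsymbol{0}}(T_0, \delta)$, given $\ep > 0$ one may choose $\delta > 0$ so small that
	\begin{equation*}
		\int_0^{T_0}
			\bigl\| F_{t_0, \phi_0}^0(u, I_u\beta) - F_{t_0, \phi_0}^0(u, I_u\bar{\boldsymbol{0}}) \bigr\|_E
		\mspace{2mu} \mathrm{d}u
		\le \ep/2
	\end{equation*}
holds simultaneously for \emph{all} $\beta$ in the ball; the uniformity over the ball is bought by shrinking $\delta$, not by compactness. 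The assumption that $F$ is continuous along prolongations makes $u \mapsto F_{t_0, \phi_0}^0(u, I_u\bar{\boldsymbol{0}})$ continuous on $[0, T_0]$, hence $\bigl| \int_{s_2}^{s_1} \| F_{t_0, \phi_0}^0(u, I_u\bar{\boldsymbol{0}}) \|_E \, \mathrm{d}u \bigr| \le \ep/2$ once $|s_1 - s_2|$ is small, and the triangle inequality finishes. Your first two paragraphs (the moduli $\omega_n$ and the estimate $|\omega_n(\beta) - \omega_n(\beta')| \le \mu(\beta, \beta')$) are correct and would combine with this observation --- take $\beta' = \bar{\boldsymbol{0}}$ and shrink $\delta$ --- to give a valid proof; it is only the detour through sequential compactness that has to be abandoned.
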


\begin{proof}
By the assumption (iii) and by the condition for $F$,
we choose $T_0, \delta_0 > 0$ so that
\begin{itemize}
\item $\varLambda_{t_0, \phi_0}(T_0, \delta_0) \subset \dom(F)$,
\item for each fixed $\beta_0 \in \varGamma_{0, \boldsymbol{0}}(T_0, \delta_0)$,
$\eqref{eq:PC}_{t_0, \phi_0}$ follows
as $\beta \to \beta_0$ in $\varGamma_{0, \boldsymbol{0}}(T_0, \delta_0)$.
\end{itemize}

Let $\ep > 0$ be given.
From $\eqref{eq:PC}_{t_0, \phi_0}$,
there is $0 < \delta < \delta_0$ such that
for all $\beta \in \varGamma_{0, \boldsymbol{0}}(T_0, \delta)$,
	\begin{equation*}
		\int_0^{T_0}
			\bigl\| F_{t_0, \phi_0}^0(u, I_u\beta) - F_{t_0, \phi_0}^0(u, I_u\bar{\boldsymbol{0}})\bigr\|_E
		\mspace{2mu} \mathrm{d}u
		\le \ep/2.
	\end{equation*}
The assumption (ii) indicates that there is $r > 0$ such that
for all $s_1, s_2 \in [0, T_0]$,
	\begin{equation*}
		|s_1 - s_2| < r
			\imply
		\left|
			\int_{s_2}^{s_1} \bigl\| F_{t_0, \phi_0}^0 (u, I_u\bar{\boldsymbol{0}}) \bigr\|_E \mspace{2mu} \mathrm{d}u
		\right|
		\le \ep/2.
	\end{equation*}
By combining these inequalities,
for all $s_1, s_2 \in [0, T_0]$ and all $\beta \in \varGamma_{0, \boldsymbol{0}}(T_0, \delta)$,
$|s_1 - s_2| < r$ implies
	\begin{align*}
		\left| \int_{s_2}^{s_1} \bigl\| F_{t_0, \phi_0}^0(u, I_u\beta) \bigr\|_E \mspace{2mu} \mathrm{d}u \right|
		&\le \int_0^{T_0}
			\bigl\| F_{t_0, \phi_0}^0(u, I_u\beta) - F_{t_0, \phi_0}^0(u, I_u\bar{\boldsymbol{0}})\bigr\|_E
		\mspace{2mu} \mathrm{d}u \\
		&\mspace{40mu} + \left| \int_{s_2}^{s_1}
			\bigl\| F_{t_0, \phi_0}^0 (u, I_u\bar{\boldsymbol{0}}) \bigr\|_E
		\mspace{2mu} \mathrm{d}u \right| \\
		&\le (\ep/2) + (\ep/2) \\
		&= \ep.
	\end{align*}
Therefore, the conclusion is obtained.
\end{proof}

\begin{proposition}[cf.\ \cite{Kappel--Schappacher 1980}]
\label{prop:compactness of transformation on prolongation space}
Let $E = \mathbb{R}^n$ and $(t_0, \phi_0) \in \dom(F)$.
Suppose that
(i) $H$ is closed under prolongations,
(ii) $F$ is continuous along prolongations,
(iii) $F$ is locally bounded about prolongations, and
(iv) $\dom(F)$ is a neighborhood by prolongations of $(t_0, \phi_0)$.
If $F$ is continuous about prolongation at $(t_0, \phi_0)$,
then for all sufficiently small $T > 0$,
	\begin{equation*}
		\mathcal{S}_{t_0, \phi_0}^0
		\colon \varGamma_{0, \boldsymbol{0}}(T, \delta) \to \varGamma_{0, \boldsymbol{0}}(T, \delta)
	\end{equation*}
is an well-defined compact map.
\end{proposition}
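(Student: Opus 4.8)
The plan is to realize $\mathcal{S}_{t_0, \phi_0}^0$ as an integral operator and then assemble the two ingredients of a compact map, namely continuity and relative compactness of the image, with the hypothesis $E = \mathbb{R}^n$ entering only through the Arzel\`{a}--Ascoli theorem; this is the natural prolongation-space analogue of the Peano/Schauder approach. First I would fix the constants. Using hypothesis (iv) I choose $T, \delta > 0$ so small that $\varLambda_{t_0, \phi_0}(T, \delta) \subset \dom(F)$; by the local boundedness about prolongations (iii) I then obtain $M > 0$ with $\|F\|_E \le M$ on this rectangle; finally I shrink $T$ so that $MT \le \delta$ and so that the convergence \eqref{eq:PC} from Definition~\ref{dfn:PC} is available on $\varGamma_{0, \boldsymbol{0}}(T, \delta)$. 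This last coordination is legitimate because reducing $T$ only decreases the nonnegative integrand in \eqref{eq:PC}, shrinking $\delta$ restricts the admissible $\beta$, and Remark~\ref{rmk:embedding of prolongations} lets one embed the smaller prolongation spaces into the ones on which \eqref{eq:PC} was assumed.

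Next I would record the integral representation $\mathcal{S}_{t_0, \phi_0}^0\beta(s) = \int_0^s F_{t_0, \phi_0}^0(u, I_u\beta) \, \mathrm{d}u$ on $[0, T]$ together with $I_0[\mathcal{S}_{t_0, \phi_0}^0\beta] = \boldsymbol{0}$, derived above for $v = 0$. Since every $\beta \in \varGamma_{0, \boldsymbol{0}}(T, \delta)$ corresponds via $A_{t_0, \phi_0}^0$ to a prolongation of $(t_0, \phi_0)$ whose histories lie in $\varLambda_{t_0, \phi_0}(T, \delta)$ (here I would invoke Lemma~\ref{lem:translation on rectangles}, so that $\tau_{t_0, \phi_0}^0(u, I_u\beta) \in \dom(F)$ with $\|F\|_E \le M$ there), hypothesis (ii) makes the integrand $u \mapsto F_{t_0, \phi_0}^0(u, I_u\beta)$ continuous. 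Hence $\mathcal{S}_{t_0, \phi_0}^0\beta$ is of class $C^1$ on $[0, T]$, is a genuine prolongation of $\boldsymbol{0}$, and satisfies $\|\mathcal{S}_{t_0, \phi_0}^0\beta\|_\infty \le MT \le \delta$, so it lands in $\varGamma_{0, \boldsymbol{0}}(T, \delta)$; this is well-definedness.

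For continuity I would estimate $\rho^0(\mathcal{S}_{t_0, \phi_0}^0\beta, \mathcal{S}_{t_0, \phi_0}^0\beta_0) \le \int_0^T \|F_{t_0, \phi_0}^0(u, I_u\beta) - F_{t_0, \phi_0}^0(u, I_u\beta_0)\|_E \, \mathrm{d}u$, whose right-hand side tends to $0$ as $\beta \to \beta_0$ precisely by the hypothesis \eqref{eq:PC}. For relative compactness of the image I would invoke Arzel\`{a}--Ascoli applied to the restrictions to $[0, T]$: the family $\{\mathcal{S}_{t_0, \phi_0}^0\beta\}$ is uniformly bounded by $\delta$, and the estimate $\|\mathcal{S}_{t_0, \phi_0}^0\beta(s_1) - \mathcal{S}_{t_0, \phi_0}^0\beta(s_2)\|_E \le M|s_1 - s_2|$ makes it equi-Lipschitz, hence equicontinuous. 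Since $E = \mathbb{R}^n$, the uniform bound furnishes pointwise relative compactness, so the family is totally bounded in the sup metric; because $\varGamma_{0, \boldsymbol{0}}(T, \delta)$ is complete by Proposition~\ref{prop:completeness of space of prolongations}, the image is relatively compact there. Together with continuity, this yields that $\mathcal{S}_{t_0, \phi_0}^0$ is a well-defined compact map.

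The main obstacle, and the sole place the hypothesis $E = \mathbb{R}^n$ is genuinely used, is the relative compactness step: equicontinuity plus a uniform bound does not force compactness in an infinite-dimensional $E$, and finite-dimensionality is exactly what upgrades the uniform bound to the pointwise relative compactness required by Arzel\`{a}--Ascoli. A secondary bookkeeping point worth handling carefully is that elements of $\varGamma_{0, \boldsymbol{0}}(T, \delta)$ are maps on $[0, T] + I$ that all coincide with $\boldsymbol{0}$ on $I$, so that $\rho^0$ collapses to the supremum over $[0, T]$ and the Arzel\`{a}--Ascoli argument can be run on $C([0, T], \mathbb{R}^n)_\mathrm{u}$ without loss.
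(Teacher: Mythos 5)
Your proposal is correct and follows essentially the same route as the paper's proof in Appendix~\ref{subsec:proofs in Section 5}: the same choice of constants $T \le \min\{T_0, \delta/M\}$ so that the rectangle sits in $\dom(F)$ with $\|F\|_E \le M$, well-definedness via the bound $MT \le \delta$, continuity of $\mathcal{S}_{t_0, \phi_0}^0$ from \eqref{eq:PC}, and relative compactness of the image from the uniform $M$-Lipschitz estimate together with the Ascoli--Arzel\`{a} theorem, with $E = \mathbb{R}^n$ entering exactly where you say it does. The only cosmetic difference is that the paper phrases the compactness step through the isometric embedding $j$ into a closed subspace of $C([0,T],E)_\mathrm{u}$, which is the same bookkeeping point you flag at the end.
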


We give the proof in Appendix~\ref{subsec:proofs in Section 5}.
See also \cite[Theorem 2.2]{Kappel--Schappacher 1980} for this proof.

To obtain a solution by using Proposition~\ref{prop:compactness of transformation on prolongation space},
the Schauder fixed point theorem is used.

\begin{fact}[Schauder fixed point theorem]\label{fact:Schauder}
Let $C$ be a convex subset of a normed linear space.
Then each compact map from $C$ to $C$ has at least one fixed point.
\end{fact}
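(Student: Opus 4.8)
The plan is to reduce this infinite-dimensional fixed point problem to a finite-dimensional one and then invoke Brouwer's fixed point theorem, which I take as the base case. Write $K := \overline{f(C)}$; by hypothesis $K$ is compact, and $f \colon C \to C$ is continuous with $f(C) \subset K$. The central device is the \emph{Schauder projection}: for each $\ep > 0$, use the compactness of $K$ to choose a finite $\ep$-net $y_1, \dots, y_n \in f(C)$ and define $P_\ep$ on $K$ by
\[
	P_\ep(y) = \frac{\sum_{i=1}^n m_i(y)\, y_i}{\sum_{i=1}^n m_i(y)},
	\qquad
	m_i(y) := \max\{0,\ \ep - \|y - y_i\|\}.
\]
The denominator is strictly positive on $K$ because the $y_i$ form an $\ep$-net, so $P_\ep$ is well-defined and continuous, it satisfies $\|P_\ep(y) - y\| < \ep$ for every $y \in K$, and it takes values in the convex hull $C_\ep := \mathrm{conv}\{y_1, \dots, y_n\}$.

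Since each $y_i \in f(C) \subset C$ and $C$ is convex, we have $C_\ep \subset C$; moreover $C_\ep$ is a compact convex subset of the finite-dimensional space $\mathrm{span}\{y_1, \dots, y_n\}$, hence affinely homeomorphic to a compact convex subset of some $\R^m$. The composite $f_\ep := P_\ep \circ f \colon C_\ep \to C_\ep$ is continuous, so by Brouwer's fixed point theorem it has a fixed point $x_\ep \in C_\ep \subset C$. By the defining property of $P_\ep$,
\[
	\|x_\ep - f(x_\ep)\| = \|P_\ep(f(x_\ep)) - f(x_\ep)\| < \ep,
\]
so $x_\ep$ is an approximate fixed point of $f$.

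Finally I would pass to the limit. Taking $\ep = 1/k$ produces a sequence $x_k \in C$ with $\|x_k - f(x_k)\| \to 0$ and $f(x_k) \in K$. By compactness of $K$ a subsequence $f(x_{k_j})$ converges to some $y^* \in K$, and then $x_{k_j} \to y^*$ as well by the triangle inequality. Continuity of $f$ at $y^*$ gives $f(x_{k_j}) \to f(y^*)$, whence $f(y^*) = y^*$.

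The main obstacle is the finite-dimensional reduction itself: one must build $P_\ep$ so that the perturbed map $f_\ep$ genuinely carries a \emph{compact convex} set into itself, which is exactly what allows Brouwer's theorem to apply, and precompactness of $f(C)$ is what makes a finite $\ep$-net (hence a finite-dimensional $C_\ep$) available. A secondary technical point is that $f(y^*)$ in the limit step requires $y^* \in C$, the domain of $f$; this is automatic here since in the intended application (Proposition~\ref{prop:compactness of transformation on prolongation space}) the set $C = \varGamma_{0, \boldsymbol{0}}(T, \delta)$ is complete by Proposition~\ref{prop:completeness of space of prolongations}, hence closed in the ambient Banach space $C([0, T], E)_\mathrm{u}$, so $y^* = \lim_j x_{k_j} \in C$.
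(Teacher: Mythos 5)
The paper does not prove this statement at all: it is recorded as a \emph{Fact} with a pointer to Granas \& Dugundji, so there is no internal argument to compare against. What you supply is the standard proof --- Schauder projection onto the convex hull of a finite $\ep$-net, Brouwer's theorem on the resulting finite-dimensional compact convex set, approximate fixed points, and a compactness-plus-continuity passage to the limit --- and the skeleton is correct.

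The one point you should tighten is the reading of ``compact map'' at the very start, because it is exactly what controls the loose end you flag at the end. You take the hypothesis to be that $K := \overline{f(C)}$ is compact (closure in the ambient space). Under that reading the statement is actually \emph{false}: take $C = (0, 1] \subset \R$ and $f(x) = x/2$; then $f$ is continuous with relatively compact image but has no fixed point in $C$. The convention in the cited reference, and the one under which the Fact is true as stated for a merely convex (not closed) $C$, is that $f(C)$ is contained in a compact subset $K_0$ of $C$; since $K_0$ is closed in the ambient space, this forces $\overline{f(C)} \subset K_0 \subset C$. With that definition your limit point $y^* \in \overline{f(C)}$ lies in $C$ automatically, so the final step needs no appeal to the completeness of $\varGamma_{0, \boldsymbol{0}}(T, \delta)$ in the intended application --- the general theorem already closes. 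Everything else (positivity of the denominator of $P_\ep$ on the relevant set, $C_\ep \subset C$ by convexity, the Brouwer reduction, and $\|x_\ep - f(x_\ep)\| < \ep$) is fine.
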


We refer the reader to Granas \& Dugundji~\cite{Granas--Dugundji} for fixed point theory.

We give the proofs of the succeeding lemmas and theorem in Appendix~\ref{subsec:proofs in Section 5}
to keep this paper self-contained.

\begin{proposition}[\cite{Nishiguchi 2017}]
\label{prop:compactness of transformation on prolongation space by continuity}
Let $E = \mathbb{R}^n$ and $(t_0, \phi_0) \in \dom(F)$.
Suppose that
(i) $H$ is prolongable and regulated by prolongations,
(ii) $F$ is continuous, and
(iii) $\dom(F)$ is a neighborhood by prolongations of $(t_0, \phi_0)$.
Then there exists $\delta > 0$ such that for all sufficiently small $T > 0$,
	\begin{equation*}
		\mathcal{S}_{t_0, \phi_0}^0
		\colon \varGamma_{0, \boldsymbol{0}}(T, \delta) \to \varGamma_{0, \boldsymbol{0}}(T, \delta)
	\end{equation*}
is an well-defined compact map with respect to $\rho^0$.
\end{proposition}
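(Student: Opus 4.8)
The plan is to obtain this proposition as a corollary of Proposition~\ref{prop:compactness of transformation on prolongation space} by verifying, under the present stronger hypotheses, each of the structural assumptions (i)--(iv) of that proposition together with the property that $F$ is continuous about prolongation at $(t_0,\phi_0)$. Once these are in place, the well-definedness and compactness of $\mathcal{S}^0_{t_0,\phi_0}$ follow directly, so no new compactness (Arzel\`a--Ascoli) argument is needed here; the finite-dimensionality $E = \mathbb{R}^n$ enters only through that earlier proposition.

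First I would record the two cheap implications. Since $H$ is prolongable, it is in particular closed under prolongations, giving assumption (i); and assumption (iv), that $\dom(F)$ is a neighborhood by prolongations of $(t_0,\phi_0)$, is hypothesis (iii) of the present statement verbatim. For continuity along prolongations (assumption (ii)), let $\gamma$ be any prolongation of $(t_0,\phi_0)$ with $(t,I_t\gamma)\in\dom(F)$ for all $t$; prolongability of $H$ makes $t\mapsto I_t\gamma\in H$ continuous, and composing with the continuous map $F$ shows $t\mapsto F(t,I_t\gamma)$ is continuous. For local boundedness about prolongations (assumption (iii) of the earlier proposition), I would use the continuity of $F$ to choose a neighborhood $W$ of $(t_0,\phi_0)$ in $\mathbb{R}\times H$ on which $F$ is bounded, and then invoke Theorem~\ref{thm:nbd by prolongations}(i) to see that $W$ is a neighborhood by prolongations, so that $\varLambda_{t_0,\phi_0}(T,\delta)\subset W$ for some $T,\delta>0$; hence $F$ is bounded on this rectangle.

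It remains to verify that $F$ is continuous about prolongation at $(t_0,\phi_0)$ and to fix the $\delta$ of the statement. Because $\dom(F)$ is a neighborhood by prolongations, there exist $T_0,\delta>0$ with $\varLambda_{t_0,\phi_0}(T_0,\delta)\subset\dom(F)$, so the hypotheses of Proposition~\ref{prop:continuity about prolongations} hold; that proposition yields, for each fixed $\beta_0\in\varGamma_{0,\boldsymbol{0}}(T_0,\delta)$,
\[
	\sup_{u\in[0,T_0]}\bigl\|F^0_{t_0,\phi_0}(u,I_u\beta)-F^0_{t_0,\phi_0}(u,I_u\beta_0)\bigr\|_E\to 0
\]
as $\beta\to\beta_0$. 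Bounding the integral in Definition~\ref{dfn:PC} by $T_0$ times this supremum gives $\eqref{eq:PC}_{t_0,\phi_0}$, so $F$ is continuous about prolongation at $(t_0,\phi_0)$ with this very $\delta$. With all hypotheses of Proposition~\ref{prop:compactness of transformation on prolongation space} confirmed, that proposition delivers the conclusion for all sufficiently small $T>0$.

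The step I expect to require the most care is not any single estimate but the bookkeeping of the constants: one must fix a single $\delta>0$ (the one supplied by the continuity-about-prolongation property) and then check that the separate choices of $(T,\delta)$ arising in the boundedness argument and in the inclusion $\varLambda_{t_0,\phi_0}(T_0,\delta)\subset\dom(F)$ can all be arranged to be compatible, after which shrinking $T$ preserves every inclusion by Remark~\ref{rmk:comparison of rectangle by prolongations}. Since the genuine analytic content---the compactness via equi-continuity in $E=\mathbb{R}^n$---is packaged inside Proposition~\ref{prop:compactness of transformation on prolongation space}, the present argument is essentially a matching of hypotheses.
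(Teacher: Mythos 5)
Your proposal is correct and follows essentially the same route as the paper's own proof: both verify the hypotheses of Proposition~\ref{prop:compactness of transformation on prolongation space} (closedness under prolongations, continuity along prolongations, local boundedness via Theorem~\ref{thm:nbd by prolongations}, and the condition \eqref{eq:PC} obtained by bounding the integral by $T_0$ times the supremum from Proposition~\ref{prop:continuity about prolongations}) and then invoke that proposition. The paper merely abbreviates the first two verifications by pointing to Lemmas~\ref{lem:(E1) implies (E0)} and \ref{lem:(E2) implies (E0)}, whereas you spell them out directly; the content is identical.
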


The following is a version of the Peano existence theorem for RFDEs with history space $H$.

\begin{theorem}[\cite{Nishiguchi 2017}]
\label{thm:Peano thm}
Let $E = \mathbb{R}^n$ and $(t_0, \phi_0) \in \dom(F)$.
Suppose that
(i) $H$ is prolongable and regulated by prolongations,
(ii) $F$ is continuous, and
(iii) $\dom(F)$ is a neighborhood by prolongations of $(t_0, \phi_0)$.
Then there exits $T > 0$ such that $\eqref{eq:IVP}_{t_0, \phi_0}$ has a solution $x \colon [t_0, t_0 + T] + I \to E$.
\end{theorem}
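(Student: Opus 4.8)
The plan is to obtain a solution of $\eqref{eq:IVP}_{t_0, \phi_0}$ as a fixed point of the integral operator $\mathcal{T}_{t_0, \phi_0}$ of Notation~\ref{notation:transformations for integral eq}, producing that fixed point by a compactness (Schauder) argument rather than by contraction, since no Lipschitz hypothesis is available here.

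First I would apply Proposition~\ref{prop:compactness of transformation on prolongation space by continuity}: under hypotheses (i)--(iii) there is $\delta > 0$ such that, for every sufficiently small $T > 0$, the normalized operator
\[
	\mathcal{S}_{t_0, \phi_0}^0 \colon \varGamma_{0, \boldsymbol{0}}(T, \delta) \to \varGamma_{0, \boldsymbol{0}}(T, \delta)
\]
is a well-defined compact map with respect to $\rho^0$. I would treat this as a black box; it is precisely here that the finite dimensionality $E = \mathbb{R}^n$ is indispensable (through an Arzel\`a--Ascoli type argument), and it is the genuine obstacle behind the statement.

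Next I would verify the hypotheses of the Schauder fixed point theorem (Fact~\ref{fact:Schauder}). Since $\rho^0$ is induced by $\|\cdot\|_\infty$, the set $\varGamma_{0, \boldsymbol{0}}(T, \delta)$ can be viewed as a subset of the normed linear space of bounded continuous maps $[0, T] + I \to E$ that vanish on $I$. Convexity is immediate: a convex combination of prolongations of $\boldsymbol{0}$ is again such a prolongation, and the constraint $\|\cdot\|_{C[0, T]} \le \delta$ is preserved. Fact~\ref{fact:Schauder} then yields a fixed point $\beta_* \in \varGamma_{0, \boldsymbol{0}}(T, \delta)$ of $\mathcal{S}_{t_0, \phi_0}^0$.

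Finally I would undo the normalization and bootstrap to a genuine solution. Put $\gamma_* := A_{t_0, \phi_0}^0\beta_*$. Using $\mathcal{S}_{t_0, \phi_0}^0 = N_{t_0, \phi_0}^0 \circ \mathcal{T}_{t_0, \phi_0} \circ A_{t_0, \phi_0}^0$ and that $N_{t_0, \phi_0}^0$ inverts $A_{t_0, \phi_0}^0$, the equation $\mathcal{S}_{t_0, \phi_0}^0\beta_* = \beta_*$ rewrites as $\mathcal{T}_{t_0, \phi_0}\gamma_* = \gamma_*$. Hence $\gamma_* \colon [t_0, t_0 + T] + I \to E$ is a prolongation of $(t_0, \phi_0)$ with $I_{t_0}\gamma_* = \phi_0$ and
\[
	\gamma_*(t) = \phi_0(0) + \int_{t_0}^t F(u, I_u\gamma_*) \,\mathrm{d}u \qquad (t \in [t_0, t_0 + T]);
\]
shrinking $T$ so that $\varLambda_{t_0, \phi_0}(T, \delta) \subset \dom(F)$ guarantees $(t, I_t\gamma_*) \in \dom(F)$ throughout. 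By the prolongability of $H$ the curve $t \mapsto I_t\gamma_*$ is continuous into $H$, so by the continuity of $F$ the integrand $u \mapsto F(u, I_u\gamma_*)$ is continuous; the fundamental theorem of calculus then makes $\gamma_*|_{[t_0, t_0 + T]}$ differentiable with derivative $F(t, I_t\gamma_*)$, so that $\gamma_*$ is a solution of $\eqref{eq:IVP}_{t_0, \phi_0}$. The only real difficulty, compactness of $\mathcal{S}_{t_0, \phi_0}^0$, is subsumed in the cited proposition; what remains is the routine Schauder verification and the passage from the fixed-point identity back to a differentiable solution.
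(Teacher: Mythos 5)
Your proposal is correct and follows essentially the same route as the paper: Proposition~\ref{prop:compactness of transformation on prolongation space by continuity} gives compactness of $\mathcal{S}_{t_0, \phi_0}^0$ on the convex set $\varGamma_{0, \boldsymbol{0}}(T, \delta)$, Schauder's theorem (Fact~\ref{fact:Schauder}) yields a fixed point, and conjugation by $A_{t_0, \phi_0}^0$ and $N_{t_0, \phi_0}^0$ transfers it to a fixed point of $\mathcal{T}_{t_0, \phi_0}$, i.e.\ a solution via Lemma~\ref{lem:integral eq}. The only cosmetic difference is that the paper realizes $\varGamma_{0, \boldsymbol{0}}(T, \delta)$ as a closed convex subset of the Banach space $\bigl( \varGamma_{0, \boldsymbol{0}}(T, \infty), \|\cdot\|_\infty \bigr)$, which is equivalent to your embedding into the bounded continuous maps vanishing on $I$.
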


We finally arrive the local existence result without the Lipschitz condition for $F$.

\begin{corollary}\label{cor:local existence with extension}
Let $E = \mathbb{R}^n$ and $(t_0, \phi_0) \in \dom(F)$.
Suppose that
(i) $H$ is closed under $C^1$-prolongations,
(ii) $\bigl( \bar{H}, \bar{F} \bigr)$ is an extension of $(H, F)$,
(iii) $\bar{H}$ is prolongable and regulated by prolongations,
(iv) $\bar{F}$ is continuous, and
(v) $\dom \bigl( \bar{F} \bigr)$ is a neighborhood by prolongations
	of $(t_0, \phi_0)$.
Then there exists $T > 0$ such that $\eqref{eq:IVP}_{t_0, \phi_0}$ has a solution $x \colon [t_0, t_0 + T] + I \to E$.
\end{corollary}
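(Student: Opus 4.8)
The plan is to transfer the problem to the extended system $\bigl(\bar{H}, \bar{F}\bigr)$, solve it there by the Peano-type existence theorem (which is exactly where the finite-dimensionality $E = \mathbb{R}^n$ is needed), and then pull the resulting solution back to $(H, F)$ using the closedness of $H$ under $C^1$-prolongations. This detour is forced upon us because $F$ itself need not be continuous on $H$ (cf.\ Example~\ref{exa:const delay and L^p-topology}), so Theorem~\ref{thm:Peano thm} cannot be applied to $(H, F)$ directly.

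First I would observe that $(t_0, \phi_0) \in \dom(F) = \dom\bigl(\bar{F}\bigr) \cap (\mathbb{R} \times H)$, so in particular $(t_0, \phi_0) \in \dom\bigl(\bar{F}\bigr)$. By assumptions (iii), (iv), and (v), the pair $\bigl(\bar{H}, \bar{F}\bigr)$ satisfies precisely the hypotheses of Theorem~\ref{thm:Peano thm}: $\bar{H}$ is prolongable and regulated by prolongations, $\bar{F}$ is continuous, and $\dom\bigl(\bar{F}\bigr)$ is a neighborhood by prolongations of $(t_0, \phi_0)$. Since $E = \mathbb{R}^n$, Theorem~\ref{thm:Peano thm} then yields $T > 0$ and a solution $x \colon [t_0, t_0 + T] + I \to E$ of $\eqref{eq:extended IVP}_{t_0, \phi_0}$.

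Next I would note that such a solution is, by the definition of a solution of an IVP, a prolongation of $(t_0, \phi_0)$: its domain $[t_0, t_0 + T]$ is left-closed with left end point $t_0$, it satisfies $I_{t_0}x = \phi_0$, and $x|_{[t_0, t_0 + T]}$ is continuous. Hence the hypotheses of Lemma~\ref{lem:sol of extended IVP} are met---assumption (i) gives the closedness of $H$ under $C^1$-prolongations, assumption (ii) gives that $\bigl(\bar{H}, \bar{F}\bigr)$ is an extension, and assumptions (iii)--(iv) give that $\bar{H}$ is prolongable and $\bar{F}$ is continuous. Applying the implication (b) $\Rightarrow$ (a) of that lemma, $x$ is a $C^1$-solution of $\eqref{eq:IVP}_{t_0, \phi_0}$, and in particular a solution, which is the claim.

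The only genuinely nontrivial point is the passage from the $\bar{H}$-valued problem back to the $H$-valued one: a priori the histories $I_t x$ of the solution furnished by Theorem~\ref{thm:Peano thm} lie in $\bar{H}$ and not obviously in $H$. This is exactly what Lemma~\ref{lem:sol of extended IVP} settles internally, where the prolongability of $\bar{H}$ together with the continuity of $\bar{F}$ first upgrade $x|_{[t_0, t_0 + T]}$ to class $C^1$, and then the closedness of $H$ under $C^1$-prolongations forces $(t, I_t x) \in \mathbb{R} \times H$, hence into $\dom(F)$, for every $t$. No additional estimates are required; the corollary is a direct combination of the two cited statements, and I would expect the write-up to be correspondingly short.
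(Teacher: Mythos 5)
Your proposal is correct and follows exactly the paper's own argument: apply Theorem~\ref{thm:Peano thm} to the extended pair $\bigl(\bar{H}, \bar{F}\bigr)$ to solve $\eqref{eq:extended IVP}_{t_0, \phi_0}$, then invoke Lemma~\ref{lem:sol of extended IVP} to pull the solution back to $\eqref{eq:IVP}_{t_0, \phi_0}$. The paper's write-up is indeed just two sentences; your additional verification of the hypotheses and of the prolongation property of the solution is accurate but not a different route.
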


\begin{proof}
Applying Theorem~\ref{thm:Peano thm}, $\eqref{eq:extended IVP}_{t_0, \phi_0}$ has a solution $x$.
From Lemma~\ref{lem:sol of extended IVP}, $x$ is a solution of $\eqref{eq:IVP}_{t_0, \phi_0}$.
\end{proof}

\section{Mechanisms for continuity of solution processes}\label{sec:mechanisms}

The purpose of this section is to find mechanisms for continuity of solution processes
by generalizing the following result obtained in \cite[Theorem B]{Nishiguchi 2017}.

The main results of this section are
Theorems~\ref{thm:maximal WP, C^1} and \ref{thm:maximal WP without uniform Lip, extension}.

\subsection{Maximal well-posedness with uniform Lipschitz condition}

\begin{proposition}\label{prop:continuity about C^1-prolongations}
Let $(t_0, \phi_0) \in \dom(F)$ and $v_0 \in E$.
Suppose that
(i) $H$ is $C^1$-prolongable and regulated by $C^1$-prolongations,
(ii) $F$ is continuous, and
(iii) there exist a neighborhood $W$ of $(t_0, \phi_0)$ in $\dom(F)$,
a neighborhood $V$ of $v_0$ in $E$, and $T_0, \delta > 0$ such that
	\begin{equation*}
		\bigcup_{(\sigma, \psi, v) \in W \times V} \varLambda_{\sigma, \psi}^1(T_0, \delta, v) \subset \dom(F).
	\end{equation*}
If the map
	\begin{equation*}
		\mathbb{R}_+ \times E \times H \ni (t, v, \phi) \mapsto S(t)(v, \phi) \in H
	\end{equation*}
is continuous,
then for every $(\sigma_0, \psi_0) \in W$, every $0 < T \le T_0$,
and every $\beta_0 \in \varGamma_{0, \boldsymbol{0}}^1(T, \delta, 0)$,
we have
	\begin{equation*}
		\sup_{u \in [0, T]}
		\bigl\| F \circ \tau_{\sigma, \psi}^v(u, I_u\beta) - F \circ \tau_{\sigma_0, \psi_0}^{v_0}(u, I_u\beta_0) \bigr\|_E
		\to 0
	\end{equation*}
as $(\sigma, \psi, v) \to (\sigma_0, \psi_0, v_0)$ in $W \times V$ and 
as $\rho^1(\beta, \beta_0) \to 0$ in $\varGamma_{0, \boldsymbol{0}}^1(T, \delta, 0)$.
\end{proposition}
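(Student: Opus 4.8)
The plan is to read the supremum over $u \in [0,T]$ as the distance in $\bigl( C([0,T], E), \|\cdot\|_\infty \bigr)$ between the two curves $u \mapsto F \circ \tau_{\sigma,\psi}^v(u, I_u\beta)$ and $u \mapsto F \circ \tau_{\sigma_0,\psi_0}^{v_0}(u, I_u\beta_0)$, and then to deduce the stated convergence from the joint continuity of a single curve-valued map by the compactness (``currying'') argument recorded in Appendix~\ref{sec:family of maps}. This is the $C^1$-analogue of Proposition~\ref{prop:continuity about prolongations}, now incorporating the parameter $v$.

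First I would introduce the evaluation map
\[
	G(u, \sigma, \psi, v, \beta) := \tau_{\sigma,\psi}^v(u, I_u\beta) = \bigl( \sigma + u,\ S(u)(v, \psi) + I_u\beta \bigr)
\]
on $[0, T] \times W \times V \times \varGamma_{0, \boldsymbol{0}}^1(T, \delta, 0)$, and check that it takes values in $\dom(F)$. Indeed, for $\beta \in \varGamma_{0, \boldsymbol{0}}^1(T, \delta, 0)$ and $u \in [0, T]$ the pair $(u, I_u\beta)$ lies in $\varLambda_{0, \boldsymbol{0}}^1(T, \delta, 0)$, so by Lemma~\ref{lem:translation on rectangles}~(ii) its image under $\tau_{\sigma,\psi}^v$ lies in $\varLambda_{\sigma,\psi}^1(T, \delta, v) \subset \varLambda_{\sigma,\psi}^1(T_0, \delta, v)$ (Remark~\ref{rmk:comparison of rectangle by prolongations}), which is contained in $\dom(F)$ by hypothesis~(iii).

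The key step is to show that $G$ is jointly continuous. The first coordinate $\sigma + u$ is trivial. For the second, I would split $S(u)(v, \psi) + I_u\beta$: the term $S(u)(v, \psi)$ is continuous in $(u, v, \psi)$ precisely by the assumed continuity of the parametrized semiflow $(t, v, \phi) \mapsto S(t)(v, \phi)$. For the term $I_u\beta$, writing $I_u\beta - I_{u_0}\beta_0 = I_u(\beta - \beta_0) + \bigl( I_u\beta_0 - I_{u_0}\beta_0 \bigr)$, the second summand tends to $\boldsymbol{0}$ in $H$ by the $C^1$-prolongability of $H$ (continuity of $u \mapsto I_u\beta_0$), while the first is the history of $\beta - \beta_0$, which is a $C^1$-prolongation of $\boldsymbol{0}$ with $0$-derivative whose $C^1[0, T]$-norm equals $\rho^1(\beta, \beta_0)$; hence $\bigl( u, I_u(\beta - \beta_0) \bigr) \in \varLambda_{0, \boldsymbol{0}}^1(T, \rho^1(\beta, \beta_0), 0)$. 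By the regulation of $H$ by $C^1$-prolongations, given any neighborhood $N$ of $\boldsymbol{0}$ there is $\delta' > 0$ with $\varLambda_{0, \boldsymbol{0}}^1(T, \delta', 0) \subset [0, T] \times N$, so once $\rho^1(\beta, \beta_0) \le \delta'$ we get $I_u(\beta - \beta_0) \in N$ \emph{uniformly} in $u \in [0, T]$. This yields continuity of $G$ at every point.

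Finally, $F \circ G$ is continuous since $F$ is continuous and $G$ maps into $\dom(F)$. Because $[0, T]$ is compact, currying $F \circ G$ in its $[0, T]$-argument produces a continuous map from $W \times V \times \varGamma_{0, \boldsymbol{0}}^1(T, \delta, 0)$ into $\bigl( C([0, T], E), \|\cdot\|_\infty \bigr)$; this is the continuity/equi-continuity transfer of Appendix~\ref{sec:family of maps}, proved by covering $[0, T]$ by finitely many neighborhoods on which $F \circ G$ oscillates by less than $\ep/2$. Continuity of this curried map at $(\sigma_0, \psi_0, v_0, \beta_0)$ is exactly the claimed convergence. I expect the main obstacle to be the joint continuity of $I_u\beta$ in $(u, \beta)$: pointwise-in-$u$ continuity is immediate from $C^1$-prolongability, but upgrading it to control of $I_u(\beta - \beta_0)$ \emph{uniformly} in $u$ is what forces the use of regulation by $C^1$-prolongations, the subtlety being that $\beta - \beta_0$ is handled through the rectangle formalism (its histories have support in $[-u, 0]$) rather than through any a priori compact-support assumption.
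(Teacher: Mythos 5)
Your proof is correct and rests on the same ingredients as the paper's own argument: the decomposition $\tau_{\sigma,\psi}^v(u, I_u\beta) - \tau_{\sigma_0,\psi_0}^{v_0}(u, I_u\beta_0) = \bigl( \sigma - \sigma_0,\ I_u[\beta - \beta_0] + S(u)(v - v_0, \psi - \psi_0) \bigr)$, the regulation by $C^1$-prolongations to make $I_u[\beta - \beta_0]$ small uniformly in $u \in [0, T]$ via $\varLambda_{0, \boldsymbol{0}}^1(T, \rho^1(\beta, \beta_0), 0)$, and the continuity of the parametrized semiflow $S$. The only difference is organizational: you package the compactness of $[0, T]$ into the currying/equi-continuity transfer of Theorem~\ref{thm:equi-continuity from continuity} applied to $F \circ G$, whereas the paper runs the same compactness argument directly, through uniform continuity of $F$ near the compact orbit $K = \{\mspace{2mu} \tau_{\sigma_0, \psi_0}^{v_0}(u, I_u\beta_0) : u \in [0, T] \mspace{2mu}\}$.
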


\begin{proof}
Fix $(\sigma_0, \psi_0) \in W$, $0 < T \le T_0$, and $\beta_0 \in \varGamma_{0, \boldsymbol{0}}^1(T, \delta, 0)$.
By the $C^1$-prolongability of $H$,
	\begin{equation*}
		[0, T] \ni u \mapsto \tau_{\sigma_0, \psi_0}^{v_0}(u, I_u\beta_0) \in \mathbb{R} \times H
	\end{equation*}
is continuous.
Therefore,
	\begin{equation*}
		K
		:= \left\{ \mspace{2mu} \tau_{\sigma_0, \psi_0}^{v_0}(u, I_u\beta_0) : u \in [0, T] \mspace{2mu} \right\}
	\end{equation*}
is a compact set of $\mathbb{R} \times H$.

Let $\ep > 0$.
By the continuity of $F$,
there are $a > 0$ and a neighborhood $N$ of $\boldsymbol{0}$ in $H$ such that
for all $(t_1, \phi_1) \in \dom(F)$ and all $(t_2, \phi_2) \in K$,
	\begin{equation*}
		|t_1 - t_2| < a \mspace{10mu} \text{and} \mspace{10mu} \phi_1 - \phi_2 \in N
			\imply
		\|F(t_1, \phi_1) - F(t_2, \phi_2)\|_E \le \ep.
	\end{equation*}
We choose a neighborhood $N'$ of $\boldsymbol{0}$ in $H$ so that $N' + N' \subset N$.
We also choose $r > 0$ so that
	\begin{equation*}
		\varLambda_{0, \boldsymbol{0}}^1(T, r, 0) \subset N'
	\end{equation*}
since $H$ is regulated by $C^1$-prolongations.
For all $(\sigma, \psi, v, \beta) \in \mathbb{R} \times H \times E \times \varGamma_{0, \boldsymbol{0}}^1(T, \delta, 0)$
and all $u \in [0, T]$,
	\begin{align*}
		\tau_{\sigma, \psi}^v(u, I_u\beta) - \tau_{\sigma_0, \psi_0}^{v_0}(u, I_u\beta_0)
		&= \Bigl( \sigma - \sigma_0, I_u[\beta - \beta_0] + I_u[\psi^{\wedge v} - \psi_0^{\wedge v_0}] \Bigr) \\
		&= \bigl( \sigma - \sigma_0, I_u[\beta - \beta_0] + S(u)(v - v_0, \psi - \psi_0) \bigr).
	\end{align*}
Therefore, there are a neighborhood $W'$ of $(\sigma_0, \psi_0)$ in $W$,
a neighborhood $V'$ of $v_0$ in $V$ such that
for all $(\sigma, \psi) \in W'$, all $v \in V'$,
all $\beta \in \varGamma_{0, \boldsymbol{0}}^1(T, \delta, 0)$ satisfying $\rho^1(\beta, \beta_0) \le r$,
and all $u \in [0, T]$,
	\begin{align*}
		\tau_{\sigma, \psi}^v(u, I_u\beta) - \tau_{\sigma_0, \psi_0}^{v_0}(u, I_u\beta_0)
		&\in (-a, a) \times (N' + N') \\
		&\subset (-a, a) \times N.
	\end{align*}
This shows the conclusion.
\end{proof}

\begin{theorem}\label{thm:uniform contraction on C^1-prolongations}
Let $(t_0, \phi_0) \in \dom(F)$.
Suppose that
(i) $H$ is $C^1$-prolongable and regulated by $C^1$-prolongations,
(ii) $F$ is continuous, and
(iii) $\dom(F)$ is a uniform neighborhood by $C^1$-prolongations of $(t_0, \phi_0)$.
If $F$ is uniformly locally Lipschitzian about $C^1$-prolongations at $(t_0, \phi_0)$, and if
	\begin{equation*}
		\mathbb{R}_+ \times E \times H \ni (t, v, \phi) \mapsto S(t)(v, \phi) \in H
	\end{equation*}
is continuous,
then there exist a neighborhood $W$ of $(t_0, \phi_0)$ in $\dom(F)$ and $\delta > 0$ such that
the following statements hold for all sufficiently small $T > 0$:
\begin{enumerate}
\item For every $(\sigma, \psi) \in W$,
	\begin{equation*}
		\mathcal{S}^1_{\sigma, \psi} \colon
		\varGamma_{0, \boldsymbol{0}}^1(T, \delta, 0) \to \varGamma_{0, \boldsymbol{0}}^1(T, \delta, 0)
	\end{equation*}
is well-defined.
\item $\bigl( \mathcal{S}^1_{\sigma, \psi} \bigr)_{(\sigma, \psi) \in W}$ is a uniform contraction
on $\varGamma_{0, \boldsymbol{0}}^1(T, \delta, 0)$ with respect to $\rho^0$ and $\rho^1$.
\item The map
	\begin{equation*}
		W \times \varGamma_{0, \boldsymbol{0}}^1(T, \delta, 0) \ni (\sigma, \psi, \beta) \mapsto
		\mathcal{S}^1_{\sigma, \psi}\beta \in \varGamma_{0, \boldsymbol{0}}^1(T, \delta, 0)
	\end{equation*}
is continuous with respect to $\rho^0$ and $\rho^1$.
\end{enumerate}
\end{theorem}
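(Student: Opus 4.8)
The plan is to upgrade the single base point argument of Lemma~\ref{lem:contraction on C^1-prolongation space} to a family indexed by $(\sigma, \psi) \in W$, exploiting that the uniform local Lipschitz condition supplies one common constant $L$ (with $T$ and $\delta$) valid on every $\varLambda_{\sigma, \psi}^1(T, \delta; F)$ with $(\sigma, \psi)$ near $(t_0, \phi_0)$. First I would fix all constants once and for all. From the uniform local Lipschitz about $C^1$-prolongations I get a neighborhood $W_1$ and $\delta, L > 0$; from the continuity of $F$ I get a neighborhood $W'$ of $(t_0, \phi_0)$ and a bound $M$ with $\|F\|_E \le M$ on $W' \cap \dom(F)$. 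To force the rectangles into $W'$ (where $F$ is simultaneously bounded and Lipschitz), I apply Theorem~\ref{thm:nbd by C^1-prolongations}(ii) with $v_0 = F(t_0, \phi_0)$: since the parametrized semiflow is continuous, $W'$ is a uniform neighborhood by $C^1$-prolongations, so there are $T_0, \delta > 0$, a neighborhood $W_2 \ni (t_0, \phi_0)$ and a neighborhood $V_0 \ni v_0$ with $\bigcup_{(\sigma, \psi, v) \in W_2 \times V_0} \varLambda_{\sigma, \psi}^1(T_0, \delta, v) \subset W'$. Shrinking so that $F(W) \subset V_0$ and intersecting neighborhoods yields a single $W$ and $\delta$ for which $\varLambda_{\sigma, \psi}^1(T_0, \delta; F) \subset W' \subset \dom(F)$, with $\|F\|_E \le M$ and $L$-\eqref{eq:Lipschitzian} holding there, uniformly over $(\sigma, \psi) \in W$.

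For statement~1 I would copy the two estimates of Lemma~\ref{lem:contraction on C^1-prolongation space}, now uniform in $(\sigma, \psi) \in W$. Using $\mathcal{S}_{\sigma, \psi}^1\beta(s) = \int_0^s [F \circ \tau_{\sigma, \psi}^v(u, I_u\beta) - v]\,\mathrm{d}u$ with $v = F(\sigma, \psi)$, the sup norm is bounded by $2MT$, while the derivative $(\mathcal{S}_{\sigma, \psi}^1\beta)'(s) = F \circ \tau_{\sigma, \psi}^v(s, I_s\beta) - v$ splits into a Lipschitz term $\le L\|I_s\beta\|_\infty \le LT\delta$ (since $\|I_s\beta\|_\infty \le s\delta$ for $\beta \in \varGamma_{0, \boldsymbol{0}}^1(T, \delta, 0)$) and a drift term $\|F \circ \tau_{\sigma, \psi}^v(s, I_s\bar{\boldsymbol{0}}) - v\|_E = \|F(\sigma + s, I_s\psi^{\wedge v}) - F(\sigma, \psi)\|_E$. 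The only genuinely new point is making this drift term small uniformly in $(\sigma, \psi) \in W$. I would observe that $\Theta(\sigma, \psi, s) := F(\sigma + s, S(s)(F(\sigma, \psi), \psi))$ is continuous on a neighborhood in $(\mathbb{R} \times H) \times \mathbb{R}_+$ by the continuity of $F$ and of $(t, v, \phi) \mapsto S(t)(v, \phi)$, and that $\Theta(\sigma, \psi, s) - F(\sigma, \psi)$ vanishes at $(t_0, \phi_0, 0)$; continuity at that single point already yields a product neighborhood $W_3 \times [0, T_*)$ on which $\|\Theta(\sigma, \psi, s) - F(\sigma, \psi)\|_E \le \delta/4$, so no compactness of $W$ is required. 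Shrinking $W \subset W_3$ and taking $T \le \min\{T_0, T_*, \delta/(4M), 1/(4L)\}$, the two estimates give $\|\mathcal{S}_{\sigma, \psi}^1\beta\|_{C^1[0, T]} \le (\delta/2) + (\delta/2) = \delta$; since the integrand is continuous by the $C^1$-prolongability of $H$ and the continuity of $F$, and $I_0[\mathcal{S}_{\sigma, \psi}^1\beta] = \boldsymbol{0}$, $(\mathcal{S}_{\sigma, \psi}^1\beta)'(0) = F(\sigma, \psi) - v = 0$, this places $\mathcal{S}_{\sigma, \psi}^1\beta$ in $\varGamma_{0, \boldsymbol{0}}^1(T, \delta, 0)$.

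Statement~2 is then immediate and inherits uniformity automatically: for $\beta_1, \beta_2 \in \varGamma_{0, \boldsymbol{0}}^1(T, \delta, 0)$ and any $(\sigma, \psi) \in W$, the single common constant $L$ gives $\rho^0(\mathcal{S}_{\sigma, \psi}^1\beta_1, \mathcal{S}_{\sigma, \psi}^1\beta_2) \le LT\,\rho^0(\beta_1, \beta_2)$ and $\rho^1(\mathcal{S}_{\sigma, \psi}^1\beta_1, \mathcal{S}_{\sigma, \psi}^1\beta_2) \le 2LT\,\rho^1(\beta_1, \beta_2)$ exactly as in Lemma~\ref{lem:contraction on C^1-prolongation space}, and $LT \le 1/4$ makes the contraction factors $1/4$ and $1/2$ independent of $(\sigma, \psi)$. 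For statement~3 I would invoke Proposition~\ref{prop:continuity about C^1-prolongations} directly: writing $v = F(\sigma, \psi)$ and $v_0 = F(\sigma_0, \psi_0)$ and subtracting the integral representations, both $\|\mathcal{S}_{\sigma, \psi}^1\beta - \mathcal{S}_{\sigma_0, \psi_0}^1\beta_0\|_{C[0, T]}$ and the corresponding derivative difference are controlled by $\sup_{u \in [0, T]} \|F \circ \tau_{\sigma, \psi}^v(u, I_u\beta) - F \circ \tau_{\sigma_0, \psi_0}^{v_0}(u, I_u\beta_0)\|_E$ plus a term of size $\|v - v_0\|_E$; since $v \to v_0$ by continuity of $F$, Proposition~\ref{prop:continuity about C^1-prolongations} forces both to tend to $0$ as $(\sigma, \psi) \to (\sigma_0, \psi_0)$ and $\rho^1(\beta, \beta_0) \to 0$, giving joint continuity in $\rho^0$ and $\rho^1$. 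I expect the main obstacle to be the uniform drift estimate in statement~1 — ruling out any loss of uniformity in $(\sigma, \psi)$ without compactness of $W$ — which is precisely where the continuity of the parametrized semiflow $(t, v, \phi) \mapsto S(t)(v, \phi)$, rather than merely of $S_0$, enters, both here and, through Proposition~\ref{prop:continuity about C^1-prolongations}, in statement~3.
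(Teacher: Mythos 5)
Your proposal is correct and follows essentially the same route as the paper: uniform constants $M, L, T, \delta$ are extracted from the continuity of $F$, Theorem~\ref{thm:nbd by C^1-prolongations}\,(ii), and the uniform Lipschitz condition; well-definedness comes from the $2MT$ bound plus a drift estimate; the uniform contraction follows from the single common $L$; and statement~3 rests on Proposition~\ref{prop:continuity about C^1-prolongations}. The only cosmetic differences are that you make the drift term $\bigl\| F_{\sigma, \psi}^1(s, I_s\bar{\boldsymbol{0}}) - F(\sigma, \psi) \bigr\|_E$ uniformly small by a direct continuity-at-a-point argument for $(\sigma, \psi, s) \mapsto \Theta(\sigma, \psi, s) - F(\sigma, \psi)$ instead of the paper's three-term split through the base point $(t_0, \phi_0)$, and that you prove statement~3 jointly in $(\sigma, \psi, \beta)$ rather than reducing to continuity in $(\sigma, \psi)$ for fixed $\beta$ via Lemma~\ref{lem:continuity of uniform contraction}; both variants are valid and rely on the same underlying facts.
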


\begin{proof}
By the continuity of $F$,
there are a neighborhood $W_0$ of $(t_0, \phi_0)$ in $\mathbb{R} \times H$ and $M > 0$ such that
	\begin{equation*}
		\sup_{(t, \phi) \in W_0 \cap \dom(F)} \|F(t, \phi)\|_E \le M.
	\end{equation*}
From Theorem~\ref{thm:nbd by C^1-prolongations},
$W_0$ is a uniform neighborhood by $C^1$-prolongations of $(t_0, \phi_0)$.
Therefore, $W_0 \cap \dom(F)$ is also a uniform neighborhood by $C^1$-prolongations of $(t_0, \phi_0)$.
By combining this property and the Lipschitz condition,
there are a neighborhood $W_1$ of $(t_0, \phi_0)$ in $W_0$ and $T_1, \delta, L > 0$ such that
\begin{itemize}
\item $\bigcup_{(\sigma, \psi) \in W_1} \varLambda_{\sigma, \psi}^1(T_1, \delta; F) \subset W_0 \cap \dom(F)$,
\item $L$-\eqref{eq:Lipschitzian} holds
for all $(\sigma, \psi) \in W_1$ and all $(t, \phi_1), (t, \phi_2) \in \varLambda_{\sigma, \psi}^1(T_1, \delta; F)$.
\end{itemize}
We note that the continuity of $F$ at $(t_0, \phi_0)$ is also used here.

By the assumptions (i) and (ii), $F$ is continuous along $C^1$-prolongations.
By combining this and Proposition~\ref{prop:continuity about C^1-prolongations},
there are a neighborhood $W_2$ of $(t_0, \phi_0)$ in $\dom(F)$ and $T_2 > 0$ with the following properties:
\begin{itemize}
\item $\sup_{u \in [0, T_2]} \bigl\| F_{t_0, \phi_0}^1(u, I_u\bar{\boldsymbol{0}}) - F(t_0, \phi_0) \bigr\|_E \le \delta/8$,
\item For all $(\sigma, \psi) \in W_2$,
	\begin{equation*}
		\sup_{u \in [0, T_2]}
		\bigl\| F_{\sigma, \psi}^1(u, I_u\bar{\boldsymbol{0}}) - F_{t_0, \phi_0}^1(u, I_u\bar{\boldsymbol{0}}) \bigr\|_E
		\le \delta/8.
	\end{equation*}
\end{itemize}
Let
	\begin{equation*}
		0 < T \le \min\{T_1, T_2, \delta/4M, 1/8L\}
			\mspace{10mu} \text{and} \mspace{10mu}
		W := W_1 \cap W_2.
	\end{equation*}

1. Fix $(\sigma, \psi) \in W$.
Let $\beta \in \varGamma_{0, \boldsymbol{0}}^1(T, \delta, 0)$.
We now check $\mathcal{S}_{\sigma, \psi}^1\beta \in \varGamma_{0, \boldsymbol{0}}^1(T, \delta, 0)$.
By the assumptions (i) and (ii),
$\mathcal{S}_{\sigma, \psi}^1\beta \colon [0, T] + I \to E$ is a $C^1$-prolongation of $\boldsymbol{0}$
satisfying $(\mathcal{S}_{\sigma, \psi}^1\beta)'(0) = 0$.
For all $s \in [0, T]$, we have
	\begin{align*}
		\bigl\| \mathcal{S}_{\sigma, \psi}^1\beta(s) \bigr\|_E
		&\le \int_0^s
				\bigl\| F_{\sigma, \psi}^1(u, I_u\beta) - F(\sigma, \psi)] \bigr\|_E
			\mspace{2mu} \mathrm{d}u \\
		&\le \int_0^T
				\left[ \bigl\| F_{\sigma, \psi}^1(u, I_u\beta) \bigr\|_E + \|F(\sigma, \psi)\|_E \right]
			\mspace{2mu} \mathrm{d}u  \\
		&\le 2MT
	\end{align*}
and
	\begin{align*}
		\bigl\| (\mathcal{S}_{\sigma, \psi}^1\beta)'(s) \bigr\|_E
		&= \bigl\| F_{\sigma, \psi}^1(s, I_s\beta) - F(\sigma, \psi) \bigr\|_E \\
		&\le \bigl\| F_{\sigma, \psi}^1(s, I_s\beta) - F_{\sigma, \psi}^1(s, I_s\bar{\boldsymbol{0}}) \bigr\|_E
			+ \bigl\| F_{\sigma, \psi}^1(s, I_s\bar{\boldsymbol{0}}) - F_{t_0, \phi_0}^1(s, I_s\bar{\boldsymbol{0}}) \bigr\|_E \\
			&\mspace{60mu} + \bigl\| F_{t_0, \phi_0}^1(s, I_s\bar{\boldsymbol{0}}) - F(t_0, \phi_0) \bigr\|_E
				+ \|F(t_0, \phi_0) - F(\sigma, \psi)\|_E \\
		&\le L \cdot \|I_s\beta\|_\infty + (\delta/8) + (\delta/8) + (\delta/8) \\
		&\le \{LT + (3/8)\} \delta.
	\end{align*}
Therefore,
	\begin{align*}
		\bigl\| \mathcal{S}_{\sigma, \psi}^1\beta \bigr\|_{C^1[0, T]}
		&\le 2MT + \{LT + (3/8)\} \delta \\
		&\le (\delta/2) + (\delta/2),
	\end{align*}
which shows
$\mathcal{S}_{\sigma, \psi}^1\beta \in \varGamma_{0, \boldsymbol{0}}^1(T, \delta, 0)$.

2. We omit the proof because the proof is obtained by replacing $(t_0, \phi_0)$ to $(\sigma, \psi) \in W$
in Step 2 of the proof of Lemma~\ref{lem:contraction on C^1-prolongation space}.
This is possible by the assumption of the uniformly local Lipschitz about $C^1$-prolongations.

3. From the statement 2 of this theorem and from Lemma~\ref{lem:continuity of uniform contraction},
it is enough to show the continuity of
	\begin{equation*}
		W \ni (\sigma, \psi) \mapsto \mathcal{S}_{\sigma, \psi}^1\beta \in \varGamma_{0, \boldsymbol{0}}^1(T, \delta, 0)
	\end{equation*}
at each $(\sigma_0, \psi_0) \in W$ for each fixed $\beta \in \varGamma_{0, \boldsymbol{0}}^1(T, \delta, 0)$
with respect to $\rho^0$ and $\rho^1$.
For all $(\sigma, \psi) \in W$ and all $s \in [0, T]$, we have
	\begin{align*}
		\bigl\| \mathcal{S}_{\sigma, \psi}^1\beta(s) - \mathcal{S}_{\sigma_0, \psi_0}^1\beta(s) \bigr\|_E
		&\le \int_0^T
				\bigl\| F_{\sigma, \psi}^1(u, I_u\beta) - F_{\sigma_0, \psi_0}^1(u, I_u\beta) \bigr\|_E
			\mspace{2mu} \mathrm{d}u \\
		&\mspace{60mu} + \int_0^T \|F(\sigma, \psi) - F(\sigma_0, \psi_0)\|_E \mspace{2mu} \mathrm{d}u \\
		&\le 2T \cdot \sup_{u \in [0, T]} \bigl\| F_{\sigma, \psi}^1(u, I_u\beta) - F_{\sigma_0, \psi_0}^1(u, I_u\beta) \bigr\|_E
	\end{align*}
and
	\begin{align*}
		\bigl\| (\mathcal{S}_{\sigma, \psi}^1\beta)'(s) - (\mathcal{S}_{\sigma_0, \psi_0}^1\beta)'(s) \bigr\|_E
		&\le \sup_{s \in [0, T]} \bigl\| F_{\sigma, \psi}^1(u, I_u\beta) - F_{\sigma_0, \psi_0}^1(u, I_u\beta) \bigr\|_E \\
		&\mspace{60mu} + \|F(\sigma, \psi) - F(\sigma_0, \psi_0)\|_E \\
		&\le 2 \cdot \sup_{u \in [0, T]} \bigl\| F_{\sigma, \psi}^1(u, I_u\beta) - F_{\sigma_0, \psi_0}^1(u, I_u\beta) \bigr\|_E.
	\end{align*}
Therefore, the convergence
	\begin{align*}
		\rho^0 \bigl( \mathcal{S}_{\sigma, \psi}^1\beta, \mathcal{S}_{\sigma_0, \psi_0}^1\beta \bigr)
		&\le \rho^1 \bigl( \mathcal{S}_{\sigma, \psi}^1\beta, \mathcal{S}_{\sigma_0, \psi_0}^1\beta \bigr) \\
		&= 2(T + 1) \cdot
			\sup_{u \in [0, T]} \bigl\| F_{\sigma, \psi}^1(u, I_u\beta) - F_{\sigma_0, \psi_0}^1(u, I_u\beta) \bigr\|_E \\
		&\to 0
	\end{align*}
as $(\sigma, \psi) \to (\sigma_0, \psi_0)$ follows
from Proposition~\ref{prop:continuity about C^1-prolongations}.

This completes the proof.
\end{proof}

Theorem~\ref{thm:maximal WP, C^1} stated in Introduction is the result about the maximal well-posedness.

\begin{proof}[\textbf{Proof of Theorem~\ref{thm:maximal WP, C^1}}]
The local existence and local uniqueness for $C^1$-solutions of IVP~\eqref{eq:IVP}
follows by Corollaries~\ref{cor:local existence with Lip, (E1)} and \ref{cor:local uniqueness (E1)}.
From Proposition~\ref{prop:maximal C^1-solutions and solution process}, the following statements hold :
\begin{itemize}
\item For each $(t_0, \phi_0) \in \dom(F)$, $\eqref{eq:IVP}_{t_0, \phi_0}$ has the unique maximal $C^1$-solution
	\begin{equation*}
		x_F(\cdot; t_0, \phi_0) \colon [t_0, t_0 + T_F(t_0, \phi_0)) + I \to E,
	\end{equation*}
where $0 < T_F(t_0, \phi_0) \le \infty$.
\item The solution process $\mathcal{P}_F$ defined by \eqref{eq:sol process}
given in Subsection~\ref{subsec:maximal well-posedness} is a maximal process in $\dom(F)$.
\end{itemize}

We now show that $\mathcal{P}_F$ is a continuous maximal process in $\dom(F)$.
For this purpose, we use Corollary~\ref{cor:continuity of maximal processes}
and Theorems~\ref{thm:equi-continuity from continuity} and \ref{thm:continuity from local equi-continuity}.

\begin{flushleft}
\textbf{Step 1. Continuity of orbits}
\end{flushleft}

This follows by the $C^1$-prolongability of $H$.

\begin{flushleft}
\textbf{Step 2. Lower semi-continuity of escape time function}
\end{flushleft}

Fix $(t_0, \phi_0) \in \dom(F)$.
Applying Theorem~\ref{thm:uniform contraction on C^1-prolongations},
we choose a neighborhood $W$ of $(t_0, \phi_0)$ in $\dom(F)$ and $T, \delta > 0$ so that
$\bigl( \mathcal{S}^1_{\sigma, \psi} \bigr)_{(\sigma, \psi) \in W}$ is an well-defined uniform contraction
on $\varGamma_{0, \boldsymbol{0}}^1(T, \delta, 0)$.
Therefore, $\mathcal{S}^1_{\sigma, \psi}$ has the unique fixed point
	\begin{equation*}
		\eta(\cdot; \sigma, \psi) \in \varGamma_{0, \boldsymbol{0}}^1(T, \delta, 0)
	\end{equation*}
for each $(\sigma, \psi) \in W$.
Let $(\sigma, \psi) \in W$.
Then
	\begin{equation*}
		\xi(\cdot; \sigma, \psi)
		:= A_{\sigma, \psi}^{F(\sigma, \psi)}[\eta(\cdot; \sigma, \psi)]
		\in \varGamma_{\sigma, \psi}^1(T, \delta; F)
	\end{equation*}
is the unique fixed point of
	$\mathcal{T}_{\sigma, \psi} \colon
	\varGamma_{\sigma, \psi}^1(T, \delta; F) \to \varGamma_{\sigma, \psi}^1(T, \delta; F)$,
i.e., a $C^1$-solution of $\eqref{eq:IVP}_{\sigma, \psi}$ from Lemma~\ref{lem:integral eq}.
By the maximality of $x_F(\cdot; \sigma, \psi)$, we have $T < T_F(\sigma, \psi)$.
Since this holds for every $(\sigma, \psi) \in W$,
	\begin{equation*}
		[0, T] \times W \subset \dom(\mathcal{P}_F)
	\end{equation*}
is derived.

\begin{flushleft}
\textbf{Step 3. Equi-continuity}
\end{flushleft}

Fix $(t_0, \phi_0) \in \dom(F)$.
We choose $W, T, \delta$ in Step 2.
For every $(\tau, \sigma, \psi) \in [0, T] \times W$, we have
	\begin{align*}
		\mathcal{P}_F(\tau, \sigma, \psi)
		&= I_{\sigma + \tau}[x_F(\cdot; \sigma, \psi)] \\
		&= I_{\sigma + \tau}[\xi(\cdot; \sigma, \psi)] \\
		&= I_\tau[\eta(\cdot; \sigma, \psi)] + I_\tau\psi^{\wedge F(\sigma, \psi)}.
	\end{align*}
This shows that for each fixed $(\sigma_0, \psi_0) \in W$, we have
	\begin{align*}
		&\mathcal{P}_F(\tau, \sigma, \psi) - \mathcal{P}_F(\tau, \sigma_0, \psi_0) \\
		&= I_\tau[\eta(\cdot; \sigma, \psi) - \eta(\cdot; \sigma_0, \psi_0)]
			+ S(\tau) \bigl( F(\sigma, \psi) - F(\sigma_0, \psi_0), \psi - \psi_0 \bigr).
	\end{align*}
Then the equi-continuity of $(\mathcal{P}_F(\tau, \cdot)|_W)_{\tau \in [0, T]}$ follows by the following reasons:
\begin{itemize}
\item Applying Theorem~\ref{thm:uniform contraction thm} (uniform contraction theorem)
in view of Theorem~\ref{thm:uniform contraction on C^1-prolongations},
we have the convergence
	\begin{equation*}
		\rho^1 \bigl( \eta(\cdot; \sigma, \psi), \eta(\cdot; \sigma_0, \psi_0) \bigr) \to 0
		\mspace{20mu}
		\text{as $(\sigma, \psi) \to (\sigma_0, \psi_0)$}.
	\end{equation*}
This implies the uniform convergence of the first term since $H$ is regulated by $C^1$-prolongations.
\item The uniform convergence of the second term follows by the continuity of $F$ and by the continuity of
$\mathbb{R}_+ \times E \times H \ni (t, v, \phi) \mapsto S(t)(v, \phi) \in H$.
\end{itemize}
This completes the proof.
\end{proof}

\subsection{Maximal well-posedness without uniform Lipschitz condition}

\begin{proposition}\label{prop:continuity about prolongations, uniform}
Let $(t_0, \phi_0) \in \dom(F)$.
Suppose that
(i) $H$ is prolongable and regulated by prolongations,
(ii) $F$ is continuous, and
(iii) there exist a neighborhood $W$ of $(t_0, \phi_0)$ in $\dom(F)$ and $T_0, \delta > 0$ such that
	\begin{equation*}
		\bigcup_{(\sigma, \psi) \in W} \varLambda_{\sigma, \psi}(T_0, \delta) \subset \dom(F).
	\end{equation*}
If the semifow
	\begin{equation*}
		\mathbb{R}_+ \times H \ni (t, \phi) \mapsto S_0(t)\phi \in H
	\end{equation*}
is continuous,
then for every $(\sigma_0, \psi_0) \in W$, every $0 < T \le T_0$,
and every $\beta_0 \in \varGamma_{0, \boldsymbol{0}}(T, \delta)$,
we have
	\begin{equation*}
		\sup_{u \in [0, T]}
		\bigl\| F_{\sigma, \psi}^0(u, I_u\beta) - F_{\sigma_0, \psi_0}^0(u, I_u\beta_0) \bigr\|_E
		\to 0
	\end{equation*}
as $(\sigma, \psi, \beta) \to (\sigma_0, \psi_0, \beta_0)$ in $W \times \varGamma_{0, \boldsymbol{0}}(T, \delta)$. 
\end{proposition}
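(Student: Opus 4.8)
The plan is to follow the proof of Proposition~\ref{prop:continuity about C^1-prolongations} almost verbatim, specialized to the parameter value $v = v_0 = 0$. Under this specialization the transformation $\tau_{\sigma, \psi}^v$ becomes $\tau_{\sigma, \psi}^0$, the space $\varGamma_{0, \boldsymbol{0}}^1(T, \delta, 0)$ is replaced by $\varGamma_{0, \boldsymbol{0}}(T, \delta)$, the hypotheses of $C^1$-prolongability and regulation by $C^1$-prolongations are replaced by their ordinary counterparts, and the continuity hypothesis on the parametrized semiflow $S(t)(v, \phi)$ is weakened to the continuity of $S_0$ alone. The last point is exactly why the prolongation version needs less: since $v$ no longer varies, only $S_0$ enters.

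First I would fix $(\sigma_0, \psi_0) \in W$, $0 < T \le T_0$, and $\beta_0 \in \varGamma_{0, \boldsymbol{0}}(T, \delta)$. Since $A_{\sigma_0, \psi_0}^0\beta_0$ is a prolongation of $(\sigma_0, \psi_0)$ and $H$ is prolongable, the curve $[0, T] \ni u \mapsto \tau_{\sigma_0, \psi_0}^0(u, I_u\beta_0) \in \mathbb{R} \times H$ is continuous, so its image $K$ is compact. Using the continuity of $F$ together with the compactness of $K$ and the linear-topological structure of $\mathbb{R} \times H$, I obtain, for a prescribed $\ep > 0$, a number $a > 0$ and a neighborhood $N$ of $\boldsymbol{0}$ in $H$ such that $|t_1 - t_2| < a$ and $\phi_1 - \phi_2 \in N$ force $\|F(t_1, \phi_1) - F(t_2, \phi_2)\|_E \le \ep$ whenever $(t_1, \phi_1) \in \dom(F)$ and $(t_2, \phi_2) \in K$.

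Next I would choose a neighborhood $N'$ of $\boldsymbol{0}$ with $N' + N' \subset N$ and split the two error terms in the identity
\[
	\tau_{\sigma, \psi}^0(u, I_u\beta) - \tau_{\sigma_0, \psi_0}^0(u, I_u\beta_0)
	= \bigl( \sigma - \sigma_0,\; I_u[\beta - \beta_0] + S_0(u)(\psi - \psi_0) \bigr),
\]
valid for $u \in [0, T]$ and using the linearity of $S(u)$. For the first term, the regulation of $H$ by prolongations supplies $r > 0$ with $\varLambda_{0, \boldsymbol{0}}(T, r) \subset [0, T] \times N'$; then $\rho^0(\beta, \beta_0) \le r$ gives $\beta - \beta_0 \in \varGamma_{0, \boldsymbol{0}}(T, r)$ and hence $I_u[\beta - \beta_0] \in N'$ for every $u \in [0, T]$. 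For the second term, a tube argument over the compact interval $[0, T]$ applied to the continuous semiflow $S_0$ (using $S_0(u)\boldsymbol{0} = \boldsymbol{0}$) produces a neighborhood $M'$ of $\boldsymbol{0}$ with $S_0(u)M' \subset N'$ for all $u \in [0, T]$.

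Finally, shrinking $W$ to a neighborhood $W'$ of $(\sigma_0, \psi_0)$ so that $(\sigma, \psi) \in W'$ forces $|\sigma - \sigma_0| < a$ and $\psi - \psi_0 \in M'$, the displayed difference lands in $(-a, a) \times (N' + N') \subset (-a, a) \times N$. Since hypothesis (iii) guarantees $\tau_{\sigma, \psi}^0(u, I_u\beta) \in \varLambda_{\sigma, \psi}(T_0, \delta) \subset \dom(F)$ while $\tau_{\sigma_0, \psi_0}^0(u, I_u\beta_0) \in K$, the uniform modulus yields $\|F_{\sigma, \psi}^0(u, I_u\beta) - F_{\sigma_0, \psi_0}^0(u, I_u\beta_0)\|_E \le \ep$ for every $u \in [0, T]$, whence the supremum over $u$ is bounded by $\ep$. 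I expect the only genuinely delicate points to be the two compactness arguments — extracting a uniform continuity modulus for $F$ over the compact set $K$, and producing a single neighborhood $M'$ that works simultaneously for all $u \in [0, T]$ — both of which are forced by the possible non-metrizability of $H$ but are handled by the same finite-subcover reasoning already used in the proof of Proposition~\ref{prop:continuity about C^1-prolongations}.
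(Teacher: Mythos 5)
Your proposal is correct and follows essentially the same route as the paper's own proof: the compact orbit $K$ of $u \mapsto \tau_{\sigma_0,\psi_0}^0(u, I_u\beta_0)$, the uniform continuity modulus for $F$ over $K$, the splitting of the difference via $N' + N' \subset N$ using the identity involving $S_0(u)(\psi - \psi_0)$, and the regulation by prolongations to control $I_u[\beta - \beta_0]$. The extra detail you supply (the tube/equi-continuity argument producing a single neighborhood $M'$ with $S_0(u)M' \subset N'$ for all $u \in [0,T]$) is exactly what the paper leaves implicit in its final ``there are a neighborhood $W'$ \dots'' step, and your use of hypothesis (iii) to place $\tau_{\sigma,\psi}^0(u, I_u\beta)$ in $\dom(F)$ matches the paper.
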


See Propositions~\ref{prop:continuity about prolongations} and \ref{prop:continuity about C^1-prolongations}
for the related results.
We give the proof in Appendix~\ref{subsec:proofs in Section 6}
because the proof is similar as that of Proposition~\ref{prop:continuity about C^1-prolongations}.

\begin{proposition}\label{prop:compactness of transformation on prolongation space, uniform}
Let $E = \mathbb{R}^n$ and $(t_0, \phi_0) \in \dom(F)$.
Suppose that
(i) $H$ is prolongable and regulated by prolongations,
(ii) $F$ is continuous, and
(iii) $\dom(F)$ is a uniform neighborhood by prolongations of $(t_0, \phi_0)$.
If the semifow
	\begin{equation*}
		\mathbb{R}_+ \times H \ni (t, \phi) \mapsto S_0(t)\phi \in H
	\end{equation*}
is continuous, then there exists a neighborhood $W$ of $(t_0, \phi_0)$ in $\dom(F)$ and $\delta > 0$ such that
for all sufficiently small $T > 0$,
	\begin{equation*}
		W \times \varGamma_{0, \boldsymbol{0}}(T, \delta) \ni (\sigma, \psi, \beta)
			\mapsto
		\mathcal{S}_{\sigma, \psi}^0\beta \in \varGamma_{0, \boldsymbol{0}}(T, \delta)
	\end{equation*}
is an well-defined compact map.
\end{proposition}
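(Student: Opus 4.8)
The plan is to treat this as the parametrized (uniform-in-base-point) analogue of Proposition~\ref{prop:compactness of transformation on prolongation space by continuity}, working throughout from the integral representation $\mathcal{S}_{\sigma, \psi}^0\beta(s) = \int_0^s F_{\sigma, \psi}^0(u, I_u\beta) \, \mathrm{d}u$ together with $I_0[\mathcal{S}_{\sigma, \psi}^0\beta] = \boldsymbol{0}$. I would establish three things: first, well-definedness, i.e.\ $\mathcal{S}_{\sigma, \psi}^0\beta \in \varGamma_{0, \boldsymbol{0}}(T, \delta)$ for all $(\sigma, \psi) \in W$ and all $\beta \in \varGamma_{0, \boldsymbol{0}}(T, \delta)$; second, joint continuity with respect to $\rho^0$; and third, relative compactness of the total image in the complete metric space $(\varGamma_{0, \boldsymbol{0}}(T, \delta), \rho^0)$. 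Since $\varGamma_{0, \boldsymbol{0}}(T, \delta)$ is $\rho^0$-bounded, the second and third assertions together are exactly the statement that the map is a compact map. Note that $H$ prolongable together with $F$ continuous yields that $F$ is continuous along prolongations, so each integrand $u \mapsto F_{\sigma, \psi}^0(u, I_u\beta)$ is continuous and $\mathcal{S}_{\sigma, \psi}^0\beta$ is indeed a prolongation of $\boldsymbol{0}$.

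First I would fix the data. By the continuity of $F$ at $(t_0, \phi_0)$ there are a neighborhood $W_0$ of $(t_0, \phi_0)$ in $\mathbb{R} \times H$ and $M > 0$ with $\|F(t, \phi)\|_E \le M$ for all $(t, \phi) \in W_0 \cap \dom(F)$. Because the semiflow $\mathbb{R}_+ \times H \ni (t, \phi) \mapsto S_0(t)\phi \in H$ is continuous, Theorem~\ref{thm:nbd by prolongations}(ii) makes $W_0$ a uniform neighborhood by prolongations of $(t_0, \phi_0)$; intersecting with $\dom(F)$, which is itself such a uniform neighborhood, yields (by the monotonicity in Remark~\ref{rmk:comparison of rectangle by prolongations}) a neighborhood $W$ of $(t_0, \phi_0)$ in $\dom(F)$ together with $T_1, \delta > 0$ so that $\bigcup_{(\sigma, \psi) \in W} \varLambda_{\sigma, \psi}(T_1, \delta) \subset W_0 \cap \dom(F)$. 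For any $T \le \min\{T_1, \delta/M\}$ and any $\beta \in \varGamma_{0, \boldsymbol{0}}(T, \delta)$ one has $(u, I_u\beta) \in \varLambda_{0, \boldsymbol{0}}(T, \delta)$ for $u \in [0, T]$, hence $\tau_{\sigma, \psi}^0(u, I_u\beta) \in \varLambda_{\sigma, \psi}(T, \delta)$ by Lemma~\ref{lem:translation on rectangles}(i), so that $\|F_{\sigma, \psi}^0(u, I_u\beta)\|_E \le M$. Integrating gives $\|\mathcal{S}_{\sigma, \psi}^0\beta\|_{C[0, T]} \le MT \le \delta$, which is well-definedness.

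For continuity I would estimate $\rho^0(\mathcal{S}_{\sigma, \psi}^0\beta, \mathcal{S}_{\sigma_0, \psi_0}^0\beta_0) \le T \cdot \sup_{u \in [0, T]} \|F_{\sigma, \psi}^0(u, I_u\beta) - F_{\sigma_0, \psi_0}^0(u, I_u\beta_0)\|_E$ directly from the integral formula, and then invoke Proposition~\ref{prop:continuity about prolongations, uniform} (whose hypotheses hold for our $W$, $T_1$, $\delta$) to conclude that the right-hand side tends to $0$ as $(\sigma, \psi, \beta) \to (\sigma_0, \psi_0, \beta_0)$ in $W \times \varGamma_{0, \boldsymbol{0}}(T, \delta)$. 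For relative compactness I would pass to the restrictions to $[0, T]$, under which $\varGamma_{0, \boldsymbol{0}}(T, \delta)$ is isometric to a bounded subset of $C([0, T], E)_\mathrm{u}$, and apply the Arzel\`a--Ascoli theorem: the uniform bound $\|\mathcal{S}_{\sigma, \psi}^0\beta\|_{C[0, T]} \le MT$ and the uniform Lipschitz estimate $\|\mathcal{S}_{\sigma, \psi}^0\beta(s_1) - \mathcal{S}_{\sigma, \psi}^0\beta(s_2)\|_E \le M|s_1 - s_2|$ give equicontinuity of the whole family, while the finite-dimensionality of $E = \mathbb{R}^n$ gives pointwise relative compactness, since the values lie in the compact ball $\bar{B}_E(0; MT)$.

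The decisive point, and the only place where $E = \mathbb{R}^n$ is genuinely used, is the compactness in the third step; but the real difficulty is not the Arzel\`a--Ascoli argument itself — rather it is guaranteeing that the bound $M$, the rectangle sizes $T, \delta$, and hence the uniform bound and the equi-Lipschitz estimate hold \emph{uniformly} across all base points $(\sigma, \psi) \in W$ and all $\beta$ simultaneously. This uniformity is exactly what the hypothesis that $\dom(F)$ is a \emph{uniform} neighborhood by prolongations, combined with Theorem~\ref{thm:nbd by prolongations}(ii) (which upgrades the ordinary neighborhood $W_0$ furnished by the continuity of $F$ into a uniform neighborhood by prolongations, using the continuity of $S_0$), is designed to supply. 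Without the continuity of the semiflow $S_0$ one could not choose $W$, $T$, $\delta$, and $M$ independently of the base point, and the equi-Lipschitz bound underlying the compactness would fail.
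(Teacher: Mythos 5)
Your proposal is correct and follows essentially the same route as the paper's proof: the continuity of $F$ supplies the local bound $M$ on a neighborhood $W_0$, Theorem~\ref{thm:nbd by prolongations} (via the continuity of the semiflow $S_0$) upgrades $W_0 \cap \dom(F)$ to a uniform neighborhood by prolongations yielding $W$, $T$, $\delta$ independent of the base point, the integral estimate $\|\mathcal{S}_{\sigma,\psi}^0\beta\|_{C[0,T]} \le MT$ gives well-definedness, Proposition~\ref{prop:continuity about prolongations, uniform} gives joint continuity, and the uniform $M$-Lipschitz estimate together with the Ascoli--Arzel\`a theorem (using $E = \mathbb{R}^n$) gives relative compactness of the image. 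Your closing remarks on where the uniformity and the finite-dimensionality enter accurately reflect the role these hypotheses play in the paper's argument.
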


The proof is similar as that of Proposition~\ref{prop:compactness of transformation on prolongation space}.
Therefore, we give the proof in Appendix~\ref{subsec:proofs in Section 6}.

\begin{theorem}\label{thm:continuity and uniqueness}
Let $E = \mathbb{R}^n$ and $(t_0, \phi_0) \in \dom(F)$.
Suppose that
(i) $H$ is prolongable and regulated by prolongations,
(ii) $F$ is continuous, and
(iii) $\dom(F)$ is a uniform neighborhood by prolongations of $(t_0, \phi_0)$.
If $\eqref{eq:IVP}_{t_0, \phi_0}$ has a unique solution, and if the semiflow
	\begin{equation*}
		\mathbb{R}_+ \times H \ni (t, \phi) \mapsto S_0(t)\phi \in H
	\end{equation*}
is continuous,
then there exist a neighborhood $W$ of $(t_0, \phi_0)$ in $\dom(F)$ and $T, \delta > 0$ such that
the following statements hold:
\begin{enumerate}
\item For each $(\sigma, \psi) \in W$, $\eqref{eq:IVP}_{\sigma, \psi}$ has a solution
	\begin{equation*}
		\xi(\cdot; \sigma, \psi) \in \varGamma_{\sigma, \psi}(T, \delta) \subset \dom(F).
	\end{equation*}
\item Let $\eta(\cdot; \sigma, \psi) := N_{\sigma, \psi}^0[\xi(\cdot; \sigma, \psi)]$ for each $(\sigma, \psi) \in W$.
Then we have
	\begin{equation*}
		\rho^0(\eta(\cdot; \sigma, \psi), \eta(\cdot; t_0, \phi_0)) \to 0
	\end{equation*}
as $(\sigma, \psi) \to (t_0, \phi_0)$ in $W$.
\end{enumerate}
\end{theorem}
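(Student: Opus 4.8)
The plan is to recast the problem as a parametrized fixed-point problem on the prolongation space $\varGamma_{0, \boldsymbol{0}}(T, \delta)$ and then to combine the Schauder fixed point theorem with a compactness-plus-uniqueness argument. First I would invoke Proposition~\ref{prop:compactness of transformation on prolongation space, uniform}: under hypotheses (i)--(iii) together with the continuity of $(t, \phi) \mapsto S_0(t)\phi$, it produces a neighborhood $W$ of $(t_0, \phi_0)$ in $\dom(F)$ and a $\delta > 0$ such that, for every sufficiently small $T > 0$, the joint map
\[
	W \times \varGamma_{0, \boldsymbol{0}}(T, \delta) \ni (\sigma, \psi, \beta) \mapsto \mathcal{S}_{\sigma, \psi}^0\beta \in \varGamma_{0, \boldsymbol{0}}(T, \delta)
\]
is a well-defined compact map; in particular it is continuous with relatively compact image, and $\varLambda_{\sigma, \psi}(T, \delta) \subset \dom(F)$ for $(\sigma, \psi) \in W$. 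I would fix one such $T$. Recalling from Notation~\ref{notation:transformations for integral eq} that $\mathcal{S}_{\sigma, \psi}^0 = N_{\sigma, \psi}^0 \circ \mathcal{T}_{\sigma, \psi} \circ A_{\sigma, \psi}^0$, the conjugation by the mutually inverse transformations $A_{\sigma, \psi}^0, N_{\sigma, \psi}^0$ matches fixed points $\eta$ of $\mathcal{S}_{\sigma, \psi}^0$ with fixed points $\xi = A_{\sigma, \psi}^0 \eta \in \varGamma_{\sigma, \psi}(T, \delta)$ of $\mathcal{T}_{\sigma, \psi}$, and by the integral-equation correspondence underlying the Peano Theorem~\ref{thm:Peano thm} (Lemma~\ref{lem:integral eq}) the latter are exactly the solutions of $\eqref{eq:IVP}_{\sigma, \psi}$ lying in $\varGamma_{\sigma, \psi}(T, \delta)$.

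For statement~1, for each fixed $(\sigma, \psi) \in W$ the map $\mathcal{S}_{\sigma, \psi}^0$ is a compact self-map of the convex set $\varGamma_{0, \boldsymbol{0}}(T, \delta)$ — convexity is immediate since the defining constraints $I_0 \gamma = \boldsymbol{0}$ and $\|\gamma\|_{C[0, T]} \le \delta$ are preserved under convex combinations — so the Schauder fixed point theorem (Fact~\ref{fact:Schauder}) yields a fixed point $\eta(\cdot; \sigma, \psi)$. Setting $\xi(\cdot; \sigma, \psi) := A_{\sigma, \psi}^0 \eta(\cdot; \sigma, \psi)$ gives the desired solution in $\varGamma_{\sigma, \psi}(T, \delta) \subset \dom(F)$. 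At the base point, the assumed uniqueness of the solution of $\eqref{eq:IVP}_{t_0, \phi_0}$ forces any two fixed points of $\mathcal{S}_{t_0, \phi_0}^0$ in $\varGamma_{0, \boldsymbol{0}}(T, \delta)$ to coincide, since two such fixed points would yield two solutions on $[t_0, t_0 + T]$ that must agree; hence $\eta_0 := \eta(\cdot; t_0, \phi_0)$ is the unique fixed point of $\mathcal{S}_{t_0, \phi_0}^0$, independently of the selection made elsewhere.

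For statement~2 I would argue by contradiction. Suppose $\rho^0(\eta(\cdot; \sigma, \psi), \eta_0) \not\to 0$; then there are $\ep > 0$ and a sequence $(\sigma_k, \psi_k) \to (t_0, \phi_0)$ in $W$ with $\rho^0(\eta_k, \eta_0) \ge \ep$, where $\eta_k := \eta(\cdot; \sigma_k, \psi_k)$. Each $\eta_k = \mathcal{S}_{\sigma_k, \psi_k}^0 \eta_k$ lies in the image of the compact joint map, hence in a relatively compact subset of $\varGamma_{0, \boldsymbol{0}}(T, \delta)$, which is complete by Proposition~\ref{prop:completeness of space of prolongations}; passing to a subsequence I may assume $\eta_{k_j} \to \eta_*$ in $\varGamma_{0, \boldsymbol{0}}(T, \delta)$. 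By the continuity of the joint map, letting $j \to \infty$ in $\eta_{k_j} = \mathcal{S}_{\sigma_{k_j}, \psi_{k_j}}^0 \eta_{k_j}$ gives $\eta_* = \mathcal{S}_{t_0, \phi_0}^0 \eta_*$, so $\eta_*$ is a fixed point of $\mathcal{S}_{t_0, \phi_0}^0$. Uniqueness at the base point then forces $\eta_* = \eta_0$, contradicting $\rho^0(\eta_{k_j}, \eta_0) \ge \ep$. This establishes the convergence and completes the proof.

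The main obstacle is not the compactness-plus-uniqueness deduction itself, which is routine, but securing its two inputs. The first is the joint continuity and compactness of $(\sigma, \psi, \beta) \mapsto \mathcal{S}_{\sigma, \psi}^0 \beta$ supplied by Proposition~\ref{prop:compactness of transformation on prolongation space, uniform}; this is where finite-dimensionality $E = \mathbb{R}^n$ is essential, as it drives the Arzel\`a--Ascoli-type compactness of the image, and where the continuity of the semiflow $(t, \phi) \mapsto S_0(t)\phi$ is needed to make the relevant estimates (cf.\ Proposition~\ref{prop:continuity about prolongations, uniform}) uniform as the base point $(\sigma, \psi)$ ranges over $W$. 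The second is the promotion of uniqueness of the solution of $\eqref{eq:IVP}_{t_0, \phi_0}$ to uniqueness of the fixed point of $\mathcal{S}_{t_0, \phi_0}^0$, which is immediate once one notes that distinct fixed points would produce distinct solutions on $[t_0, t_0 + T]$. Without the uniform control over $W$, the image of the joint map need not be relatively compact and the subsequence extraction would fail.
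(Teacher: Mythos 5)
Your proposal follows essentially the same route as the paper: Proposition~\ref{prop:compactness of transformation on prolongation space, uniform} supplies the jointly continuous compact map, Schauder gives the fixed points $\eta(\cdot;\sigma,\psi)$ lying in a fixed relatively compact set, and compactness plus uniqueness at the base point yields the convergence in statement~2. The one place where your argument as written does not quite close is the contradiction step: you negate the convergence by extracting a \emph{sequence} $(\sigma_k, \psi_k) \to (t_0, \phi_0)$ with $\rho^0(\eta_k, \eta_0) \ge \ep$. Since $H$ is only assumed to be a linear topological space, $W \subset \mathbb{R} \times H$ need not be first countable, so the failure of convergence along the neighborhood filter of $(t_0, \phi_0)$ cannot in general be witnessed by a sequence; your argument establishes only sequential continuity of $(\sigma,\psi) \mapsto \eta(\cdot;\sigma,\psi)$ at $(t_0,\phi_0)$, which is strictly weaker than the asserted convergence. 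The paper runs the identical compactness-plus-uniqueness argument with nets and subnets: for any net $(\sigma_\alpha,\psi_\alpha)_\alpha$ converging to $(t_0,\phi_0)$, every subnet of $(\eta(\cdot;\sigma_\alpha,\psi_\alpha))_\alpha$ admits a further subnet converging (by compactness of $K$) to a fixed point of $\mathcal{S}^0_{t_0,\phi_0}$, which by uniqueness must be $\eta(\cdot;t_0,\phi_0)$, whence the whole net converges. Your proof repairs immediately under this substitution, since relative compactness of the image of the joint map furnishes convergent subnets just as it furnishes convergent subsequences; everything else --- the identification of the limit as a fixed point (you use joint continuity of $(\sigma,\psi,\beta)\mapsto\mathcal{S}^0_{\sigma,\psi}\beta$, the paper uses pointwise convergence of the integrals together with Proposition~\ref{prop:continuity about prolongations, uniform}, which amounts to the same thing) and the promotion of solution uniqueness for $\eqref{eq:IVP}_{t_0,\phi_0}$ to fixed-point uniqueness for $\mathcal{S}^0_{t_0,\phi_0}$ --- matches the paper's proof.
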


\begin{proof}
1. From Proposition~\ref{prop:compactness of transformation on prolongation space, uniform},
we choose a neighborhood $W$ of $(t_0, \phi_0)$ in $\dom(F)$ and $T, \delta > 0$ so that
	\begin{equation*}
		W \times \varGamma_{0, \boldsymbol{0}}(T, \delta) \ni (\sigma, \psi, \beta)
			\mapsto
		\mathcal{S}_{\sigma, \psi}^0\beta \in \varGamma_{0, \boldsymbol{0}}(T, \delta)
	\end{equation*}
is an well-defined compact map.
By the Schauder fixed point theorem (see Fact~\ref{fact:Schauder}),
	\begin{equation*}
		\mathcal{S}_{\sigma, \psi}^0
		\colon \varGamma_{0, \boldsymbol{0}}(T, \delta) \to \varGamma_{0, \boldsymbol{0}}(T, \delta)
	\end{equation*}
has a fixed point
	\begin{equation*}
		\eta(\cdot; \sigma, \psi) \in \varGamma_{0, \boldsymbol{0}}(T, \delta)
	\end{equation*}
for each $(\sigma, \psi) \in W$,
which belongs to some compact set $K$ of $\varGamma_{0, \boldsymbol{0}}(T, \delta)$.
Let
	\begin{equation*}
		\xi(\cdot; \sigma, \psi)
		:= A_{\sigma, \psi}^0\eta(\cdot; \sigma, \psi)
		\in \varGamma_{\sigma, \psi}(T, \delta)
	\end{equation*}
for each $(\sigma, \psi) \in W$, which is a solution of $\eqref{eq:IVP}_{\sigma, \psi}$.

2. We choose a net $(\sigma_\alpha, \psi_\alpha)_{\alpha \in A}$ in $W$ which converges to $(t_0, \phi_0)$.
Since $(\eta(\cdot; \sigma_\alpha, \psi_\alpha))_{\alpha \in A}$ is a net in the compact set $K$,
there is a subnet
	\begin{equation*}
		\left( \sigma_{h(\beta)}, \psi_{h(\beta)} \right)_{\beta \in B}
	\end{equation*}
of $(\sigma_\alpha, \psi_\alpha)_{\alpha \in A}$ such that
$\left( \eta \bigl( \cdot; \sigma_{h(\beta)}, \psi_{h(\beta)} \bigr) \right)_{\beta \in B}$ converges to
some $\eta \in K \subset \varGamma_{0, \boldsymbol{0}}(T, \delta)$.
By the point-wise convergence, for all $s \in [0, T]$, we have
	\begin{align*}
		\eta(s)
		&= \lim_{\beta} \eta \bigl( s; \sigma_{h(\beta)}, \psi_{h(\beta)} \bigr) \\
		&= \lim_{\beta}
				\int_0^s
				F_{\sigma_{h(\beta)}, \psi_{h(\beta)}}^0 \bigl( u, I_u\eta(\cdot; \sigma_{h(\beta)}, \psi_{h(\beta)} \bigr) \bigr)
				\mspace{2mu} \mathrm{d}u,
	\end{align*}
which equals to
	\begin{equation*}
		\int_0^s F_{t_0, \phi_0}^0(u, I_u\eta) \mspace{2mu} \mathrm{d}u
	\end{equation*}
from Proposition~\ref{prop:continuity about prolongations, uniform}.
Therefore, $\eta$ is a fixed point of $\mathcal{S}_{t_0, \phi_0}^0$, that is,
	\begin{equation*}
		\xi := A_{t_0, \phi_0}^0\eta
	\end{equation*}
is a solution of $\eqref{eq:IVP}_{t_0, \phi_0}$.
By the uniqueness, we have $\xi(\cdot; t_0, \phi_0) = \xi$,
which is equivalent to
	\begin{equation*}
		\eta = N_{t_0, \phi_0}^0[\xi(\cdot; t_0, \phi_0)] = \eta(\cdot; t_0, \phi_0).
	\end{equation*}

The above argument shows that
every subnet of $(\eta(\cdot; \sigma_\alpha, \psi_\alpha))_{\alpha \in A}$ converges to $\eta(\cdot; t_0, \phi_0)$,
which is independent from the choice of a subnet.
Therefore, $(\eta(\cdot; \sigma_\alpha, \psi_\alpha))_{\alpha \in A}$ converges to $\eta(\cdot; t_0, \phi_0)$.
This shows the conclusion.
\end{proof}

\begin{remark}
Let $(\sigma, \psi) \in W$.
Then for all $s \in [0, T] + I$, we have
	\begin{equation*}
		\xi(\sigma + s; \sigma, \psi) = \eta(s; \sigma, \psi) + \bar{\psi}(s).
	\end{equation*}
Therefore, for all $s \in [0, T]$,
	\begin{align*}
		&\|\xi(\sigma + s; \sigma, \psi) - \xi(t_0 + s; t_0, \phi_0)\|_E \\
		&\mspace{40mu} \le \|\eta(s; \sigma, \psi) - \eta(s; t_0, \phi_0)\|_E + \|\psi(0) - \phi_0(0)\|_E.
	\end{align*}
Thus, under an additional assumption of the continuity of the substitution
	\begin{equation*}
		H \ni \phi \mapsto \phi(0) \in E,
	\end{equation*}
we obtain the convergence
	\begin{align*}
		\sup_{s \in [0, T]} \left\| \xi(\sigma + s; \sigma, \psi) - \xi(t_0 + s; t_0, \phi_0) \right\|_E \to 0
	\end{align*}
as $(\sigma, \psi) \to (t_0, \phi_0)$.
\end{remark}

The continuity of $H \ni \phi \mapsto \phi(0) \in E$ was assumed in \cite{Hale--J.Kato 1978} and \cite{Kato 1978}.
In general, the continuity of the substitution is unrelated to
the continuity of the semiflow $\mathbb{R}_+ \times H \ni (t, \phi) \mapsto S_0(t)\phi \in H$.

\begin{example}
Let $I = I^r$ $(0 < r < \infty)$, and we consider a seminorm $p$ given by
	\begin{equation*}
		p(\phi) = \|\phi(0)\|_E + \|\phi(-r)\|_E
			\mspace{20mu}
		(\phi \in C(I^r, E)).
	\end{equation*}
Let $H = (C(I^r, E), p)$ be a seminormed space.
Then the following properties hold:
\begin{itemize}
\item $H \ni \phi \mapsto \phi(0) \in E$ is continuous.
\item For each $0 < t < r$, the time-$t$ map $S_0(t) \colon H \to H$ is not continuous.
\end{itemize}
\end{example}

See also \cite[Example 3]{Nishiguchi 2017}.

In the following theorems,
it becomes clear that the continuity of the substitution $H \ni \phi \mapsto \phi(0) \in E$ is unnecessary,
but the continuity of the semiflow is important for the maximal well-posedness.

Theorem~\ref{thm:maximal WP without uniform Lip} is included in the following theorem.

\begin{theorem}\label{thm:maximal WP without uniform Lip, extension}
Let $E = \mathbb{R}^n$.
Suppose that
(i) $H$ is closed under $C^1$-prolongations, and
(ii) there exists an extension $\left( \bar{H}, \bar{F} \right)$ of $(H, F)$ with the following properties:
	\begin{itemize}
	\item $\bar{H}$ is prolongable and regulated by prolongations.
	\item $H$ is a topological subspace of $\bar{H}$.
	\item $\bar{F}$ is continuous.
	\item $\dom \bigl( \bar{F} \bigr)$ is a neighborhood by prolongations 
	of every $(t_0, \phi_0) \in \dom \bigl( \bar{F} \bigr)$.
	\end{itemize}
If $F$ is locally Lipschitzian about $C^1$-prolongations, and if
	\begin{equation*}
		\mathbb{R}_+ \times \bar{H} \ni (t, \phi) \mapsto S_0(t)\phi \in \bar{H}
	\end{equation*}
is continuous, then IVP~\eqref{eq:IVP} is maximally well-posed for $C^1$-solutions.
\end{theorem}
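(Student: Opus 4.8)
The plan is to assemble maximal well-posedness from three ingredients — local existence, local uniqueness, and continuity of the solution process — using the extension $\bigl(\bar{H},\bar{F}\bigr)$ as the bridge between the $C^1$-framework on $H$ and the prolongation-framework on $\bar{H}$. First I would record, via Lemma~\ref{lem:sol of extended IVP} (whose hypotheses hold: $H$ is closed under $C^1$-prolongations, $\bar{H}$ is prolongable, $\bar{F}$ is continuous), that for $(t_0,\phi_0)\in\dom(F)$ the $C^1$-solutions of $\eqref{eq:IVP}_{t_0,\phi_0}$ are exactly the solutions of the extended IVP $\eqref{eq:extended IVP}_{t_0,\phi_0}$. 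Since $F$ is locally Lipschitzian about $C^1$-prolongations, Corollaries~\ref{cor:local existence with Lip, (E2)} and \ref{cor:local uniqueness (E2)} apply verbatim and give local existence and local uniqueness of $C^1$-solutions; by Proposition~\ref{prop:maximal C^1-solutions and solution process} this yields, for each $(t_0,\phi_0)\in\dom(F)$, the unique maximal $C^1$-solution $x_F(\cdot;t_0,\phi_0)$ and the fact that $\mathcal{P}_F$ is a maximal process in $\dom(F)$. It then remains only to prove that $\mathcal{P}_F$ is continuous.

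Next I would observe that $H$ is in fact $C^1$-prolongable, not merely closed under $C^1$-prolongations: for a $C^1$-prolongation $\gamma$ of $\psi\in H$, the curve $t\mapsto I_t\gamma$ is continuous in $\bar{H}$ because $\bar{H}$ is prolongable, and since $H$ carries the subspace topology of $\bar{H}$ and the curve stays in $H$, it is continuous in $H$; this gives continuity of orbits. For the remaining continuity I would apply Theorem~\ref{thm:continuity and uniqueness} to the extended data $\bigl(\bar{H},\bar{F}\bigr)$ at each $(t_0,\phi_0)\in\dom(F)$: here $\bar{H}$ is prolongable and regulated by prolongations, $\bar{F}$ is continuous, $S_0$ is continuous on $\bar{H}$, and $\eqref{eq:extended IVP}_{t_0,\phi_0}$ has a unique solution (it coincides with the unique $C^1$-solution of the original IVP by Lemma~\ref{lem:sol of extended IVP} and local uniqueness). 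The uniform-neighborhood-by-prolongations property of $\dom\bigl(\bar{F}\bigr)$ at $(t_0,\phi_0)$ I would deduce from the neighborhood-by-prolongations hypothesis together with the continuity of $S_0$ through Theorem~\ref{thm:nbd by prolongations}(ii). The theorem then produces a neighborhood $W$ of $(t_0,\phi_0)$, constants $T,\delta>0$, and solutions $\xi(\cdot;\sigma,\psi)\in\varGamma_{\sigma,\psi}(T,\delta)\subset\dom\bigl(\bar{F}\bigr)$ whose normalizations $\eta(\cdot;\sigma,\psi)=N^0_{\sigma,\psi}\xi(\cdot;\sigma,\psi)$ satisfy $\rho^0\bigl(\eta(\cdot;\sigma,\psi),\eta(\cdot;t_0,\phi_0)\bigr)\to0$ as $(\sigma,\psi)\to(t_0,\phi_0)$ in $W$.

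I would then feed this into the maximal-process continuity criterion (Corollary~\ref{cor:continuity of maximal processes} with Theorems~\ref{thm:equi-continuity from continuity} and \ref{thm:continuity from local equi-continuity}), following the skeleton of the proof of Theorem~\ref{thm:maximal WP, C^1}. By uniqueness, $\xi(\cdot;\sigma,\psi)$ agrees with $x_F(\cdot;\sigma,\psi)$ on $[\sigma,\sigma+T]+I$, so $[0,T]\times W\subset\dom(\mathcal{P}_F)$, giving lower semi-continuity of the escape-time function, and one has the decomposition
\[
	\mathcal{P}_F(\tau,\sigma,\psi)=I_\tau[\eta(\cdot;\sigma,\psi)]+S_0(\tau)\psi
	\qquad(\tau\in[0,T]).
\]
The first term is equi-continuous in $\tau\in[0,T]$ because $\eta(\cdot;\sigma,\psi)-\eta(\cdot;t_0,\phi_0)$ is a prolongation of $\boldsymbol{0}$ of small $\|\cdot\|_\infty$-size and $\bar{H}$ is regulated by prolongations; the second is equi-continuous by the continuity of $S_0$ on $\bar{H}$ and compactness of $[0,T]$. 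Since $\mathcal{P}_F(\tau,\sigma,\psi)-\mathcal{P}_F(\tau,\sigma_0,\psi_0)$ lies in $H$ and $H$ is a topological subspace of $\bar{H}$, this $\bar{H}$-equi-continuity is equi-continuity in $H$, which together with the orbit continuity yields that $\mathcal{P}_F$ is a continuous maximal process.

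The main obstacle I anticipate is the continuity step, and within it two delicate points. The first is verifying that $\dom\bigl(\bar{F}\bigr)$ is a \emph{uniform} (not merely pointwise) neighborhood by prolongations at each $(t_0,\phi_0)\in\dom(F)$, so that Theorem~\ref{thm:continuity and uniqueness} and its underlying compactness argument (Proposition~\ref{prop:compactness of transformation on prolongation space, uniform}) are actually applicable; this is exactly where the continuity of $S_0$ must be exploited. The second is the change of ambient space: one must ensure that the $\rho^0$-continuity of the normalized solutions, obtained through the Schauder-type compactness argument valid in $E=\mathbb{R}^n$ for the extended problem, transfers to genuine continuity of $\mathcal{P}_F$ in the subspace topology of $H$ — carefully keeping the decomposition inside $\bar{H}$ and reabsorbing it into $H$ at the end — without ever invoking a Lipschitz bound on $\bar{F}$, which is unavailable.
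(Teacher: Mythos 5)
Your proposal is correct and follows essentially the same route as the paper's proof in Appendix~\ref{subsec:proofs in Section 6}: local existence/uniqueness via Corollaries~\ref{cor:local existence with Lip, (E2)} and \ref{cor:local uniqueness (E2)}, transfer through Lemma~\ref{lem:sol of extended IVP}, the Schauder-based continuity of the normalized solutions from Theorem~\ref{thm:continuity and uniqueness} (resting on Proposition~\ref{prop:compactness of transformation on prolongation space, uniform} and the uniform-neighborhood upgrade from Theorem~\ref{thm:nbd by prolongations}(ii)), and the same decomposition $\mathcal{P}_F(\tau,\sigma,\psi)-\mathcal{P}_F(\tau,\sigma_0,\psi_0)=I_\tau[\eta(\cdot;\sigma,\psi)-\eta(\cdot;\sigma_0,\psi_0)]+S_0(\tau)(\psi-\psi_0)$ fed into the maximal-process continuity criterion. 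The two delicate points you flag are exactly the ones the paper handles, and you resolve them the same way.
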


We give the proof in Appendix~\ref{subsec:proofs in Section 6}
because this is similar as that of Theorem~\ref{thm:maximal WP, C^1}.
We note that the assumption that $F$ is locally Lipschitzian about $C^1$-prolongations can be replaced with
the assumption of the uniqueness of solutions in view of Theorem~\ref{thm:continuity and uniqueness}.

\begin{remark}\label{rmk:H and bar{H}}
The following properties hold under the assumptions:
\begin{enumerate}
\item The semiflow $\mathbb{R}_+ \times H \ni (t, \phi) \mapsto S_0(t)\phi \in H$ is continuous.
\item $H$ is $C^1$-prolongable.
\end{enumerate}
\end{remark}

\section{Applications to state-dependent DDEs}\label{sec:state-dependent DDEs}

Throughout this section,
let $I$ be a past interval, $E = (E, \|\cdot\|_E)$ be a Banach space, and $H \subset \Map(I, E)$ be a history space.
Let
	\begin{equation*}
		f \colon \mathbb{R} \times E \times E \supset \dom(f) \to E
			\mspace{10mu} \text{and} \mspace{10mu}
		\tau \colon \mathbb{R} \times H \supset D \to \mathbb{R}_+
	\end{equation*}
be maps, where $D := \dom(\tau)$.
We suppose that $\tau$ satisfies
	\begin{equation*}
		\tau(t, \phi) \in -I \mspace{20mu} (\forall (t, \phi) \in D).
	\end{equation*}
This means that the following cases are considered:
\begin{itemize}
\item Case 1: $I = I^\infty$. $\tau$ is an unbounded function.
\item Case 2: $I = I^r$ for some $r > 0$. $\tau$ is bounded.
\end{itemize}

We consider a class of state-dependent DDEs of the form~\eqref{eq:sdDDE}
	\begin{equation*}
		\dot{x}(t) = f \bigl( t, x(t), x(t - \tau(t, I_tx)) \bigr).
	\end{equation*}
Then $\tau$ is the delay functional of DDE~\eqref{eq:sdDDE}.
For the given delay functional $\tau$, we introduce the map
	\begin{equation*}
		\rho_\tau \colon \mathbb{R} \times H \supset D \to \mathbb{R} \times E \times E, \mspace{20mu}
		\rho_\tau(t, \phi) = \bigl( t, \phi(0), \phi(-\tau(t, \phi)) \bigr).
	\end{equation*}
DDE~\eqref{eq:sdDDE} can be written as an RFDE with history space $H$
	\begin{equation*}
		\dot{x}(t) = F_{f, \tau}(t, I_tx),
	\end{equation*}
where $F_{f, \tau} \colon \mathbb{R} \times H \supset \dom \bigl( F_{f, \tau} \bigr) \to E$ is the history functional
defined by
	\begin{equation*}
		\dom \bigl( F_{f, \tau} \bigr) = \rho_\tau^{-1}(\dom(f)), \mspace{15mu}
		F_{f, \tau}(t, \phi) = f \circ \rho_\tau(t, \phi).
	\end{equation*}

\begin{lemma}
Let $(\sigma_0, \psi_0) \in \dom \bigl( F_{f, \tau} \bigr)$.
Suppose that $\dom(f)$ is a neighborhood of $\rho_\tau(\sigma_0, \psi_0)$ in $\mathbb{R} \times E \times E$.
If $\rho_\tau$ is continuous,
then $\dom \bigl( F_{f, \tau} \bigr)$ is a neighborhood of $(\sigma_0, \psi_0)$ in $D$.
\end{lemma}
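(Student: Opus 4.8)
The plan is to recognize this as the standard topological fact that the preimage of a neighborhood under a continuous map is again a neighborhood, applied to $\rho_\tau$ regarded as a continuous map from $D$ (equipped with its subspace topology inherited from $\mathbb{R} \times H$) into $\mathbb{R} \times E \times E$. The defining relation $\dom(F_{f,\tau}) = \rho_\tau^{-1}(\dom(f))$ is what links the two domains, so the whole argument reduces to pulling back a neighborhood through $\rho_\tau$.

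First I would unpack the hypothesis that $\dom(f)$ is a neighborhood of the point $q_0 := \rho_\tau(\sigma_0, \psi_0)$: by definition there exists an open set $U \subset \mathbb{R} \times E \times E$ with $q_0 \in U \subset \dom(f)$. Next, using the continuity of $\rho_\tau$, the set $\rho_\tau^{-1}(U)$ is open in $D$, and it contains $(\sigma_0, \psi_0)$ because $\rho_\tau(\sigma_0, \psi_0) = q_0 \in U$. Finally, from $U \subset \dom(f)$ together with $\dom(F_{f,\tau}) = \rho_\tau^{-1}(\dom(f))$ I obtain
	$\rho_\tau^{-1}(U) \subset \rho_\tau^{-1}(\dom(f)) = \dom(F_{f,\tau})$.
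Thus $\dom(F_{f,\tau})$ contains a set that is open in $D$ and that contains $(\sigma_0, \psi_0)$, which is precisely the assertion that $\dom(F_{f,\tau})$ is a neighborhood of $(\sigma_0, \psi_0)$ in $D$.

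There is essentially no hard step here; the only points requiring care are bookkeeping ones. The first is that ``neighborhood'' must be read relative to the subspace topology on $D$, so continuity of $\rho_\tau$ is continuity of $\rho_\tau \colon D \to \mathbb{R} \times E \times E$ with $D$ carrying that subspace topology. The second is that one should first shrink $\dom(f)$ to a genuinely open set $U$ before taking the preimage, rather than attempting to apply continuity to $\dom(f)$ itself, which need not be open. The substantive input is the continuity of $\rho_\tau$, and it is furnished as a hypothesis; in the intended applications it would in turn follow from continuity of the substitution $\phi \mapsto \phi(0)$, of the delay functional $\tau$, and of the evaluation map, but that analysis is properly part of establishing the continuity of $\rho_\tau$ and is not needed for the present lemma.
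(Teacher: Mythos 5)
Your proof is correct and follows essentially the same route as the paper's: choose an open $U$ with $\rho_\tau(\sigma_0, \psi_0) \in U \subset \dom(f)$, use continuity of $\rho_\tau$ to see that $\rho_\tau^{-1}(U)$ is open in $D$, and conclude via $\rho_\tau^{-1}(U) \subset \rho_\tau^{-1}(\dom(f)) = \dom \bigl( F_{f, \tau} \bigr)$. Your added remarks about the subspace topology on $D$ and about first shrinking $\dom(f)$ to an open set are sensible bookkeeping but do not change the argument.
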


\begin{proof}
We choose an open neighborhood $U$ of $\rho_\tau(\sigma_0, \psi_0)$ in $\mathbb{R} \times E \times E$
so that $U \subset \dom(f)$.
Since $\rho_\tau$ is continuous,
$\rho_\tau^{-1}(U)$ is an open neighborhood of $(\sigma_0, \psi_0)$ in $D$.
In view of
	\begin{equation*}
		\rho_\tau^{-1}(U) \subset\rho_\tau^{-1}(\dom(f)) = \dom \bigl( F_{f, \tau} \bigr),
	\end{equation*}
the conclusion is obtained.
\end{proof}

\subsection{Spaces of continuous maps, and history spaces}\label{subsec:spaces of continuous maps}

By using the evaluation map
	\begin{equation*}
		\ev_H \colon H \times I \to E,
			\mspace{15mu}
		\ev_H(\phi, \theta) = \phi(\theta),
	\end{equation*}
$\rho_\tau$ is expressed by
	\begin{equation*}
		\rho_\tau(t, \phi)
		= \bigl( t, \ev_H(\phi, 0), \ev_H(\phi, -\tau(t, \phi)) \bigr).
	\end{equation*}
Therefore, the continuity of $\tau$ and $\ev_H$ imply that of $\rho_\tau$.

The continuity of the evaluation map $\ev_H$ also implies the following:
\begin{itemize}
\item $H \subset C(I, E)$, and
\item the topology of $H$ is finer than the compact-open topology.
\end{itemize}
We refer the reader to \cite[Section 7]{Kelley 1955} for the relationship
between the continuity of the evaluation map and the compact-open topology.
Therefore, we arrive the following proposition.

\begin{proposition}
Let $I = I^r$ for some $r > 0$.
Then the following properties are equivalent:
\begin{enumerate}
\item[\emph{(a)}] $H$ is closed under prolongations and regulated by prolongations,
and the evaluation map $\ev_H$ is continuous.
\item[\emph{(b)}] $H = C(I^r, E)_\mathrm{u}$.
\end{enumerate}
\end{proposition}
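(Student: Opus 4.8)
The plan is to prove the two implications separately, reducing each to results already available earlier in the paper. For $(b) \Rightarrow (a)$ I would verify the three assertions directly for the concrete space $H = C(I^r, E)_\mathrm{u}$; for $(a) \Rightarrow (b)$ I would first establish the set-theoretic equality $H = C(I^r, E)$ and then pin down the topology by squeezing it between two opposite comparisons.

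For $(b) \Rightarrow (a)$: given $\psi \in C(I^r, E)$ and a prolongation $\gamma \colon \mathbb{R}_+ + I^r \to E$ of $\psi$, the pieces $\gamma|_{I^r} = \psi$ and $\gamma|_{\mathbb{R}_+}$ are continuous and agree at the single overlap point $0$, so $\gamma$ is continuous on $[-r, +\infty)$; hence every history $I_t\gamma$ lies in $C(I^r, E) = H$, which is exactly closedness under prolongations. Regulation by prolongations is then immediate from Theorem~\ref{thm:r < infty, regulation}, whose condition (b) (topology of $H$ coarser than or equal to uniform convergence) holds trivially because the topology of $H$ \emph{is} the uniform one. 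Finally, the continuity of $\ev_H$ on $C(I^r, E)_\mathrm{u} \times I^r$ follows from the standard estimate $\|\phi(\theta) - \phi_0(\theta_0)\|_E \le \|\phi - \phi_0\|_\infty + \|\phi_0(\theta) - \phi_0(\theta_0)\|_E$, the first term controlled by uniform closeness and the second by continuity of the fixed map $\phi_0$.

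For $(a) \Rightarrow (b)$: first I would settle the underlying sets. Since $H$ is closed under prolongations, in particular $\boldsymbol{0}$ is, so Proposition~\ref{prop:closedness under prolongations} gives $C_\mathrm{c}(I^r, E) \subset H$, and compactness of $I^r$ upgrades this to $C(I^r, E) \subset H$. Conversely, the continuity of $\ev_H$ forces $H \subset C(I, E)$, since for each fixed $\phi \in H$ the map $\theta \mapsto \ev_H(\phi, \theta) = \phi(\theta)$ is continuous; thus $H = C(I^r, E)$ as a set. It then remains to match the topology of $H$ with the topology of uniform convergence. On one side, $H$ is regulated by prolongations and $\boldsymbol{0}$ is closed under prolongations, so Theorem~\ref{thm:r < infty, regulation} gives that the topology of $H$ is coarser than or equal to the uniform one. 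On the other side, the continuity of $\ev_H$ makes the topology of $H$ finer than the compact-open topology, which for the compact past interval $I^r$ coincides with the uniform topology. Squeezing the two comparisons yields that the two topologies agree, i.e.\ $H = C(I^r, E)_\mathrm{u}$.

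The routine parts are the gluing argument and the evaluation estimate; the one genuinely load-bearing step is the topological squeeze in $(a) \Rightarrow (b)$, where the two qualitatively different hypotheses—regulation by prolongations on the one hand, continuity of $\ev_H$ on the other—must be made to meet from opposite sides. I expect the only point demanding care to be the invocation that continuity of $\ev_H$ implies finer-than-compact-open, together with the identification of the compact-open and uniform topologies on $C(I^r, E)$ when $I^r$ is compact; both are standard (cf.\ the cited Kelley material) but should be stated cleanly so that the sandwich closes exactly.
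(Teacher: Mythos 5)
Your proposal is correct and follows essentially the same route as the paper: the set-theoretic identification via the continuity of $\ev_H$ (giving $H \subset C(I^r, E)$) combined with Proposition~\ref{prop:closedness under prolongations}, and then the topological squeeze between Theorem~\ref{thm:r < infty, regulation} (coarser than uniform) and the standard fact that continuity of the evaluation map forces the topology to be finer than the compact-open, hence uniform, topology on the compact interval $I^r$. The only difference is that you spell out the converse direction and the gluing/evaluation estimates, which the paper dismisses as obvious.
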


\begin{proof}
(b) $\Rightarrow$ (a) is obvious.
We show (a) $\Rightarrow$ (b).
The continuity of $\ev_H$ implies $H \subset C(I^r, E)$.
This shows $H = C(I^r, E)$ from Proposition~\ref{prop:closedness under prolongations}.
The continuity of $\ev_H$ also implies that the topology of $H$ is finer than the topology of uniform convergence.
Therefore, the topology of $H$ is exactly equal to the topology of uniform convergence
from Theorem~\ref{thm:r < infty, regulation}.
\end{proof}

The next proposition ensures the continuity of
	\begin{equation*}
		\mathbb{R}_+ \times E \times H \ni (t, v, \phi) \mapsto S(t)(v, \phi) \in H
	\end{equation*}
for the history spaces which consist of continuous maps.
See also Remark~\ref{rmk:continuity and equi-continuity}.

\begin{proposition}
Let
	\begin{equation*}
		H :=
		\begin{cases}
			C(I^r, E)_\mathrm{u}, & \text{when $I = I^r$ for some $r > 0$}, \\
			C(I^\infty, E)_\mathrm{co}, & \text{when $I = I^\infty$}.
		\end{cases}
	\end{equation*}
Then for every $T > 0$, $(S(t))_{t \in [0, T]}$ is equi-continuous at $(0, \boldsymbol{0})$.
\end{proposition}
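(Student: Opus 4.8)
The plan is to exploit the linearity of $S(t)$ together with explicit pointwise bounds. First I would recall that, by the identity $(\phi+\psi)^{\wedge(v+w)} = \phi^{\wedge v} + \psi^{\wedge w}$ recorded above, each $S(t)\colon E\times\Map(I,E)\to\Map(I,E)$ is linear and $S(t)(0,\boldsymbol 0) = I_t[\boldsymbol 0^{\wedge 0}] = \boldsymbol 0$. Hence equi-continuity of $(S(t))_{t\in[0,T]}$ at $(0,\boldsymbol 0)$ amounts to the following statement: for every neighborhood $N$ of $\boldsymbol 0$ in $H$ there is a neighborhood $V\times M$ of $(0,\boldsymbol 0)$ in $E\times H$ such that $S_v(t)\phi\in N$ for all $t\in[0,T]$ and all $(v,\phi)\in V\times M$. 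The same linearity identity gives $\phi^{\wedge v} = \bar\phi + \boldsymbol 0^{\wedge v}$, and applying the linear map $I_t$ yields the decomposition $S_v(t)\phi = S_0(t)\phi + S_v(t)\boldsymbol 0$, which I would treat term by term.

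For the two terms I would use their pointwise descriptions. For $\theta\in I$, $S_0(t)\phi(\theta)$ equals $\phi(t+\theta)$ when $t+\theta\le 0$ and equals $\phi(0)$ when $t+\theta>0$; in either case it is a value of $\phi$ at a point of $[-k,0]$ whenever $\theta\in[-k,0]$ and $t\ge 0$ (here one uses $t+\theta\ge\theta\ge -k$ in the first case and $0\in[-k,0]$ in the second), so $\|S_0(t)\phi\|_{C[-k,0]}\le\|\phi\|_{C[-k,0]}$ for every $k$. Likewise $S_v(t)\boldsymbol 0(\theta)$ equals $0$ when $t+\theta\le 0$ and $(t+\theta)v$ when $t+\theta>0$, and since $t+\theta\in[0,t]\subset[0,T]$ on the latter set, $\|S_v(t)\boldsymbol 0\|_{C[-k,0]}\le T\|v\|_E$. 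Combining the two estimates through the decomposition gives $\|S_v(t)\phi\|_{C[-k,0]}\le\|\phi\|_{C[-k,0]} + T\|v\|_E$, uniformly in $t\in[0,T]$ and valid for every $k$ and for an arbitrary Banach space $E$.

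It then remains to read off the neighborhoods in each topology. When $I=I^r$ with $r<\infty$ and $H = C(I^r,E)_\mathrm{u}$, the relevant seminorm is $\|\cdot\|_\infty = \|\cdot\|_{C[-r,0]}$; given $N = B_H(\boldsymbol 0;\varepsilon)$ I would take $M = B_H(\boldsymbol 0;\varepsilon/2)$ and $V = B_E(0;\varepsilon/(2T))$ (which is legitimate since $T>0$), whence $\|S_v(t)\phi\|_\infty\le\varepsilon/2 + T\cdot\varepsilon/(2T) < \varepsilon$. When $I = I^\infty$ and $H = C(I^\infty,E)_\mathrm{co}$, a neighborhood base of $\boldsymbol 0$ is given by the sets $\{\psi : \|\psi\|_{C[-k,0]}<\varepsilon\}$ (a general basic neighborhood is a finite intersection of such, which reduces to a single one because $\|\cdot\|_{C[-k,0]}$ is nondecreasing in $k$), and the identical choice of $M$ and $V$ tested against the seminorm $\|\cdot\|_{C[-k,0]}$ works. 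There is no genuine obstacle here: the only points requiring care are the reduction of equi-continuity to the single displayed inequality via the linearity of $S(t)$, and, in the compact-open case, the observation that it suffices to test against one seminorm $\|\cdot\|_{C[-k,0]}$; the estimates themselves are elementary and uniform over $t\in[0,T]$.
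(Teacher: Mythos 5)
Your proof is correct and follows essentially the same route as the paper: both rest on the estimate $\|S_v(t)\phi\|_{C[-k,0]}\le\|\phi\|_{C[-k,0]}+T\|v\|_E$ (the paper obtains it by a direct supremum computation on $\phi^{\wedge v}(t+\theta)$, you via the decomposition $S_v(t)\phi=S_0(t)\phi+S_v(t)\boldsymbol 0$, which is only a cosmetic difference), and both then choose the neighborhoods $\|\phi\|<\ep/2$ and $\|v\|_E<\ep/(2T)$ in each of the two cases.
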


\begin{proof}
Fix $T > 0$.

\textbf{Case 1:} $I = I^r$ for some $r > 0$.

For all $t \in [0, T]$ and all $(v, \phi) \in E \times H$, we have
	\begin{align*}
		\|S(t)(v, \phi)\|_\infty
		&= \sup_{\theta \in [-r, 0]} \bigl\| \phi^{\wedge v}(t + \theta) \bigr\|_E \\
		&\le \|\phi\|_\infty + T\|v\|_E \\
		&\le (1 + T) \cdot (\|v\|_E + \|\phi\|_\infty).
	\end{align*}
This shows the conclusion.

\textbf{Case 2:} $I = I^\infty$.

Let $N$ be a neighborhood of $\boldsymbol{0}$ in $H$.
We choose an integer $k \ge 1$ and $\ep > 0$ so that
	\begin{equation*}
		\{\mspace{2mu} \phi \in H : \|\phi\|_{C[-k, 0]} < \ep \mspace{2mu}\} \subset N.
	\end{equation*}
Then for all $t \in [0, T]$ and all $(v, \phi) \in E \times H$,
	\begin{equation*}
		\|\phi\|_{C[-k, 0]} < \ep/2 \mspace{10mu} \text{and} \mspace{10mu} \|v\|_E < \ep/(2T)
	\end{equation*}
imply
	\begin{align*}
		\|S(t)(v, \phi)\|_{C[-k, 0]}
		&= \sup_{\theta \in [-k, 0]} \bigl\| \phi^{\wedge v}(t + \theta) \bigr\|_E \\
		&\le \|\phi\|_{C[-k, 0]} + T\|v\|_E \\
		&< (\ep/2) + T \cdot (\ep/2T) \\
		&= \ep.
	\end{align*}
This shows $S(t)(v, \phi) \in N$.
Therefore, the conclusion holds.
\end{proof}

\subsection{Lipschitz conditions for state-dependent DDEs}

In this subsection, we suppose
	\begin{equation*}
		H =
		\begin{cases}
			C(I^r, E)_\mathrm{u}, & \text{when $I = I^r$ for some $r > 0$}, \\
			C(I^\infty, E)_\mathrm{co}, & \text{when $I = I^\infty$}
		\end{cases}
	\end{equation*}
in view of Subsection~\ref{subsec:spaces of continuous maps}.
Then under the assumption of the continuity of $\tau$, $\rho_\tau$ is continuous.
The above can be shortly written by
	\begin{equation*}
		H = C(I, E)_\mathrm{co}
			\mspace{20mu}
		\text{for $I = I^r$ or $I = I^\infty$}
	\end{equation*}
because the compact-open topology of $C(I, E)$ equals to the topology of uniform convergence when $I$ is compact.

\begin{lemma}\label{lem:Lip condition for sd delay}
Let $(\sigma_0, \psi_0) \in \dom \bigl( F_{f, \tau} \bigr)$.
Suppose that $\tau$ is continuous.
If $f$ is locally Lipschitzian,
then there exist a neighborhood $W$ of $(\sigma_0, \psi_0)$ in $\mathbb{R} \times H$ and $L_f > 0$ such that
for all $(t, \phi_1), (t, \phi_2) \in W \cap \dom \bigl( F_{f, \tau} \bigr)$,
	\begin{align*}
		\bigl\| F_{f, \tau}(t, \phi_1) - F_{f, \tau}(t, \phi_2) \bigr\|_E
		&\le L_f \cdot (\|\phi_1(0) - \phi_2(0)\|_E \\
		&\mspace{80mu} + \|\phi_1(-\tau(t, \phi_1)) - \phi_2(-\tau(t, \phi_2))\|_E)
	\end{align*}
holds.
\end{lemma}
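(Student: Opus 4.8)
The plan is to exploit the factorization $F_{f, \tau} = f \circ \rho_\tau$ together with the local Lipschitz property of $f$, the key point being that the two histories $\phi_1, \phi_2$ are compared at the \emph{same} time $t$, so the estimate only has to control the two $E$-valued coordinates of $\rho_\tau$. First I would record what the local Lipschitz property of $f$ gives at the base point. Since $(\sigma_0, \psi_0) \in \dom \bigl( F_{f, \tau} \bigr)$ means $\rho_\tau(\sigma_0, \psi_0) \in \dom(f)$, there exist an open neighborhood $U$ of $\rho_\tau(\sigma_0, \psi_0)$ in $\mathbb{R} \times E \times E$ with $U \subset \dom(f)$ and a constant $L_f > 0$ such that
\[
	\|f(t, u_1, w_1) - f(t, u_2, w_2)\|_E \le L_f \cdot (\|u_1 - u_2\|_E + \|w_1 - w_2\|_E)
\]
for all $(t, u_1, w_1), (t, u_2, w_2) \in U$. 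Here the common first coordinate $t$ is precisely what makes the $|t_1 - t_2|$-contribution in the usual local Lipschitz bound for $f$ vanish.

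Next I would transport $U$ back to the history side. As noted in this subsection, the standing choice $H = C(I, E)_\mathrm{co}$ makes the evaluation map $\ev_H$ continuous, and the continuity of $\tau$ then yields the continuity of $\rho_\tau$ on $D$. Consequently $\rho_\tau^{-1}(U)$ is relatively open in $D$ and contains $(\sigma_0, \psi_0)$, so I can choose an open set $W$ in $\mathbb{R} \times H$ with $(\sigma_0, \psi_0) \in W$ and $W \cap D = \rho_\tau^{-1}(U)$. Because $\dom \bigl( F_{f, \tau} \bigr) = \rho_\tau^{-1}(\dom(f)) \subset D$, every $(t, \phi) \in W \cap \dom \bigl( F_{f, \tau} \bigr)$ satisfies $\rho_\tau(t, \phi) \in U$.

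Finally I would run the estimate. Given $(t, \phi_1), (t, \phi_2) \in W \cap \dom \bigl( F_{f, \tau} \bigr)$, both images $\rho_\tau(t, \phi_i) = (t, \phi_i(0), \phi_i(-\tau(t, \phi_i)))$ lie in $U$ and share the first coordinate $t$, so applying the displayed Lipschitz inequality for $f$ gives exactly
\[
	\bigl\| F_{f, \tau}(t, \phi_1) - F_{f, \tau}(t, \phi_2) \bigr\|_E
	\le L_f \cdot \bigl( \|\phi_1(0) - \phi_2(0)\|_E + \|\phi_1(-\tau(t, \phi_1)) - \phi_2(-\tau(t, \phi_2))\|_E \bigr),
\]
which is the assertion. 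There is no serious obstacle in this argument; the only subtlety worth flagging is the bookkeeping about domains — one must invoke $\dom \bigl( F_{f, \tau} \bigr) \subset D$ so that $\rho_\tau$ is defined at the relevant points, and one must use that the two arguments carry the same time coordinate, so that the $t$-direction of the local Lipschitz bound contributes nothing.
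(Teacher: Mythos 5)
Your proposal is correct and follows essentially the same route as the paper: apply the local Lipschitz property of $f$ at the point $\rho_\tau(\sigma_0, \psi_0)$ to get a neighborhood $U$ and constant $L_f$, pull $U$ back through the continuous map $\rho_\tau$ to obtain the neighborhood $W$, and then read off the estimate from the factorization $F_{f,\tau} = f \circ \rho_\tau$. The extra remarks about the shared time coordinate and the domain bookkeeping are accurate but do not change the argument.
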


\begin{proof}
Let $(\sigma_0, x_0, y_0) := \rho_\tau(\sigma_0, \psi_0)$.
Since $f$ is locally Lipschitzian,
there are a neighborhood $U$ of $(\sigma_0, x_0, y_0)$ in $\mathbb{R} \times E \times E$ and $L_f > 0$ such that
for all $(t, x_1, y_1), (t, x_2, y_2) \in U \cap \dom(f)$,
	\begin{equation*}
		\|f(t, x_1, y_1) - f(t, x_2, y_2)\|_E
		\le L_f \cdot (\|x_1 - y_1\|_E + \|x_2 - y_2\|_E).
	\end{equation*}
By the continuity of $\rho_\tau$,
there is a neighborhood $W$ of $(\sigma_0, \psi_0)$ in $\mathbb{R} \times H$ such that $\rho_\tau(W \cap D) \subset U$.
Therefore, $(t, \phi) \in W \cap \dom \bigl( F_{f, \tau} \bigr)$ implies
	\begin{equation*}
		\rho_\tau(t, \phi) \in \dom(f), \mspace{15mu} \rho_\tau(t, \phi) \in \rho_\tau(W \cap D).
	\end{equation*}
This shows the conclusion.
\end{proof}

\subsubsection{General delay functionals}

For a general delay functional $\tau$, a difficulty arise by the following type estimation:
For $(t, \phi_1), (t, \phi_2) \in D$,
	\begin{align*}
		\|\phi_1(-\tau(t, \phi_1)) - \phi_2(-\tau(t, \phi_2))\|_E
		&\le \|\phi_1(-\tau(t, \phi_1)) - \phi_2(-\tau(t, \phi_1))\|_E \\
		&\mspace{60mu} + \|\phi_2(-\tau(t, \phi_1)) - \phi_2(-\tau(t, \phi_2))\|_E.
	\end{align*}
Here we have
	\begin{equation*}
		\|\phi_1(-\tau(t, \phi_1)) - \phi_2(-\tau(t, \phi_1))\|_E \le \|\phi_1 - \phi_2\|_\infty.
	\end{equation*}
However, the Lipschitz continuity of $\phi_2$ is necessary for the estimation of the second term.

\begin{proposition}\label{prop:almost loc Lip for sd delay}
Let $(\sigma_0, \psi_0) \in \dom \bigl( F_{f, \tau} \bigr)$.
Suppose that $\tau$ is continuous.
If $f$ is locally Lipschitzian, and if $\tau$ is almost locally Lipschitzian at $(\sigma_0, \psi_0)$,
then $F_{f, \tau}$ is almost locally Lipschitzian at $(\sigma_0, \psi_0)$.
\end{proposition}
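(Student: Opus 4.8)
The plan is to reduce the estimate to the pointwise bound supplied by Lemma~\ref{lem:Lip condition for sd delay} and then control the two resulting terms. I would first fix the data appearing in the definition of almost local Lipschitz: a single $M > 0$ in the case $I = I^r$ (Definition~\ref{dfn:almost locally Lip for I^r}), or a sequence $(M_k)_{k=1}^\infty$ in the case $I = I^\infty$ (Definition~\ref{dfn:almost locally Lip for I^infty}). Since $\tau$ is continuous, so is $\rho_\tau$, and as $f$ is locally Lipschitzian, Lemma~\ref{lem:Lip condition for sd delay} yields a neighborhood $W_f$ of $(\sigma_0, \psi_0)$ and $L_f > 0$ such that, on $W_f \cap \dom(F_{f,\tau})$,
\begin{equation*}
    \bigl\| F_{f,\tau}(t, \phi_1) - F_{f,\tau}(t, \phi_2) \bigr\|_E
    \le L_f \bigl( \|\phi_1(0) - \phi_2(0)\|_E + \|\phi_1(-\tau(t,\phi_1)) - \phi_2(-\tau(t,\phi_2))\|_E \bigr).
\end{equation*}
The first summand is at most $\|\phi_1 - \phi_2\|_\infty$, so the whole problem reduces to estimating the second.

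The second summand is precisely the troublesome term flagged in the discussion preceding the statement, because $\phi_1$ and $\phi_2$ are evaluated at the two different delay arguments $-\tau(t,\phi_1)$ and $-\tau(t,\phi_2)$; controlling it is the main obstacle. I would split it by the triangle inequality,
\begin{equation*}
    \|\phi_1(-\tau(t,\phi_1)) - \phi_2(-\tau(t,\phi_2))\|_E
    \le \|(\phi_1 - \phi_2)(-\tau(t,\phi_1))\|_E
        + \|\phi_2(-\tau(t,\phi_1)) - \phi_2(-\tau(t,\phi_2))\|_E,
\end{equation*}
where the first term on the right is again bounded by $\|\phi_1 - \phi_2\|_\infty$. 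For the second term I would exploit the Lipschitz regularity of $\phi_2$ imposed by the side condition (namely $\lip(\phi_2) \le M$, resp.\ $\lip(\phi_2|_{[-k,0]}) \le M_k$), bounding it by the Lipschitz constant of $\phi_2$ times $|\tau(t,\phi_1) - \tau(t,\phi_2)|$, and then apply the almost local Lipschitz property of $\tau$ --- invoked with the same $M$, resp.\ $(M_k)$ --- to obtain a neighborhood $W_\tau$ and $L_\tau > 0$ with $|\tau(t,\phi_1) - \tau(t,\phi_2)| \le L_\tau \|\phi_1 - \phi_2\|_\infty$.

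In the finite-lag case $I = I^r$ this closes immediately, because $\lip(\phi_2) \le M$ and $I^r$ is compact. In the infinite-lag case $I = I^\infty$ one further step is required to render the bound on $\phi_2$ usable: since $\tau$ is continuous and $\tau(\sigma_0, \psi_0) \in \mathbb{R}_+$, $\tau$ is locally bounded, so I would shrink to a neighborhood $W_0$ and choose an integer $k_0 \ge 1$ with $\tau(t,\phi) \le k_0$ on $W_0$; then both arguments $-\tau(t,\phi_i)$ lie in $[-k_0, 0]$, and the admissible bound $\lip(\phi_2|_{[-k_0,0]}) \le M_{k_0}$ applies. Setting $W := W_f \cap W_\tau \cap W_0$ and assembling the pieces gives
\begin{equation*}
    \bigl\| F_{f,\tau}(t, \phi_1) - F_{f,\tau}(t, \phi_2) \bigr\|_E
    \le L_f \bigl( 2 + M_{k_0} L_\tau \bigr) \|\phi_1 - \phi_2\|_\infty
\end{equation*}
on $W \cap \dom(F_{f,\tau})$ (with $M_{k_0}$ read as $M$ in the finite-lag case), so $L := L_f(2 + M_{k_0} L_\tau)$ witnesses $L$-\eqref{eq:Lipschitzian}. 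The only delicate points are matching the Lipschitz data between $\tau$ and $F_{f,\tau}$ and confining the delay values to the fixed compact interval $[-k_0, 0]$ in the infinite-lag case.
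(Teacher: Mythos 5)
Your proposal is correct and follows essentially the same route as the paper's proof: the pointwise bound from Lemma~\ref{lem:Lip condition for sd delay}, the triangle-inequality split of the delayed term, the Lipschitz bound $M_{k_0}\,|\tau(t,\phi_1)-\tau(t,\phi_2)|$ on the second piece using the local boundedness of $\tau$ to confine the delay values to $[-k_0,0]$, and the almost local Lipschitz property of $\tau$ to close the estimate with the constant $L_f(2+M_{k_0}L_\tau)$. The only difference is that you spell out the finite-lag case as well, which the paper delegates to Mallet-Paret, Nussbaum, and Paraskevopoulos.
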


The proof for $I = I^r$ $(r > 0)$ can be found
in \cite[Proposition 1.1]{Mallet-Paret--Nussbaum--Paraskevopoulos 1994}.
We give the proof for the case $I = I^\infty$.
See Definitions~\ref{dfn:almost locally Lip for I^r} and \ref{dfn:almost locally Lip for I^infty}
for the notion of almost local Lipschitz.

\begin{proof}[\textbf{Proof of Proposition~\ref{prop:almost loc Lip for sd delay}}]
Let $(M_k)_{k = 1}^\infty$ be a sequence of positive numbers.
We choose a neighborhood $W$ of $(\sigma_0, \psi_0)$ in $\mathbb{R} \times H$ and $L_f > 0$
given in Lemma~\ref{lem:Lip condition for sd delay}.
Since $\tau$ is almost locally Lipschitzian at $(\sigma_0, \psi_0)$,
we may assume the following by choosing $W$ sufficiently small:
there is $L_\tau > 0$ such that
	\begin{equation*}
		|\tau(t, \phi_1) - \tau(t, \phi_2)|
		\le L_\tau \cdot \|\phi_1 - \phi_2\|_\infty
	\end{equation*}
holds for all $(t, \phi_1), (t, \phi_2) \in W \cap D$ satisfying the conditions
\begin{enumerate}
\item[(i)] $\supp(\phi_1 - \phi_2)$ is compact, and
\item[(ii)] $\lip \bigl( \phi_1|_{[-k, 0]} \bigr), \lip \bigl( \phi_2|_{[-k, 0]} \bigr) \le M_k$ for all $k \ge 1$.
\end{enumerate}
By the continuity of $\tau$, we may also assume that $\tau(W \cap D) \subset [0, k_0]$ holds
for some integer $k_0 \ge 1$.
In view of Lemma~\ref{lem:Lip condition for sd delay}, we have
	\begin{align*}
		\bigl\| F_{f, \tau}(t, \phi_1) - F_{f, \tau}(t, \phi_2) \bigr\|_E
		&\le L_f \cdot (\|\phi_1(0) - \phi_2(0)\|_E \\
		&\mspace{80mu} + \|\phi_1(-\tau(t, \phi_1)) - \phi_2(-\tau(t, \phi_2))\|_E) \\
		&\le L_f \cdot (2\|\phi_1 - \phi_2\|_\infty + M_{k_0} \cdot |\tau(t, \phi_1) - \tau(t, \phi_2)|) \\
		&\le L_f (2 + M_{k_0}L_\tau) \cdot \|\phi_1 - \phi_2\|_\infty 
	\end{align*}
for $(t, \phi_1), (t, \phi_2) \in W \cap \dom \bigl( F_{f, \tau} \bigr)$ satisfying the above conditions (i) and (ii).
This shows the conclusion.
\end{proof}

\subsubsection{Constancy of delay functionals about memories}\label{subsec:constancy about memories}

The introduction of the Lipschitz about memories
was motived by the property of delay functionals
introduced by Rezounenko~\cite{Rezounenko 2009, Rezounenko 2012}.
The following property is a generalization of this property.

\begin{definition}[\cite{Nishiguchi 2017}, cf.\ \cite{Rezounenko 2009, Rezounenko 2012}]
\label{dfn:constancy about memories}
We say that $\tau$ is \textit{constant about memories} if there exists $R > 0$ such that
for all $(t, \phi_1), (t, \phi_2) \in D$,
	\begin{equation*}
		\supp(\phi_1 - \phi_2) \subset [-R, 0] \subset I
			\imply
		\tau(t, \phi_1) = \tau(t, \phi_2).
	\end{equation*}
We say that $\tau$ is \textit{locally constant about memories} at $(\sigma_0, \psi_0) \in D$
if there exists a neighborhood $W$ of $(\sigma_0, \psi_0)$ in $\mathbb{R} \times H$ such that
$\tau|_{W \cap D}$ is constant about memories.
When $\tau$ is locally constant about memories at each $(t_0, \phi_0) \in D$,
we simply say that $\tau$ is locally constant about memories.
\end{definition}

We note that variable delays independent from the dependent variable are trivial examples
of delay functionals which are constant about memories.

\begin{example}
For $a, b \in \mathbb{R}$ and $0 < \lambda < 1$, the differential equation
	\begin{equation}\label{eq:pantograph eq}
		\dot{x}(t) = ax(\lambda t) + bx(t) \mspace{20mu} (t \ge 0)
	\end{equation}
is the so-called \textit{pantograph equation}
(see \cite{Ockendon--Tayler 1971} for the origin of the pantograph equation).
\eqref{eq:pantograph eq} is a DDE with an unbounded variable delay in view of
	\begin{equation*}
		\lambda t = t - (1 - \lambda)t, \mspace{15mu} \lim_{t \to +\infty} (1 - \lambda)t = \infty.
	\end{equation*}
The delay $(1 - \lambda)t$ is independent from the dependent variable.
\end{example}

A nontrivial example of a class of delay functionals which are locally constant about memories is obtained
from the following proposition.

\begin{proposition}[cf.\ \cite{Nishiguchi 2017}]
Let $\delta(\cdot) \colon \mathbb{R} \to (0, \infty)$ and $\tau_0 \colon \mathbb{R} \times E \supset \dom(\tau_0) \to \mathbb{R}_+$ be functions.
Suppose that $\delta(\cdot)$ is continuous.
If $\tau \colon \mathbb{R} \times H \supset D \to \mathbb{R}_+$ is given by
$-\delta(t) \in I$ for all $t \in \mathbb{R}$, and
	\begin{align*}
		D
		&= \bigl\{ \mspace{2mu} (t, \phi) \in \mathbb{R} \times H :
				\bigl( t, \phi(-\delta(t)) \bigr) \in \dom(\tau_0) \mspace{2mu} \bigr\}, \\
		\tau(t, \phi)
		&= \tau_0 \bigl( t, \phi(-\delta(t)) \bigr),
	\end{align*}
then $\tau$ is locally constant about memories.
\end{proposition}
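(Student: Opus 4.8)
The plan is to verify the defining condition of local constancy about memories (Definition~\ref{dfn:constancy about memories}) directly at an arbitrary base point, exploiting the fact that $\tau(t, \phi)$ reads off $\phi$ only at the single past instant $-\delta(t)$. First I would fix $(\sigma_0, \psi_0) \in D$ and use the continuity of $\delta$ together with $\delta(\sigma_0) > 0$ to produce a margin: setting $R := \delta(\sigma_0)/2 > 0$, the continuity of $\delta$ at $\sigma_0$ yields a neighborhood $U$ of $\sigma_0$ in $\mathbb{R}$ on which $\delta(t) > R$, hence $-\delta(t) < -R$. Since $-\delta(\sigma_0) \in I$ and $I$ is an interval containing $0$, the inequality $R < \delta(\sigma_0)$ gives $[-R, 0] \subset [-\delta(\sigma_0), 0] \subset I$, so the inclusion $[-R, 0] \subset I$ demanded by the definition is secured.

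Next I would take $W := U \times H$, which is a neighborhood of $(\sigma_0, \psi_0)$ in $\mathbb{R} \times H$, and check that $\tau|_{W \cap D}$ is constant about memories with this $R$. For $(t, \phi_1), (t, \phi_2) \in W \cap D$ with $\supp(\phi_1 - \phi_2) \subset [-R, 0]$, the maps $\phi_1$ and $\phi_2$ coincide on $\{\theta \in I : \theta < -R\}$. Because $t \in U$ forces $-\delta(t) < -R$ while $-\delta(t) \in I$, the evaluation point lies in that region, so $\phi_1(-\delta(t)) = \phi_2(-\delta(t))$, and therefore
	\begin{equation*}
		\tau(t, \phi_1) = \tau_0 \bigl( t, \phi_1(-\delta(t)) \bigr) = \tau_0 \bigl( t, \phi_2(-\delta(t)) \bigr) = \tau(t, \phi_2).
	\end{equation*}
This establishes constancy about memories on $W \cap D$, i.e.\ local constancy about memories at $(\sigma_0, \psi_0)$; since the base point was arbitrary, the conclusion follows.

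The argument is essentially immediate once the right $R$ is isolated, and it uses neither a fixed-point scheme nor any special feature of the topology of $H$. The only point demanding a little care---and the closest thing to an obstacle---is securing the strict inequality $-\delta(t) < -R$ uniformly over a full neighborhood $U$, so that the evaluation point stays strictly to the left of $-R$, where the two histories are forced to agree, rather than merely at $-R$, where they may differ; this must be arranged simultaneously with $[-R, 0] \subset I$. Both requirements are handled by the single choice $R = \delta(\sigma_0)/2$ together with the continuity of $\delta$, so I anticipate no genuine difficulty.
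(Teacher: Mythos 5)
Your proof is correct and follows essentially the same route as the paper's: both isolate a radius $R>0$ strictly below $\delta(t)$ on a whole time-neighborhood of $\sigma_0$ (the paper takes $R<\inf_{|t-\sigma_0|<a}\delta(t)$, you take $R=\delta(\sigma_0)/2$ and shrink the neighborhood), so that the single evaluation point $-\delta(t)$ lies strictly to the left of $[-R,0]$ where the two histories agree. Your explicit check that $[-R,0]\subset I$ and your remark about needing the strict inequality $-\delta(t)<-R$ are correct refinements of the same argument.
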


\begin{proof}
Fix $(\sigma_0, \psi_0) \in D$ and take $a > 0$.
Since $\delta(\cdot)$ is continuous, $\inf_{|t - \sigma_0| < a} \delta(t) > 0$.
Let $0 < R < \inf_{|t - \sigma_0| < a} \delta(t)$.
Then for all $(t, \phi_1), (t, \phi_2) \in D$
satisfying $|t - \sigma_0| < a$ and $\supp(\phi_1 - \phi_2) \subset [-R, 0]$, we have
	\begin{equation*}
		\tau(t, \phi_1)
		= \tau_0 \bigl( t, \phi_1(-\delta(t)) \bigr)
		= \tau_0 \bigl( t, \phi_2(-\delta(t)) \bigr)
		= \tau(t, \phi_2).
	\end{equation*}
This shows the conclusion.
\end{proof}

The Lipschitz about memories and the constancy about memories are combined in the following lemma.

\begin{proposition}[cf.\ \cite{Nishiguchi 2017}]
\label{prop:constancy about memories}
Suppose that $\tau \colon \mathbb{R} \times H \supset D \to \mathbb{R}_+$ is continuous.
Let $(\sigma_0, \psi_0) \in \dom \bigl( F_{f, \tau} \bigr)$.
If $f$ is locally Lipschitzian, and if $\tau$ is locally constant about memories at $(\sigma_0, \psi_0)$,
then $F_{f, \tau}$ is locally Lipschitzian about memories at $(\sigma_0, \psi_0)$.
\end{proposition}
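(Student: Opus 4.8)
The plan is to reduce the claim to Lemma~\ref{lem:Lip condition for sd delay} and then to exploit the local constancy of $\tau$ about memories in order to kill the only dangerous term. Recall that Lemma~\ref{lem:Lip condition for sd delay} provides, from the continuity of $\tau$ and the local Lipschitz property of $f$, a neighborhood $W_1$ of $(\sigma_0,\psi_0)$ in $\mathbb{R}\times H$ and a constant $L_f>0$ with
\[
\bigl\| F_{f,\tau}(t,\phi_1) - F_{f,\tau}(t,\phi_2) \bigr\|_E
\le L_f \bigl( \|\phi_1(0)-\phi_2(0)\|_E + \|\phi_1(-\tau(t,\phi_1)) - \phi_2(-\tau(t,\phi_2))\|_E \bigr)
\]
for all $(t,\phi_1),(t,\phi_2)\in W_1\cap\dom\bigl(F_{f,\tau}\bigr)$. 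The obstacle flagged earlier for general delay functionals is that the evaluation points $-\tau(t,\phi_1)$ and $-\tau(t,\phi_2)$ of the second term generally differ, so that bounding it by $\|\phi_1-\phi_2\|_\infty$ would require Lipschitz continuity of $\phi_2$.

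First I would invoke the assumption that $\tau$ is locally constant about memories at $(\sigma_0,\psi_0)$ to obtain a second neighborhood $W_2$ of $(\sigma_0,\psi_0)$ and an $R>0$ such that $\tau(t,\phi_1)=\tau(t,\phi_2)$ whenever $(t,\phi_1),(t,\phi_2)\in W_2\cap D$ and $\supp(\phi_1-\phi_2)\subset[-R,0]\subset I$. Setting $W:=W_1\cap W_2$, I would then verify the conditions of Definition~\ref{dfn:Lip about memories} for the restriction $F_{f,\tau}|_{W\cap\dom(F_{f,\tau})}$: take $(t,\phi_1),(t,\phi_2)\in W\cap\dom\bigl(F_{f,\tau}\bigr)$ with $\supp(\phi_1-\phi_2)\subset[-R,0]$ and $\phi_1-\phi_2$ continuous. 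Since $\dom\bigl(F_{f,\tau}\bigr)\subset D$, the local constancy yields a common value $\tau_0:=\tau(t,\phi_1)=\tau(t,\phi_2)$, so the troublesome term collapses to $\|(\phi_1-\phi_2)(-\tau_0)\|_E\le\|\phi_1-\phi_2\|_\infty$, while $\|\phi_1(0)-\phi_2(0)\|_E\le\|\phi_1-\phi_2\|_\infty$ trivially. Here $\|\phi_1-\phi_2\|_\infty<\infty$ because $\phi_1-\phi_2$ is continuous with support contained in the compact interval $[-R,0]$. Substituting into the displayed estimate gives $L$-\eqref{eq:Lipschitzian} with $L:=2L_f$, which is exactly the assertion that $F_{f,\tau}$ is locally Lipschitzian about memories at $(\sigma_0,\psi_0)$.

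I do not expect any genuine difficulty: the entire role of the constancy about memories is to remove the mismatch of the two delayed evaluation points, and once that mismatch is gone the estimate of Lemma~\ref{lem:Lip condition for sd delay} reduces directly to a multiple of $\|\phi_1-\phi_2\|_\infty$. The one point meriting a word of care is that the support condition $\supp(\phi_1-\phi_2)\subset[-R,0]$ does not force $\phi_1(0)=\phi_2(0)$, since the endpoint $0$ lies in the support window; this is immaterial, as that term is in any case dominated by $\|\phi_1-\phi_2\|_\infty$. This should be contrasted with Proposition~\ref{prop:almost loc Lip for sd delay}, where no such cancellation is available and the Lipschitz seminorm of the histories must instead be brought in.
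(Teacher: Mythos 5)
Your argument is correct and is essentially the paper's own proof: both combine Lemma~\ref{lem:Lip condition for sd delay} with the local constancy of $\tau$ about memories on a common (shrunk) neighborhood, so that the two delayed evaluation points coincide and the estimate collapses to $2L_f\cdot\|\phi_1-\phi_2\|_\infty$. The additional remarks on the finiteness of $\|\phi_1-\phi_2\|_\infty$ and on the term $\|\phi_1(0)-\phi_2(0)\|_E$ are accurate but not needed beyond what the paper records.
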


\begin{proof}
We choose a neighborhood $W$ of $(\sigma_0, \psi_0)$ in $\mathbb{R} \times H$ and $L_f > 0$
given in Lemma~\ref{lem:Lip condition for sd delay}.
By choosing $W$ sufficiently small,
we may assume that there is $R > 0$ such that
	\begin{equation*}
		\tau(t, \phi_1) = \tau(t, \phi_2)
	\end{equation*}
holds for all $(t, \phi_1), (t, \phi_2) \in W \cap D$ satisfying $\supp(\phi_1 - \phi_2) \subset [-R, 0]$.
Therefore, for all $(t, \phi_1), (t, \phi_2) \in W \cap \dom \bigl( F_{f, \tau} \bigr)$,
$\supp(\phi_1 - \phi_2) \subset [-R, 0]$ implies
	\begin{equation*}
		\bigl\| F_{f, \tau}(t, \phi_1) - F_{f, \tau}(t, \phi_2) \bigr\|_E
		\le 2L_f \cdot \|\phi_1 - \phi_2\|_\infty
	\end{equation*}
from Lemma~\ref{lem:Lip condition for sd delay}.
\end{proof}

\subsection{Maximal well-posedness for state-dependent DDEs}

In this subsection, we derive results about the maximal well-posedness of state-dependent DDE~\eqref{eq:sdDDE}
as a consequence of the results obtained in this paper.

\begin{corollary}\label{cor:maximal WP, state-dependent}
Let $E = \mathbb{R}^n$, $I = I^r$ for some $r > 0$ or $I = I^\infty$, and
	\begin{equation*}
		H = C^{0, 1}_\mathrm{loc}(I, E)_\mathrm{co}, \mspace{15mu} \bar{H} = C(I, E)_\mathrm{co}.
	\end{equation*}
Suppose that
(i) $f$ is continuous and $\dom(f)$ is open in $\mathbb{R} \times E \times E$, and
(ii) there exists a continuous map $\bar{\tau} \colon \mathbb{R} \times \bar{H} \supset \bar{D} \to \mathbb{R}_+$
such that $\bar{\tau}$ is an extension of $\tau$ and $\bar{D}$ is open in $\mathbb{R} \times H$.
If $f$ is locally Lipschitzian, and if $\bar{\tau}$ is almost locally Lipschitzian,
then IVP~\eqref{eq:IVP} for $F := F_{f, \tau}$ is maximally well-posed for $C^1$-solutions.
\end{corollary}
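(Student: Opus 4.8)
The plan is to apply Theorem~\ref{thm:maximal WP without uniform Lip, extension} with the extension $\bigl(\bar H, \bar F\bigr)$ given by $\bar H = C(I, E)_\mathrm{co}$ and $\bar F := F_{f, \bar\tau}$, realizing the state-dependent DDE as $\dot x(t) = F(t, I_t x)$ for $F := F_{f, \tau}$ on the Lipschitz history space $H = C^{0,1}_\mathrm{loc}(I, E)_\mathrm{co}$. The first task is to confirm that $\bigl(\bar H, \bar F\bigr)$ is genuinely an extension of $(H, F)$ in the sense of Section~\ref{sec:existence and uniqueness}. As sets one has $C^{0,1}_\mathrm{loc}(I, E) \subset C(I, E)$, and since both carry the compact-open topology, $H$ is a topological subspace of $\bar H$. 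Because $\bar\tau$ extends $\tau$, on $\mathbb{R} \times H$ the maps $\rho_{\bar\tau}$ and $\rho_\tau$ coincide, whence $\bar F|_{\mathbb{R} \times H} = F$ and $\dom(F) = \dom\bigl(\bar F\bigr) \cap (\mathbb{R} \times H)$; in particular $F = \bar F^{0,1}$ is the Lipschitz restriction of $\bar F$.

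Next I would verify the structural hypotheses of Theorem~\ref{thm:maximal WP without uniform Lip, extension}. That $H$ is closed under $C^1$-prolongations is immediate, since a $C^1$-prolongation of a locally Lipschitz map is again locally Lipschitz. For $\bar H$, in the case $I = I^r$ one has $\bar H = C(I^r, E)_\mathrm{u}$ and in the case $I = I^\infty$ one has $\bar H = C(I^\infty, E)_\mathrm{co}$; both are prolongable, and both are regulated by prolongations by Theorems~\ref{thm:r < infty, regulation} and \ref{thm:r = infty, regulation}, as their topologies are at worst the topology of uniform convergence on compact subsets. Continuity of $\bar F = f \circ \rho_{\bar\tau}$ follows from the continuity of $f$ together with that of $\rho_{\bar\tau}$, which in turn follows from the continuity of $\bar\tau$ and of the evaluation map on $\bar H$. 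Since $\dom(f)$ is open, $\bar D$ is open, and $\rho_{\bar\tau}$ is continuous, the domain $\dom\bigl(\bar F\bigr) = \rho_{\bar\tau}^{-1}(\dom(f))$ is open in $\mathbb{R} \times \bar H$; hence it is a neighborhood of each of its points, and Theorem~\ref{thm:nbd by prolongations}(i) upgrades this to a neighborhood by prolongations. Finally, the continuity of the semiflow $\mathbb{R}_+ \times \bar H \ni (t, \phi) \mapsto S_0(t)\phi \in \bar H$ follows from the equi-continuity of $\bigl(S(t)\bigr)_{t \in [0, T]}$ at $(0, \boldsymbol{0})$ established for these two history spaces earlier in this section, together with Remark~\ref{rmk:continuity and equi-continuity}.

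The heart of the argument is the Lipschitz hypothesis, the one place where the lack of smoothness of the state-dependent delay must be absorbed. Here the plan is to run the chain furnished by the results of Section~\ref{sec:Lip conditions}: since $f$ is locally Lipschitzian and $\bar\tau$ is almost locally Lipschitzian, Proposition~\ref{prop:almost loc Lip for sd delay} (applied on the ambient space $\bar H$) shows that $\bar F = F_{f, \bar\tau}$ is almost locally Lipschitzian at each point of $\dom\bigl(\bar F\bigr)$, in particular at each point of $\dom\bigl(\bar F^{0,1}\bigr)$. Then, because $\bar H$ is prolongable and regulated by prolongations, Theorem~\ref{thm:almost local Lip for I = I^r} (when $I = I^r$) or Theorem~\ref{thm:almost local Lip for I = I^infty} (when $I = I^\infty$) converts almost local Lipschitz into local Lipschitz about $C^1$-prolongations for the restriction $\bar F^{0,1}$. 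As $\bar F^{0,1} = F$, this is exactly that $F$ is locally Lipschitzian about $C^1$-prolongations, and with every hypothesis of Theorem~\ref{thm:maximal WP without uniform Lip, extension} now in place, that theorem yields the maximal well-posedness of IVP~\eqref{eq:IVP} for $C^1$-solutions.

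I expect the main obstacle to lie in the bookkeeping of this last chain: one must apply the notion of almost local Lipschitz with $\bar H$ as the ambient history space, so that both Proposition~\ref{prop:almost loc Lip for sd delay} and the hypotheses $\bar H \supset C^{0,1}_\mathrm{loc}(I, E)$ of Theorems~\ref{thm:almost local Lip for I = I^r} and \ref{thm:almost local Lip for I = I^infty} apply, while the resulting Lipschitz-about-$C^1$-prolongations property is read off for $F = \bar F^{0,1}$ on the subspace $H$. The delicate point is to check that the rectangles $\varLambda^1_{\sigma, \psi}(T, \delta; F)$ entering the two formulations coincide, which holds because they consist of histories of $C^1$-prolongations that are automatically locally Lipschitz and hence lie in $\mathbb{R} \times H$.
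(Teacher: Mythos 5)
Your proposal is correct and follows essentially the same route as the paper: the extension $\bigl(\bar{H}, \bar{F}\bigr) = \bigl(C(I,E)_\mathrm{co}, F_{f,\bar{\tau}}\bigr)$, the chain Proposition~\ref{prop:almost loc Lip for sd delay} $\Rightarrow$ Theorems~\ref{thm:almost local Lip for I = I^r}/\ref{thm:almost local Lip for I = I^infty} $\Rightarrow$ $F = \bar{F}^{0,1}$ locally Lipschitzian about $C^1$-prolongations, and the conclusion via Theorem~\ref{thm:maximal WP without uniform Lip, extension}. Your additional verifications of the structural hypotheses (closedness of $H$ under $C^1$-prolongations, regulation of $\bar{H}$, openness of $\dom\bigl(\bar{F}\bigr)$, continuity of the semiflow) are exactly the points the paper leaves implicit.
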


\begin{proof}
Let $\bar{F}_{f, \tau} := F_{f, \bar{\tau}}$.
Then $\left( \bar{H}, \bar{F}_{f, \tau} \right)$ is an extension of $(H, F_{f, \tau})$.
By assumptions, $\bar{F}_{f, \tau} = f \circ \rho_{\bar{\tau}}$ is a continuous map
whose domain of definition $\rho_{\bar{\tau}}^{-1}(\dom(f))$ is open in $\mathbb{R} \times H$.
From Proposition~\ref{prop:almost loc Lip for sd delay},
$\bar{F}_{f, \tau}$ is almost locally Lipschitzian.
Applying Theorems~\ref{thm:almost local Lip for I = I^r} and \ref{thm:almost local Lip for I = I^infty},
$F_{f, \tau}$ is locally Lipschitzian about $C^1$-prolongations because
	\begin{equation*}
		\bar{F}_{f, \tau}^{0, 1} = F_{f, \tau}
	\end{equation*}
holds (see the notation in Subsection~\ref{subsec:almost loc Lip}).
Therefore, the conclusion follows as a consequence of Theorem~\ref{thm:maximal WP without uniform Lip, extension}.
\end{proof}

This is a result of maximal well-posedness applicable for a wide class of state-dependent DDEs when $E = \mathbb{R}^n$.
The following was obtained in \cite[Theorem 6.8]{Nishiguchi 2017}.

\begin{corollary}
Let $I = I^r$ for some $r > 0$ or $I = I^\infty$, and
	\begin{equation*}
		H = C(I, E)_\mathrm{co}.
	\end{equation*}
Suppose that
(i) $f$ is continuous and $\dom(f)$ is open in $\mathbb{R} \times E \times E$, and
(ii) $\tau$ is continuous where $D$ is open in $\mathbb{R} \times H$.
If $f$ is locally Lipschitzian, and if $\tau$ is locally constant about memories,
then IVP~\eqref{eq:IVP} for $F := F_{f, \tau}$ is maximally well-posed.
\end{corollary}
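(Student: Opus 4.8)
The plan is to reduce the claim to the original main theorem (Theorem~\ref{thm:maximal WP}), whose hypothesis (b) is precisely the continuity of the trivial semiflow $\mathbb{R}_+ \times H \ni (t,\phi) \mapsto S_0(t)\phi \in H$ and whose conclusion (a) asserts maximal well-posedness for every history functional that is continuous, defined on an open set, and uniformly locally Lipschitzian about prolongations. Accordingly, I would first record the structural facts about the history space and the functional $F := F_{f,\tau}$, and then verify each of the three conditions on $F$ together with the semiflow continuity.

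First I would fix $H = C(I,E)_{\mathrm{co}}$ and note that in both cases $I = I^r$ (with $r>0$) and $I = I^\infty$ this space is prolongable and regulated by prolongations: the regulation is the topological criterion of Theorem~\ref{thm:r < infty, regulation} (where the compact-open topology is the uniform topology) and of Theorem~\ref{thm:r = infty, regulation} (where, on maps with fixed support in $[-R,0]$, the compact-open and uniform topologies agree), while closedness under prolongations and continuity of the orbit curves are immediate for spaces of continuous maps. Next, since $\tau$ is continuous and the evaluation map $\ev_H$ is continuous on $C(I,E)_{\mathrm{co}}$, the map $\rho_\tau$ is continuous (Subsection~\ref{subsec:spaces of continuous maps}); hence $F = f \circ \rho_\tau$ is continuous and its domain $\dom(F) = \rho_\tau^{-1}(\dom(f))$ is open in $\mathbb{R}\times H$, because $\dom(f)$ is open. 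The continuity of the semiflow $S_0$ follows from the equi-continuity of $(S(t))_{t\in[0,T]}$ at $(0, \boldsymbol{0})$ proved for these spaces in Subsection~\ref{subsec:spaces of continuous maps}, specialized to the parameter $v=0$.

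The Lipschitz input is the heart of the argument. Because $f$ is locally Lipschitzian and $\tau$ is locally constant about memories, Proposition~\ref{prop:constancy about memories} shows that $F$ is locally Lipschitzian about memories at every point of $\dom(F)$. With $H$ prolongable and regulated by prolongations, and with $S_0$ continuous, Theorem~\ref{thm:Lip about memories} then upgrades this to the statement that $F$ is uniformly locally Lipschitzian about prolongations. At this stage all three requirements on $F$ in part (a) of Theorem~\ref{thm:maximal WP} are met, namely continuity, open domain, and uniform local Lipschitz about prolongations, and hypothesis (b) holds by the semiflow continuity established above; hence Theorem~\ref{thm:maximal WP} yields that IVP~\eqref{eq:IVP} for $F$ is maximally well-posed.

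The main obstacle I anticipate is not any single hard estimate but ensuring that the passage from \emph{locally constant about memories} to \emph{locally Lipschitzian about memories} and then to the \emph{uniform} local Lipschitz about prolongations is genuinely available in the required uniform form; this hinges on the continuity of the trivial semiflow, which is exactly why verifying that $(S(t))_{t\in[0,T]}$ is equi-continuous at $(0, \boldsymbol{0})$ for both the compact and the whole past interval is the step that must be handled with care. Everything else is bookkeeping about the compact-open topology and the continuity of $\rho_\tau$.
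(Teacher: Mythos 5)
Your proposal is correct and follows essentially the same route as the paper: Proposition~\ref{prop:constancy about memories} gives local Lipschitz about memories, Theorem~\ref{thm:Lip about memories} upgrades this to uniform local Lipschitz about prolongations, and Theorem~\ref{thm:maximal WP} then yields maximal well-posedness. The additional verifications you supply (prolongability and regulation of $C(I,E)_\mathrm{co}$, continuity of $\rho_\tau$ and openness of $\dom(F)$, equi-continuity of $(S(t))_{t \in [0,T]}$ at $(0, \boldsymbol{0})$) are exactly the background facts the paper establishes in Section~\ref{subsec:spaces of continuous maps} and leaves implicit in its one-line proof.
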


\begin{proof}
From Proposition~\ref{prop:constancy about memories},
$F_{f, \tau}$ is locally Lipschitzian about memories.
Therefore, $F_{f, \tau}$ is uniformly locally Lipschitzian about prolongations from Theorem~\ref{thm:Lip about memories}.
The conclusion is a consequence of Theorem~\ref{thm:maximal WP}.
\end{proof}

\appendix

\section{Maximal semiflows and processes}\label{sec:maximal semiflows and processes}

In this appendix, let $X$ be a topological space.

\subsection{Definitions}

\begin{definition}[ref.\ \cite{Hajek 1968}, \cite{Marsden--McCracken 1976}]
\label{dfn:maximal semiflow}
Let $\varPhi \colon \mathbb{R}_+ \times X \supset \dom(\varPhi) \to X$ be a map.
We call $\varPhi$ a \textit{maximal semiflow} in $X$ if the following properties are satisfied:
\begin{enumerate}
\item[(i)] There exists a function $T(\cdot) \colon X \to (0, \infty]$ such that
$\dom(\varPhi) = \bigcup_{x \in X} [0, T(x)) \times \{x\}$.
\item[(ii)] For all $x \in X$, $\varPhi(0, x) = x$.
\item[(iii)] For all $t_1, t_2 \in \mathbb{R}_+$ and all $x \in X$,
if $(t_1, x) \in \dom(\varPhi)$ and $(t_2, \varPhi(t_1, x)) \in \dom(\varPhi)$, then
	\begin{equation*}
		(t_1 + t_2, x) \in \dom(\varPhi), \mspace{15mu} \varPhi(t_1 + t_2, x) = \varPhi(t_2, \varPhi(t_1, x)).
	\end{equation*}
\end{enumerate}
$T(x)$ is called the \textit{escape time} of $x$,
and the function $T(\cdot) \colon X \to (0, \infty]$ is called the \textit{escape time function}.
\end{definition}

\begin{definition}[cf.\ \cite{Dafermos 1971}]
Let $\varOmega \subset \mathbb{R} \times X$ be a subset
and $\mathcal{P} \colon \mathbb{R}_+ \times \varOmega \supset \dom(\mathcal{P}) \to X$ be a map.
We call $\mathcal{P}$ a \textit{maximal process} in $\varOmega$ if the map
	$\overline{\mathcal{P}} \colon
	\mathbb{R}_+ \times \varOmega \supset \dom(\overline{\mathcal{P}}) \to \mathbb{R} \times X$
defined by
	\begin{equation*}
		\dom(\overline{\mathcal{P}}) = \dom(\mathcal{P}), \mspace{15mu}
		\overline{\mathcal{P}}(\tau, (t, x)) = (t + \tau, \mathcal{P}(\tau, t, x))
	\end{equation*}
becomes a maximal semiflow in $\varOmega$.
We call $\overline{\mathcal{P}}$ the \textit{extended semiflow} of $\mathcal{P}$.
\end{definition}

In this paper, we do not assume the continuity of maximal semiflows and maximal processes in advance
(see Definitions~\ref{dfn:continuity of maximal semiflow} and \ref{dfn:continuity of maximal process}).
In \cite{Hajek 1968}, the terminology of local semiflows is used.

By the paraphrase, $\mathcal{P}$ is a maximal process in $\varOmega$
if and only if the following conditions (the \textit{axiom of maximal processes}) are satisfied:
\begin{enumerate}
\item[(i)] $\overline{\mathcal{P}}(\dom(\mathcal{P})) \subset \varOmega$.
\item[(ii)] There exists a function $T(\cdot) \colon \varOmega \to (0, \infty]$ such that
	\begin{equation*}
		\dom(\mathcal{P}) = \bigcup_{(t, x) \in \varOmega} [0, T(t, x)) \times \{(t, x)\}.
	\end{equation*}
\item[(iii)] For all $(t, x) \in \varOmega$, $\mathcal{P}(0, t, x) = x$.
\item[(iv)] For all $\tau_1, \tau_2 \in \mathbb{R}_+$ and all $(t, x) \in \mathbb{R} \times X$,
conditions
	$(\tau_1, t, x) \in \dom(\mathcal{P})$ and $(\tau_2, t + \tau_1, \mathcal{P}(\tau_1, t, x)) \in \dom(\mathcal{P})$
imply
	\begin{equation*}
		(\tau_1 + \tau_2, t, x) \in \dom(\mathcal{P}), \mspace{15mu}
		\mathcal{P}(\tau_1 + \tau_2, t, x) = \mathcal{P}(\tau_2, t + \tau_1, \mathcal{P}(\tau_1, t, x)).
	\end{equation*}
\end{enumerate}
We call the above $T(\cdot) \colon \varOmega \to (0, \infty]$ the \textit{escape time function} of $\mathcal{P}$.

\begin{definition}[ref.\ \cite{Hajek 1968}, \cite{Marsden--McCracken 1976}]
\label{dfn:continuity of maximal semiflow}
Let $\varPhi \colon \mathbb{R}_+ \times X \supset \dom(\varPhi) \to X$ be a map.
We call $\varPhi$ a \textit{continuous maximal semiflow} in $X$ if
(i) $\varPhi$ is a maximal semiflow in $X$,
(ii) $\dom(\varPhi)$ is open in $\mathbb{R}_+ \times X$, and
(iii) $\varPhi$ is continuous.
\end{definition}

\begin{definition}\label{dfn:continuity of maximal process}
Let $\varOmega \subset \mathbb{R} \times X$ be a subset
and $\mathcal{P} \colon \mathbb{R}_+ \times \varOmega \supset \dom(\mathcal{P}) \to X$.
We call $\mathcal{P}$ a \textit{continuous maximal process} in $\varOmega$
if the extended semiflow $\overline{\mathcal{P}}$ is a continuous maximal semiflow in $\varOmega$.
\end{definition}

\subsection{Continuity}

In this subsection, we investigate the continuity property of maximal semiflows in $X$.
The continuity property of maximal processes is reduced to that of maximal semiflows
by taking the corresponding extended semiflows.

Let $\varPhi$ be a maximal semiflow in $X$ with the escape time function $T(\cdot) \colon X \to (0, \infty]$.
By definition, $(t, x) \in \dom(\varPhi)$ is equivalent to $t < T(x)$.
Therefore, this is also equivalent to $[0, t] \times \{x\} \subset \dom(\varPhi)$.

\begin{lemma}\label{lem:escape time function}
The following properties are equivalent:
\begin{enumerate}
\item[\emph{(a)}] $\dom(\varPhi)$ is open in $\mathbb{R}_+ \times X$.
\item[\emph{(b)}] $T(\cdot)$ is lower semi-continuous.
\end{enumerate}
\end{lemma}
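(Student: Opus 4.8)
The plan is to work entirely through the stated identity that $(t, x) \in \dom(\varPhi)$ is equivalent to $t < T(x)$, together with the standard reformulation of lower semi-continuity in terms of superlevel sets: a function $T(\cdot) \colon X \to (0, \infty]$ is lower semi-continuous if and only if $\{\mspace{2mu} x \in X : T(x) > c \mspace{2mu}\}$ is open in $X$ for every real $c$. Once both sides are rewritten in these terms, the equivalence becomes a matter of recognizing the superlevel sets as time-slices of $\dom(\varPhi)$. I would treat the two implications separately.

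For (a) $\Rightarrow$ (b), I would fix $c \ge 0$ and observe that, by the identity above, $\{\mspace{2mu} x : T(x) > c \mspace{2mu}\}$ is exactly the set of $x$ with $(c, x) \in \dom(\varPhi)$. This set is the preimage of $\dom(\varPhi)$ under the continuous inclusion $\iota_c \colon X \to \mathbb{R}_+ \times X$, $\iota_c(x) = (c, x)$; since $\dom(\varPhi)$ is open, its preimage is open. For $c < 0$ the superlevel set is all of $X$, hence trivially open. Thus every superlevel set is open and $T(\cdot)$ is lower semi-continuous.

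For (b) $\Rightarrow$ (a), I would take an arbitrary $(t_0, x_0) \in \dom(\varPhi)$, so $0 \le t_0 < T(x_0)$, and choose a number $c$ with $t_0 < c < T(x_0)$ (any $c > t_0$ works when $T(x_0) = \infty$). By lower semi-continuity the set $U := \{\mspace{2mu} x : T(x) > c \mspace{2mu}\}$ is an open neighborhood of $x_0$. Then $[0, c) \times U$ is a neighborhood of $(t_0, x_0)$ that is open in $\mathbb{R}_+ \times X$, and every $(t, x)$ in it satisfies $t < c < T(x)$, hence lies in $\dom(\varPhi)$. This exhibits $\dom(\varPhi)$ as open.

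There is no serious obstacle here; the only points requiring a little care are bookkeeping ones: I must use the relative topology of $\mathbb{R}_+ = [0, \infty)$ correctly, noting that the half-open interval $[0, c)$ is genuinely open in $\mathbb{R}_+$, and I must handle the value $T(x_0) = \infty$ when selecting the intermediate constant $c$. Both are handled by the choices above, so the argument reduces to the two short slice/preimage observations.
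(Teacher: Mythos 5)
Your proof is correct and follows essentially the same route as the paper's: both directions reduce to the equivalence $(t, x) \in \dom(\varPhi) \iff t < T(x)$ together with a direct topological argument in the product $\mathbb{R}_+ \times X$. The only cosmetic difference is that you phrase lower semi-continuity via openness of superlevel sets and obtain (a) $\Rightarrow$ (b) as a preimage under the continuous slice map $x \mapsto (c, x)$, whereas the paper extracts a basic open box $[T, T + \ep] \times N$ around $(T, x_0)$; the content is identical.
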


\begin{proof}
(a) $\Rightarrow$ (b):
Fix $x_0 \in X$ and let $T < T(x_0)$.
Since $(T, x_0)$ belongs to the open set $\dom(\varPhi)$ of $\mathbb{R}_+ \times X$,
there are $\ep > 0$ and a neighborhood $N$ of $x_0$ in $X$ such that
	\begin{equation*}
		[T, T + \ep] \times N \subset \dom(\varPhi).
	\end{equation*}
Therefore, $T < T(x)$ holds for all $x \in N$.
This means that $T(\cdot)$ is lower semi-continuous at $x_0$.

(b) $\Rightarrow$ (a):
Let $(t_0, x_0) \in \dom(\varPhi)$.
We choose $\ep > 0$ so that $t_0 + \ep < T(x_0)$.
By the lower semi-continuity,
there is a neighborhood $N$ of $x_0$ in $X$ such that
	\begin{equation*}
		T(x) > t_0 + \ep \mspace{20mu} (\forall x \in N).
	\end{equation*}
Therefore, we have $[0, t_0 + \ep] \times N \subset \dom(\varPhi)$,
which means that $\dom(\varPhi)$ is a neighborhood of $(t_0, x_0)$ in $\mathbb{R}_+ \times X$.
\end{proof}

The following theorem is a corollary of \cite[Theorem 15]{Hajek 1968}
which was proved by using the notion of \textit{germs}.
In this paper, we give a direct proof of that theorem.
See also \cite[Theorem A.7]{Nishiguchi 2017}.

\begin{theorem}[\cite{Hajek 1968}]
\label{thm:continuity of maximal semiflows}
$\varPhi$ is a continuous maximal semiflow in $X$ if and only if both of the following conditions are satisfied:
\begin{enumerate}
\item[\emph{(i)}] For every $x \in X$,
the orbit $[0, T(x)) \ni t \mapsto \varPhi(t, x) \in X$ is continuous.
\item[\emph{(ii)}] For every $x \in X$, there exist $T > 0$ and a neighborhood $N$ of $x$ in $X$ such that
	\begin{enumerate}
	\item[\emph{(ii-1)}] $[0, T] \times N \subset \dom(\varPhi)$, and
	\item[\emph{(ii-2)}] the restriction $\varPhi|_{[0, T] \times N} \colon [0, T] \times N \to X$ is continuous.
	\end{enumerate}
\end{enumerate}
\end{theorem}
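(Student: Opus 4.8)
The plan is to prove the two implications separately, the forward one being essentially a restatement of the definition and the reverse one carrying the real content. For the forward implication, assume $\varPhi$ is a continuous maximal semiflow, i.e.\ a maximal semiflow whose domain is open and which is continuous. Condition (i) is then immediate, since each orbit $[0, T(x)) \ni t \mapsto \varPhi(t, x)$ is the restriction of the continuous map $\varPhi$ to $[0, T(x)) \times \{x\}$. For condition (ii), fix $x$ and choose $T$ with $0 < T < T(x)$, so that $[0, T] \times \{x\} \subset \dom(\varPhi)$. By Lemma~\ref{lem:escape time function} the openness of $\dom(\varPhi)$ is equivalent to the lower semi-continuity of $T(\cdot)$, and this yields a neighborhood $N$ of $x$ with $T(y) > T$ for all $y \in N$, hence $[0, T] \times N \subset \dom(\varPhi)$ (giving (ii-1)); then (ii-2) follows from the continuity of $\varPhi$.

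The substance is the reverse implication. Assume (i) and (ii). Since $\varPhi$ is already a maximal semiflow by the standing hypothesis of this subsection, it remains to show that $\dom(\varPhi)$ is open and that $\varPhi$ is continuous, and I would establish both at once by proving the following local claim at an arbitrary $(t_0, x_0) \in \dom(\varPhi)$: there exist $T' > t_0$ and an open neighborhood $N'$ of $x_0$ such that $[0, T'] \times N' \subset \dom(\varPhi)$ and $\varPhi$ is continuous there. Once this is shown, $[0, T'] \times N'$ is a neighborhood of $(t_0, x_0)$ contained in $\dom(\varPhi)$ on which $\varPhi$ is continuous; since $(t_0, x_0)$ is arbitrary, the domain is open and $\varPhi$ is continuous, as required.

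To prove the local claim I first fix $T'$ with $t_0 < T' < T(x_0)$ and consider the orbit segment $K = \{\varPhi(s, x_0) : s \in [0, T']\}$, which is compact by (i). Applying (ii) at each $y \in K$ gives a time $T_y > 0$ and an open neighborhood $N_y$ on which $\varPhi$ is defined and continuous over $[0, T_y] \times N_y$; extracting a finite subcover of $K$ and letting $\eta$ be the minimum of the corresponding finitely many $T_y$ produces a uniform step length $\eta > 0$. Choosing a partition $0 = s_0 < s_1 < \dots < s_k = T'$ of mesh less than $\eta$, I would then prove by induction on $j$ that there is an open neighborhood $U_j$ of $x_0$ with $[0, s_j] \times U_j \subset \dom(\varPhi)$ and $\varPhi$ continuous there. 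The inductive step is where the semiflow structure enters: given the statement at $s_j$, the point $\varPhi(s_j, x_0) \in K$ lies in some $N_{y_i}$, so shrinking $U_j$ to a neighborhood $U_{j+1}$ with $\varPhi(s_j, U_{j+1}) \subset N_{y_i}$ and using the cocycle identity $\varPhi(t, x) = \varPhi(t - s_j, \varPhi(s_j, x))$ (valid by property (iii) of Definition~\ref{dfn:maximal semiflow}, since $t - s_j \le s_{j+1} - s_j < \eta \le T_{y_i}$) expresses $\varPhi$ on $[s_j, s_{j+1}] \times U_{j+1}$ as a composition of continuous maps landing in $[0, T_{y_i}] \times N_{y_i}$. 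Pasting this with the continuity already known on $[0, s_j] \times U_{j+1}$ across the closed overlap $\{s_j\} \times U_{j+1}$ advances the induction, and the case $j = k$ is exactly the local claim.

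The main obstacle is precisely this propagation of continuity from a neighborhood of time $0$ to the whole orbit segment: condition (ii) supplies only joint continuity near $t = 0$, and it must be transported along the orbit using the cocycle identity and the continuity of $\varPhi(s_j, \cdot)$ in the space variable. The two technical points to handle carefully are (a) obtaining the uniform step length $\eta$ via compactness of $K$, so that each step $[s_j, s_{j+1}]$ fits within the local continuity time $T_{y_i}$ at the relevant orbit point, and (b) the pasting of the two continuous pieces at each stage, which relies on the gluing lemma for continuous maps on the closed sets $[0, s_j]$ and $[s_j, s_{j+1}]$.
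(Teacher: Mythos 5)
Your argument is correct, and the substantive direction is organized genuinely differently from the paper's. The paper fixes $x_0$, introduces the set $S_{x_0}$ of times $T$ for which some neighborhood $N$ of $x_0$ satisfies $[0, T] \times N \subset \dom(\varPhi)$ with $\varPhi|_{[0, T] \times N}$ continuous, and shows $\sup(S_{x_0}) = T(x_0)$ by contradiction: if $t_* := \sup(S_{x_0}) < T(x_0)$, it applies (ii) at $x_* = \varPhi(t_*, x_0)$, uses (i) to pick $t' < t_*$ with $\varPhi(t', x_0) \in N_*$, and performs a single extension step (cocycle identity plus pasting lemma) to produce an element of $S_{x_0}$ strictly larger than $t_*$. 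You iterate the very same extension step as a finite induction along a partition of $[0, T']$, with the step length $\eta$ obtained from compactness of the orbit segment $K$ and a finite subcover of the neighborhoods furnished by (ii). The core mechanism is identical in both proofs --- transporting the local-in-time continuity of (ii) along the orbit via $\varPhi(t, x) = \varPhi(t - s_j, \varPhi(s_j, x))$ and gluing on the closed overlap --- so the difference lies only in how it is iterated. Your subdivision version sidesteps a delicacy the paper must address explicitly, namely that one cannot conclude $t_* \in S_{x_0}$ and has to approach $t_*$ from below using (i); a finite induction never meets that boundary issue. The supremum argument, in exchange, needs no compact set $K$ and no finite subcover, and makes the single extension step more visible. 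Both routes deliver openness of $\dom(\varPhi)$ (equivalently, lower semi-continuity of $T(\cdot)$ by Lemma~\ref{lem:escape time function}) and joint continuity simultaneously, and your final passage from the local claim to the theorem --- observing that $[0, T') \times N'$ is already an open neighborhood of $(t_0, x_0)$ inside $[0, T'] \times N'$ --- is sound.
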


\begin{proof}
(Only-if-part).
We check that the above (i) and (ii) hold for each fixed $x \in X$.
From Lemma~\ref{lem:escape time function},
there are $T > 0$ and a neighborhood $N$ of $x$ in $X$ such that $[0, T] \times N \subset \dom(\varPhi)$.
That is, (ii-1) holds.
Since $\varPhi$ is a continuous map, the restricted maps
	\begin{equation*}
		\varPhi|_{[0, T_\varPhi(x)) \times \{x\}}, \mspace{15mu}
		\varPhi|_{[0, T] \times N}
	\end{equation*}
are also continuous.
This shows that (i) and (ii-2) hold.

(If-part).
\textbf{Step 1. Setting.}

Fix $x_0 \in X$.
Define a subset
	\begin{equation*}
		S_{x_0} \subset (0, T(x_0))
	\end{equation*}
by the following manner:
$T \in S_{x_0}$ if and only if
there exists a neighborhood $N$ of $x_0$ in $X$ with the following properties:
\begin{enumerate}
\item[(1)] $[0, T] \times N \subset \dom(\varPhi)$, and
\item[(2)] $\varPhi|_{[0, T] \times N}$ is continuous.
\end{enumerate}
By the assumption (ii), $S_{x_0} \ne \emptyset$.
Therefore,
	\begin{equation*}
		\sup(S_{x_0}) \in (0, T(x_0)]
	\end{equation*}
exists.
Here we interpret $\sup(S_{x_0}) = \infty$ when $T(x_0) = \infty$.
If $\sup(S_{x_0}) = T(x_0)$ is established,
for every $T < T(x_0)$, there is $T' \in S_{x_0}$ such that $T' > T$.
Therefore, by the definition of $S_{x_0}$, we have the following properties:
\begin{itemize}
\item $T(\cdot)$ is lower semi-continuous at $x_0$.
\item $\varPhi$ is continuous at each $(t_0, x_0) \in \dom(\varPhi)$.
\end{itemize}

\textbf{Step 2. Proof by a contradiction.}

From Step 1, it is sufficient to prove
	\begin{equation*}
		\sup(S_{x_0}) = T(x_0) \mspace{20mu} (\forall x_0 \in X).
	\end{equation*}
Fix $x_0 \in X$.
Let
	\begin{equation*}
		t_* := \sup(S_{x_0}).
	\end{equation*}
We suppose $t_* < T(x_0)$ and derive a contradiction.
Let
	\begin{equation*}
		x_* := \varPhi(t_*, x_0) \in X.
	\end{equation*}
By the assumption (ii), there are $T_* > 0$ and a neighborhood $N_*$ of $x_*$ in $X$ such that
	\begin{equation*}
		[0, T_*] \times N_* \subset \dom(\varPhi), \mspace{15mu} \text{$\varPhi|_{[0, T_*] \times N_*}$ is continuous}.
	\end{equation*}
We note that one cannot conclude $t_* \in S_{x_0}$ in general.
By the assumption (i), one can choose $t' \in (t_* - (T_*/2), t_*)$ so that $\varPhi(t', x_0) \in N_*$.
Since $t' < t_*$, there is a neighborhood $N'$ of $x_0$ such that
	\begin{equation*}
		[0, t'] \times N' \subset \dom(\varPhi), \mspace{15mu} \text{$\varPhi|_{[0, t'] \times N'}$ is continuous}
	\end{equation*}
by the definition of $S_{x_0}$.
By the continuity of $\varPhi(t', \cdot)|_{N'}$ at $x_0$, we may assume $\varPhi(t', N') \subset N_*$
by choosing $N'$ sufficiently small.
Then for all $t \in [t', t' + T_*]$ and all $x \in N'$,
	\begin{equation*}
		(t', x) \in \dom(\varPhi), \mspace{15mu} (t - t', \varPhi(t', x)) \in [0, T_*] \times N_* \subset \dom(\varPhi).
	\end{equation*}
Therefore, $(t, x) = (t' + (t - t'), x) \in \dom(\varPhi)$ by the maximality, that is
	\begin{equation*}
		[t', t' + T_*] \times N' \subset \dom(\varPhi).
	\end{equation*}
This shows
	\begin{equation*}
		[0, t' + T_*] \times N' = ([0, t'] \times N') \cup ([t', t' + T_*] \times N') \subset \dom(\varPhi).
	\end{equation*}
We check the continuity of $\varPhi|_{[0, t' + T_*] \times N'}$.
By the pasting lemma in General Topology,
it is sufficient to show the continuity of $\varPhi|_{[0, t'] \times N'}$ and $\varPhi|_{[t', t' + T_*] \times N'}$
because $[0, t'] \times N'$ and $[t', t' + T_*] \times N'$ are closed sets of $[0, t' + T_*] \times N'$.
We know the continuity of $\varPhi|_{[0, t'] \times N'}$.
The continuity of $\varPhi|_{[t', t' + T_*] \times N'}$ is obtained in view of
	\begin{equation*}
		\varPhi(t, x) = \varPhi(t - t', \varPhi(t', x))
			\mspace{20mu}
		\left( \forall (t, x) \in [t', t' + T_*] \times N' \right)
	\end{equation*}
and the continuity of $\varPhi(t', \cdot)|_{N'}$ and $\varPhi|_{[0, T_*] \times N_*}$.

Then, we obtain $t' + T_* \in S_{x_0}$ by the definition, which contradicts $t_* = \sup(S_{x_0})$ because
	\begin{equation*}
		t' + T_* > t_* + (T_*/2) > t_*.
	\end{equation*}
Thus, $t_* = T(x_0)$ follows.

Since $x_0 \in X$ is arbitrary, this completes the proof.
\end{proof}

\begin{corollary}\label{cor:continuity of maximal processes}
Let $\varOmega \subset \mathbb{R} \times X$ be a subset and
$\mathcal{P}$ be a maximal process in $\varOmega$
with the escape time function $T(\cdot) \colon \varOmega \to (0, \infty]$.
Then $\mathcal{P}$ is a continuous maximal process in $\varOmega$ if and only if
both of the following conditions are satisfied:
\begin{enumerate}
\item[\emph{(i)}] For every $(t, x) \in \varOmega$,
the orbit $[0, T(t, x)) \ni \tau \mapsto \mathcal{P}(\tau, t, x) \in \varOmega$ is continuous.
\item[\emph{(ii)}] For every $(t, x) \in \varOmega$,
there exist $T > 0$ and a neighborhood $N$ of $(t, x)$ in $\varOmega$ such that
	\begin{enumerate}
	\item[\emph{(ii-1)}] $[0, T] \times N \subset \dom(\mathcal{P})$, and
	\item[\emph{(ii-2)}] the restriction $\mathcal{P}|_{[0, T] \times N} \colon [0, T] \times N \to \varOmega$ is continuous.
	\end{enumerate}
\end{enumerate}
\end{corollary}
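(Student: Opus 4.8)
The plan is to reduce everything to Theorem~\ref{thm:continuity of maximal semiflows} applied to the extended semiflow $\overline{\mathcal{P}}$, regarding $\varOmega$ (with the subspace topology inherited from $\mathbb{R} \times X$) as the ambient topological space. By Definition~\ref{dfn:continuity of maximal process}, $\mathcal{P}$ is a continuous maximal process in $\varOmega$ precisely when $\overline{\mathcal{P}}$ is a continuous maximal semiflow in $\varOmega$, so it suffices to match the two conditions of Theorem~\ref{thm:continuity of maximal semiflows} (for $\overline{\mathcal{P}}$) with conditions (i) and (ii) of the corollary.

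First I would record the elementary observation that drives the whole argument: since $\overline{\mathcal{P}}(\tau, (t, x)) = (t + \tau, \mathcal{P}(\tau, t, x))$ and $\overline{\mathcal{P}}(\dom(\mathcal{P})) \subset \varOmega$ by the axiom of maximal processes, continuity of $\overline{\mathcal{P}}$ on any subset $A \subset \dom(\mathcal{P})$---viewed as a map into $\varOmega$ with the subspace topology, equivalently into $\mathbb{R} \times X$---is equivalent to continuity of both of its coordinate maps on $A$. The first coordinate $(\tau, (t, x)) \mapsto t + \tau$ is manifestly continuous, so continuity of $\overline{\mathcal{P}}|_A$ is equivalent to continuity of $\mathcal{P}|_A$. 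The same remark applies to the orbit maps, where $\tau \mapsto t + \tau$ is trivially continuous.

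With this equivalence in hand, the translation is immediate. Condition (i) of Theorem~\ref{thm:continuity of maximal semiflows} for $\overline{\mathcal{P}}$ asks for continuity of the orbit $[0, T(t,x)) \ni \tau \mapsto \overline{\mathcal{P}}(\tau, (t,x))$, which by the above reduces exactly to continuity of $\tau \mapsto \mathcal{P}(\tau, t, x)$, i.e.\ condition (i) of the corollary. Condition (ii) of the theorem for $\overline{\mathcal{P}}$ furnishes, for each $(t, x) \in \varOmega$, a number $T > 0$ and a neighborhood $N$ with $[0, T] \times N \subset \dom(\overline{\mathcal{P}}) = \dom(\mathcal{P})$ on which $\overline{\mathcal{P}}$ is continuous; using $\dom(\overline{\mathcal{P}}) = \dom(\mathcal{P})$ together with the coordinate reduction, this is precisely (ii-1) and (ii-2) of the corollary. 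Running the equivalences in both directions yields the stated biconditional.

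I do not expect any serious obstacle here: the substance is entirely contained in Theorem~\ref{thm:continuity of maximal semiflows}, and the corollary is a bookkeeping translation through the definition of the extended semiflow. The only point requiring a little care---and the one I would state explicitly---is that continuity of $\overline{\mathcal{P}}$ as a map into the subspace $\varOmega$ coincides with continuity into $\mathbb{R} \times X$ and hence splits into the two coordinate conditions; once the trivially continuous first coordinate is discarded, continuity of $\overline{\mathcal{P}}$ and of $\mathcal{P}$ are interchangeable on every relevant set.
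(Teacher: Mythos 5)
Your argument is correct and coincides with the paper's own proof: both apply Theorem~\ref{thm:continuity of maximal semiflows} to the extended semiflow $\overline{\mathcal{P}}$ and use the fact that continuity of $\overline{\mathcal{P}}$ is equivalent to continuity of $\mathcal{P}$ because the first coordinate $(\tau,(t,x)) \mapsto t+\tau$ is trivially continuous. Your explicit remark about the coordinate reduction is exactly the (brief) observation the paper relies on.
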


\begin{proof}
Let $\overline{\mathcal{P}}$ be the extended semiflow of $\mathcal{P}$.
Then the continuity of $\overline{\mathcal{P}}$ is equivalent to the continuity of $\mathcal{P}$.
Therefore, the conclusion is obtained
by applying Theorem~\ref{thm:continuity of maximal semiflows} as $\varPhi = \overline{\mathcal{P}}$.
\end{proof}

\section{Fundamental properties for maximal existence and uniqueness}\label{ap:maximal sol}

\subsection{Local uniqueness and maximal uniqueness}

\begin{lemma}[\cite{Nishiguchi 2017}]
\label{lem:maximal uniqueness for C^1-solutions}
Suppose that IVP~\eqref{eq:IVP} satisfies the local uniqueness for $C^1$-solutions.
Then the following statement holds for every $(t_0, \phi_0) \in \dom(F)$:
For any $C^1$-solutions $x_i \colon J + I \to E$ $(i = 1, 2)$ of $\eqref{eq:IVP}_{t_0, \phi_0}$, $x_1 = x_2$ holds.
\end{lemma}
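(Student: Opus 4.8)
The plan is to upgrade the local uniqueness hypothesis (agreement of any two $C^1$-solutions on a right-neighborhood of the initial time, as in Subsection~\ref{subsec:maximal well-posedness}) to agreement on the whole common domain, by a connectedness argument based at the first point of disagreement. First I would set $D := \{\, t \in J : x_1(t) \ne x_2(t) \,\}$ and argue by contradiction, assuming $D \ne \emptyset$; put $\tau := \inf D$. Applying the local uniqueness hypothesis at the base point $(t_0, \phi_0)$ yields $T > 0$ with $x_1 = x_2$ on $[t_0, t_0 + T]$, so $\tau \ge t_0 + T > t_0$; moreover $\tau \in J$, since $J$ is an interval containing $t_0$ together with points of $D \subset J$ lying to the right of and arbitrarily close to $\tau$.

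The crucial step is to show that $x_1$ and $x_2$ generate the same history at $\tau$, so that local uniqueness can be re-applied there. By the definition of $\tau$ the two solutions agree on $[t_0, \tau)$, hence on $[t_0, \tau]$ by continuity (in particular $\tau \notin D$). They also agree on $t_0 + I$, because $I_{t_0}x_1 = \phi_0 = I_{t_0}x_2$. Since $\tau + I \subset (-\infty, \tau]$ splits into the part below $t_0$ (governed by the common initial history) and the part in $[t_0, \tau]$ (governed by the agreement just established), I would conclude $I_\tau x_1 = I_\tau x_2 =: \psi$. As $x_1$ is a $C^1$-solution of $\eqref{eq:IVP}_{t_0, \phi_0}$, one has $(\tau, I_\tau x_1) = (\tau, \psi) \in \dom(F)$, so $(\tau, \psi)$ is an admissible base point.

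Finally I would restrict: on $(J \cap [\tau, +\infty)) + I$ each $x_i$ is a $C^1$-solution of $\eqref{eq:IVP}_{\tau, \psi}$, since this interval is left-closed with left endpoint $\tau$, one has $I_\tau x_i = \psi$, and the defining identities $(t, I_t x_i) \in \dom(F)$ and $x_i'(t) = F(t, I_t x_i)$ persist for every $t$ in the restricted interval. The local uniqueness hypothesis at $(\tau, \psi)$ then provides $T' > 0$ with $x_1 = x_2$ on a right-neighborhood $[\tau, \tau + T']$ of $\tau$ inside $J$. This contradicts $\tau = \inf D$, because there must exist $d \in D$ with $\tau < d < \tau + T'$. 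Hence $D = \emptyset$, i.e.\ $x_1 = x_2$ on $J$; combined with agreement on $t_0 + I$ this yields $x_1 = x_2$ on all of $J + I$. The main obstacle is the gluing step of the second paragraph --- verifying the history identity $I_\tau x_1 = I_\tau x_2$ by splitting $\tau + I$ at $t_0$, and checking both that $(\tau, \psi) \in \dom(F)$ and that the restricted maps are genuine $C^1$-solutions of the shifted initial value problem; the endpoint bookkeeping for the various shapes of $J$ (compact, half-open, or $[t_0, +\infty)$) is routine once this is settled.
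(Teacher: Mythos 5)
Your proposal is correct and takes essentially the same route as the paper's proof: the paper reduces to compact $J$ and works with $\alpha := \sup\{\mspace{2mu}\tau : x_1|_{[t_0, \tau]} = x_2|_{[t_0, \tau]}\mspace{2mu}\}$, whereas you take the infimum of the disagreement set, but both arguments re-apply the local uniqueness hypothesis at the common history point to contradict extremality. The gluing step you single out (splitting $\tau + I$ at $t_0$ to verify $I_\tau x_1 = I_\tau x_2$) is exactly what the paper uses implicitly when it writes $(t_1, \phi_1) := (\alpha, I_{\alpha}x_1) = (\alpha, I_{\alpha}x_2)$.
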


\begin{proof}
Fix $(t_0, \phi_0) \in \dom(F)$.
Let $x_i \colon J + I \to E$ $(i = 1, 2)$ be $C^1$-solutions of $\eqref{eq:IVP}_{t_0, \phi_0}$.
Since
	\begin{equation*}
		J = \bigcup_{T \in [0, |J|)} [t_0, t_0 + T],
	\end{equation*}
it is sufficient to consider the case $J = [t_0, t_0 + T]$ for some $0 < T < \infty$.
We consider the set $S$ given by
	\begin{equation*}
		S = \left\{ \mspace{2mu} \tau \in (t_0, t_0 + T] : x_1|_{[t_0, \tau]} = x_2|_{[t_0, \tau]} \mspace{2mu} \right\}.
	\end{equation*}
By the local uniqueness, $S \ne \emptyset$.
Therefore,
	\begin{equation*}
		\alpha := \sup(S) \in (t_0, t_0 + T]
	\end{equation*}
exists.
Then $x_1|_{[t_0, \alpha)} = x_2|_{[t_0, \alpha)}$,
which implies $x_1|_{[t_0, \alpha]} = x_2|_{[t_0, \alpha]}$ by the continuity.
When $\alpha = t_0 + T$, the conclusion is obtained.
We suppose $\alpha < t_0 + T$ and derive a contradiction.

Let $(t_1, \phi_1) := (\alpha, I_{\alpha}x_1) = (\alpha, I_{\alpha}x_2)$.
Then $(t_1, \phi_1) \in \dom(F)$, and
	\begin{equation*}
		x_1|_{[t_1, t_0 + T] + I}, \mspace{15mu} x_2|_{[t_1, t_0 + T] + I}
	\end{equation*}
are $C^1$-solutions of $\eqref{eq:IVP}_{t_1, \phi_1}$.
By the local uniqueness,
there exists $\delta > 0$ such that $x_1|_{[t_1, t_1 + \delta]} = x_2|_{[t_1, t_1 + \delta]}$.
This means
	\begin{equation*}
		x_1|_{[t_0, t_1 + \delta]} = x_2|_{[t_0, t_1 + \delta]},
	\end{equation*}
which contradicts $t_1 = \alpha = \sup(S)$.
Therefore, we obtain $\alpha = t_0 + T$.
\end{proof}

This proof is independent from the class of solutions (see also \cite[Lemma 3.3]{Nishiguchi 2017}).

\subsection{Maximal solutions}

\begin{definition}\label{dfn:binary relation}
Let $(t_0, \phi_0) \in \dom(F)$.
We define a binary relation $\le$ on the set of $C^1$-solutions of $\eqref{eq:IVP}_{t_0, \phi_0}$ as follows:
For any $C^1$-solutions $x_i \colon J_i + I \to E$ $(i = 1, 2)$ of $\eqref{eq:IVP}_{t_0, \phi_0}$,
	\begin{equation*}
		x_1 \le x_2 \iff \text{$J_1 \subset J_2$ and $x_1|_{J_1} = x_2|_{J_1}$}.
	\end{equation*}
In this case, $x_2$ is said to be an \textit{extension} of $x_1$.
\end{definition}

The above binary relation satisfies the following properties:
\begin{itemize}
\item (Reflexivity) $x \le x$.
\item (Antisymmetry) $x_1 \le x_2$ and $x_2 \le x_1$ imply $x_1 = x_2$.
\item (Transitivity) $x_1 \le x_2$, $x_2 \le x_3$ implies $x_1 \le x_3$.
\end{itemize}
That is, $\le$ is a partial order on the set of $C^1$-solutions of $\eqref{eq:IVP}_{t_0, \phi_0}$
for each $(t_0, \phi_0) \in \dom(F)$.

\begin{lemma}\label{lem:local uniqueness for C^1-solutions and total order}
If IVP~\eqref{eq:IVP} satisfies the local uniqueness for $C^1$-solutions,
then for all $(t_0, \phi_0) \in \dom(F)$,
$\le$ is a total order on the set of $C^1$-solutions of $\eqref{eq:IVP}_{t_0, \phi_0}$.
\end{lemma}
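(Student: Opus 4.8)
The plan is to lean on the partial-order structure already recorded above together with Lemma~\ref{lem:maximal uniqueness for C^1-solutions}, which asserts that any two $C^1$-solutions of $\eqref{eq:IVP}_{t_0, \phi_0}$ sharing the \emph{same} domain must coincide. Reflexivity, antisymmetry, and transitivity of $\le$ have already been verified, so it remains only to establish totality: given any two $C^1$-solutions $x_1 \colon J_1 + I \to E$ and $x_2 \colon J_2 + I \to E$ of $\eqref{eq:IVP}_{t_0, \phi_0}$, I must show that $x_1 \le x_2$ or $x_2 \le x_1$.

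First I would compare the two domains. By the definition of a solution of $\eqref{eq:IVP}_{t_0, \phi_0}$, each $J_i$ is a left-closed interval with left end point $t_0$, hence of the form $[t_0, t_0 + |J_i|]$ or $[t_0, t_0 + |J_i|)$ (with $J_i = [t_0, +\infty)$ allowed when $|J_i| = \infty$). Two intervals of this shape sharing the left end point $t_0$ are always nested, so after possibly interchanging the roles of $x_1$ and $x_2$ I may assume $J_1 \subset J_2$. The only mild point here is the elementary case distinction showing that inclusion holds in every configuration of the right end points (endpoint attained or not); this is purely set-theoretic.

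Next I would restrict $x_2$ to $J_1 + I$ and argue that $x_2|_{J_1 + I}$ is again a $C^1$-solution of $\eqref{eq:IVP}_{t_0, \phi_0}$: condition $(t, I_t x_2) \in \dom(F)$ holds for all $t \in J_1 \subset J_2$, the map $x_2|_{J_1}$ inherits both the $C^1$ property and the differential equation from $x_2|_{J_2}$, the interval $J_1$ is left-closed with left end point $t_0$, and $I_{t_0} x_2 = \phi_0$. Having produced two $C^1$-solutions, namely $x_1$ and $x_2|_{J_1 + I}$, on the common domain $J_1 + I$, I would then invoke Lemma~\ref{lem:maximal uniqueness for C^1-solutions} to conclude $x_1 = x_2|_{J_1 + I}$; in particular $x_1|_{J_1} = x_2|_{J_1}$, which together with $J_1 \subset J_2$ is exactly the statement $x_1 \le x_2$.

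I do not expect a genuine obstacle, since the substantive work is already carried by Lemma~\ref{lem:maximal uniqueness for C^1-solutions}. The two things requiring care are the nesting of the two domains, which reduces to comparing their right end points, and the verification that restricting a solution to a smaller left-closed subinterval containing $t_0$ preserves all the defining properties of a $C^1$-solution. Both are routine once the reduction to a common domain has been made, and the conclusion that $\le$ is a total order follows immediately.
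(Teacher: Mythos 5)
Your proposal is correct and follows essentially the same route as the paper: reduce to nested domains $J_1 \subset J_2$ (automatic since both are left-closed intervals with left end point $t_0$), observe that $x_2|_{J_1 + I}$ is again a $C^1$-solution, and apply Lemma~\ref{lem:maximal uniqueness for C^1-solutions} on the common domain to conclude $x_1 \le x_2$. The extra detail you supply on the nesting of the intervals and on the restriction preserving the solution properties is exactly what the paper leaves implicit.
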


\begin{proof}
Fix $(t_0, \phi_0) \in \dom(F)$.
Let $x_i \colon J_i + I \to E$ $(i = 1, 2)$ be $C^1$-solutions of $\eqref{eq:IVP}_{t_0, \phi_0}$.
Then $J_1 \subset J_2$ or $J_1 \supset J_2$.
Without loss of generality, we may assume $J_1 \subset J_2$.
Since $x_2|_{J_1 + I}$ is also a $C^1$-solution of $\eqref{eq:IVP}_{t_0, \phi_0}$,
$x_1 = x_2|_{J_1 + I}$ holds from Lemma~\ref{lem:maximal uniqueness for C^1-solutions}.
This means $x_1 \le x_2$, and therefore, $x_1$ and $x_2$ are comparable.
\end{proof}

For each $(t_0, \phi_0) \in \dom(F)$,
a maximal element of the set of $C^1$-solutions of $\eqref{eq:IVP}_{t_0, \phi_0}$ with respect to the partial order
is called a \textit{maximal $C^1$-solution}.
From Lemma~\ref{lem:local uniqueness for C^1-solutions and total order},
a maximal $C^1$-solution is unique if it exists under the local uniqueness for $C^1$-solutions.

\begin{lemma}\label{lem:join of C^1-solutions}
Fix $(t_0, \phi_0) \in \dom(F)$.
Let $x_0 \colon [t_0, t_1] + I \to E$ be a $C^1$-solution of $\eqref{eq:IVP}_{t_0, \phi_0}$
and $x_1 \colon [t_1, t_2] + I \to E$ be a $C^1$-solution of $\eqref{eq:IVP}_{t_1, \phi_1}$, where $\phi_1 := I_{t_1}x_0$.
Then $x \colon [t_0, t_2] + I \to E$ defined by $I_{t_0}x = \phi_0$, and
	\begin{equation*}
		x(t) =
		\begin{cases}
			x_0(t), & t_0 \le t \le t_1, \\
			x_1(t), & t_1 \le t \le t_2
		\end{cases}
	\end{equation*}
is a $C^1$-solution of $\eqref{eq:IVP}_{t_0, \phi_0}$.
\end{lemma}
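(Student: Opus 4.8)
The plan is to verify directly that the concatenated map $x$ satisfies the definition of a $C^1$-solution of $\eqref{eq:IVP}_{t_0, \phi_0}$. The only genuine content is checking differentiability (of class $C^1$) at the junction point $t = t_1$; away from $t_1$ the conclusion is immediate since $x$ agrees with $x_0$ on $[t_0, t_1)$ and with $x_1$ on $(t_1, t_2]$, both of which are $C^1$-solutions of their respective IVPs.

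First I would record the consistency of the histories at the junction. By the hypothesis $\phi_1 := I_{t_1}x_0$, together with the initial condition $I_{t_1}x_1 = \phi_1$ for $\eqref{eq:IVP}_{t_1, \phi_1}$, we have $I_{t_1}x_0 = I_{t_1}x_1 = \phi_1$, so the two pieces agree on the overlap $t_1 + I$ and $x$ is well-defined on $[t_0, t_2] + I$. In particular $I_{t_0}x = \phi_0$ holds by construction, so the initial condition for $\eqref{eq:IVP}_{t_0, \phi_0}$ is satisfied and condition (i) of being a solution of the IVP (that $J = [t_0, t_2]$ is left-closed with left endpoint $t_0$) is clear.

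The main obstacle—though a mild one—is to show $x|_{[t_0, t_2]}$ is $C^1$ at $t_1$, i.e.\ that the left and right derivatives match and the derivative is continuous there. For $t \in [t_0, t_1)$ we have $(x|_{[t_0,t_2]})'(t) = x_0'(t) = F(t, I_tx_0) = F(t, I_tx)$, and for $t \in (t_1, t_2]$ we have $(x|_{[t_0,t_2]})'(t) = x_1'(t) = F(t, I_tx_1) = F(t, I_tx)$, using in each case that $I_tx$ coincides with $I_tx_0$ or $I_tx_1$ respectively. At $t_1$ itself, the left-hand derivative of $x$ equals $x_0'(t_1) = F(t_1, I_{t_1}x_0) = F(t_1, \phi_1)$, while the right-hand derivative equals $x_1'(t_1) = F(t_1, I_{t_1}x_1) = F(t_1, \phi_1)$; since both one-sided derivatives exist and coincide, $x$ is differentiable at $t_1$ with $(x|_{[t_0,t_2]})'(t_1) = F(t_1, I_{t_1}x)$.

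Finally I would assemble these observations: the derivative $(x|_{[t_0,t_2]})'(t) = F(t, I_tx)$ holds for every $t \in [t_0, t_2]$, and continuity of this derivative on each closed subinterval $[t_0, t_1]$ and $[t_1, t_2]$, with matching value $F(t_1,\phi_1)$ at the shared endpoint, yields continuity on all of $[t_0, t_2]$ by the pasting lemma. Hence $x|_{[t_0,t_2]}$ is of class $C^1$ and $(t, I_tx) \in \dom(F)$ for all $t$, so $x$ is a $C^1$-solution of $\eqref{eq:IVP}_{t_0, \phi_0}$, as claimed.
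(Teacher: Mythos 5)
Your proof is correct and follows essentially the same route as the paper: both arguments reduce to checking that the histories of the concatenation agree with those of the pieces (which rests on $I_{t_1}x_0 = I_{t_1}x_1 = \phi_1$), and then matching the one-sided derivatives at $t_1$ via $F(t_1, \phi_1)$. The paper merely spells out the casewise computation showing $I_t x_1 = I_t x$ for $t \in [t_1, t_2]$, which you assert from the agreement of the two pieces on the overlap; this is the same idea.
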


\begin{proof}
Let $t \in [t_1, t_2]$.
For all $\theta \in I$, we have
	\begin{align*}
		I_tx_1(\theta)
		&= x_1(t + \theta) \\
		&=
		\begin{cases}
			x_1(t + \theta), & t_1 - t \le \theta \le 0, \\
			\phi_1(t - t_1 + \theta), & \theta \le t_1 - t
		\end{cases} \\
		&=
		\begin{cases}
			x_1(t + \theta), & t_1 - t \le \theta \le 0, \\
			x_0(t + \theta), & \theta \le t_1 - t.
		\end{cases}
	\end{align*}
This shows $I_tx_1 = I_tx$.
Therefore, for all $t \in [t_0, t_2] \setminus \{t_1\}$, we have $\dot{x}(t) = F(t, I_tx)$.
We also have
	\begin{equation*}
		\dot{x}_-(t_1) = \dot{x}_0(t_1) = F(t_1, I_{t_1}x) = \dot{x}_1(t_1) = \dot{x}_+(t_1),
	\end{equation*}
which shows that $x$ is differentiable at $t_1$ and $\dot{x}(t_1) = F(t_1, I_{t_1}x)$.
The continuity of $\dot{x}$ also follows.
Thus, $x$ is a $C^1$-solution of $\eqref{eq:IVP}_{t_0, \phi_0}$.
\end{proof}

Applying Lemma~\ref{lem:join of C^1-solutions},
we have the following property under the assumption that IVP~\eqref{eq:IVP} satisfies the local existence for $C^1$-solutions:
If $x \colon J + I \to E$ is a maximal $C^1$-solution of $\eqref{eq:IVP}_{t_0, \phi_0}$,
$J = [t_0, t_0 + T)$ holds for some $0 < T \le \infty$.

\begin{proposition}\label{prop:maximal C^1-solutions and solution process}
If IVP~\eqref{eq:IVP} satisfies the local existence and local uniqueness for $C^1$-solutions,
then for every $(t_0, \phi_0) \in \dom(F)$, $\eqref{eq:IVP}_{t_0, \phi_0}$ has the unique maximal $C^1$-solution
	\begin{equation*}
		x_F(\cdot; t_0, \phi_0) \colon [t_0, t_0 + T_F(t_0, \phi_0)) + I \to E.
	\end{equation*}
Furthermore, $\mathcal{P}_F$ defined by
	\begin{align*}
		\dom(\mathcal{P}_F)
		&= \bigcup_{(t_0, \phi_0) \in \dom(F)} [0, T_F(t_0, \phi_0)) \times \{(t_0, \phi_0)\}, \\
		\mathcal{P}_F(\tau, t_0, \phi_0)
		&= I_{t_0 + \tau} [x_F(\cdot; t_0, \phi_0)]
	\end{align*}
is a maximal process in $\dom(F)$.
\end{proposition}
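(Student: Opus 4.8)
The plan is to prove the statement in two stages: first produce, for each base point, the unique maximal $C^1$-solution and record the half-open form of its domain, and then verify the four conditions of the axiom of maximal processes for $\mathcal{P}_F$ (the conditions (i)--(iv) listed after the definition of maximal process in Appendix~\ref{sec:maximal semiflows and processes}).

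For the first stage, fix $(t_0, \phi_0) \in \dom(F)$. By Lemma~\ref{lem:local uniqueness for C^1-solutions and total order} the set of all $C^1$-solutions of $\eqref{eq:IVP}_{t_0, \phi_0}$ is totally ordered by the extension relation $\le$; in particular their domains are nested left-closed intervals with common left endpoint $t_0$, and any two coincide on their overlap (Lemma~\ref{lem:maximal uniqueness for C^1-solutions}). I would therefore define $x_F(\cdot; t_0, \phi_0)$ on the union $J^{*} + I$ of all these domains by gluing. Since being a $C^1$-solution is a local condition on $J^{*}$, the glued map is again a $C^1$-solution and dominates every member of the (totally ordered) family, hence is the maximum, i.e. the unique maximal $C^1$-solution. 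The half-open form $J^{*} = [t_0, t_0 + T_F(t_0, \phi_0))$ with $0 < T_F(t_0, \phi_0) \le \infty$ is exactly the property already recorded after Lemma~\ref{lem:join of C^1-solutions}: a closed maximal domain would be extendable at its right endpoint (which lies in $\dom(F)$ by solution property (i)) via local existence and Lemma~\ref{lem:join of C^1-solutions}, contradicting maximality.

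For the second stage, the first three axioms are essentially bookkeeping. With $x_F$ and $T_F(\cdot) \colon \dom(F) \to (0, \infty]$ in hand, the formula for $\dom(\mathcal{P}_F)$ is precisely condition (ii) with escape time function $T_F$; condition (iii), $\mathcal{P}_F(0, t_0, \phi_0) = I_{t_0} x_F = \phi_0$, is the initial condition; and condition (i) follows from solution property (i), since for $\tau < T_F(t_0, \phi_0)$ one has $t_0 + \tau \in J^{*}$, whence $\bigl( t_0 + \tau, I_{t_0 + \tau} x_F \bigr) \in \dom(F)$. The substantive step is the cocycle identity (iv). Writing $x := x_F(\cdot; t_0, \phi_0)$ and $(t_1, \phi_1) := \bigl( t_0 + \tau_1, I_{t_0 + \tau_1} x \bigr) \in \dom(F)$, the crux is the escape-time identity $T_F(t_1, \phi_1) = T_F(t_0, \phi_0) - \tau_1$ together with the agreement $x_F(\cdot; t_1, \phi_1) = x$ on $[t_1, t_0 + T_F(t_0, \phi_0)) + I$. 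One inequality comes from observing that $x$ restricted to $[t_1, t_0 + T_F(t_0, \phi_0)) + I$ is a $C^1$-solution of $\eqref{eq:IVP}_{t_1, \phi_1}$, so by maximality $x_F(\cdot; t_1, \phi_1)$ extends it; the reverse inequality comes from joining $x|_{[t_0, t_1] + I}$ with $x_F(\cdot; t_1, \phi_1)$ via Lemma~\ref{lem:join of C^1-solutions} (applied on each compact subinterval and taking unions), producing a $C^1$-solution of $\eqref{eq:IVP}_{t_0, \phi_0}$ that, by maximality of $x$, cannot exceed it. Given this identity, the hypothesis $(\tau_2, t_1, \phi_1) \in \dom(\mathcal{P}_F)$ reads $\tau_1 + \tau_2 < T_F(t_0, \phi_0)$, yielding $(\tau_1 + \tau_2, t_0, \phi_0) \in \dom(\mathcal{P}_F)$, while the agreement of the two solutions on the overlap gives $\mathcal{P}_F(\tau_2, t_1, \phi_1) = I_{t_0 + \tau_1 + \tau_2} x = \mathcal{P}_F(\tau_1 + \tau_2, t_0, \phi_0)$.

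I expect condition (iv)---and inside it the escape-time identity, which requires both the restriction argument and the join argument---to be the main obstacle, whereas the construction of the maximal solution and conditions (i)--(iii) are routine consequences of the total order, the uniqueness lemma, and the definitions.
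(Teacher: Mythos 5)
Your proposal is correct and follows essentially the same route as the paper: construct the maximal $C^1$-solution by gluing over the totally ordered family of solutions (using Lemmas~\ref{lem:maximal uniqueness for C^1-solutions} and \ref{lem:local uniqueness for C^1-solutions and total order}), and verify the axiom of maximal processes with the cocycle condition handled via Lemma~\ref{lem:join of C^1-solutions} together with maximality. Your explicit formulation of the escape-time identity $T_F(t_1, \phi_1) = T_F(t_0, \phi_0) - \tau_1$ is only a slightly more detailed packaging of the same join-plus-maximality argument the paper uses for condition (iv).
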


\begin{proof}
\begin{flushleft}
\textbf{Step 1. Construction of maximal solution}
\end{flushleft}

Fix $(t_0, \phi_0) \in \dom(F)$.
From Lemma~\ref{lem:local uniqueness for C^1-solutions and total order},
a maximal $C^1$-solution of $\eqref{eq:IVP}_{t_0, \phi_0}$ is unique if it exists.
We now construct the maximal $C^1$-solution.
For each $C^1$-solution $x$ of $\eqref{eq:IVP}_{t_0, \phi_0}$,
let $J_x$ be a left-closed interval with the left end point $t_0$ satisfying
	\begin{equation*}
		\dom(x) = J_x + I.
	\end{equation*}
Let $J := \bigcup_{x} J_x$.
Then $J$ is also a left-closed interval with the left end point $t_0$.
We define the map $X \colon J + I \to E$ by $I_{t_0}X = \phi_0$ and
	\begin{equation*}
		X(t) = x(t) \mspace{20mu} \text{when $t \in J_x$}.
	\end{equation*}
From Lemma~\ref{lem:maximal uniqueness for C^1-solutions}, this is well-defined.
Furthermore, $X$ is a $C^1$-solution of $\eqref{eq:IVP}_{t_0, \phi_0}$ from Lemma~\ref{lem:join of C^1-solutions}.
We finally check the maximality of $X$.
Let $x$ be a $C^1$-solution of $\eqref{eq:IVP}_{t_0, \phi_0}$ satisfying $x \ge X$.
By the construction, we have $J_x \subset J$, which implies $J_x = J$.
Therefore, $x = X$.
Let $J_X = [t_0, t_0 + T_F(t_0, \phi_0))$.

\begin{flushleft}
\textbf{Step 2. Axiom of maximal processes}
\end{flushleft}

Fix $(t_0, \phi_0) \in \dom(F)$.
Let
	\begin{equation*}
		x_F(\cdot; t_0, \phi_0) \colon [t_0, t_0 + T_F(t_0, \phi_0)) + I \to E
	\end{equation*}
be the maximal $C^1$-solution of $\eqref{eq:IVP}_{t_0, \phi_0}$.

We check that $\mathcal{P}_F$ satisfies the axiom of maximal processes.
\begin{enumerate}
\item[(i)] Let $(\tau, t_0, \phi_0) \in \dom(\mathcal{P}_F)$, where $\tau \in [0, T_F(t_0, \phi_0))$.
Then by definition,
	\begin{equation*}
		(t_0 + \tau, I_{t_0 + \tau}[x_F(\cdot; t_0, \phi_0)]) \in \dom(F).
	\end{equation*}
\item[(ii)] $\dom(\mathcal{P}_F) = \bigcup_{(t_0, \phi_0) \in \dom(F)} [0, T_F(t_0, \phi_0)) \times \{(t_0, \phi_0)\}$.
\item[(iii)] For all $(t_0, \phi_0) \in \dom(F)$,
$\mathcal{P}_F(0, t_0, \phi_0) = (t_0, I_{t_0}[x_F(\cdot; t_0, \phi_0)]) = (t_0, \phi_0)$.
\item[(iv)] Let $\tau_1, \tau_2 \in \mathbb{R}_+$ and $(t_0, \phi_0) \in \dom(F)$.
Suppose
	\begin{equation*}
		(\tau_1, t_0, \phi_0) \in \dom(\mathcal{P}_F), \mspace{15mu}
		(\tau_2, t_0 + \tau_1, \mathcal{P}_F(\tau_1, t_0, \phi_0)) \in \dom(\mathcal{P}_F).
	\end{equation*}
These are equivalent to
	\begin{align*}
		[t_0, t_0 + \tau_1] + I
		&\subset \dom \bigl( x_F(\cdot; t_0, \phi_0) \bigr), \\
		[t_0 + \tau_1, (t_0 + \tau_1) + \tau_2] + I
		&\subset \dom \bigl( x_F \bigl( \cdot; t_0 + \tau_1, \mathcal{P}_F(\tau_1, t_0, \phi_0) \bigr) \bigr),
	\end{align*}
respectively.
From Lemma~\ref{lem:join of C^1-solutions} and by the maximality, we have
	\begin{equation*}
		[t_0, t_0 + \tau_1 + \tau_2] + I \subset \dom \bigl( x_F(\cdot; t_0, \phi_0) \bigr)
	\end{equation*}
and
	\begin{align*}
		\mathcal{P}_F(\tau_1 + \tau_2, t_0, \phi_0)
		&= I_{t_0 + (\tau_1 + \tau_2)}[x_F(\cdot; t_0, \phi_0)] \\
		&= I_{(t_0 + \tau_1) + \tau_2} \bigl[ x_F \bigl( \cdot; t_0 + \tau_1, \mathcal{P}_F(\tau_1, t_0, \phi_0) \bigr) \bigr] \\
		&= \mathcal{P}_F \bigl( \tau_2, t_0 + \tau_1, \mathcal{P}_F(\tau_1, t_0, \phi_0) \bigr).
	\end{align*}
\end{enumerate}

This completes the proof.
\end{proof}

\section{Continuity for families of maps}\label{sec:family of maps}

Let $X$ be a topological space, $Y = (Y, \mathcal{V})$ be a uniform space, and $\Lambda$ be a topological space.
In this appendix, we consider a family $(f_\lambda)_{\lambda \in \Lambda}$ of maps from $X$ to $Y$
and the map $f \colon \Lambda \times X \to Y$ defined by
	\begin{equation*}
		f(\lambda, x) = f_\lambda(x).
	\end{equation*}
For the theory of uniform spaces, we refer the reader to Kelley~\cite{Kelley 1955}.
In the following, we briefly recall some notations:
\begin{itemize}
\item For $V \in \mathcal{V}$, $V^{-1}$ is defined by
$V^{-1} = \{\mspace{2mu} (y, y') \in Y \times Y : (y', y) \in V \mspace{2mu}\}$.
\item For $V_1, V_2 \in \V$, the \textit{composition} $V_1 \circ V_2$ is defined by
	\begin{equation*}
		V_1 \circ V_2
		= \{\mspace{2mu} (y, y') \in Y \times Y
			: \text{$\exists z \in Y$ s.t.\ $(y, z) \in V_1$ and $(z, y') \in V_2$}
		\mspace{2mu}\}.
	\end{equation*}
\item $V \in \mathcal{V}$ is said to be \textit{symmetric} if $V = V^{-1}$.
\item For $y \in Y$ and $V \in \V$, $V[y]$ is defined by
	\begin{equation*}
		V[y] = \{\mspace{2mu} y' \in Y : (y, y') \in V \mspace{2mu}\}.
	\end{equation*}
\end{itemize}
We note that for every $V \in \V$, we can choose a symmetric $V' \in \V$ so that $V' \circ V' \subset V$
by the following properties:
\begin{itemize}
\item For all $U$, $U \cap U^{-1}$ is symmetric.
\item For all $U, U'$, $U \subset U'$ implies $U \circ U \subset U' \circ U'$.
\end{itemize}

\subsection{Equi-continuity and joint continuity}

\begin{definition}\label{dfn:equi-continuity}
$(f_\lambda)_{\lambda \in \Lambda}$ is said to be \textit{equi-continuous} at $x_0 \in X$
if for every $V \in \mathcal{V}$, there exists a neighborhood $N$ of $x_0$ in $X$ such that
for all $(\lambda, x) \in \Lambda \times N$,
	\begin{equation*}
		(f_{\lambda}(x), f_{\lambda}(x_0)) \in V
	\end{equation*}
holds.
We say that $(f_\lambda)_{\lambda \in \Lambda}$ is \textit{locally equi-continuous} at $x_0 \in X$
if for each $\lambda_0 \in \Lambda$, there exists a neighborhood $W$ of $\lambda_0$ in $\Lambda$ such that
$(f_\lambda)_{\lambda \in W}$ is equi-continuous at $x_0$.
\end{definition}

\begin{remark}
By definition, $(f_\lambda)_{\lambda \in \Lambda}$ is locally equi-continuous at every $x \in X$
if and only if for every $(\lambda_0, x_0) \in \Lambda \times X$,
there is a neighborhood $W$ of $\lambda_0$ in $\Lambda$ such that
$(f_\lambda)_{\lambda \in W}$ is equi-continuous at $x_0$.
We note that the neighborhood $W$ depends on $x_0$.
\end{remark}

\begin{theorem}\label{thm:equi-continuity from continuity}
If $f$ is continuous and $\Lambda$ is compact,
then $(f_\lambda)_{\lambda \in \Lambda}$ is equi-continuous at every $x_0 \in X$.
\end{theorem}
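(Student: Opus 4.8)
The plan is to exploit the compactness of $\Lambda$ in order to turn the pointwise continuity of $f$ into a single choice of neighborhood of $x_0$ that controls all the maps $f_\lambda$ simultaneously. First I would fix $x_0 \in X$ and an arbitrary entourage $V \in \mathcal{V}$, and then choose a symmetric $V' \in \mathcal{V}$ with $V' \circ V' \subset V$; this is possible by the closure properties of uniformities recalled just before the statement (intersecting $U$ with $U^{-1}$ to symmetrize, and halving by composition).

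Next, for each $\lambda_0 \in \Lambda$ I would use the continuity of $f$ at the point $(\lambda_0, x_0) \in \Lambda \times X$. Since $V'[f(\lambda_0, x_0)]$ is a neighborhood of $f(\lambda_0, x_0)$ in $Y$, its preimage under $f$ is a neighborhood of $(\lambda_0, x_0)$, and hence contains a basic product neighborhood $W_{\lambda_0} \times N_{\lambda_0}$, with $W_{\lambda_0}$ an open neighborhood of $\lambda_0$ in $\Lambda$ and $N_{\lambda_0}$ a neighborhood of $x_0$ in $X$. By construction,
\[
	(f(\lambda_0, x_0), f(\lambda, x)) \in V' \quad \text{for all } (\lambda, x) \in W_{\lambda_0} \times N_{\lambda_0}.
\]
In particular, taking $x = x_0$ gives $(f(\lambda_0, x_0), f(\lambda, x_0)) \in V'$ for every $\lambda \in W_{\lambda_0}$.

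Now $\{ W_{\lambda_0} \}_{\lambda_0 \in \Lambda}$ is an open cover of $\Lambda$, so by compactness I would extract a finite subcover indexed by $\lambda_1, \dots, \lambda_n$ and set $N := \bigcap_{i=1}^{n} N_{\lambda_i}$, which is again a neighborhood of $x_0$. Given any $(\lambda, x) \in \Lambda \times N$, I would choose $i$ with $\lambda \in W_{\lambda_i}$; then, because $x \in N \subset N_{\lambda_i}$, both
\[
	(f(\lambda_i, x_0), f(\lambda, x)) \in V' \quad \text{and} \quad (f(\lambda_i, x_0), f(\lambda, x_0)) \in V'
\]
hold. Reversing the first membership by the symmetry of $V'$ and composing with the second via $V' \circ V' \subset V$, I would obtain
\[
	(f_\lambda(x), f_\lambda(x_0)) = (f(\lambda, x), f(\lambda, x_0)) \in V' \circ V' \subset V,
\]
which is precisely the equi-continuity of $(f_\lambda)_{\lambda \in \Lambda}$ at $x_0$.

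The main obstacle is purely the bookkeeping with the uniform structure: one must route both estimates through the fixed auxiliary points $f(\lambda_i, x_0)$ and rely on the symmetric halving entourage $V'$ so that the two memberships can be composed into one in $V$. The compactness step itself is routine once the product neighborhoods $W_{\lambda_0} \times N_{\lambda_0}$ are in hand, since the whole point of using products is to let the $\Lambda$-direction be absorbed by the finite subcover while the $X$-direction survives as the finite intersection $N$.
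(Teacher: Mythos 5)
Your argument is correct, but it takes a genuinely different route from the paper's. You give a direct covering proof: fix $x_0$ and an entourage $V$, choose a symmetric $V'$ with $V' \circ V' \subset V$, use joint continuity of $f$ at each point $(\lambda_0, x_0)$ to produce product neighborhoods $W_{\lambda_0} \times N_{\lambda_0}$ carried by $f$ into $V'[f(\lambda_0, x_0)]$, extract a finite subcover $W_{\lambda_1}, \dots, W_{\lambda_n}$ of the compact space $\Lambda$, and take $N = \bigcap_{i=1}^n N_{\lambda_i}$; the two memberships are then chained through the auxiliary point $f(\lambda_i, x_0)$ using the symmetry of $V'$ and the inclusion $V' \circ V' \subset V$, exactly as required by the definition of composition used in this appendix. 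The paper instead argues by contradiction: assuming equi-continuity fails for some $V$, it builds a net $(\lambda_N, x_N)$ indexed by the neighborhood filter of $x_0$ with $(f_{\lambda_N}(x_N), f_{\lambda_N}(x_0)) \notin V$, uses compactness of $\Lambda$ to extract a subnet with $\lambda_{N_\alpha} \to \lambda_*$, and lets joint continuity push both $f(\lambda_{N_\alpha}, x_{N_\alpha})$ and $f(\lambda_{N_\alpha}, x_0)$ into $V'[f(\lambda_*, x_0)]$, contradicting the choice of the net. The two proofs invoke compactness in its two equivalent guises --- finite subcovers versus cluster points of nets. Yours is direct, exhibits the neighborhood $N$ explicitly, and avoids the subnet machinery entirely; the paper's is slightly quicker to set up but is nonconstructive and leans on the characterization of compactness via convergent subnets. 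Both are complete and valid proofs of the statement.
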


\begin{proof}
Suppose that $(f_\lambda)_{\lambda \in \Lambda}$ is not equi-continuous at some $x_0 \in X$,
and derive a contradiction.
Then there exists $V \in \mathcal{V}$ with the following property:
For every neighborhood $N$ of $x_0$ in $X$, there exists $(\lambda_N, x_N) \in \Lambda \times N$ such that
	\begin{equation*}
		(f_{\lambda_N}(x_N), f_{\lambda_N}(x_0)) \not \in V.
	\end{equation*}
Let $\mathcal{N}$ denote the directed set of all neighborhoods of $x_0$ in $X$ by the set inclusion.

By the choice of $(\lambda_N, x_N)$, $(x_N)_{N \in \mathcal{N}}$ converges to $x_0$.
Since $\Lambda$ is compact,
there is a subnet $(\lambda_{N_\alpha})_{\alpha \in A}$ of $(\lambda_N)_{N \in \mathcal{N}}$
which converges to some $\lambda_* \in \Lambda$.
Choose a symmetric $V' \in \mathcal{V}$ so that $V' \circ V' \subset V$.
The continuity of $f$ implies
	\begin{equation*}
		f(\lambda_{N_\alpha}, x_{N_\alpha}) \to f(\lambda_*, x_0)
			\mspace{10mu} \text{and} \mspace{10mu}
		f(\lambda_{N_\alpha}, x_0) \to f(\lambda_*, x_0).
	\end{equation*}
Namely, there is $\alpha_0 \in A$ such that for all $\alpha \in A$, $\alpha \ge \alpha_0$ implies
	\begin{equation*}
		f(\lambda_{N_\alpha}, x_{N_\alpha}) \in V'[f(\lambda_*, x_0)]
			\mspace{10mu} \text{and} \mspace{10mu}
		f(\lambda_{N_\alpha}, x_0) \in V'[f(\lambda_*, x_0)].
	\end{equation*}
These relations show that
	\begin{equation*}
		\bigl( f_{\lambda_{N_\alpha}}(x_{N_\alpha}), f_{\lambda_{N_\alpha}}(x_0) \bigr) \in V
	\end{equation*}
holds for all $\alpha \ge \alpha_0$, which is a contradiction.
This completes the proof.
\end{proof}

\begin{theorem}\label{thm:continuity from local equi-continuity}
If $f(\cdot, x) \colon \Lambda \to Y$ is continuous for every $x \in X$,
and if $(f_\lambda)_{\lambda \in \Lambda}$ is locally equi-continuous at every $x \in X$,
then $f$ is continuous.
\end{theorem}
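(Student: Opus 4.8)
The plan is to prove continuity of $f$ at an arbitrary point $(\lambda_0, x_0) \in \Lambda \times X$, working with the entourage neighborhood base of $Y$: since $Y = (Y, \V)$ is a uniform space, the sets $V[f(\lambda_0, x_0)]$ for $V \in \V$ form a neighborhood base at $f(\lambda_0, x_0)$, so it suffices to show that for each $V \in \V$ there is a neighborhood $U$ of $(\lambda_0, x_0)$ in $\Lambda \times X$ with $(f(\lambda_0, x_0), f(\lambda, x)) \in V$ for all $(\lambda, x) \in U$.

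First I would split the given $V$ using the standard refinement recalled in the preamble: choose a symmetric $V' \in \V$ with $V' \circ V' \subset V$. The two hypotheses then each contribute one factor of $V'$. From local equi-continuity of $(f_\lambda)_{\lambda \in \Lambda}$ at $x_0$, applied to $\lambda_0$, there is a neighborhood $W$ of $\lambda_0$ in $\Lambda$ for which $(f_\lambda)_{\lambda \in W}$ is equi-continuous at $x_0$; feeding $V'$ into that equi-continuity yields a neighborhood $N$ of $x_0$ in $X$ such that $(f_\lambda(x), f_\lambda(x_0)) \in V'$ for all $(\lambda, x) \in W \times N$. From continuity of the single map $f(\cdot, x_0) \colon \Lambda \to Y$ at $\lambda_0$, there is a neighborhood $W'$ of $\lambda_0$ with $(f_{\lambda_0}(x_0), f_\lambda(x_0)) \in V'$ for all $\lambda \in W'$.

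Then I would set $U := (W \cap W') \times N$ and combine the two estimates along the intermediate point $f_\lambda(x_0)$. For $(\lambda, x) \in U$ we have $(f_{\lambda_0}(x_0), f_\lambda(x_0)) \in V'$ and, using symmetry of $V'$, $(f_\lambda(x_0), f_\lambda(x)) \in V'$; composing gives $(f_{\lambda_0}(x_0), f_\lambda(x)) \in V' \circ V' \subset V$, that is, $f(\lambda, x) \in V[f(\lambda_0, x_0)]$. Since $U$ is a neighborhood of $(\lambda_0, x_0)$, this proves continuity at that point, and as $(\lambda_0, x_0)$ was arbitrary, $f$ is continuous.

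I do not expect a genuine obstacle here: the argument is a two-variable ``triangle inequality'' in the entourage calculus. The one point that must be handled with care --- and the reason the hypothesis is local rather than global equi-continuity --- is that equi-continuity in the $x$-variable is only available over a neighborhood $W$ of $\lambda_0$, so the $\lambda$-variable must be confined to $W \cap W'$; confining it there is precisely what lets the equi-continuous factor and the pointwise-continuous factor be applied simultaneously at the same $\lambda$.
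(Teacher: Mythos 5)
Your proposal is correct and follows essentially the same argument as the paper: split $V$ into a symmetric $V'$ with $V' \circ V' \subset V$, use local equi-continuity to control $(f_\lambda(x), f_\lambda(x_0))$ over $W \times N$, use pointwise continuity of $f(\cdot, x_0)$ to control $(f_{\lambda_0}(x_0), f_\lambda(x_0))$, and compose; the only cosmetic difference is that you intersect $W$ with a second neighborhood $W'$ where the paper simply shrinks $W$.
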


\begin{proof}
Fix $(\lambda_0, x_0) \in \Lambda \times X$.
We show the continuity of $f$ at $(\lambda_0, x_0)$.
By the local equi-continuity,
there is a neighborhood $W$ of $\lambda_0$ in $\Lambda$ such that
$(f_\lambda)_{\lambda \in W}$ is equi-continuous at $x_0$.
Let $V \in \mathcal{V}$ and choose a symmetric $V' \in \mathcal{V}$ so that $V' \circ V' \subset V$.
By the continuity of $\Lambda \ni \lambda \mapsto f_\lambda(x_0) \in Y$,
we may assume $f_\lambda(x_0) \in V'[f_{\lambda_0}(x_0)]$ by choosing $W$ sufficiently small.
By the equi-continuity, there is a neighborhood $N$ of $x_0$ in $X$ such that for all $(\lambda, x) \in W \times N$,
	\begin{equation*}
		(f_\lambda(x), f_\lambda(x_0)) \in V'.
	\end{equation*}
Therefore, we have
	\begin{equation*}
		f(\lambda, x)\in V[f(\lambda_0, x_0)]
	\end{equation*}
for all $(\lambda, x) \in W \times N$.
This shows that $f$ is continuous at $(\lambda_0, x_0)$.
\end{proof}

\subsection{Continuity of global semiflows in uniform spaces}

\begin{corollary}\label{cor:continuity of global semiflows}
Suppose that $X$ is a uniform space and $\varPhi$ is a global semiflow in $X$.
Then $\varPhi$ is continuous if and only if both of the following properties are satisfied:
\begin{enumerate}
\item[\emph{(i)}] $\varPhi(\cdot, x) \colon \mathbb{R}_+ \to X$ is continuous for every $x \in X$.
\item[\emph{(ii)}] For each $T > 0$, $(\varPhi(t, \cdot))_{t \in [0, T]}$ is equi-continuous at every $x \in X$.
\end{enumerate}
\end{corollary}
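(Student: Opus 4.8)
The plan is to deduce this directly from Theorems~\ref{thm:equi-continuity from continuity} and \ref{thm:continuity from local equi-continuity}, taking $Y = X$ and reading the time-$t$ maps $\varPhi(t, \cdot) \colon X \to X$ as the family $(f_\lambda)_{\lambda \in \Lambda}$ indexed by $\lambda = t$. Since $\varPhi$ is a \emph{global} semiflow, $\dom(\varPhi) = \mathbb{R}_+ \times X$, so $\varPhi$ is genuinely a map on the whole product and both theorems apply with $f = \varPhi$ (or its restriction to a sub-product) without any domain bookkeeping.

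First I would treat the only-if direction. Assuming $\varPhi$ is continuous, property (i) is immediate, since the restriction $\varPhi(\cdot, x)$ of a continuous map to $\mathbb{R}_+ \times \{x\}$ is continuous. For (ii), I fix $T > 0$ and apply Theorem~\ref{thm:equi-continuity from continuity} with the compact index space $\Lambda = [0, T]$ and $f = \varPhi|_{[0, T] \times X}$; this restriction is continuous, so the theorem yields equi-continuity of $(\varPhi(t, \cdot))_{t \in [0, T]}$ at every $x_0 \in X$, which is exactly (ii).

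For the if direction I would assume (i) and (ii) and apply Theorem~\ref{thm:continuity from local equi-continuity} with $\Lambda = \mathbb{R}_+$ and $f = \varPhi$. The hypothesis that $f(\cdot, x)$ is continuous for every $x$ is precisely (i). The remaining hypothesis is that $(\varPhi(t, \cdot))_{t \in \mathbb{R}_+}$ is locally equi-continuous at every $x_0$: given $t_0 \in \mathbb{R}_+$, I pick any $T > t_0$, so that $[0, T]$ is a neighborhood of $t_0$ in $\mathbb{R}_+$, and invoke (ii) to obtain equi-continuity of the subfamily indexed by $[0, T]$ at $x_0$. Here I use the trivial observation that shrinking the index set of an equi-continuous family preserves equi-continuity, since the same neighborhood $N$ of $x_0$ continues to work. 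This establishes the local equi-continuity required, and the theorem delivers the joint continuity of $\varPhi$.

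The only point needing care — rather than a genuine obstacle — is that $\mathbb{R}_+$ is not compact, so Theorem~\ref{thm:equi-continuity from continuity} cannot be applied to the full index set at once. This is exactly why condition (ii) is phrased over compact ranges $[0, T]$, and why the if-direction must route through the \emph{local} equi-continuity hypothesis of Theorem~\ref{thm:continuity from local equi-continuity} instead of a global equi-continuity statement.
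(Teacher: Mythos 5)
Your proposal is correct and follows essentially the same route as the paper: the only-if direction is Theorem~\ref{thm:equi-continuity from continuity} applied to the compact index set $[0, T]$, and the if direction observes that (ii) yields local equi-continuity of $(\varPhi(t, \cdot))_{t \in \mathbb{R}_+}$ and then invokes Theorem~\ref{thm:continuity from local equi-continuity}. The paper's proof is just a terser version of yours, leaving implicit the point you spell out (that $[0, T]$ with $T > t_0$ serves as the required neighborhood of $t_0$ in $\mathbb{R}_+$).
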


\begin{proof}
(Only-if-part).
The property (i) follows by the continuity of $\varPhi$.
Fix $T > 0$.
Applying Theorem~\ref{thm:equi-continuity from continuity},
$(\varPhi(t, \cdot))_{t \in [0, T]}$ is equi-continuous at every $x \in X$.

(If-part).
The property (ii) implies that $(\varPhi(t, \cdot))_{t \in \mathbb{R}_+}$ is locally equi-continuous at every $x \in X$.
Therefore, the conclusion holds from Theorem~\ref{thm:continuity from local equi-continuity}.
\end{proof}

\begin{definition}\label{dfn:locally equi-continuous C_0-semigroup}
Let $X$ be a linear topological space and $(T(t))_{t \in \mathbb{R}_+}$ be a family of linear operators on $X$.
We say that $(T(t))_{t \in \mathbb{R}_+}$ is a \textit{locally equi-continuous $C_0$-semigroup}
if the following properties are satisfied:
\begin{enumerate}
\item[(i)] $(T(t))_{t \in \mathbb{R}_+}$ is a linear semigroup.
\item[(ii)] For every $x \in X$, $\mathbb{R}_+ \ni t \mapsto T(t)x \in X$ is continuous.
\item[(iii)] For every $T > 0$, $(T(t))_{t \in [0, T]}$ is equi-continuous at $0 \in X$.
\end{enumerate}
\end{definition}

Compare the notion of locally equi-continuous $C_0$-semigroups
to the notion of \textit{equi-continuous $C_0$-semigroups}
introduced by Yosida~\cite{Yosida 1980}.

\begin{corollary}\label{cor:locally equi-continuous C_0-semigroup}
Let $X$ be a linear topological space and $(T(t))_{t \in \mathbb{R}_+}$ be a semigroup of linear operators on $X$.
Then the following properties are equivalent:
\begin{enumerate}
\item[\emph{(a)}] $(T(t))_{t \in \mathbb{R}_+}$ is a locally equi-continuous $C_0$-semigroup.
\item[\emph{(b)}] $\mathbb{R}_+ \times X \ni (t, x) \mapsto T(t)x \in X$ is continuous.
\end{enumerate}
\end{corollary}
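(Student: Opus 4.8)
The plan is to reduce the equivalence to Corollary~\ref{cor:continuity of global semiflows} by viewing the map $\varPhi(t, x) := T(t)x$ as a global semiflow on $X$, regarded as a uniform space. First I would equip the linear topological space $X$ with its canonical translation-invariant uniformity, whose entourages are
	\begin{equation*}
		V_N := \{\, (x, y) \in X \times X : x - y \in N \,\}
	\end{equation*}
as $N$ ranges over the neighborhoods of $0$ in $X$. This uniformity induces the given topology of $X$, so that Corollary~\ref{cor:continuity of global semiflows} is applicable. Since $(T(t))_{t \in \mathbb{R}_+}$ is a linear semigroup, one has $T(0) = \mathrm{id}_X$ and $T(t_1 + t_2) = T(t_2) \circ T(t_1)$, which is exactly the statement that $\varPhi$ is a global semiflow in $X$.

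With this setup, Corollary~\ref{cor:continuity of global semiflows} shows that (b), the continuity of $(t, x) \mapsto T(t)x$, is equivalent to the conjunction of the continuity of every orbit $\mathbb{R}_+ \ni t \mapsto T(t)x \in X$ and the equi-continuity of $(T(t))_{t \in [0, T]}$ at \emph{every} $x \in X$ for each $T > 0$. On the other hand, by Definition~\ref{dfn:locally equi-continuous C_0-semigroup}, property (a) amounts to the same orbit continuity together with the equi-continuity of $(T(t))_{t \in [0, T]}$ at the \emph{single} point $0 \in X$ for each $T > 0$; here the semigroup law already supplies the requirement that $(T(t))_{t \in \mathbb{R}_+}$ be a linear semigroup. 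Hence the whole problem reduces to showing that, for a family of linear operators, equi-continuity at $0$ is equivalent to equi-continuity at every point.

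The key step, which I expect to be the main (though routine) point, is this linearity argument. Equi-continuity at an arbitrary point trivially implies equi-continuity at $0$. For the converse, I would fix $T > 0$ and $x_0 \in X$, and take an entourage $V_N$. Using equi-continuity at $0$ together with $T(t)0 = 0$, I would choose a neighborhood $N_0$ of $0$ such that $T(t)y \in N$ for all $t \in [0, T]$ and all $y \in N_0$. Then $x_0 + N_0$ is a neighborhood of $x_0$, and for $x = x_0 + y$ with $y \in N_0$ linearity gives
	\begin{equation*}
		T(t)x - T(t)x_0 = T(t)(x - x_0) = T(t)y \in N,
	\end{equation*}
so that $(T(t)x, T(t)x_0) \in V_N$ for all $t \in [0, T]$. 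This proves equi-continuity at $x_0$, hence at every point, and closes the equivalence (a) $\Leftrightarrow$ (b). The only care required is the bookkeeping between an entourage $V_N$ and its defining neighborhood $N$ of $0$, and the observation that translation by $x_0$ carries a neighborhood of $0$ to a neighborhood of $x_0$.
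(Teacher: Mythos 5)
Your proposal is correct and follows essentially the same route as the paper: reduce to Corollary~\ref{cor:continuity of global semiflows} for the global semiflow $\varPhi(t, x) = T(t)x$ on $X$ with its canonical translation-invariant uniformity, and use linearity (via the identity $T(t)x - T(t)x_0 = T(t)(x - x_0)$ and the fact that $\{x_0 + N\}$ is a local base at $x_0$) to upgrade equi-continuity at $0$ to equi-continuity at every point. The paper's proof is just a terser statement of exactly this argument.
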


\begin{proof}
For every $x \in X$,
	\begin{equation*}
		\{\mspace{2mu} x + N : \text{$N$ is a neighborhood of $0$ in $X$} \mspace{2mu}\}
	\end{equation*}
is a local base of $x$.
Therefore, the equi-continuity at $0$ implies the equi-continuity at every $x \in X$.
Applying Corollary~\ref{cor:continuity of global semiflows}, the equivalence is obtained.
\end{proof}

\subsection{Uniform contraction}

In this subsection, we suppose that $X = Y = (X, d)$ is a metric space.

\begin{definition}\label{dfn:uniform contraction}
$(f_\lambda)_{\lambda \in \Lambda}$ is said to be a \textit{uniform contraction} on $X$
if there exists $0 < c < 1$ such that for all $\lambda \in \Lambda$ and all $x, x' \in X$,
	\begin{equation*}
		d(f_\lambda(x), f_\lambda(x')) \le c \cdot d(x, x')
	\end{equation*}
holds.
\end{definition}

\begin{lemma}\label{lem:continuity of uniform contraction}
Suppose that $(f_\lambda)_{\lambda \in \Lambda}$ is a uniform contraction on a metric space $X = (X, d)$.
Then the following properties are equivalent:
\begin{enumerate}
\item[\emph{(a)}] $f$ is continuous.
\item[\emph{(b)}] $f(\cdot, x) \colon \Lambda \to X$ is continuous for every $x \in X$.
\end{enumerate}
\end{lemma}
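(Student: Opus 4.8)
The plan is to prove the two implications separately, with essentially all of the content concentrated in the direction (b) $\Rightarrow$ (a); the reverse is a one-line observation and the forward direction is a clean reduction to Theorem~\ref{thm:continuity from local equi-continuity}.

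First, the direction (a) $\Rightarrow$ (b) is immediate and does not use the contraction hypothesis: for each fixed $x \in X$ the map $f(\cdot, x)$ is the composition of the continuous inclusion $\Lambda \ni \lambda \mapsto (\lambda, x) \in \Lambda \times X$ with the continuous map $f$, hence continuous.

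For (b) $\Rightarrow$ (a), the strategy is to apply Theorem~\ref{thm:continuity from local equi-continuity}, whose two hypotheses are precisely (1) the continuity of $f(\cdot, x) \colon \Lambda \to X$ for every $x \in X$, and (2) the local equi-continuity of $(f_\lambda)_{\lambda \in \Lambda}$ at every $x \in X$. Hypothesis (1) is exactly assumption (b), so the only thing to verify is hypothesis (2). Here the uniform contraction property does all the work: by Definition~\ref{dfn:uniform contraction} there is $0 < c < 1$ with $d(f_\lambda(x), f_\lambda(x')) \le c \cdot d(x, x')$ for all $\lambda \in \Lambda$ and all $x, x' \in X$. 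Fixing $x_0 \in X$ and an entourage $V \in \mathcal{V}$, I would choose $\varepsilon > 0$ so that the $\varepsilon$-entourage $V_\varepsilon = \{\, (y, y') : d(y, y') < \varepsilon \,\}$ satisfies $V_\varepsilon \subseteq V$, and set $N := \{\, x \in X : d(x, x_0) < \varepsilon \,\}$. Then for every $\lambda \in \Lambda$ and every $x \in N$ one has $d(f_\lambda(x), f_\lambda(x_0)) \le c \cdot d(x, x_0) < \varepsilon$, so $(f_\lambda(x), f_\lambda(x_0)) \in V_\varepsilon \subseteq V$. This is exactly equi-continuity of $(f_\lambda)_{\lambda \in \Lambda}$ at $x_0$ in the sense of Definition~\ref{dfn:equi-continuity}, and equi-continuity is a special case of local equi-continuity (take $W = \Lambda$ for each $\lambda_0$). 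Hence hypothesis (2) holds, and Theorem~\ref{thm:continuity from local equi-continuity} yields the continuity of $f$, which is (a).

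There is essentially no hard step here. The only point requiring care is recognizing that the contraction inequality produces an \emph{equi}-continuity estimate uniform in $\lambda$ (indeed the whole family is equi-Lipschitz with the single constant $c$), so that the neighborhood $N$ can be chosen independently of the parameter $\lambda$; this is exactly what lets us invoke the equi-continuity framework of Appendix~\ref{sec:family of maps}. The remainder is bookkeeping translating between the metric balls and the entourages of the metric uniformity on $X$.
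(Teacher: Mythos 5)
Your proposal is correct and follows the same route as the paper: the direction (a) $\Rightarrow$ (b) is dismissed as trivial, and (b) $\Rightarrow$ (a) is obtained by showing that the uniform contraction constant $c$ gives equi-continuity of $(f_\lambda)_{\lambda \in \Lambda}$ at each $x_0 \in X$ (the paper uses the ball of radius $\ep/c$ where you use radius $\ep$, an immaterial difference) and then invoking Theorem~\ref{thm:continuity from local equi-continuity}. No gaps.
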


\begin{proof}
(a) $\Rightarrow$ (b): Trivial.

(b) $\Rightarrow$ (a):
We show that $(f_\lambda)_{\lambda \in \Lambda}$ is equi-continuous at every $x \in X$.
Then (a) follows from Theorem~\ref{thm:continuity from local equi-continuity}.
Fix $x_0 \in X$.
Let $\ep > 0$.
For all $(\lambda, x) \in \Lambda \times X$, $d(x, x_0) < \ep/c$ implies
	\begin{equation*}
		d(f_\lambda(x), f_\lambda(x_0)) \le c \cdot d(x, x_0) < \ep.
	\end{equation*}
Since $\ep$ is arbitrary, this shows that $(f_\lambda)_{\lambda \in \Lambda}$ is equi-continuous at $x_0$.
\end{proof}

The following is a uniform contraction theorem.

\begin{theorem}\label{thm:uniform contraction thm}
Suppose that
(i) $(f_\lambda)_{\lambda \in \Lambda}$ is a uniform contraction on $X$, and
(ii) for each $\lambda \in \Lambda$, $x(\lambda) \in X$ is a fixed point of $f_\lambda$.
If $f(\cdot, x) \colon \Lambda \to X$ is continuous for every $x \in X$,
then $x(\cdot) \colon \Lambda \to X$ is continuous.
\end{theorem}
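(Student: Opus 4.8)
The plan is to establish continuity of $x(\cdot)$ at an arbitrary fixed parameter $\lambda_0 \in \Lambda$; since $\lambda_0$ is arbitrary, this yields continuity on all of $\Lambda$. The key observation is that both $x(\lambda)$ and $x(\lambda_0)$ are fixed points (of $f_\lambda$ and $f_{\lambda_0}$, respectively), so I can compare them by inserting the intermediate term $f_\lambda(x(\lambda_0))$ and exploiting the uniform contraction constant $0 < c < 1$ precisely on the piece that stays at the common parameter $\lambda$, while the parameter-dependence is quarantined into a single term evaluated at the fixed base point $x(\lambda_0)$.

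Concretely, first I would fix $\lambda_0$ and estimate, for every $\lambda \in \Lambda$,
\begin{align*}
	d(x(\lambda), x(\lambda_0))
	&= d\bigl( f_\lambda(x(\lambda)), f_{\lambda_0}(x(\lambda_0)) \bigr) \\
	&\le d\bigl( f_\lambda(x(\lambda)), f_\lambda(x(\lambda_0)) \bigr)
		+ d\bigl( f_\lambda(x(\lambda_0)), f_{\lambda_0}(x(\lambda_0)) \bigr) \\
	&\le c \cdot d(x(\lambda), x(\lambda_0))
		+ d\bigl( f(\lambda, x(\lambda_0)), f(\lambda_0, x(\lambda_0)) \bigr),
\end{align*}
where the fixed-point property (ii) is used to rewrite the endpoints in the first line, and the uniform contraction hypothesis (i) is applied to the first summand in the last line. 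Subtracting $c \cdot d(x(\lambda), x(\lambda_0))$ from both sides and dividing by $1 - c > 0$ gives the central inequality
\begin{equation*}
	d(x(\lambda), x(\lambda_0))
	\le \frac{1}{1 - c} \cdot d\bigl( f(\lambda, x(\lambda_0)), f(\lambda_0, x(\lambda_0)) \bigr).
\end{equation*}

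To finish, I would invoke the hypothesis that $f(\cdot, x) \colon \Lambda \to X$ is continuous for the single point $x = x(\lambda_0)$: as $\lambda \to \lambda_0$ the right-hand side tends to $0$, hence $d(x(\lambda), x(\lambda_0)) \to 0$, which is exactly continuity of $x(\cdot)$ at $\lambda_0$. There is no real obstacle here; the argument is elementary, and the contraction constant $c$ does all the work by absorbing the self-comparison term into the left-hand side. It is worth noting that, in contrast with the equi-continuity arguments of Appendix~\ref{sec:family of maps}, only the continuity of $f$ in the parameter at the \emph{fixed} point $x(\lambda_0)$ is needed, so Lemma~\ref{lem:continuity of uniform contraction} is not required here; the uniformity of the contraction is what makes the estimate strong enough to close the argument directly.
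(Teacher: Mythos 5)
Your argument is correct and is essentially identical to the paper's proof: the same triangle-inequality decomposition through the intermediate point $f_\lambda(x(\lambda_0))$, absorption of the $c \cdot d(x(\lambda), x(\lambda_0))$ term into the left-hand side, and division by $1 - c$, followed by the continuity of $f(\cdot, x(\lambda_0))$ at $\lambda_0$. No differences worth noting.
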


\begin{proof}
Fix $\lambda_0 \in \Lambda$.
We show that $x(\cdot) \colon \Lambda \to X$ is continuous at $\lambda_0 \in \Lambda$.
Let $\ep > 0$ be given.
By the triangle inequality, we have
	\begin{align*}
		d \bigl( x(\lambda), x(\lambda_0) \bigr)
		&= d \bigl( f_\lambda(x(\lambda)), f_{\lambda_0}(x(\lambda_0)) \bigr) \\
		&\le d \bigl( f_\lambda(x(\lambda)), f_\lambda(x(\lambda_0)) \bigr)
			+ d \bigl( f_\lambda(x(\lambda_0)), f_{\lambda_0}(x(\lambda_0)) \bigr) \\
		&\le c \cdot d(x(\lambda), x(\lambda_0)) + d \bigl( f_\lambda(x(\lambda_0)), f_{\lambda_0}(x(\lambda_0)) \bigr)
	\end{align*}
for all $\lambda \in \Lambda$.
Since $f(\cdot, x(\lambda_0)) \colon \Lambda \to X$ is continuous,
there is a neighborhood $W$ of $\lambda_0$ in $\Lambda$ such that for all $\lambda \in W$, we have
	\begin{equation*}
		d \bigl( f_\lambda(x(\lambda_0)), f_{\lambda_0}(x(\lambda_0)) \bigr) < (1 - c) \cdot \ep.
	\end{equation*}
This shows that
	\begin{equation*}
		d \bigl( x(\lambda), x(\lambda_0) \bigr)
		\le \frac{1}{1 - c} \cdot d \bigl( f_\lambda(x(\lambda_0)), f_{\lambda_0}(x(\lambda_0)) \bigr)
		< \ep
	\end{equation*}
holds for all $\lambda \in W$.
Therefore, the conclusion holds.
\end{proof}

\begin{remark}
In Theorem~\ref{thm:uniform contraction thm}, the completeness of $X$ is unnecessary
(see also \cite[Theorem 1.244]{Chicone 2006}).
Since $f_\lambda$ is a contraction, a fixed point of $f_\lambda$ is unique if it exists.
\end{remark}

\section{Lipschitz conditions about prolongations for ODEs}\label{sec:Lip about prolongations for ODEs}

In this appendix, we consider an ODE
	\begin{equation}\label{eq:ODE}
		\dot{x}(t) = f(t, x(t))
	\end{equation}
for a map $f \colon \mathbb{R} \times E \supset \dom(f) \to E$.
ODE~\eqref{eq:ODE} can be considered as an RFDE with history space $C(I^0, E)_\mathrm{u}$ by
	\begin{equation*}
		\dot{x}(t) = F_f \bigl( t, I^0_tx \bigr).
	\end{equation*}
Here $F_f \colon \mathbb{R} \times C(I^0, E)_\mathrm{u} \supset \dom(F_f) \to E$ is defined by
	\begin{equation*}
		\dom(F_f) = \{\mspace{2mu} (t, \phi) : (t, \phi(0)) \in \dom(f) \mspace{2mu}\},
			\mspace{15mu}
		F_f(t, \phi) = f(t, \phi(0)).
	\end{equation*}
Therefore, by using the isometry
	\begin{equation*}
		j \colon \mathbb{R} \times C(I^0, E)_\mathrm{u} \to \mathbb{R} \times E,
			\mspace{15mu}
		j(t, \phi) := (t, \phi(0)),
	\end{equation*}
$F_f$ is represented by
	\begin{equation*}
		\dom(F_f) = j^{-1}(\dom(f)), \mspace{15mu} F_f = f \circ j.
	\end{equation*}

\begin{proposition}\label{prop:r = 0, rectangles}
Let $I = I^0$.
Then for every $0 \le T < \infty$ and $0 \le \delta \le \infty$,
	\begin{equation*}
		\varLambda_{0, \boldsymbol{0}}(T, \delta) = [0, T] \times \bar{B}_E(0; \delta)
	\end{equation*}
holds under the identification $\Map(I^0, E) = E$.
\end{proposition}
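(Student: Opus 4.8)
The plan is to unwind the definitions in the degenerate case $I = I^0 = \{0\}$ and then verify the set equality by two inclusions. Under the identification $\Map(I^0, E) = E$, an element $\gamma$ of $\varGamma_{0, \boldsymbol{0}}(\tau, \delta)$ is nothing but a continuous map $\gamma \colon [0, \tau] \to E$ with $\gamma(0) = 0$ (this is exactly the constraint $I_0\gamma = \boldsymbol{0}$) and $\|\gamma\|_{C[0, \tau]} \le \delta$; moreover the history $I_\tau\gamma$ collapses to the single value $\gamma(\tau) \in E$. Hence
\begin{equation*}
	\varLambda_{0, \boldsymbol{0}}(T, \delta)
	= \bigl\{ \mspace{2mu} (\tau, \gamma(\tau)) : \tau \in [0, T], \ \gamma \in \varGamma_{0, \boldsymbol{0}}(\tau, \delta) \mspace{2mu} \bigr\},
\end{equation*}
so the task reduces to identifying the set of attainable endpoint values $\gamma(\tau)$ as $\gamma$ ranges over $\varGamma_{0, \boldsymbol{0}}(\tau, \delta)$.

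For the inclusion $\subset$, I would take $(\tau, \phi) \in \varLambda_{0, \boldsymbol{0}}(T, \delta)$, so that $\phi = \gamma(\tau)$ for some $\gamma \in \varGamma_{0, \boldsymbol{0}}(\tau, \delta)$. Then $\tau \in [0, T]$, and
	$\|\phi\|_E = \|\gamma(\tau)\|_E \le \|\gamma\|_{C[0, \tau]} \le \delta$,
whence $\phi \in \bar{B}_E(0; \delta)$. This direction is immediate and requires no construction.

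For the reverse inclusion $\supset$, given $(\tau, \phi) \in [0, T] \times \bar{B}_E(0; \delta)$ with $\tau > 0$, I would exhibit an explicit prolongation realizing $\phi$ as an endpoint value, namely the straight-line path $\gamma(s) := (s/\tau)\phi$ for $s \in [0, \tau]$. This $\gamma$ is continuous, satisfies $\gamma(0) = 0$, obeys $\|\gamma\|_{C[0, \tau]} = \|\phi\|_E \le \delta$ (so that $\gamma \in \varGamma_{0, \boldsymbol{0}}(\tau, \delta)$), and has $\gamma(\tau) = \phi$; thus $(\tau, \phi) \in \varLambda_{0, \boldsymbol{0}}(T, \delta)$. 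So for every positive $\tau$ the full closed ball is attained.

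The one delicate point --- and the step I expect to require care rather than computation --- is the degenerate slice $\tau = 0$. There the only admissible prolongation is the trivial map $\boldsymbol{0}$, forcing the endpoint value $\gamma(0) = 0$, and the straight-line construction above is unavailable. I would therefore isolate this boundary case and reconcile it with the degenerate-interval convention adopted in the definition of prolongations, checking that the intended reading of the right-hand side is compatible with it; everything away from $\tau = 0$ follows from the one-line homotopy construction together with the routine sup-norm estimate.
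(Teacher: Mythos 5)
Your argument is the same as the paper's: the inclusion $\subset$ via the estimate $\|\gamma(\tau)\|_E \le \|\gamma\|_{C[0,\tau]} \le \delta$, and the inclusion $\supset$ via the straight-line prolongation $\beta(s) = (s/\tau)\phi$. The delicate point you flag at $\tau = 0$ is genuine, and the paper's proof does not address it either (its construction $\beta(t) = (t/\tau)x_0$ is equally undefined there). Indeed, by the convention $\varGamma_{0, \boldsymbol{0}}(0, \delta) = \{\boldsymbol{0}\}$, the slice of $\varLambda_{0, \boldsymbol{0}}(T, \delta)$ over $\tau = 0$ is the single point $(0, 0)$, whereas the right-hand side contains all of $\{0\} \times \bar{B}_E(0; \delta)$; so the stated equality is literally false on that degenerate slice and no choice of prolongation can repair it. This is an imprecision in the proposition itself rather than a gap in your reasoning: the equality holds over $\tau \in (0, T]$, which is what is actually exploited later (in Lemma~\ref{lem:Lip about prolongations for ODEs} the restriction to positive $\tau$ is harmless). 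You should state the boundary case explicitly rather than leave it as a promise to ``reconcile'' it, but otherwise your proof matches the paper's.
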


\begin{proof}
$(\subset)$
Let $(\tau, \phi) \in \varLambda_{0, \boldsymbol{0}}(T, \delta)$.
Then $\tau \in [0, T]$ and $\phi = I^0_\tau\beta$ for some $\beta \in \varGamma_{0, \boldsymbol{0}}(\tau, \delta)$.
Therefore, we have
	\begin{equation*}
		\|\phi(0)\|_E = \|\beta(\tau)\|_E \le \|\beta\|_\infty \le \delta.
	\end{equation*}
This shows $\varLambda_{0, \boldsymbol{0}}(T, \delta) \subset [0, T] \times \bar{B}_E(0; \delta)$.

$(\supset)$
Let $(\tau, x_0) \in [0, T] \times \bar{B}_E(0; \delta)$.
We consider the map $\beta \colon [0, \tau] + I^0 \to E$ given by $\beta(t) := (t/\tau)x_0$.
Then for all $t \in [0, \tau]$, $\|\beta(t)\|_E \le \|x_0\|_E \le \delta$.
This means $\beta \in \varGamma_{0, \boldsymbol{0}}(\tau, \delta)$.
Therefore,
	\begin{equation*}
		(\tau, x_0)
		= (\tau, \beta(\tau))
		= (\tau, I^0_\tau\beta)
		\in \varLambda_{0, \boldsymbol{0}}(T, \delta).
	\end{equation*}

This completes the proof.
\end{proof}

For $L > 0$, the map $f$ is said to be $L$-\textit{Lipschitzian} if
	\begin{equation*}
		\|f(t, x_1) - f(t, x_2)\|_E \le L \cdot \|x_1 - x_2\|_E
	\end{equation*}
holds for all $(t, x_1), (t, x_2) \in \dom(f)$.
$f$ is said to be \textit{Lipschitzian} if $f$ is $L$-\textit{Lipschitzian} for some $L > 0$.
$f$ is said to be \textit{locally Lipschitzian}
if for every $(t_0, x_0) \in \dom(f)$, there exists a neighborhood $W$ of $(t_0, x_0)$ in $\dom(f)$ such that
$f|_{W}$ is Lipschitzian.

\begin{lemma}\label{lem:Lip about prolongations for ODEs}
Let $(t_0, x_0) \in \dom(f)$ and $(t_0, \phi_0) := j^{-1}(t_0, x_0)$.
Then the following properties are equivalent:
\begin{enumerate}
\item[\emph{(a)}] $f$ is $L$-Lipschitzian on $\bigl( [t_0, t_0 + T] \times \bar{B}_E(x_0; \delta) \bigr) \cap \dom(f)$.
\item[\emph{(b)}] $F_f$ satisfies $L$-\eqref{eq:Lipschitzian} 
for all $(t, \phi_1), (t, \phi_2) \in \varLambda_{t_0, \phi_0}(T, \delta) \cap \dom(F_f)$.
\end{enumerate}
\end{lemma}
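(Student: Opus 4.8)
The plan is to reduce the whole statement to a set-theoretic identification of the rectangle $\varLambda_{t_0, \phi_0}(T, \delta)$ with the product box $[t_0, t_0 + T] \times \bar{B}_E(x_0; \delta)$ through the isometry $j$, after which conditions (a) and (b) become literally the same inequality. The underlying point is that for $I = I^0$ there are no genuine histories: a point $(t, \phi) \in \mathbb{R} \times C(I^0, E)_\mathrm{u}$ carries exactly the data $(t, \phi(0)) = j(t, \phi) \in \mathbb{R} \times E$, and the infinity norm collapses to $\|\phi_1 - \phi_2\|_\infty = \|\phi_1(0) - \phi_2(0)\|_E$.

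First I would establish the key identity
	\begin{equation*}
		j\bigl( \varLambda_{t_0, \phi_0}(T, \delta) \bigr)
		= [t_0, t_0 + T] \times \bar{B}_E(x_0; \delta).
	\end{equation*}
This follows by transporting Proposition~\ref{prop:r = 0, rectangles}, which computes $\varLambda_{0, \boldsymbol{0}}(T, \delta) = [0, T] \times \bar{B}_E(0; \delta)$, along the translation $\tau_{t_0, \phi_0}^0$. By Lemma~\ref{lem:translation on rectangles}(i) this translation is a bijection from $\varLambda_{0, \boldsymbol{0}}(T, \delta)$ onto $\varLambda_{t_0, \phi_0}(T, \delta)$, and for $I = I^0$ it is simply the ordinary translation $(t, \phi) \mapsto (t_0 + t, x_0 + \phi)$ in $\mathbb{R} \times E$ under the identification $\phi_0 = \phi_0(0) = x_0$. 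Composing with $j$ then sends the box $[0, T] \times \bar{B}_E(0; \delta)$ onto $[t_0, t_0 + T] \times \bar{B}_E(x_0; \delta)$, as claimed.

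Next, using $\dom(F_f) = j^{-1}(\dom(f))$ together with the bijectivity of $j$, the previous identity intersects correctly with the domains, giving
	\begin{equation*}
		j\bigl( \varLambda_{t_0, \phi_0}(T, \delta) \cap \dom(F_f) \bigr)
		= \bigl( [t_0, t_0 + T] \times \bar{B}_E(x_0; \delta) \bigr) \cap \dom(f).
	\end{equation*}
Then, for any $(t, \phi_1), (t, \phi_2) \in \varLambda_{t_0, \phi_0}(T, \delta) \cap \dom(F_f)$ with images $(t, x_i) := j(t, \phi_i)$, the relations $F_f(t, \phi_i) = f(t, x_i)$ and $\|\phi_1 - \phi_2\|_\infty = \|x_1 - x_2\|_E$ convert $L$-\eqref{eq:Lipschitzian} for $F_f$ into the $L$-Lipschitz inequality for $f$, and conversely. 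Quantifying over all such pairs yields (a) $\Leftrightarrow$ (b).

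There is no genuine analytic obstacle here; the only delicate point is the bookkeeping in the identification step — matching the translated rectangle with the product box and verifying that the two domain intersections correspond under $j$. Once that is in place, both conditions reduce to the same inequality over the same set of pairs, and the equivalence is immediate.
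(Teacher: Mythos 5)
Your proposal is correct and follows essentially the same route as the paper: both reduce the statement to the identification $j\bigl( \varLambda_{t_0, \phi_0}(T, \delta) \cap \dom(F_f) \bigr) = \bigl( [t_0, t_0 + T] \times \bar{B}_E(x_0; \delta) \bigr) \cap \dom(f)$ coming from Proposition~\ref{prop:r = 0, rectangles}, and then observe that under $j$ the inequality $L$-\eqref{eq:Lipschitzian} for $F_f$ and the $L$-Lipschitz inequality for $f$ are literally the same because $\|\phi_1 - \phi_2\|_\infty = \|\phi_1(0) - \phi_2(0)\|_E$. The only difference is that you spell out the translation from the base point $(0, \boldsymbol{0})$ to $(t_0, \phi_0)$ via Lemma~\ref{lem:translation on rectangles}, a bookkeeping step the paper leaves implicit when it cites Proposition~\ref{prop:r = 0, rectangles} directly for the general base point.
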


\begin{proof}
(a) $\Rightarrow$ (b):
For all $(t, \phi_1), (t, \phi_2) \in \varLambda_{t_0, \phi_0}(T, \delta) \cap \dom(F_f)$,
	\begin{equation*}
		j(t, \phi_1), j(t, \phi_2) \in \bigl( [t_0, t_0 + T] \times \bar{B}_E(x_0; \delta) \bigr) \cap \dom(f)
	\end{equation*}
from Proposition~\ref{prop:r = 0, rectangles}.
Therefore, we have
	\begin{align*}
		\|F_f(t, \phi_1) - F_f(t, \phi_2)\|_E
		&= \|f(t, \phi_1(0)) - f(t, \phi_2(0))\|_E \\
		&\le L \cdot \|\phi_1(0) - \phi_2(0)\|_E \\
		&= L \cdot \|\phi_1 - \phi_2\|_\infty,
	\end{align*}
which shows (b).

(b) $\Rightarrow$ (a):
For all $(t, x_1), (t, x_2) \in \bigl( [t_0, t_0 + T] \times \bar{B}_E(x_0; \delta) \bigr) \cap \dom(f)$,
	\begin{equation*}
		j^{-1}(t, x_1), j^{-1}(t, x_2) \in \varLambda_{t_0, \phi_0}(T, \delta) \cap \dom(F_f)
	\end{equation*}
from Proposition~\ref{prop:r = 0, rectangles}.
Let $(t, \phi_i) := j^{-1}(t, x_i)$ ($i = 1, 2$).
Then,
	\begin{align*}
		\|f(t, x_1) - f(t, x_2)\|_E
		&= \|F_f \circ j^{-1}(t, x_1) - F_f \circ j^{-1}(t, x_2)\|_E \\
		&\le L \cdot \|\phi_1 - \phi_2\|_\infty \\
		&= L \cdot \|x_1 - x_2\|_E
	\end{align*}
holds.

This completes the proof.
\end{proof}

\begin{proposition}\label{prop:Lip about prolongations for ODEs}
If $f$ is locally Lipschitzian, then $F_f$ is uniformly locally Lipschitzian about prolongations.
\end{proposition}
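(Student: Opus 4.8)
The plan is to reduce the uniform Lipschitz condition about prolongations for $F_f$ to the ordinary local Lipschitz condition for $f$ through the isometry $j$, exploiting the explicit description of the rectangles $\varLambda_{\sigma, \psi}(T, \delta)$ that is available when $I = I^0$. First I would fix an arbitrary base point $(t_0, \phi_0) \in \dom(F_f)$ and set $(t_0, x_0) := j(t_0, \phi_0)$, so that $x_0 = \phi_0(0)$. Since $f$ is locally Lipschitzian, I can choose $c, L > 0$ and an open set $V \supset (t_0 - c, t_0 + c) \times B_E(x_0; c)$ in $\mathbb{R} \times E$ such that $f$ is $L$-Lipschitzian on $W := V \cap \dom(f)$.

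The next step is to pin down the shape of the rectangles with a general base point. From Lemma~\ref{lem:translation on rectangles}(i), together with the description of $\tau_{\sigma, \psi}^0$ in the case $I = I^0$ (which is the ordinary translation $(t, \phi) \mapsto (\sigma + t, \psi + \phi)$ under the identification $\Map(I^0, E) = E$) and Proposition~\ref{prop:r = 0, rectangles}, one obtains, after applying $j$,
\[
	j\bigl( \varLambda_{\sigma, \psi}(T, \delta) \bigr) = [\sigma, \sigma + T] \times \bar{B}_E(\psi(0); \delta).
\]
One may equally verify this directly from the definition of $\varLambda_{\sigma, \psi}(T, \delta)$. Using that $j$ is a bijection with $j(\dom(F_f)) = \dom(f)$, this yields $j\bigl( \varLambda_{\sigma, \psi}(T, \delta) \cap \dom(F_f) \bigr) = \bigl( [\sigma, \sigma + T] \times \bar{B}_E(\psi(0); \delta) \bigr) \cap \dom(f)$.

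Then I would choose the constants and neighborhood small enough to make every rectangle fit inside $W$. Concretely, I pick $a, \eta, T, \delta > 0$ with $a + T < c$ and $\eta + \delta < c$, and set $W_0 := j^{-1}\bigl( (t_0 - a, t_0 + a) \times B_E(x_0; \eta) \bigr) \cap \dom(F_f)$, which is a neighborhood of $(t_0, \phi_0)$ in $\dom(F_f)$. For $(\sigma, \psi) \in W_0$ we have $|\sigma - t_0| < a$ and $\|\psi(0) - x_0\|_E < \eta$, whence
\[
	j\bigl( \varLambda_{\sigma, \psi}(T, \delta) \bigr) \subset [t_0 - a, t_0 + a + T] \times \bar{B}_E(x_0; \eta + \delta) \subset V,
\]
so the set $\bigl( [\sigma, \sigma + T] \times \bar{B}_E(\psi(0); \delta) \bigr) \cap \dom(f)$ lies in $W$ and $f$ is $L$-Lipschitzian there. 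Applying Lemma~\ref{lem:Lip about prolongations for ODEs} with base point $(\sigma, \psi)$ (its hypothesis (a) being exactly this $L$-Lipschitz property) gives $L$-\eqref{eq:Lipschitzian} for all $(t, \phi_1), (t, \phi_2) \in \varLambda_{\sigma, \psi}(T, \delta) \cap \dom(F_f)$. As $T, \delta, L$ and $W_0$ are independent of the chosen base point in $W_0$, this is precisely Definition~\ref{dfn:Lip about uniform prolongations}, and since $(t_0, \phi_0)$ was arbitrary, $F_f$ is uniformly locally Lipschitzian about prolongations.

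The only point requiring care — and what I would regard as the main (if mild) obstacle — is the uniformity: a single choice of $T, \delta$ must work simultaneously for every base point $(\sigma, \psi)$ as it ranges over $W_0$, which is exactly what separates the uniform statement from the pointwise content of Lemma~\ref{lem:Lip about prolongations for ODEs}. This is settled by the displayed box inclusion, where the radii $a$ and $\eta$ absorbing the variation of the base point are chosen so that $a + T$ and $\eta + \delta$ remain below the Lipschitz radius $c$ of $f$; no further estimate is needed.
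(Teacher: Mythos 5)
Your proposal is correct and follows essentially the same route as the paper's proof: both identify the rectangles $\varLambda_{\sigma, \psi}(T, \delta)$ with boxes $[\sigma, \sigma + T] \times \bar{B}_E(\psi(0); \delta)$ via Proposition~\ref{prop:r = 0, rectangles}, obtain the uniformity in the base point from a nested-box inclusion inside the Lipschitz domain of $f$ (the paper uses radii $T/2$, $\delta/2$ where you use $a + T < c$, $\eta + \delta < c$), and conclude with Lemma~\ref{lem:Lip about prolongations for ODEs}. No gaps.
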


\begin{proof}
Fix $(t_0, x_0) \in \dom(f)$.
We show that $F_f$ is uniformly locally Lipschitzian about prolongations at $j^{-1}(t_0, x_0)$.
By assumption, there are $T, \delta, L > 0$ such that $f$ is $L$-Lipschitzian on
	\begin{equation*}
		\bigl( [t_0 - T, t_0 + T] \times \bar{B}_E(x_0; \delta) \bigr) \cap \dom(f).
	\end{equation*}
By the triangle inequality, for all $(t, x) \in [t_0 - (T/2), t_0 + (T/2)] \times \bar{B}_E(x_0; \delta/2)$, we have
	\begin{equation*}
		[t, t + (T/2)] \times \bar{B}_E(x; \delta/2) \subset [t_0 - T, t_0 + T] \times \bar{B}_E(x_0; \delta).
	\end{equation*}
Let
	\begin{equation*}
		W_0 := [t_0 - (T/2), t_0 + (T/2)] \times \bar{B}_E(x_0; \delta/2),
	\end{equation*}
which is a neighborhood of $(t_0, x_0)$ in $\mathbb{R} \times E$.
From Proposition~\ref{prop:r = 0, rectangles} and Lemma~\ref{lem:Lip about prolongations for ODEs},
$F_f$ satisfies $L$-\eqref{eq:Lipschitzian} for all $(\sigma, \psi) \in j^{-1}(W_0)$
and all $(t, \phi_1), (t, \phi_2) \in \varLambda_{\sigma, \psi}(T/2, \delta/2) \cap \dom(F_f)$.
This shows the conclusion.
\end{proof}

Lemma~\ref{lem:Lip about prolongations for ODEs} and Proposition~\ref{prop:Lip about prolongations for ODEs} show that
the notion of (uniform) local Lipschitz for history functionals is an extension of the notion of Lipschitz condition for ODEs.

\section{Proofs}\label{sec:proofs}

\subsection{Propositions in Section~\ref{sec:existence and uniqueness}}\label{subsec:proofs in Section 5}

\begin{proof}[\textbf{Proof of Proposition~\ref{prop:continuity about prolongations}}]
By the prolongability of $H$,
	\begin{equation*}
		[0, T_0] \ni u \mapsto \tau_{t_0, \phi_0}^0(u, I_u\beta_0) \in \mathbb{R} \times H
	\end{equation*}
is continuous.
Therefore,
	\begin{equation*}
		K
		:= \left\{ \mspace{2mu} \tau_{t_0, \phi_0}^0(u, I_u\beta_0) : u \in [0, T_0] \mspace{2mu} \right\}
	\end{equation*}
is a compact set of $\mathbb{R} \times H$.

Let $\ep > 0$.
By the continuity of $F$, there are $a > 0$ and a neighborhood $N$ of $\boldsymbol{0}$ in $H$ such that
for all $(t_1, \phi_1) \in \dom(F)$ and all $(t_2, \phi_2) \in K$,
	\begin{equation*}
		|t_1 - t_2| < a \mspace{10mu} \text{and} \mspace{10mu} \phi_1 - \phi_2 \in N
			\imply
		\|F(t_1, \phi_1) - F(t_2, \phi_2)\|_E \le \ep.
	\end{equation*}
For all $\beta \in \varGamma_{0, \boldsymbol{0}}(T_0, \delta)$ and all $u \in [0, T_0]$,
	\begin{equation*}
		\tau_{t_0, \phi_0}^0(u, I_u\beta) - \tau_{t_0, \phi_0}^0(u, I_u\beta_0)
		= (0, I_u[\beta - \beta_0]).
	\end{equation*}
We choose $r > 0$ so that
	\begin{equation*}
		\varLambda_{0, \boldsymbol{0}}(T_0, r) \subset N
	\end{equation*}
since $H$ is regulated by prolongations.
Then $\rho^0(\beta, \beta_0) \le r$ implies that
	\begin{equation*}
		\bigl\| F_{t_0, \phi_0}^0(u, I_u\beta) - F_{t_0, \phi_0}^0(u, I_u\beta_0) \bigr\|_E
		\le \ep
	\end{equation*}
holds for all $u \in [0, T_0]$.
This shows the conclusion.
\end{proof}

\begin{proof}[\textbf{Proof of Proposition~\ref{prop:compactness of transformation on prolongation space}}]
By assumptions, we choose $T_0, \delta, M > 0$ so that
\begin{itemize}
\item $\varLambda_{t_0, \phi_0}(T_0, \delta) \subset \dom(F)$,
\item $\sup \{\mspace{2mu} \|F(t, \phi)\|_E : (t, \phi) \in \varLambda_{t_0, \phi_0}(T_0, \delta) \mspace{2mu}\} \le M$, and
\item for each $\beta_0 \in \varGamma_{0, \boldsymbol{0}}(T, \delta)$,
$\eqref{eq:PC}_{t_0, \phi_0}$ holds as $\rho^0(\beta, \beta_0) \to 0$ in $\varGamma_{0, \boldsymbol{0}}(T, \delta)$.
\end{itemize}
Let $0 < T \le \min\{T_0, \delta/M\}$.

\begin{flushleft}
\textbf{Step 1. Well-definedness}
\end{flushleft}

Let $\beta \in \varGamma_{0, \boldsymbol{0}}(T, \delta)$.
Then for all $s \in [0, T]$, we have
	\begin{equation*}
		\bigl\| \mathcal{S}_{t_0, \phi_0}^0\beta(s) \bigr\|_E
		\le \int_0^T \bigl\| F_{t_0, \phi_0}^0(u, I_u\beta) \bigr\|_E \mspace{2mu} \mathrm{d}u
		\le MT
	\end{equation*}
because
	\begin{equation*}
		\tau_{t_0, \phi_0}^0(u, I_u\beta)
		\in \varLambda_{t_0, \phi_0}(T, \delta)
		\subset \varLambda_{t_0, \phi_0}(T_0, \delta)
	\end{equation*}
(see Remark~\ref{rmk:comparison of rectangle by prolongations}).
This shows $\mathcal{S}_{t_0, \phi_0}^0\beta \in \varGamma_{0, \boldsymbol{0}}(T, \delta)$.

\begin{flushleft}
\textbf{Step 2. Compactness}
\end{flushleft}

The continuity of $\mathcal{S}_{t_0, \phi_0}^0$ with respect to $\rho^0$ follows from $\eqref{eq:PC}_{t_0, \phi_0}$
because
	\begin{equation*}
		\rho^0 \bigl( \mathcal{S}_{t_0, \phi_0}^0\beta, \mathcal{S}_{t_0, \phi_0}^0\beta_0 \bigr)
		\le \int_0^T
			\bigl\| F_{t_0, \phi_0}^0(u, I_u\beta) - F_{t_0, \phi_0}^0(u, I_u\beta_0) \bigr\|_E
		\mspace{2mu} \mathrm{d}u
	\end{equation*}
holds for every $\beta, \beta_0 \in \varGamma_{0, \boldsymbol{0}}(T, \delta)$.

We consider a closed linear subspace of the Banach space $C([0, T], E)_\mathrm{u}$ given by
	\begin{equation*}
		X := \{\mspace{2mu} \chi \in C([0, T], E) : \chi(0) = 0, \|\chi\|_\infty \le \delta \mspace{2mu}\}.
	\end{equation*}
Then the map $j \colon \varGamma_{0, \boldsymbol{0}}(T, \delta) \to X$ given by
	\begin{equation*}
		j(\beta) := \beta|_{[0, T]}
	\end{equation*}
is isometrically isomorphic.
Let $K$ be the image of $\mathcal{S}_{t_0, \phi_0}^0$,
i.e., $K = \mathcal{S}_{t_0, \phi_0}^0(\varGamma_{0, \boldsymbol{0}}(T, \delta))$.
For the compactness, it is sufficient to show that $j(K)$ is relatively compact.

Let $\beta \in \varGamma_{0, \boldsymbol{0}}(T, \delta)$.
Then for all $s_1, s_2 \in [0, T]$, we have
	\begin{equation*}
		\bigl\| \mathcal{S}_{t_0, \phi_0}^0\beta(s_1) - \mathcal{S}_{t_0, \phi_0}^0\beta(s_2) \bigr\|_E
		\le \left| \int_{s_1}^{s_2} \bigl\| F_{t_0, \phi_0}^0(u, I_u\beta) \bigr\|_E \mspace{2mu} \mathrm{d}u \right| \\
		\le M|s_1 - s_2|,
	\end{equation*}
which shows that $\mathcal{S}_{t_0, \phi_0}^0\beta$ is $M$-Lipschitz continuous.
Therefore, by the Ascoli--Arzel\`{a} theorem, $j(K)$ is relatively compact.

This completes the proof.
\end{proof}

\begin{proof}[\textbf{Proof of Proposition~\ref{prop:compactness of transformation on prolongation space by continuity}}]
The properties
\begin{itemize}
\item $F$ is continuous along prolongations,
\item $F$ is locally bounded about prolongations
\end{itemize}
follow by the assumptions (i) and (ii) (see also Lemmas~\ref{lem:(E1) implies (E0)} and \ref{lem:(E2) implies (E0)}).
The assumption (iii) implies that
there are $T_0, \delta > 0$ such that $\varLambda_{t_0, \phi_0}(T_0, \delta) \subset \dom(F)$.
Then for each fixed $\beta_0 \in \varGamma_{0, \boldsymbol{0}}(T_0, \delta)$, we have
	\begin{align*}
		&\int_0^{T_0}
			\bigl\| F_{t_0, \phi_0}^0(u, I_u\beta) - F_{t_0, \phi_0}^0(u, I_u\beta_0) \bigr\|_E
		\mspace{2mu} \mathrm{d}u \\
		&\mspace{20mu}
			\le T_0 \cdot \sup_{u \in [0, T_0]} \bigl\| F_{t_0, \phi_0}^0(u, I_u\beta) - F_{t_0, \phi_0}^0(u, I_u\beta_0) \bigr\|_E,
	\end{align*}
which converges to $0$ as $\beta \to \beta_0$ in $\varGamma_{0, \boldsymbol{0}}(T_0, \delta)$
from Proposition~\ref{prop:continuity about prolongations}.
Therefore, the conclusion is an application of Proposition~\ref{prop:compactness of transformation on prolongation space}.
\end{proof}

\begin{proof}[\textbf{Proof of Theorem~\ref{thm:Peano thm}}]
From Proposition~\ref{prop:compactness of transformation on prolongation space by continuity},
there exist $T, \delta > 0$ such that
	\begin{equation*}
		\mathcal{S}_{t_0, \phi_0}^0 \colon
		\varGamma_{0, \boldsymbol{0}}(T, \delta) \to \varGamma_{0, \boldsymbol{0}}(T, \delta)
	\end{equation*}
is a compact map.
Applying the Schauder fixed point theorem,
$\mathcal{S}_{t_0, \phi_0}^0$ has a fixed point in $\varGamma_{0, \boldsymbol{0}}(T, \delta)$
because $\varGamma_{0, \boldsymbol{0}}(T, \delta)$ is a closed convex subset of the Banach space
	\begin{equation*}
		\bigl( \varGamma_{0, \boldsymbol{0}}(T, \infty), \|\cdot\|_\infty \bigr).
	\end{equation*}
Therefore, $\mathcal{T}_{t_0, \phi_0}$ also has a fixed point in $\varGamma_{t_0, \phi_0}(T, \delta)$
by the diagram
	\begin{equation*}
		\xymatrix{
			\varGamma_{t_0, \phi_0}(T, \delta)
				\ar[r]^{\mathcal{T}_{t_0, \phi_0}}
				\ar[d]_{N_{t_0, \phi_0}^0}
				\ar@{}[dr]|\circlearrowleft
			& \varGamma_{t_0, \phi_0}(T, \delta)
				\ar[d]^{N_{t_0, \phi_0}^0} \\
			\varGamma_{0, \boldsymbol{0}}(T, \delta)
				\ar[r]_{\mathcal{S}_{t_0, \phi_0}^0}
			& \varGamma_{0, \boldsymbol{0}}(T, \delta).
		}
	\end{equation*}
That fixed point is a solution of $\eqref{eq:IVP}_{t_0, \phi_0}$.
\end{proof}

\subsection{Propositions and Theorem in Section~\ref{sec:mechanisms}}\label{subsec:proofs in Section 6}

\begin{proof}[\textbf{Proof of Proposition~\ref{prop:continuity about prolongations, uniform}}]
Fix $(\sigma_0, \psi_0) \in W$, $0 < T \le T_0$, and $\beta_0 \in \varGamma_{0, \boldsymbol{0}}(T, \delta)$.
By the prolongability of $H$,
	\begin{equation*}
		[0, T] \ni u \mapsto \tau_{\sigma_0, \psi_0}^0(u, I_u\beta_0) \in \mathbb{R} \times H
	\end{equation*}
is continuous.
Therefore,
	\begin{equation*}
		K
		:= \left\{ \mspace{2mu} \tau_{\sigma_0, \psi_0}^0(u, I_u\beta_0) : u \in [0, T] \mspace{2mu} \right\}
	\end{equation*}
is a compact set of $\mathbb{R} \times H$.

Let $\ep > 0$.
By the continuity of $F$,
there are $a > 0$ and a neighborhood $N$ of $\boldsymbol{0}$ in $H$ such that
for all $(t_1, \phi_1) \in \dom(F)$ and all $(t_2, \phi_2) \in K$,
	\begin{equation*}
		|t_1 - t_2| < a \mspace{10mu} \text{and} \mspace{10mu} \phi_1 - \phi_2 \in N
			\imply
		\|F(t_1, \phi_1) - F(t_2, \phi_2)\|_E \le \ep.
	\end{equation*}
We choose a neighborhood $N'$ of $\boldsymbol{0}$ in $H$ so that $N' + N' \subset N$.
We also choose $r > 0$ so that
	\begin{equation*}
		\varLambda_{0, \boldsymbol{0}}(T, r) \subset N'
	\end{equation*}
since $H$ is regulated by prolongations.
For all $(\sigma, \psi, \beta) \in \mathbb{R} \times H \times \varGamma_{0, \boldsymbol{0}}(T, \delta)$
and all $u \in [0, T]$,
	\begin{align*}
		\tau_{\sigma, \psi}^0(u, I_u\beta) - \tau_{\sigma_0, \psi_0}^0(u, I_u\beta_0)
		&= \Bigl( \sigma - \sigma_0, I_u[\beta - \beta_0] + I_u[\bar{\psi} - \bar{\psi}_0] \Bigr) \\
		&= \bigl( \sigma - \sigma_0, I_u[\beta - \beta_0] + S_0(u)(\psi - \psi_0) \bigr).
	\end{align*}
Therefore, there are a neighborhood $W'$ of $(\sigma_0, \psi_0)$ in $W$ such that
for all $(\sigma, \psi) \in W'$, all $\beta \in \varGamma_{0, \boldsymbol{0}}^1(T, \delta, 0)$
satisfying $\rho^1(\beta, \beta_0) \le r$,
and all $u \in [0, T]$,
	\begin{align*}
		\tau_{\sigma, \psi}^0(u, I_u\beta) - \tau_{\sigma_0, \psi_0}^0(u, I_u\beta_0)
		&\in (-a, a) \times (N' + N') \\
		&\subset (-a, a) \times N.
	\end{align*}
This shows the conclusion.
\end{proof}

\begin{proof}[\textbf{Proof of Proposition~\ref{prop:compactness of transformation on prolongation space, uniform}}]
By the continuity of $F$ at $(t_0, \phi_0)$,
there exists a neighborhood $W_0$ of $(t_0, \phi_0)$ in $\mathbb{R} \times H$ and $M > 0$ such that
	\begin{equation*}
		\sup_{(t, \phi) \in W_0 \cap \dom(F)} \|F(t, \phi)\|_E \le M.
	\end{equation*}
$W_0$ is a uniform neighborhood of $(t_0, \phi_0)$ by prolongations from Theorem~\ref{thm:nbd by prolongations}.
Therefore, $W_0 \cap \dom(F)$ is also a uniform neighborhood of $(t_0, \phi_0)$ by prolongations.
Then there are a neighborhood $W$ of $(t_0, \phi_0)$ in $\dom(F)$ and $T_0, \delta > 0$ such that
	\begin{equation*}
		\bigcup_{(\sigma, \psi) \in W} \varLambda_{\sigma, \psi}(T_0, \delta) \subset W_0 \cap \dom(F).
	\end{equation*}
We choose $0 < T < \min\{T_0, \delta/M\}$.

\begin{flushleft}
\textbf{Step 1. Well-definedness}
\end{flushleft}

Let $(\sigma, \psi, \beta) \in W \times \varGamma_{0, \boldsymbol{0}}(T, \delta)$.
Then for all $s \in [0, T]$, we have
	\begin{equation*}
		\bigl\| \mathcal{S}_{\sigma, \psi}^0\beta(s) \bigr\|_E
		\le \int_0^T \bigl\| F_{\sigma, \psi}^0(u, I_u\beta) \bigr\|_E \mspace{2mu} \mathrm{d}u
		\le MT.
	\end{equation*}
This shows $\mathcal{S}_{\sigma, \psi}^0\beta \in \varGamma_{0, \boldsymbol{0}}(T, \delta)$.

\begin{flushleft}
\textbf{Step 2. Compactness}
\end{flushleft}

The continuity at fixed $(\sigma_0, \psi_0, \beta_0) \in W \times \varGamma_{0, \boldsymbol{0}}(T, \delta)$ follows
because
	\begin{equation*}
		\rho^0 \bigl( \mathcal{S}_{\sigma, \psi}^0\beta, \mathcal{S}_{\sigma_0, \psi_0}^0\beta_0 \bigr)
		\le \int_0^T
			\bigl\| F_{\sigma, \psi}^0(u, I_u\beta) - F_{\sigma_0, \psi_0}^0(u, I_u\beta_0) \bigr\|_E
		\mspace{2mu} \mathrm{d}u,
	\end{equation*}
where the right-hand side converges to $0$ as $(\sigma, \psi, \beta) \to (\sigma_0, \psi_0, \beta_0)$
from Proposition~\ref{prop:continuity about prolongations, uniform}.
In the same way as the proof of Proposition~\ref{prop:compactness of transformation on prolongation space},
$\mathcal{S}_{\sigma, \psi}^0\beta$ is $M$-Lipschitz continuous
for every $(\sigma, \psi, \beta) \in W \times \varGamma_{0, \boldsymbol{0}}(T, \delta)$,
and therefore, the compactness of the image follows.
\end{proof}

For the proof of Theorem~\ref{thm:maximal WP without uniform Lip, extension},
let $\mathcal{\bar{T}}_{\sigma, \psi}$ and $\mathcal{\bar{S}}^0_{\sigma, \psi}$ be the transformations for $\bar{F}$,
namely, the transformations obtained by replacing $F$ as $\bar{F}$ in Notation~\ref{notation:transformations for integral eq}.

\begin{proof}[\textbf{Proof of Theorem~\ref{thm:maximal WP without uniform Lip, extension}}]
IVP~\eqref{eq:IVP} satisfies the local existence and local uniqueness for $C^1$-solutions
from Corollaries~\ref{cor:local existence with Lip, (E2)} and \ref{cor:local uniqueness (E2)}.
Then the following statements hold from Proposition~\ref{prop:maximal C^1-solutions and solution process}:
\begin{itemize}
\item For each $(t_0, \phi_0) \in \dom(F)$, $\eqref{eq:IVP}_{t_0, \phi_0}$ has the unique maximal $C^1$-solution
	\begin{equation*}
		x_F(\cdot; t_0, \phi_0) \colon [t_0, t_0 + T_F(t_0, \phi_0)) + I \to E,
	\end{equation*}
where $0 < T_F(t_0, \phi_0) \le \infty$.
\item The solution process $\mathcal{P}_F$ defined by \eqref{eq:sol process}
given in Subsection~\ref{subsec:maximal well-posedness} is a maximal process in $\dom(F)$.
\end{itemize}

We now show that $\mathcal{P}_F$ is a continuous maximal process in $\dom(F)$.
For this purpose, we use Corollary~\ref{cor:continuity of maximal processes}
and Theorems~\ref{thm:equi-continuity from continuity} and \ref{thm:continuity from local equi-continuity}.

\begin{flushleft}
\textbf{Step 1. Continuity of orbits}
\end{flushleft}

This follows by the $C^1$-prolongability of $H$ (see Remark~\ref{rmk:H and bar{H}}).

\begin{flushleft}
\textbf{Step 2. Lower semi-continuity of escape time function}
\end{flushleft}

Fix $(t_0, \phi_0) \in \dom(F)$.
Applying Proposition~\ref{prop:compactness of transformation on prolongation space, uniform},
we choose a neighborhood $\overline{W}$ of $(t_0, \phi_0)$ in $\dom \bigl( \bar{F} \bigr)$
and $T, \delta > 0$ so that
$\bigl( \mathcal{\bar{S}}^0_{\sigma, \psi} \bigr)_{(\sigma, \psi) \in W}$ is an well-defined compact transformation
on $\varGamma_{0, \boldsymbol{0}}(T, \delta)$.
Therefore, $\mathcal{\bar{S}}^0_{\sigma, \psi}$ has a unique fixed point
	\begin{equation*}
		\eta(\cdot; \sigma, \psi) \in \varGamma_{0, \boldsymbol{0}}(T, \delta)
	\end{equation*}
for each $(\sigma, \psi) \in \overline{W}$.
Let $(\sigma, \psi) \in \overline{W}$.
Then
	\begin{equation*}
		\chi(\cdot; \sigma, \psi)
		:= A_{\sigma, \psi}^0[\eta(\cdot; \sigma, \psi)]
		\in \varGamma_{\sigma, \psi}(T, \delta)
	\end{equation*}
is a fixed point of
	$\mathcal{\bar{T}}_{\sigma, \psi} \colon
	\varGamma_{\sigma, \psi}(T, \delta) \to \varGamma_{\sigma, \psi}(T, \delta)$,
i.e., a solution of $\eqref{eq:extended IVP}_{\sigma, \psi}$.
Let $W := \overline{W} \cap \dom(F)$.
Then $W$ is a neighborhood of $(t_0, \phi_0)$ in $\dom(F)$.
From Lemma~\ref{lem:sol of extended IVP},
$\chi(\cdot; \sigma, \psi)$ is a $C^1$-solution of $\eqref{eq:IVP}_{\sigma, \psi}$ for each $(\sigma, \psi) \in W$.
By the maximality of $x_F(\cdot; \sigma, \psi)$, we have $T < T_F(\sigma, \psi)$.
Since this holds for every $(\sigma, \psi) \in W$,
	\begin{equation*}
		[0, T] \times W \subset \dom(\mathcal{P}_F)
	\end{equation*}
is derived.

\begin{flushleft}
\textbf{Step 3. Equi-continuity}
\end{flushleft}

Fix $(t_0, \phi_0) \in \dom(F)$.
We choose $W, T, \delta$ in Step 2.
For every $(\tau, \sigma, \psi) \in [0, T] \times W$, we have
	\begin{align*}
		\mathcal{P}_F(\tau, \sigma, \psi)
		&= I_{\sigma + \tau}[x_F(\cdot; \sigma, \psi)] \\
		&= I_{\sigma + \tau}[\chi(\cdot; \sigma, \psi)] \\
		&= I_\tau[\eta(\cdot; \sigma, \psi)] + I_\tau\bar{\psi}.
	\end{align*}
This shows that for each fixed $(\sigma_0, \psi_0) \in W$, we have
	\begin{align*}
		&\mathcal{P}_F(\tau, \sigma, \psi) - \mathcal{P}_F(\tau, \sigma_0, \psi_0) \\
		&= I_\tau[\eta(\cdot; \sigma, \psi) - \eta(\cdot; \sigma_0, \psi_0)]
			+ S_0(\tau)(\psi - \psi_0).
	\end{align*}
Then the equi-continuity of $(\mathcal{P}_F(\tau, \cdot)|_W)_{\tau \in [0, T]}$ follows by the following reasons:
\begin{itemize}
\item Applying Theorem~\ref{thm:continuity and uniqueness},
we have the convergence
	\begin{equation*}
		\rho^0 \bigl( \eta(\cdot; \sigma, \psi), \eta(\cdot; \sigma_0, \psi_0) \bigr) \to 0
		\mspace{20mu}
		\text{as $(\sigma, \psi) \to (\sigma_0, \psi_0)$ in $W$}.
	\end{equation*}
This implies
	\begin{equation*}
		I_\tau[\eta(\cdot; \sigma, \psi) - \eta(\cdot; \sigma_0, \psi_0)] \to 0
	\end{equation*}
uniformly in $\tau \in [0, T]$ because $\bar{H}$ is regulated by prolongations.
\item The uniform convergence of the second term follows by the continuity of
$\mathbb{R}_+ \times H \ni (t, \phi) \mapsto S_0(t)\phi \in H$ (see Remark~\ref{rmk:H and bar{H}}).
\end{itemize}
This completes the proof.
\end{proof}

\section*{Acknowledgements}
I would like to thank Professor Hans-Otto Walther (Justus Liebig University Giessen)
and Professor Hiroshi Kokubu (Kyoto University)
for the useful comments and discussions.
This work was partially supported by Research Alliance Center for Mathematical Sciences, Tohoku University
for Start-up expense.


\begin{thebibliography}{99}
\addcontentsline{toc}{section}{References}

\bibitem{Chicone 2006} %(MR2224508)
	\newblock C. Chicone,
	\newblock ``Ordinary Differential Equations with Applications,''
	\newblock Second edition. Springer, New York, 2006.

\bibitem{Dafermos 1971} %(MR0291596)
	\newblock C. M. Dafermos,
	\newblock \emph{An invariance principle for compact processes},
	\newblock J. Differential Equations \textbf{9} (1971), 239--252.

\bibitem{Diekmann--vanGils--Lunel--Walther 1995} %(MR1345150)
	\newblock O. Diekmann, S. A. van Gils, S. M. Verduyn Lunel and H.-O. Walther,
	\newblock ``Delay Equations. Functional, Complex, and Nonlinear Analysis,"
	\newblock Springer-Verlag, New York, 1995.

\bibitem{Erneux 2009} %(MR2498700)
	\newblock T. Erneux,
	\newblock ``Applied Delay Differential Equations,"
	\newblock Springer, New York, 2009.

\bibitem{Gordon 1991} %(MR1138145)
	\newblock R. Gordon,
	\newblock \textit{Riemann integration in Banach spaces}, 
	\newblock Rocky Mountain J. Math. \textbf{21} (1991), 923--949. 

\bibitem{Granas--Dugundji} %(MR1987179)
	\newblock A. Granas and J. Dugundji,
	\newblock ``Fixed Point Theory,''
	\newblock Springer-Verlag, New York, 2003.

\bibitem{Hajek 1968} %(MR0239575)
	\newblock O. H\'{a}jek,
	\newblock \textit{Local characterisation of local semi-dynamical systems},
	\newblock Math. Systems Theory \textbf{2} (1968), 17--25.

\bibitem{Hale 1963b} %(MR0157064)
	\newblock J. K. Hale,
	\newblock \textit{A stability theorem for functional-differential equations},
	\newblock Proc. Nat. Acad. Sci. U.S.A. \textbf{50} (1963), 942--946.

\bibitem{Hale 1965} %(MR0183938)
	\newblock J. K. Hale,
	\newblock \textit{Sufficient conditions for stability and instability of autonomous functional-differential equations},
	\newblock J. Differential Equations \textbf{1} (1965), 452--482. 

\bibitem{Hale 1969} %(MR0244582)
	\newblock J. K. Hale,
	\newblock \textit{Dynamical systems and stability},
	\newblock J. Math. Anal. Appl. \textbf{26} (1969), 39--59.

\bibitem{Hale 1977} %(MR0508721)
	\newblock J. K. Hale,
	\newblock ``Theory of Functional Differential Equations,"
	\newblock Springer-Verlag, New York, 1977.

\bibitem{Hale 1988} %(MR0941371)
	\newblock J. K. Hale,
	\newblock ``Asymptotic Behavior of Dissipative Systems,''
	\newblock American Mathematical Society, Providence, RI, 1988.

\bibitem{Hale 2006b} %(MR2337813)
	\newblock J. K. Hale,
	\newblock \textit{History of delay equations},
	\newblock Delay differential equations and applications, 1--28, 
	NATO Sci. Ser. II Math. Phys. Chem., 205, Springer, Dordrecht, 2006. 

\bibitem{Hale--J.Kato 1978} %(MR0492721)
	\newblock J. K. Hale and J. Kato,
	\newblock \textit{Phase space for retarded equations with infinite delay},
	\newblock Funkcial. Ekvac. \textbf{21} (1978), 11--41.

\bibitem{Hale--Lunel 1993} %(MR1243878)
	\newblock J. K. Hale and S. M. Verduyn Lunel,
	\newblock ``Introduction to Functional Differential Equations,''
	\newblock Springer-Verlag, New York, 1993.

\bibitem{Hartung--Krisztin--Walther--Wu 2006} %(MR2457636)
	\newblock F. Hartung, T. Krisztin, H.-O. Walther and J. Wu,
	\newblock \textit{Functional differential equations with state-dependent delays: theory and applications},
	\newblock Handbook of differential equations: ordinary differential equations. Vol. III, 435--545, Elsevier/North-Holland, Amsterdam, 2006.

\bibitem{Hino--Murakami--Naito 1991} %(MR1122588)
	\newblock Y. Hino, S. Murakami, and T. Naito,
	\newblock ``Functional-differential Equations with Infinite Delay," 
	\newblock Springer-Verlag, Berlin, 1991.

\bibitem{Kappel--Schappacher 1978} %(MR0512478)
	\newblock F. Kappel and W. Schappacher,
	\newblock \textit{Autonomous nonlinear functional differential equations and averaging approximations},
	\newblock Nonlinear Anal. \textbf{2} (1978), 391--422.

\bibitem{Kappel--Schappacher 1980} %(MR0587220)
	\newblock F. Kappel and W. Schappacher,
	\newblock \textit{Some considerations to the fundamental theory of infinite delay equations},
	\newblock J. Differential Equations. \textbf{37} (1980), 141--183.

\bibitem{Kato 1978} %(MR0492740)
	\newblock J. Kato,
	\newblock \textit{Stability problem in functional differential equations with infinite delay},
	\newblock Funkcial. Ekvac. \textbf{21} (1978), 63--80.

\bibitem{Kelley 1955} %(MR0070144)
	\newblock J. L. Kelley,
	\newblock ``General Topology,"
	\newblock D. Van Nostrand Company, Inc., Toronto-New York-London, 1955.

\bibitem{Kolmanovskii--Nosov 1986} %(MR0860947)
	\newblock V. B. Kolmanovski\u{i} and V. R. Nosov,
	\newblock ``Stability of Functional Differential Equations,''
	\newblock Academic Press, London, 1986.

\bibitem{Lakshmanan--Senthilkumar 2010} %(MR2757455)
	\newblock M. Lakshmanan and D. V. Senthilkumar,
	\newblock ``Dynamics of Nonlinear Time-delay Systems,'' 
	\newblock Springer, Heidelberg, 2010.

\bibitem{Louihi--Hbid--Arino 2002} %(MR1900458)
	\newblock M. Louihi, M. L. Hbid and O. Arino,
	\newblock \textit{Semigroup properties and the Crandall Liggett approximation for a class of differential equations with state-dependent delays},
	\newblock J. Differential Equations. \textbf{181} (2002), 1--30.

\bibitem{Marsden--McCracken 1976} %(MR0494309)
	\newblock J. E. Marsden and M. McCracken,
	\newblock ``The Hopf Bifurcation and Its Applications,''
	With contributions by P. Chernoff, G. Childs, S. Chow, J. R. Dorroh, J. Guckenheimer, L. Howard, N. Kopell, O. Lanford, J. Mallet-Paret, G. Oster, O. Ruiz, S. Schecter, D. Schmidt and S. Smale.
	\newblock Springer-Verlag, New York, 1976. 

\bibitem{Mallet-Paret--Nussbaum--Paraskevopoulos 1994} %(MR1272890)
	\newblock J. Mallet-Paret, R. D. Nussbaum, and P. Paraskevopoulos,
	\newblock \textit{Periodic solutions for functional-differential equations with multiple state-dependent time lags},
	\newblock Topol. Methods Nonlinear Anal. \textbf{3} (1994), 101--162.

\bibitem{Nishiguchi 2017}
	\newblock J. Nishiguchi,
	\newblock \textit{A necessary and sufficient condition for well-posedness of initial value problems of retarded functional differential equations},
	\newblock J. Differential Equations \textbf{263} (2017), 3491--3532.

\bibitem{Ockendon--Tayler 1971}
	\newblock J. R. Ockendon and A. B. Tayler,
	\newblock \textit{The dynamics of a current collection system for an electric locomotive},
	\newblock Proc. R. Soc. Lond. A \textbf{322}, (1971), 447--468.

\bibitem{Rezounenko 2009} %(MR2515314)
	\newblock A. V. Rezounenko,
	\newblock \textit{Differential equations with discrete state-dependent delay: uniqueness and well-posedness in the space of continuous functions},
	\newblock Nonlinear Anal. \textbf{70} (2009), 3978--3986.

\bibitem{Rezounenko 2012} %(MR2834276)
	\newblock A. V. Rezounenko,
	\newblock \textit{A condition on delay for differential equations with discrete state-dependent delay},
	\newblock J. Math. Anal. Appl. \textbf{385} (2012), 506--516.

 \bibitem{Schumacher 1978} %(MR0477379)
	\newblock K. Schumacher,
	\newblock \textit{Existence and continuous dependence for functional-differential equations with unbounded delay},
	\newblock Arch.\ Rational Mech.\ Anal.\ \textbf{67} (1978), 315--335.

\bibitem{Smith 2011} %(MR2724792)
	\newblock H. Smith,
	\newblock ``An introduction to Delay Differential Equations with Applications to the Life Sciences,"
	\newblock Springer, New York, 2011.

\bibitem{Walther 2003c} %(MR2019242)
	\newblock H.-O. Walther,
	\newblock \textit{The solution manifold and $C^1$-smoothness for differential equations with state-dependent delay},
	\newblock J. Differential Equations \textbf{195} (2003), 46--65.

\bibitem{Walther 2014a} %(MR3210290)
	\newblock H.-O. Walther,
	\newblock \textit{Topics in delay differential equations},
	\newblock Jahresber. Dtsch. Math.-Ver. \textbf{116} (2014), 87--114.

\bibitem{Walther 2016}
	\newblock H.-O. Walther,
	\newblock \textit{Semiflows for differential equations with locally bounded delay on solution manifolds
	in the space $C^1((-\infty, 0], \mathbb{R}^n)$},
	\newblock Topol. Methods Nonlinear Anal. \textbf{48} (2016), 507--537.

\bibitem{Yosida 1980} %(MR1336382)
	\newblock K. Yosida,
	\newblock ``Functional Analysis,"
	\newblock Springer-Verlag, Berlin, 1980.
\end{thebibliography}
\end{document}